\documentclass[10.5pt]{article}

\usepackage{amsmath, latexsym, amsfonts, amssymb, amsthm}
\usepackage{graphicx, color,hyperref,dsfont,epsfig,caption,wrapfig,subfig}
\usepackage{pstricks}
\usepackage{movie15}
\usepackage{bm}

\setlength{\oddsidemargin}{0mm}
\setlength{\evensidemargin}{0mm}
\setlength{\textwidth}{164mm}
\setlength{\headheight}{0mm}
\setlength{\headsep}{12mm}
\setlength{\topmargin}{0mm}
\setlength{\textheight}{220mm}
\setcounter{secnumdepth}{3}
\hypersetup{
    linktoc=page,
    linkcolor=red,          % color of internal links
    citecolor=blue,        % color of links to bibliography
    filecolor=blue,      % color of file links
    urlcolor=cyan,
    colorlinks=true           % color of external links
}

\frenchspacing

\numberwithin{equation}{section}

\newtheorem{theorem}{Theorem}[section]
\newtheorem{lemma}[theorem]{Lemma}
\newtheorem{proposition}[theorem]{Proposition}
\newtheorem{corollary}[theorem]{Corollary}

\newtheorem{remark}[theorem]{Remark}
\newtheorem{definition}[theorem]{Definition}

\newcounter{conj}
\newtheorem{conjecture}[conj]{Conjecture}

\theoremstyle{definition}

\setcounter{tocdepth}{2}

\usepackage{listings}
\usepackage{color} %red, green, blue, yellow, cyan, magenta, black, white
\definecolor{mygreen}{RGB}{34,139,34} % color values Red, Green, Blue
\definecolor{myblue}{RGB}{0,0,205}
\definecolor{myred}{RGB}{178,34,34}
\definecolor{myorange}{RGB}{255,127,80}
\definecolor{mylilas}{RGB}{170,55,241}

\renewcommand{\P}{\mathbb{P}}
\newcommand{\E}{\mathbb{E}}
\renewcommand{\H}{\mathbb{H}}

\newcommand{\intr}{\int_{\mathbb{R}}}

\newcommand{\hg}{\hat{g}}

\newcommand{\s}{\mathbf{s}}

\numberwithin{equation}{section}
\hypersetup{
    colorlinks,
    linkcolor={red!50!black},
    citecolor={blue!50!black},
    urlcolor={blue!80!black}
}

\title{Integrability of boundary Liouville conformal field theory}

\author{
\begin{tabular}{c} Guillaume Remy\footnote{Department of Mathematics, Columbia University, 2990 Broadway, New York, NY 10027, USA. remy@math.columbia.edu} \end{tabular} 
\begin{tabular}{c} Tunan Zhu\footnote{D\'epartement de Math\'ematiques et Applications, \'Ecole Normale Sup\'erieure de Paris, 45 Rue d'Ulm, 75005 Paris, France. tunan.zhu@gmail.com}  \end{tabular}
}

\begin{document}
\maketitle

\begin{abstract}
Liouville conformal field theory (LCFT) is considered on a simply connected domain with boundary, specializing to the case where the Liouville potential is integrated only over the boundary of the domain. We work in the probabilistic framework of boundary LCFT introduced by Huang-Rhodes-Vargas (2015). Building upon the known proof of the bulk one-point function by the first author, exact formulas are rigorously derived for the remaining basic correlation functions of the theory, i.e., the bulk-boundary correlator, the boundary two-point and the boundary three-point functions. These four correlations should be seen as the fundamental building blocks of boundary Liouville theory, playing the analogous role of the DOZZ formula in the case of the Riemann sphere. Our study of boundary LCFT also provides the general framework to understand the integrability of one-dimensional Gaussian multiplicative chaos measures as well as their tail expansions. Finally these results have applications to studying the conformal blocks of CFT and set the stage for the more general case of boundary LCFT with both bulk and boundary Liouville potentials.
\end{abstract}

\tableofcontents

\section{Introduction and main results}

Liouville conformal field theory - LCFT henceforth - first appeared in Polyakov's seminal 1981 paper \cite{Pol} where he introduces a theory of summation over the space of Riemannian metrics on a given two-dimensional surface and sets LCFT as a fundamental building block of non-critical string theory. The necessity to solve Liouville theory led Belavin, Polyakov, and Zamolodchikov (BPZ) to introduce in \cite{BPZ} conformal field theory (CFT), a powerful framework to study quantum field theories possessing conformal symmetry. On surfaces without boundary, solving Liouville theory amounts to computing the three-point function on the sphere - which is given by the DOZZ formula proposed in \cite{DO,ZZ} - and arguing that correlation functions of higher order or in higher genus can be obtained from it using the conformal bootstrap method of \cite{BPZ}. A similar program can be pursued for surfaces with boundary, where the basic correlations have been derived in the physics literature in \cite{FZZ, bulk_boundary, three_point} and the conformal bootstrap is also applicable.

We work here in the probabilistic framework of LCFT first introduced by David-Kupiainen-Rhodes-Vargas on the Riemann sphere in \cite{Sphere}, and later followed by companion works for the boundary case \cite{Disk} and in higher genus \cite{Tori, Genus, Annulus}. The strength of this framework lies in the fact it allows to put Liouville theory on solid mathematical grounds and to rigorously carry out the program of solving the theory as described above. Indeed, in the case of the Riemann sphere, the BPZ differential equations expressing the constraints of the local conformal invariance of CFT were shown to hold in \cite{DOZZ1}. Building on this work a proof of the DOZZ formula was then given in \cite{DOZZ2}. Very shortly after, the same procedure was implemented by the first author \cite{remy} in the case of boundary LCFT to prove the Fyodorov-Bouchaud formula proposed in \cite{FyBo} that can also be interpreted as a bulk one-point function of boundary LCFT.

The purpose of the present work is to pursue solving Liouville theory on a simply connected domain with boundary, in the special case where the Liouville potential is only present on the boundary, see the Liouville action \eqref{liouville_action} below. In the study of boundary LCFT there are four basic correlation functions that must be computed: the bulk one-point function, the bulk-boundary correlator, and the boundary two-point and three-point functions. For the last two correlations we allow the freedom to choose different cosmological constants for each connected component of the boundary, see equation \eqref{def:mu_boundary} in Definition \ref{def_four_correls} below. See also Figure \ref{figure} for an illustration of the four cases. Taking as an input our previous works \cite{remy, interval}, we will thus compute all the basic correlations of boundary LCFT. In a future work we plan to address the same problem in the more general setting where there is also a bulk Liouville potential in the action. Lastly for finding higher order correlations or correlations in higher genus one needs in principle to apply the conformal bootstrap method of \cite{BPZ}. At the level of mathematics the case of boundaryless surfaces has been solved in the recent breakthroughs \cite{GKRV, GKRV2} and the study of the boundary case has been very recently initiated in \cite{Wu}. See Sections \ref{app:bulk_boundary} and \ref{app:bootstrap} for more on these two outlooks.

The key probabilistic object required to define LCFT probabilistically is the Gaussian multiplicative chaos (GMC) measure, which formally corresponds to exponentiating a log-correlated Gaussian field. Since the pioneering work of Kahane \cite{Kah}, it is well understood how to define this object using a suitable regularization procedure \cite{Ber, review}. GMC measures are now an extremely well studied object in probability theory and appear in many apparently unrelated problems such as 3d turbulence, mathematical finance, statistical physics, two-dimensional random geometry and probabilistic LCFT (see the review \cite{review} and references therein). One illustration is the Fyodorov-Bouchaud formula giving the law of the total mass of the GMC measure on the unit circle that was first proposed in statistical physics \cite{FyBo} in the context of random energy models. It was proved in \cite{remy} by viewing it as the bulk one-point function of boundary LCFT - a quantity derived in theoretical physics in \cite{FZZ} - and by implementing the BPZ differential equations of CFT in a probabilistic framework. This connection between \cite{FZZ} and \cite{FyBo} was unknown to physicists. Along the same lines, our previous work \cite{interval} gives the distribution of the mass of GMC on the unit interval making again rigorous predictions of statistical physicists \cite{FLeR, Ostro1} using once more the BPZ equations (see also the related works \cite{Ostro2, Ostro_review}). Finally there is a link between these results on GMC and the behavior of the maximum of log-correlated fields and random matrix theory, see the discussions in \cite{remy, interval} and the references therein.

In the present paper we further uncover these connections between the theory of GMC measures and Liouville CFT. We show how the law of the total mass of GMC on the unit interval with insertions at the boundaries proved in \cite{interval} can be viewed as a special case of the boundary three-point function \eqref{main_th2_H}. Similarly the exact formula we derived for the bulk-boundary correlator \eqref{eq:main_th1} gives the law of the total mass of GMC on the unit circle with an insertion, solving a conjecture of Ostrovsky \cite{Ostro2}. Our third formula, the boundary two-point function \eqref{main_th2_R}, gives a very general result on the tail expansion of one-dimensional GMC measures. These connections are detailed in Sections \ref{app:link_int} and \ref{app:laws_GMC}. The study of boundary LCFT with boundary Liouville potential is thus the most general framework to understand the integrability of one-dimensional GMC measures. On another note, these results are connected to the study of Liouville conformal blocks, the most fundamental functions in the conformal bootstrap program of \cite{BPZ}. The formula \eqref{main_th2_R} for the boundary two-point function was a crucial input in the recent work \cite{Blocks} studying the one-point conformal block on the torus. In a follow-up work studying the modular invariance of these same conformal blocks, the formula \eqref{main_th2_H} for the boundary three-point will be required. See Section \ref{app:blocks} for more details on this connection.

Let us now introduce the framework of our paper. By conformal invariance we can work equivalently on the upper half plane $\mathbb{H} = \{z \in \mathbb{C} \:| \: \mathrm{Im}(z) >0 \}$ or on the unit disk $\mathbb{D} = \{z \in \mathbb{C} \:| \: |z| <1 \}$ but for almost all of this paper we will work on $\mathbb{H}$. We use the notations $\overline{\mathbb{H}} = \mathbb{H} \cup \mathbb{R}$,  $\partial \mathbb{D}$ for the unit circle and similarly $\overline{\mathbb{D}} = \mathbb{D} \cup \partial \mathbb{D}$. In theoretical physics Liouville theory is defined using the path integral formalism. Let us fix $N$ bulk insertion points $z_i \in \mathbb{H}$ of associated weights $\alpha_i \in \mathbb{R}$ and $M$ boundary insertions points $s_j \in \mathbb{R}$ with weight $\beta_j \in \mathbb{R}$. In physics the correlation function of LCFT at these points is defined using the following infinite dimensional integral on the space of maps $X: \mathbb{H} \mapsto \mathbb{R}$,
\begin{equation}\label{path_integral}
\left \langle \prod_{i=1}^N e^{\alpha_i \phi(z_i)} \prod_{j=1}^M e^{\frac{\beta_j}{2} \phi(s_j)} \right \rangle = \int_{X: \mathbb{H} \mapsto \mathbb{R}} DX \prod_{i=1}^N  e^{\alpha_i X(z_i)} \prod_{j=1}^M e^{\frac{\beta_j}{2} X(s_j)} e^{-S_L(X)},
\end{equation}
where $DX$ is a formal uniform measure on the maps $X$ and $S_L(X)$ is the Liouville action given by:
\begin{equation}\label{liouville_action}
S_L(X) = \frac{1}{4 \pi} \int_{\mathbb{H}} \left( \vert \partial^g X \vert^2 + Q R_g X \right) d \lambda_g  + \frac{1}{2 \pi} \int_{ \mathbb{R}} \left( Q K_g X + 2 \pi \mu_{B} e^{\frac{\gamma}{2} X} \right) d \lambda_{\partial g}.
\end{equation}
The most fundamental parameter is $\gamma \in (0,2)$, which is related to $Q$ and to the central charge $c_L$ by: 
\begin{equation}\label{def_gamma_Q}
Q = \frac{\gamma}{2} + \frac{2}{\gamma}, \quad \quad  c_L = 1 + 6 Q^2.
\end{equation}
For a choice $g$ of background Riemannian metric on $\mathbb{H}$, $\partial^g$, $R_g$, $K_g$, $d \lambda_g $, $d \lambda_{\partial g}$ respectively stand for the gradient, Ricci curvature, geodesic curvature of the boundary, volume form and line element in the metric $g$. The precise choice of $g$ is irrelevant thanks to the Weyl anomaly proven in \cite[Proposition 3.7]{Disk}, see also the proof of Lemma \ref{link_D_H} for a concrete change of metrics.  $\mu_{B}$ is the boundary cosmological constant tuning the interaction strength of the Liouville potential $e^{\frac{\gamma}{2} X}$ integrated over the boundary. It will be chosen either to be a fixed positive number or more generally a function $ \mu_{B}: \mathbb{R} \mapsto \mathbb{C}$ constraint to be constant in between two consecutive insertion points $s_j$ on $\mathbb{R}$, see equation \eqref{def:mu_boundary}. In a more general setup there would also be a bulk cosmological constant but we set it here to zero, see Section \ref{app:bulk_boundary} for more details.
Of course since the path integral \eqref{path_integral} does not make rigorous sense we will rely on the construction of \cite{Disk} to obtain a valid probabilistic definition for these correlation functions. A sufficient requirement for a correlation to be well-defined is that the following Seiberg bounds must hold,
\begin{align}\label{sieberg}
\sum_{i=1}^N \alpha_i + \sum_{j=1}^M \frac{\beta_j}{2} > Q, \quad \forall j, \: \beta_j < Q,
\end{align}
although this can be lifted in some sense by analytic continuation, see the explanations below Definition \ref{def_four_correls}. Notice here that we do not have the condition $\alpha_i < Q$ present in \cite{Disk} as we do not have a bulk Liouville potential.
One of the key properties of a CFT is that its correlations behave as conformal tensors under conformal automorphism. This has indeed been checked for the probabilistic LCFT in \cite[Theorem~3.5]{Sphere} and \cite[Theorem~3.5]{Disk}. Since our correlation functions of interest contain at most one bulk and one boundary or three boundary points, this behavior under conformal maps immediately determines the dependence on the position of the marked points of the correlations. This is also precisely the reason why these are the basic correlations of the theory, as in a case with more marked points the conformal automorphisms would not suffice to pin down the dependence on the position of the points. We thus perform this reduction for our four basic correlations and reduce each of their expressions to a single constant known as a structure constant.
\begin{itemize}
\item \textbf{Bulk one-point function.} For $z \in \mathbb{H}$:
\begin{equation}\label{c1}
\left \langle e^{\alpha \phi(z)} \right \rangle = \frac{U(\alpha)}{\vert z - \overline{z} \vert^{2 \Delta_{\alpha}}}.
\end{equation}
\item \textbf{Bulk-boundary correlator.} For $ z \in \mathbb{H}$, $s \in \mathbb{R}$:
\begin{equation}\label{c2}
\left \langle e^{\alpha \phi(z)} e^{\frac{\beta}{2} \phi(s)} \right \rangle = \frac{G(\alpha, \beta)}{\vert z - \overline{z} \vert^{2 \Delta_{\alpha} - \Delta_{\beta}}\vert z -s \vert^{2 \Delta_{\beta}}}.
\end{equation}
\item \textbf{Boundary two-point function.} For $s_1, s_2 \in \mathbb{R}$:
\begin{equation}\label{c3}
\left \langle e^{\frac{\beta}{2} \phi(s_1)} e^{\frac{\beta}{2} \phi(s_2)} \right \rangle = \frac{R(\beta, \mu_1, \mu_2)}{\vert s_1 - s_2 \vert^{2 \Delta_{\beta}}}.
\end{equation}  
\item \textbf{Boundary three-point function.} For  $s_1, s_2, s_3 \in \mathbb{R}$:
\begin{equation}\label{c4}
\left \langle e^{\frac{\beta_1}{2} \phi(s_1)} e^{\frac{\beta_2}{2} \phi(s_2)} e^{\frac{\beta_3}{2} \phi(s_3)} \right \rangle = \frac{H^{(\beta_1, \beta_2, \beta_3)}_{(\mu_1, \mu_2, \mu_3)}}{\vert s_1 - s_2 \vert^{\Delta_1 + \Delta_2 - \Delta_3} \vert s_1 - s_3 \vert^{\Delta_1 + \Delta_3 - \Delta_2} \vert s_2 - s_3 \vert^{\Delta_2 + \Delta_3 - \Delta_1} }.
\end{equation}  
\end{itemize}

We have used the notations $\Delta_{\alpha} = \frac{\alpha}{2}(Q - \frac{\alpha}{2})$, $\Delta_{\beta} = \frac{\beta}{2}(Q - \frac{\beta}{2})$, and $\Delta_i = \frac{\beta_i}{2}(Q - \frac{\beta_i}{2})$. The parameters $\mu_1, \mu_2, \mu_3$ will correspond to the values taken by $\mu_B$ in between boundary insertions, see \eqref{def:mu_boundary}. Each of the four structure constants $U, G, R, H$ will then have a definition involving GMC given in Definition \ref{def_four_correls}. Our main results Theorem \ref{main_th1} and Theorem \ref{main_th2} state that these probabilistic definitions using GMC match the exact formulas predicted in physics in \cite{FZZ, Ostro2, bulk_boundary, three_point}. The picture below summarizes these four cases. We have drawn it on the disk for more clarity.

\begin{figure}[!htp]
\centering
\includegraphics[width=0.31\linewidth]{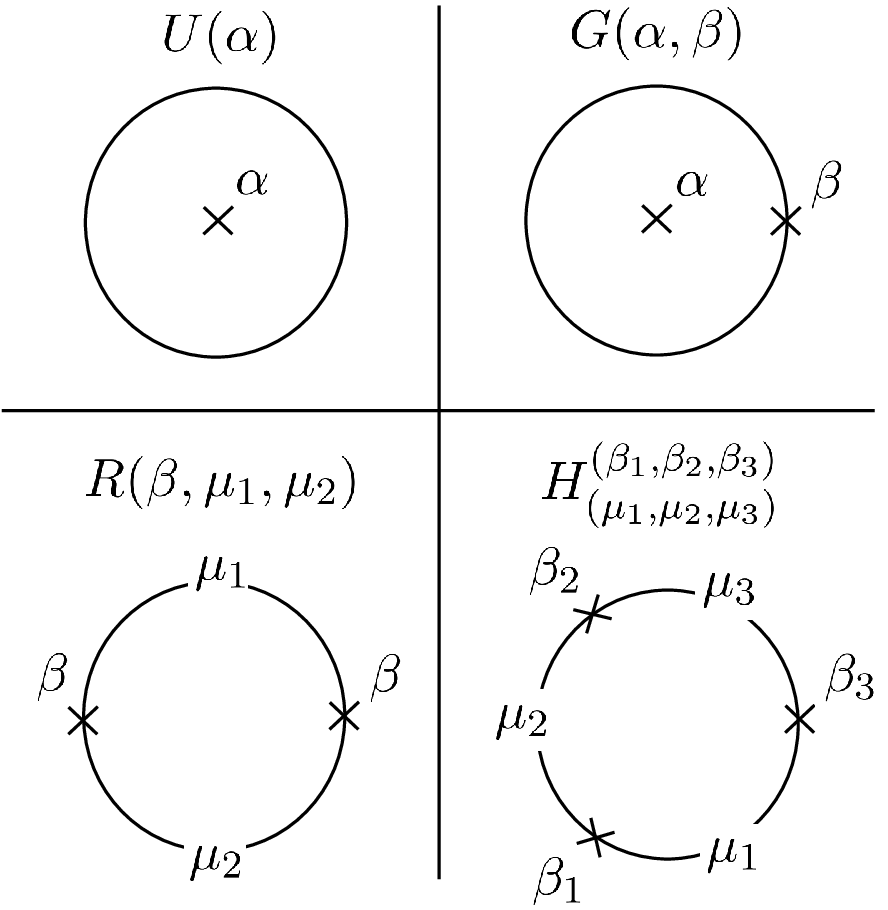}
\caption{Structure constants for boundary Liouville theory}
\label{figure}
\end{figure}

\subsection{Probabilistic definitions}
We will now introduce the two probabilistic objects required to rigorously define the four structure constants $U, G, R, H$, namely the Gaussian free field (GFF) and the Gaussian multiplicative chaos (GMC).
We work on the domain $\mathbb{H}$, viewing $\mathbb{H}$ as being equipped with the following background metric $g$, written here in diagonal form $g = g(x) dx^2$ with
\begin{equation}\label{def_metric_g}
g(x) = \frac{1}{|x|_+^4}, \quad \text{where} \quad |x|_+ := \max(|x|,1).
\end{equation}
This choice is convenient to work with because it will make the computations work in the same way as in \cite{DOZZ2} and \cite{interval}. The next definition introduces our GFF.
\begin{definition}\label{def_GFF} (Gaussian free field on $\mathbb{H}$) The Gaussian free field $X$ is the centered Gaussian process on $\mathbb{H}$ with covariance given by, for $x,y \in \mathbb{H}$:
\begin{equation}\label{covariance}
\E[X(x)X(y)] = \ln \frac{1}{|x-y||x-\bar{y}|} -\frac{1}{2}\ln g(x) - \frac{1}{2} \ln g(y).
\end{equation}
Since the variance at each point is infinite, $X$ is not defined pointwise and exists as a random distribution. It also satisfies:
\begin{equation}
\int_0^{\pi} X(e^{i \theta}) d \theta = 0.
\end{equation}
\end{definition}

See Section \ref{sec:useful_facts} for how to construct this GFF $X$ from the standard Neumann boundary (or free boundary) GFF on the disk $\mathbb{D}$. We now define the GMC measure on $\mathbb{R}$, the boundary of our domain $\mathbb{H}$.
\begin{definition}\label{def_GMC} (Gaussian multiplicative chaos)
Fix a $\gamma \in (0,2)$. The Gaussian multiplicative chaos measure associated to the field $X$ is defined by the following limit,
\begin{equation}
e^{\frac{\gamma}{2} X(x)} dx = \lim_{\epsilon \rightarrow 0} e^{\frac{\gamma}{2} X_{\epsilon}(x) - \frac{\gamma^2}{8} \mathbb{E}[X_{\epsilon}(x)^2]} dx,
\end{equation}
where the convergence is in probability and in the sense of weak convergence of measures on $\mathbb{R}$. Here $X_{\epsilon}$ is a suitable regularization of the field. More precisely, for a continuous compactly supported function $f$ on $\mathbb{R}$, the following convergence holds in probability:
\begin{equation}
\int_{\mathbb{R}} f(x) e^{\frac{\gamma}{2} X(x)} dx = \lim_{\epsilon \rightarrow 0}  \int_{\mathbb{R}} f(x) e^{\frac{\gamma}{2} X_{\epsilon}(x) - \frac{\gamma^2}{8} \mathbb{E}[X_{\epsilon}(x)^2]} dx.
\end{equation}
\end{definition}
For an elementary proof of this convergence and examples of smoothings of the field $X_{\epsilon}$, see for instance \cite{Ber} and references therein.

In order to define the boundary two-point and three-point functions we will consider parameters $\mu_1, \mu_2, \mu_3$ in $ \mathbb{C}$ corresponding to the values taken by $\mu_B$ in the Liouville action \eqref{liouville_action} on the different arcs in between insertion points (see also Figure \ref{figure}). To be able to choose a suitable branch cut of the logarithm to define the moment of GMC below, we introduce the following condition on the parameters $\mu_1, \mu_2, \mu_3$ which we will refer to as the half-space condition.

\begin{definition}(Half-space condition for the $\mu_i$)\label{half-space}
Consider $\mu_1, \mu_2, \mu_3 \in \mathbb{C}$. We say that $(\mu_i)_{i=1,2,3}$ satisfies the half-space condition if there exists a half-space $\mathcal{H}$ of $\mathbb{C}$ whose boundary is a line passing through the origin not equal to the real axis and satisfying the following. The half-space $\mathcal{H}$ does not contain the half-line $(-\infty,0)$. Each $\mu_i$ is contained in $\overline{\mathcal{H}}$ (the half-space with its boundary included) and the sum $\mu_1 + \mu_2 + \mu_3$ is strictly contained in $\mathcal{H}$. We will also refer to the half-space condition for a pair $\mu_1,\mu_2 \in \mathbb{C}$ which will be the condition above with $\mu_3$ set to $0$.
\end{definition}
The point of having this condition is two-fold. First we can always choose unambiguously the argument of each $\mu_i$ by choosing as branch cut the line $(-\infty,0)$, since the half-space $\mathcal{H}$ avoids this line. Second, any positive linear combination of the $\mu_i$ will always be contained in $\mathcal{H}$, and therefore such a linear combination is a complex number whose argument can also be defined using the branch cut $(-\infty,0)$. It will be very convenient to introduce variables $\sigma_i$ corresponding to the argument of the $\mu_i$ defined in the following way.

\begin{definition}\label{def_sigma}
Consider $\mu_1, \mu_2, \mu_3 \in \mathbb{C}$ obeying the half-space condition of Definition \ref{half-space}. We introduce the variables $\sigma_1, \sigma_2, \sigma_3 \in \mathbb{C}$ defined by the relations
\begin{equation}
\mu_{1} = e^{i \pi \gamma (\sigma_1-\frac{Q}{2})}, \quad \mu_{2} = e^{i \pi \gamma (\sigma_2-\frac{Q}{2})}, \quad \mu_{3} = e^{i \pi \gamma (\sigma_3-\frac{Q}{2})},
\end{equation}
with the convention that for positive $\mu_i$ one has $\mathrm{Re}(\sigma_i) = \frac{Q}{2}$. 
\end{definition}
With this definition at hand, remark that if $\sigma_i \in (-\frac{1}{2\gamma} +\frac{Q}{2}, \frac{1}{2\gamma} +\frac{Q}{2} ) $ for $i =1,2,3$, then $\mathrm{Re}( \mu_i) > 0$ and thus the half-space condition is satisfied with $\mathcal{H}$ being the right half-space of $\mathbb{C}$. We can now introduce the probabilistic definition of the four structure constants $U, G, R, H$ using moments of GMC on $\mathbb{R}$. Following the notations of \cite{DOZZ2}, it is convenient to work with the four quantities $\overline{U}, \overline{G}, \overline{R}, \overline{H}$ which will be purely defined as moments of GMC on $\mathbb{R}$ and be each related to the corresponding  $U, G, R, H$ by an explicit prefactor.

\begin{definition}\label{def_four_correls} (Correlation functions of Liouville theory on $\mathbb{H}$)
Fix $\gamma \in (0,2)$. Consider parameters $\alpha, \beta, \beta_1, \beta_2, \beta_3 \in \mathbb{R}$, $\mu_{B} \in (0,+\infty)$, and $\mu_1, \mu_2, \mu_3  \in  \mathbb{C}$. The four correlation functions $U, G, R, H$ have the following probabilistic definitions.
\begin{itemize}
\item $U(\alpha) = \frac{2}{\gamma} \Gamma(\frac{2(\alpha - Q)}{\gamma}) \left( \mu_{B}^{\frac{2(Q-\alpha)}{\gamma}} \right)  \overline{U}(\alpha)$  where for $\alpha > \frac{\gamma}{2}$:
\begin{equation}\label{cc1}
\overline{U}(\alpha) = \mathbb{E} \left[ \left( \int_{\mathbb{R}}  \frac{g(x)^{\frac{\gamma}{4}(\frac{2}{\gamma}-\alpha)}}{|x-i|^{\gamma\alpha  } } e^{\frac{\gamma}{2} X(x)}  dx  \right)^{\frac{2(Q -\alpha)}{\gamma}} \right].
\end{equation}
\item $G(\alpha, \beta) = \frac{2}{\gamma} \Gamma(\frac{2\alpha + \beta - 2Q}{\gamma}) \left( \mu_{B}^{\frac{2Q-2\alpha-\beta}{\gamma}} \right) \overline{G}(\alpha, \beta)$  where for $\beta < Q$, $\frac{\gamma}{2} - \alpha < \frac{\beta}{2} < \alpha $: 
\begin{equation}\label{cc2}
\overline{G}(\alpha, \beta )  =  \mathbb{E} \left[ \left(\int_{\mathbb{R}} \frac{g(x)^{\frac{\gamma}{4}(\frac{2}{\gamma}-\alpha-\frac{\beta}{2})}}{|x-i|^{\gamma\alpha  } }  e^{\frac{\gamma}{2} X(x) }  d x \right)^{\frac{2}{\gamma}(Q-\alpha-\frac{\beta}{2})} \right].
\end{equation}
\item $H^{(\beta_1, \beta_2, \beta_3)}_{(\mu_1, \mu_2, \mu_3)} = \frac{2}{\gamma} \Gamma(\frac{\beta_1+ \beta_2 + \beta_3-2Q}{\gamma}) \overline{H}^{(\beta_1, \beta_2, \beta_3)}_{(\mu_1, \mu_2, \mu_3)}$  where in the following range of parameters,
\begin{align}\label{para_H}
 &(\mu_i)_{i=1,2,3} \: \: \: \text{satisfies Definition \ref{half-space},} \quad \beta_i < Q,  \quad   {\frac{1}{\gamma}(2 Q  - \sum_{i=1}^3 \beta_i)} < \frac{4}{\gamma^2} \wedge \min_i \frac{2}{\gamma}(Q - \beta_i),
\end{align}
one can define:
\begin{equation}\label{cc3}
\overline{H}^{(\beta_1, \beta_2, \beta_3)}_{(\mu_1, \mu_2, \mu_3)}  = \mathbb{E} \left[ \left( \intr \frac{g(x)^{\frac{\gamma}{8}(\frac{4}{\gamma}- \sum_{i=1}^3 \beta_i )} }{|x|^{\frac{\gamma \beta_1 }{2}}|x-1|^{\frac{\gamma \beta_2 }{2}}}  e^{\frac{\gamma}{2} X(x)} d \mu(x) \right)^{\frac{1}{\gamma}(2 Q  - \sum_{i=1}^3 \beta_i)} \right].
\end{equation}
The dependence on the parameters $\mu_1, \mu_2, \mu_3$  appears through the measure:
\begin{equation}\label{def:mu_boundary}
d\mu(x) = \mu_1\mathbf{1}_{(-\infty,0)}(x)dx + \mu_2\mathbf{1}_{(0,1)}(x)dx + \mu_3 \mathbf{1}_{(1,\infty)}(x)dx.
\end{equation}
The GMC integral inside the expectation is a complex number avoiding $(-\infty,0)$. To define its fractional power we choose its argument in $(-\pi,\pi)$.
\item $R(\beta, \mu_1, \mu_2) = -\Gamma(1-\frac{2(Q-\beta)}{\gamma}) \overline{R}(\beta, \mu_1, \mu_2)$, where $\overline{R}(\beta, \mu_1, \mu_2)$ is defined for $\beta \in (\frac{\gamma}{2},Q)$ and  $\mu_1,\mu_2$ obeying the constraint of Definition \ref{half-space} by the following limiting procedure.
Consider $\frac{\gamma}{2} < \beta_2 < \beta <Q$ and $ \beta - \beta_2 < \beta_3 < Q$. Then the following limit exists and we set:
\begin{equation}\label{cc4}
\overline{R}(\beta, \mu_1, \mu_2) := \frac{1}{2 (Q-\beta)} \lim_{\beta_3 \downarrow \beta - \beta_2} (\beta_2 + \beta_3 - \beta) \overline{H}^{(\beta, \beta_2, \beta_3)}_{(\mu_1, \mu_2, \mu_3)}.
\end{equation}
\end{itemize}
\end{definition}

To obtain these definitions from \cite{Disk} one needs to set $\mu =0$ in the equation of \cite[Proposition~3.2]{Disk} and also use $\mathbb{H}$ instead of $\mathbb{D}$ as base domain. Let us recall the main steps given in \cite{Disk} to justify why these moments of the GMC measure are the correct probabilistic interpretation of \eqref{path_integral}. The path integral \eqref{path_integral} is interpreted in the probabilistic context as a functional of the GFF $X$ on $\mathbb{H}$. The formal measure $DX$ combined with the gradient squared of the Liouville action is replaced by an expectation over $X + c$, where $c$ is distributed according to the Lebesgue measure on $\mathbb{R}$. This constant $c$ is the so-called zero mode in physics and must be integrated over to obtain a conformally invariant theory. Performing this procedure gives:
\begin{equation*}
\left \langle \prod_{i=1}^N e^{\alpha_i \phi(z_i)} \prod_{j=1}^M e^{\frac{\beta_j}{2} \phi(s_j)} \right \rangle = \int_{\mathbb{R}} dc e^{- Q c} \E \left[  \prod_{i=1}^N  e^{\alpha_i (X(z_i) +c) } \prod_{j=1}^M e^{\frac{\beta_j}{2} (X(s_j) +c)} e^{- e^{\frac{\gamma c}{2}} \int_{ \mathbb{R}}  \mu_{B} e^{\frac{\gamma}{2} X(x)}g^{1/2}(x)dx } \right].
\end{equation*}
The right hand side is now a well-defined quantity, provided that one defines all exponentials of $X$ by a limiting procedure as performed in Definition \ref{def_GMC}. Then by a simple change of variable one can compute the integral over $c$ and obtain a Gamma function integral times a moment of GMC on $\mathbb{R}$. Using the notation $s = \frac{2}{\gamma}(\sum_{i=1}^N \alpha_i + \sum_{j=1}^M \frac{\beta_j}{2} -Q )$ one gets:
\begin{align}
&\left \langle \prod_{i=1}^N e^{\alpha_i \phi(z_i)} \prod_{j=1}^M e^{\frac{\beta_j}{2} \phi(s_j)} \right \rangle = \frac{2}{\gamma}\int_0^{\infty} du u^{s - 1} e^{-u} \E \left[  \prod_{i=1}^N  e^{\alpha_i X(z_i) } \prod_{j=1}^M e^{\frac{\beta_j}{2} X(s_j) } \left(\int_{ \mathbb{R}}  \mu_{B} e^{\frac{\gamma}{2} X(x)}g^{1/2}(x)dx \right)^{-s} \right] \nonumber \\
& = \frac{2}{\gamma} \Gamma(s) \E \left[  \left(\int_{ \mathbb{R}}   \mu_{B} \prod_{i=1}^N \left( \frac{|x|_+ |z_i|_+}{|x-z_i|} \right)^{\alpha_i \gamma}  \prod_{j=1}^M \left( \frac{|x|_+ |s_j|_+}{|x-s_j|} \right)^{\frac{\beta_j \gamma}{2}}   e^{\frac{\gamma}{2} X(x)}g^{1/2}(x)dx \right)^{-s} \right]. \label{eq:intro_moment}
\end{align}

To obtain the second line from the first we have applied the Girsanov Theorem \ref{girsanov} to the insertions $\prod_{i=1}^N  e^{\alpha_i X(z_i) } \prod_{j=1}^M e^{\frac{\beta_j}{2} X(s_j) }$. Let us take a look at the bounds on the parameters $\alpha_i, \beta_j$ for  \eqref{eq:intro_moment} to be finite. A sufficient condition is given by the Seiberg bounds \eqref{sieberg}. But now that we have an expression involving a Gamma function times a moment of GMC, we can analytically continue the Gamma function and look only at the bounds for GMC moments. This leads to the extended Seiberg bounds or unit volume bounds:
\begin{align}\label{bounds_unit_volume}
 {\frac{1}{\gamma}(2 Q - 2 \sum_{i=1}^N \alpha_i - \sum_{j=1}^M \beta_i)} < \frac{4}{\gamma^2} \wedge \min_i \frac{2}{\gamma}(Q - \beta_i), \quad \forall j, \: \beta_j < Q.
\end{align}
The proof of finiteness of \eqref{eq:intro_moment} under these bounds has been performed in \cite[Corollary 3.10]{Disk}, except that as for $\overline{H}$ we will sometimes need to choose the $\mu_B$ to be piecewise complex valued on $\mathbb{R}$ which gives the extra constraint that the $\mu_i$ in $\mathbb{C}$ need to obey the half-space condition of Definition \ref{half-space}. We give a short adaptation of the proof of \cite{Disk} for this case in Proposition \ref{lem:GMC-moment}.

We can now specialize \eqref{eq:intro_moment} and \eqref{bounds_unit_volume} to define $U$, $G$ and $H$. In our expressions the locations of the insertions have been chosen as follows. For $\overline{U}$ and $ \overline{G}$ the bulk insertion is at $i$ and the boundary insertion of $\overline{G}$ is at infinity. For $\overline{H}$ the three boundary insertions have been placed at $0$, $1$ and infinity. An alternative choice would have been to write everything on the disk $\mathbb{D}$ as shown on Figure 1. In this case it is natural to place the bulk insertion of $\overline{U}$ and $ \overline{G}$ at $0$ and the boundary insertion of $\overline{G}$ at $1$. See the statement of Lemma \ref{link_D_H} for the expressions of $\overline{U}$ and $ \overline{G}$ written as moments of GMC on the unit circle. 

For $\overline{H}$ we have allowed the freedom to choose different cosmological constants $\mu_i$ on each arc of the boundary $\mathbb{R}$ in between insertions, namely the segments $(-\infty,0)$, $(0,1)$ and $(1, +\infty)$. The precise definition is given by equation \eqref{def:mu_boundary} above. This extra degree of freedom is standard in the physics literature \cite{FZZ, three_point} but was not considered in \cite{Disk}. In a similar way $\overline{R}$ will be dependent in the general case on a $\mu_1$ and $\mu_2$. For the correlations $\overline{U}$ and $ \overline{G}$ since the boundary $\mathbb{R}$ is not separated in disjoint components by boundary insertions, we fix $\mu_B$ to be constant on all $\mathbb{R}$. It then turns out that the dependence of these two correlations on $\mu_B$ is simply a fractional power of $\mu_B$ and we directly include it in the definition of $U$ and $G$. On the other hand the dependence of $R$ and $H$ on the $\mu_i$ will be non-trivial.

 Lastly let us discuss the case of the function $\overline{R}$ which cannot be defined directly using \eqref{eq:intro_moment}. $\overline{R}$ needs to be constructed either by a limiting procedure \eqref{cc4} starting from $\overline{H}$ or directly by the probabilistic expression given by \eqref{def_R2}. Lemma \ref{lim_H_R} then asserts these two definitions match. This phenomenon was first studied in \cite{DOZZ2} for the case of the two-point function on the Riemann sphere. See Section \ref{sec_def_reflection} for a detailed explanation in our case. Heuristically the limit \eqref{cc4} corresponds to zooming around a boundary point of $\overline{H}$ of weight $\beta$ with parameters $\mu_1$ to the left and $\mu_2$ to the right. This also gives a heuristic explanation of why the boundary two-point function $R$ only depends on one parameter $\beta$ instead of two.

\subsection{Main theorems}
In order to state our main results, we need to introduce the following special functions. For all $\gamma \in (0,2) $ and for $\mathrm{Re}(x) >0$, $\Gamma_{\frac{\gamma}{2}}(x)$ is defined by the following integral formula:
\begin{equation}
\ln \Gamma_{\frac{\gamma}{2}}(x) = \int_0^{\infty} \frac{dt}{t} \left[ \frac{ e^{-xt} -e^{- \frac{Qt}{2}}   }{(1 - e^{- \frac{\gamma t}{2}})(1 - e^{- \frac{2t}{\gamma}})} - \frac{( \frac{Q}{2} -x)^2 }{2}e^{-t} + \frac{ x -\frac{Q}{2}  }{t} \right].
\end{equation} 
This function is a natural generalization of the standard Gamma function. It admits a meromorphic extension to $\mathbb{C}$ with simple poles on the lattice $\{ - n \frac{\gamma}{2} - m \frac{2}{\gamma} | n,m \in \mathbb{N} \}$ where here and throughout this paper $\mathbb{N}$ denotes non-negative integers. Consider similarly the function $S_{\frac{\gamma}{2}}(x)$ defined for $\gamma \in (0,2)$ and $\mathrm{Re}(x) \in (0, Q)$ by
\begin{equation}
S_{\frac{\gamma}{2}}(x) = \frac{\Gamma_{\frac{\gamma}{2}}(x)}{\Gamma_{\frac{\gamma}{2}}(Q -x)},
\end{equation}
which also admits a meromorphic extension to all of $ \mathbb{C}$. See Section \ref{sec_special_func} for more details on $\Gamma_{\frac{\gamma}{2}}$ and $S_{\frac{\gamma}{2}}$. Now let $\overline{\beta} = \beta_1 + \beta_2 + \beta_3$ and consider the six parameters $\beta_1, \beta_2, \beta_3, \sigma_1, \sigma_2, \sigma_3 \in \mathbb{C}^6$ constrained to satisfy the condition 
\begin{equation}\label{eq:cond_H}
\mathrm{Re}\left( Q - \sigma_3 + \sigma_2 -\frac{\beta_2}{2} \right) > 0.
\end{equation}
Under the condition \eqref{eq:cond_H} one can define the function:
\begin{align}\label{formule_PT}
&\mathcal{I}
\begin{pmatrix}
\beta_1 , \beta_2, \beta_3 \\
\sigma_1,  \sigma_2,   \sigma_3 
\end{pmatrix}\\
   & =\frac{(2\pi)^{\frac{2Q-\overline{\beta}}{\gamma}+1}(\frac{2}{\gamma})^{(\frac{\gamma}{2}-\frac{2}{\gamma})(Q-\frac{\overline{\beta}}{2})-1}}{\Gamma(1-\frac{\gamma^2}{4})^{\frac{2Q-\overline{\beta}}{\gamma}}\Gamma(\frac{\overline{\beta}-2Q}{\gamma})} 
 \frac{\Gamma_{\frac{\gamma}{2}}(2Q-\frac{\overline{\beta}}{2})\Gamma_{\frac{\gamma}{2}}(\frac{\beta_1+\beta_3-\beta_2}{2})\Gamma_{\frac{\gamma}{2}}(Q-\frac{\beta_1+\beta_2-\beta_3}{2})\Gamma_{\frac{\gamma}{2}}(Q-\frac{\beta_2+\beta_3-\beta_1}{2})}{\Gamma_{\frac{\gamma}{2}}(Q) \Gamma_{\frac{\gamma}{2}}(Q-\beta_1) \Gamma_{\frac{\gamma}{2}}(Q-\beta_2) \Gamma_{\frac{\gamma}{2}}(Q-\beta_3)} \nonumber \\ 
&\times \frac{e^{i\frac{\pi}{2}(-(2Q-\frac{\beta_1}{2}-\sigma_1-\sigma_2)(Q-\frac{\beta_1}{2}-\sigma_1-\sigma_2) + (Q+\frac{\beta_2}{2}-\sigma_2-\sigma_3)(\frac{\beta_2}{2}-\sigma_2-\sigma_3)+(Q+\frac{\beta_3}{2}-\sigma_1-\sigma_3)(\frac{\beta_3}{2}-\sigma_1-\sigma_3) -2\sigma_3(2\sigma_3-Q)        )}}{S_{\frac{\gamma}{2}}(\frac{\beta_1}{2}+\sigma_1-\sigma_2)  S_{\frac{\gamma}{2}}(\frac{\beta_3}{2}+\sigma_3-\sigma_1) } \nonumber \\
&\times \int_{\mathcal{C}} \frac{S_{\frac{\gamma}{2}}(Q-\frac{\beta_2}{2}+\sigma_3-\sigma_2+r) S_{\frac{\gamma}{2}}(\frac{\beta_3}{2}+\sigma_3-\sigma_1+r) S_{\frac{\gamma}{2}}(Q-\frac{\beta_3}{2}+\sigma_3-\sigma_1+r)}{S_{\frac{\gamma}{2}}(Q+\frac{\beta_1}{2}-\frac{\beta_2}{2}+\sigma_3-\sigma_1+r) S_{\frac{\gamma}{2}}(2Q-\frac{\beta_1}{2}-\frac{\beta_2}{2}+\sigma_3-\sigma_1+r) S_{\frac{\gamma}{2}}(Q+r)}e^{i\pi(-\frac{\beta_2}{2}+\sigma_2-\sigma_3)r} \frac{dr}{i}. \nonumber
\end{align}
In the integral appearing above the contour $\mathcal{C}$ goes from $-i \infty$ to $ i \infty$ passing to the right of the poles at $r = -(Q-\frac{\beta_2}{2}+\sigma_3-\sigma_2) -n\frac{\gamma}{2}-m\frac{2}{\gamma}$, $r = -(\frac{\beta_3}{2}+\sigma_3-\sigma_1)-n\frac{\gamma}{2}-m\frac{2}{\gamma}$, $r= -(Q-\frac{\beta_3}{2}+\sigma_3-\sigma_1)-n\frac{\gamma}{2}-m\frac{2}{\gamma}$ and to the left of the poles at $r=-(\frac{\beta_1}{2}-\frac{\beta_2}{2}+\sigma_3-\sigma_1)+n\frac{\gamma}{2}+m\frac{2}{\gamma}$, $r = -(Q-\frac{\beta_1}{2}-\frac{\beta_2}{2}+\sigma_3-\sigma_1)+n\frac{\gamma}{2}+m\frac{2}{\gamma} $, $r=n\frac{\gamma}{2}+m\frac{2}{\gamma} $ with $m,n \in \mathbb{N}$. See Appendix \ref{formula_I} for an in depth study of this formula including the check that the integral over $\mathcal{C}$ is converging and an analytic continuation to $\mathbb{C}^6$ removing the constraint of \eqref{eq:cond_H}.

We can now state our main results. For the sake of completeness we first recall the result of \cite{remy}.

\begin{theorem}\label{FyBo}(Bulk one-point function, R. 2017 \cite{remy}) For $\gamma \in (0,2)$,  $\alpha > \frac{\gamma}{2}$, one has: 
\begin{equation}
\overline{U}(\alpha) =  \left(\frac{2^{-\frac{\gamma\alpha}{2}} 2\pi}{\Gamma(1-\frac{\gamma^2}{4})} \right)^{\frac{2}{\gamma}(Q-\alpha)}\Gamma(\frac{\gamma \alpha}{2} - \frac{\gamma^2}{4}).
\end{equation}
\end{theorem}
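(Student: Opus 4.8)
The plan is to follow the probabilistic implementation of the BPZ differential equations from \cite{remy}, which is the boundary analogue of the DOZZ strategy of \cite{DOZZ2}. First I would transport the problem to the unit disk $\mathbb{D}$ via the conformal map sending $i \mapsto 0$, so that $\overline{U}(\alpha)$ becomes a moment of the total mass of GMC on the unit circle $\partial \mathbb{D}$ with a single bulk insertion of weight $\alpha$ at the origin; this is exactly the total-mass-of-circular-GMC quantity of the Fyodorov--Bouchaud conjecture \cite{FyBo}. The point of working with a genuine CFT correlation is that we may then insert an additional \emph{degenerate} boundary vertex operator of weight $-\tfrac{\gamma}{2}$ at a point $t$ of the boundary. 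Denoting the resulting bulk--boundary correlation by $\psi(t)$, conformal covariance (as recorded in the tensor transformation laws of the correlations in the itemized list above) fixes all of its dependence on positions except for a function of a single real variable.

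The heart of the argument is to show that $\psi$ satisfies a second-order BPZ/null-vector ordinary differential equation. This I would establish by Gaussian integration by parts: writing the degenerate insertion as an extra factor and differentiating twice in $t$, the Girsanov Theorem \ref{girsanov} converts derivatives of the Gaussian field into reweightings and shifts of the insertion points, and the level-two degeneracy of the weight $-\tfrac{\gamma}{2}$ produces the vanishing combination that is the null-vector relation. The resulting equation, after the reduction to one variable, is a hypergeometric equation, whose local exponents at the singular points are dictated by the fusion rules $\alpha \times (-\tfrac{\gamma}{2}) \to \alpha \pm \tfrac{\gamma}{2}$.

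Given the hypergeometric ODE, the remaining steps are asymptotic. I would analyze the limit in which the degenerate point $t$ approaches the bulk insertion (the operator product expansion) and the two admissible fusion channels; comparing the connection coefficients of the hypergeometric equation against the coefficients produced by the OPE yields an explicit functional relation, a shift equation expressing $\overline{U}(\alpha+\tfrac{\gamma}{2})$ as an explicit ratio of Gamma functions times $\overline{U}(\alpha)$. A direct computation then checks that the proposed formula solves this shift equation; to promote the shift equation to a genuine identity I would combine it with the analyticity of $\overline{U}$ in $\alpha$ and an a priori bound on its growth, together with the dual relation obtained by repeating the argument with the degenerate weight $-\tfrac{2}{\gamma}$ (which gives a clean recursion since the argument of the Gamma factor then shifts by exactly $1$), exactly as in the uniqueness arguments of \cite{DOZZ2, remy, interval}. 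Extending from generic $\gamma$ to all of $(0,2)$ by continuity finishes the proof.

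I expect the main obstacle to be the rigorous derivation of the BPZ equation, and in particular justifying the Gaussian integration by parts in the presence of the singular $-\tfrac{\gamma}{2}$ insertion: one must prove that the GMC moment is twice continuously differentiable in the position $t$ of the degenerate operator right up to the fusion thresholds, control the non-integrable-looking singularities created by the negative weight, and verify that the boundary terms vanish. The extraction of the exact connection and OPE constants --- rather than merely the shift up to an unknown factor --- is the other delicate point, since it requires identifying the precise residues governing the fusion asymptotics of the GMC.
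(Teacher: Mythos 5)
You should first be aware that the paper does not reprove Theorem \ref{FyBo}: it imports it verbatim from \cite{remy}, and its own machinery structurally \emph{cannot} produce it --- which is exactly where your proposal breaks down. The fatal step is placing the degenerate weight $-\frac{\gamma}{2}$ insertion on the \emph{boundary}. A configuration of one bulk point and one boundary point carries no cross-ratio: the M\"obius group of the disk is three-dimensional, the stabilizer of the bulk point is the rotation subgroup, and it sweeps out the entire boundary circle, so every such pair is conformally equivalent to $(0,1)$. Consequently your $\psi(t)=\langle e^{\alpha \phi(0)} e^{-\frac{\gamma}{4}\phi(t)}\rangle$ is an explicit conformal prefactor times a \emph{constant} (this is precisely \eqref{c2} with $\beta=-\frac{\gamma}{2}$); your claim that conformal covariance leaves ``a function of a single real variable'' is false, the BPZ equation is satisfied trivially by that single power law, and there is no connection problem, no two fusion channels to compare, and no shift equation. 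Moreover, the fusion $\alpha \times (-\frac{\gamma}{2}) \to \alpha \pm \frac{\gamma}{2}$ you invoke cannot be realized by a boundary degenerate operator at all, since a boundary point can never collide with the bulk insertion: boundary degenerate insertions only fuse with boundary data. Even if you repair the moduli count by adding a spectator boundary insertion $\beta$ --- which is exactly the paper's observable $G_{\chi}(t)$ --- the resulting shift equations (Proposition \ref{shift_bulk_boundary}) move $\beta$ by $\gamma$ or $\frac{4}{\gamma}$ and never touch $\alpha$; that machinery takes $\overline{U}$ as an \emph{input}, fixing the $\beta$-independent constant through $\overline{G}(\alpha,0)=\overline{U}(\alpha)$ in the proof of Theorem \ref{main_th1}, and can never output it.

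The actual proof in \cite{remy} (as the paper stresses in Section \ref{app:link_int}) places the degenerate insertion in the \emph{bulk}, at $t \in \mathbb{D}$. Then the configuration (bulk $\alpha$ at $0$, bulk degenerate at $t$) has one genuine modulus; rotational invariance reduces the BPZ equation to a hypergeometric ODE in the radial variable; the regime $t \to 0$ realizes the bulk fusion channels $\alpha \pm \chi$ (one of them through the delicate ``OPE with reflection''), while $|t| \to 1$ realizes the fusion of the degenerate field with the boundary GMC; the hypergeometric connection formulas then yield shift equations genuinely in $\alpha$, for both $\chi = \frac{\gamma}{2}$ and $\chi = \frac{2}{\gamma}$, and these two incommensurate shifts together with the trivial normalization $\overline{U}(Q)=1$ (a zeroth moment) determine the formula for $\gamma^2 \notin \mathbb{Q}$, hence for all $\gamma$ by continuity --- the uniqueness step needs no a priori growth bound, contrary to what you suggest. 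Two further details of your sketch would also need correction in any rescue attempt: the Girsanov Theorem \ref{girsanov} applied to a boundary degenerate insertion naturally produces $|t-x|^{\frac{\gamma\chi}{2}}$, but the BPZ equation only holds for the phase-carrying kernel $(t-x)^{\frac{\gamma\chi}{2}}$ without absolute values (equivalently, the cosmological constant must jump by a phase across the degenerate point), a subtlety the paper checks explicitly; and the identification of the reflected OPE coefficient requires the boundary two-point function $\overline{R}$, not merely ``residues,'' which is why the arguments of \cite{DOZZ2,remy} and of this paper devote substantial probabilistic work (Lemmas \ref{lem_reflection_ope2} and \ref{lem_reflection_ope1}) to that expansion.
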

Now the main result of the present work is to provide expressions for the remaining three structure constants. We will prove the following theorems.
\begin{theorem}\label{main_th1} (Bulk-boundary correlator) For $\gamma \in (0,2)$, $\beta < Q $, $ \frac{\gamma}{2} - \alpha < \frac{\beta}{2} < \alpha$, one has:
\begin{equation}\label{eq:main_th1}
\overline{G}(\alpha, \beta)=  \left( \frac{2^{\frac{\gamma}{2}(\frac{\beta}{2} - \alpha)} 2\pi}{\Gamma(1-\frac{\gamma^2}{4})} \right)^{\frac{2}{\gamma}(Q-\alpha-\frac{\beta}{2})} \frac{ \Gamma(\frac{\gamma\alpha}{2}+\frac{\gamma\beta}{4}-\frac{\gamma^2}{4}) \Gamma_{\frac{\gamma}{2}}(\alpha-\frac{\beta}{2} )  \Gamma_{\frac{\gamma}{2}}(\alpha+\frac{\beta}{2}  ) \Gamma_{\frac{\gamma}{2}}(Q - \frac{\beta}{2}  )^2}{ \Gamma_{\frac{\gamma}{2}}(Q - \beta ) \Gamma_{\frac{\gamma}{2}}(\alpha )^2\Gamma_{\frac{\gamma}{2}}(Q) }.
\end{equation}
\end{theorem}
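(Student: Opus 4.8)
The plan is to adapt the probabilistic implementation of the BPZ equations developed for the Riemann sphere in \cite{DOZZ2} and for the boundary one-point and interval cases in \cite{remy, interval}. The starting point is to perturb the correlator defining $\overline{G}$ by a \emph{degenerate} boundary insertion of weight $-\frac{\gamma}{2}$ at an auxiliary point $t\in\mathbb{R}$: one studies the correlation function $\langle e^{\alpha\phi(i)}\,e^{-\frac{\gamma}{4}\phi(t)}\,e^{\frac{\beta}{2}\phi(\infty)}\rangle$, which by the Girsanov Theorem \ref{girsanov} equals an explicit prefactor times a moment of GMC on $\mathbb{R}$ of the form \eqref{cc2} but carrying an additional factor $|x-t|^{\frac{\gamma^2}{4}}$ in the integrand. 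Since this correlator involves a single bulk point, one genuine boundary point and the degenerate point, the conformal covariance of probabilistic LCFT (\cite[Theorem~3.5.]{Disk}) fixes its dependence on the marked points up to a prefactor, so that after placing the bulk insertion at $i$ and the boundary insertion at $\infty$ its nontrivial content is a function $\psi(t)$ of the single real variable $t$.

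The second step is to prove that $\psi$ satisfies a second-order hypergeometric differential equation. The value $-\frac{\gamma}{2}$ is precisely the level-two degenerate weight, and the BPZ equation is obtained by writing $\partial_t^2\psi$ as a GMC expectation and applying Gaussian integration by parts with respect to the covariance of Definition \ref{def_GFF}; the underlying null-vector structure makes all resulting terms recombine into a closed linear ODE. Its regular singular points sit at $t=\pm i$ (collision of the degenerate field with the bulk insertion and its mirror) and at $t=\infty$ (collision with the boundary insertion), and its solutions are expressible through Gauss hypergeometric functions ${}_2F_1$. Making this rigorous requires justifying differentiation under the expectation and the Gaussian integration by parts, which rests on the finiteness and integrability of the relevant GMC moments recorded in Proposition \ref{lem:GMC-moment} together with uniform control near the singular points.

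The third step extracts \emph{shift equations}. As $t$ merges with the boundary insertion the asymptotics of $\psi$ are governed by the boundary operator product expansion of $e^{-\frac{\gamma}{4}\phi}$ with $e^{\frac{\beta}{2}\phi}$, whose two fusion channels reproduce $\overline{G}$ at boundary weights $\beta\pm\frac{\gamma}{2}$; matching this against the expansion governing the collision with the bulk insertion, through the explicit connection coefficients of the hypergeometric equation (ratios of ordinary $\Gamma$ functions and explicit numerical prefactors), yields a functional equation relating $\overline{G}(\alpha,\beta)$ and $\overline{G}(\alpha,\beta+\gamma)$. Running the same argument with the dual degenerate field $e^{-\frac{1}{\gamma}\phi(t)}$ of weight $-\frac{2}{\gamma}$ produces a second, independent shift equation of step $\frac{4}{\gamma}$ in $\beta$. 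One then checks, using the functional equations for $\Gamma_{\frac{\gamma}{2}}$ recalled in Section \ref{sec_special_func}, that the right-hand side of \eqref{eq:main_th1} satisfies both shift equations. Writing $F(\beta)$ for the ratio of $\overline{G}(\alpha,\beta)$ to that right-hand side, $F$ is then periodic with the two incommensurable periods $\gamma$ and $\frac{4}{\gamma}$ (for $\gamma^2\notin 4\mathbb{Q}$); being analytic and suitably bounded in a vertical strip it must be constant in $\beta$, the remaining values of $\gamma$ following by continuity.

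It remains to fix this constant, which is done by specializing to $\beta=0$: the probabilistic definition \eqref{cc2} then collapses exactly onto the moment \eqref{cc1} defining $\overline{U}(\alpha)$, while the right-hand side of \eqref{eq:main_th1}, after its $\Gamma_{\frac{\gamma}{2}}$ factors cancel, reduces precisely to the Fyodorov-Bouchaud value of Theorem \ref{FyBo}; hence $F\equiv 1$ and \eqref{eq:main_th1} follows. The step I expect to be the main obstacle is the third one, namely the rigorous derivation of the operator product expansions: one must control the precise leading and subleading asymptotics of $\psi(t)$ as $t$ approaches each insertion, identify the coefficients of those expansions with the correctly shifted structure constants, and justify the attendant analytic continuations in $\beta$. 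This is where sharp GMC tail and moment estimates are indispensable and where the bulk of the technical work lies; by comparison the ODE derivation and the special-function bookkeeping, though lengthy, are comparatively routine.
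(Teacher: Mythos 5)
Your overall architecture (degenerate boundary insertion, BPZ/hypergeometric equation, two shift equations in $\beta$, uniqueness from two incommensurable periods plus the value $\overline{G}(\alpha,0)=\overline{U}(\alpha)$) is exactly the strategy of the paper, but two of your steps would fail as written. First, you define the perturbed observable with the factor $|x-t|^{\frac{\gamma^2}{4}}$ \emph{with absolute values}. For a degenerate insertion placed on the boundary this is incorrect: the differential equation does not hold for that observable (the paper states in Section \ref{app:link_int} that this was checked explicitly). The BPZ equation for a boundary degenerate field requires a phase relation $\mu_{i+1}=e^{\pm i\pi\frac{\gamma\chi}{2}}\mu_i$ between the cosmological constants on the two sides of the insertion, and this is encoded precisely by using the fractional power $(t-x)^{\frac{\gamma\chi}{2}}$ \emph{without} absolute values, with $t$ taken in $\mathbb{H}$. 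The price is that the observable $G_\chi(t)$ becomes complex valued, which is what forces the finiteness bounds (Proposition \ref{lem:GMC-moment}) and all the OPE estimates to be redone for complex quantities; your Gaussian integration by parts would not close into an ODE otherwise.

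Second, and more seriously, your third step assumes that both fusion channels of the degenerate OPE produce $\overline{G}$ at the shifted weights $\beta\pm\chi$. This is true only for $\chi=\frac{\gamma}{2}$ and only in the restricted range $\beta<\frac{2}{\gamma}$, which suffices for the $\gamma$-shift equation \eqref{G_shift1}. For the dual field $\chi=\frac{2}{\gamma}$ the subleading coefficient is \emph{not} $\overline{G}(\alpha,\beta+\frac{2}{\gamma})$ --- that GMC moment is ill-defined since the shifted weight exceeds $Q$ --- but is instead, by the OPE with reflection (Lemmas \ref{lem_reflection_ope2} and \ref{lem_reflection_ope1}), an explicit multiple of $\overline{R}(\beta,1,e^{i\pi\frac{\gamma\chi}{2}})\,\overline{G}(\alpha,2Q-\beta-\chi)$, where $\overline{R}$ is the boundary two-point function. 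Consequently the $\frac{4}{\gamma}$-shift equation \eqref{G_shift2} cannot be closed without the exact value of $\overline{R}$, which the paper imports from formula \eqref{main_th2_R} of Theorem \ref{main_th2} (proved independently through the three-point machinery of Section \ref{sec_3pt}), together with the analytic continuation of $\beta\mapsto\overline{G}(\alpha,\beta)$ beyond $Q$ given by the reflection principle of Lemma \ref{reflection1}. Your proposal never mentions the reflection coefficient, so it is missing the essential ingredient at exactly the step you flagged as the main obstacle, and the uniqueness argument could not be completed.
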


\begin{theorem}\label{main_th2} (Boundary two-point and three-point functions)
Consider $\gamma \in (0,2)$, $\beta \in (\frac{\gamma}{2}, Q)$, and  $\mu_1,\mu_2$ obeying the condition of Definition \ref{half-space}. Then one has: 
\begin{align}\label{main_th2_R}
\overline{R}(\beta, \mu_1, \mu_2) = \frac{ (2 \pi)^{ \frac{2}{\gamma}(Q -\beta ) -\frac{1}{2}} (\frac{2}{\gamma})^{ \frac{\gamma}{2}(Q -\beta ) -\frac{1}{2} }  }{(Q-\beta) \Gamma(1 -\frac{\gamma^2}{4}  )^{ \frac{2}{\gamma}(Q -\beta ) } } \frac{ \Gamma_{\frac{\gamma}{2}}(\beta - \frac{\gamma}{2}  ) e^{i \pi(\sigma_1+\sigma_2 -Q)(Q-\beta)}}{\Gamma_{\frac{\gamma}{2}}(Q- \beta ) S_{\frac{\gamma}{2}}(\frac{\beta}{2} + \sigma_2- \sigma_1)S_{\frac{\gamma}{2}}(\frac{\beta}{2} + \sigma_1- \sigma_2) }.
\end{align} 
Similarly, for $\beta_1, \beta_2, \beta_3$ and $\mu_1, \mu_2, \mu_3 $ satisfying the set of conditions \eqref{para_H},
\begin{align}\label{main_th2_H}
\overline{H}^{(\beta_1 , \beta_2, \beta_3)}_{(  \mu_1,  \mu_2,   \mu_3 )} =  \mathcal{I}
\begin{pmatrix}
\beta_1 , \beta_2, \beta_3 \\
\sigma_1,  \sigma_2,   \sigma_3 
\end{pmatrix}.
\end{align}
\end{theorem}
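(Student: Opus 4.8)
The plan is to prove the formula \eqref{main_th2_H} for the three-point structure constant $\overline{H}$ first, and then to deduce \eqref{main_th2_R} for $\overline{R}$ by the limiting procedure \eqref{cc4}. For the former I would follow the rigorous conformal-bootstrap route of \cite{DOZZ2, remy, interval}: show that the probabilistic object $\overline{H}^{(\beta_1,\beta_2,\beta_3)}_{(\mu_1,\mu_2,\mu_3)}$ obeys the same functional (shift) equations in the weights $\beta_i$ as the explicit special-function expression $\mathcal{I}$, and then upgrade the matching of shift equations to an equality by a uniqueness argument. Once $\overline{H} = \mathcal{I}$ is established, I would obtain \eqref{main_th2_R} by extracting from $\mathcal{I}$ the residue at the simple pole $\beta_2 + \beta_3 = \beta$ prescribed by \eqref{cc4}, checking that the resulting limit is independent of $\beta_2$ and $\mu_3$, and simplifying through the functional equations of $\Gamma_{\frac{\gamma}{2}}$ and $S_{\frac{\gamma}{2}}$.

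To produce the shift equations I would augment the GMC moment \eqref{cc3} defining $\overline{H}$ with an additional boundary insertion of degenerate weight $-\tfrac{\gamma}{2}$ at a point $t$ lying on one of the three arcs, and study the resulting auxiliary function $\psi(t)$ of a single real variable (the other three marked points being fixed at $0,1,\infty$). Using the Girsanov theorem \ref{girsanov} to absorb the insertions into a shift of the field and then Gaussian integration by parts --- the probabilistic incarnation of the level-two null-vector relation --- I would show that $\psi$ solves a second-order hypergeometric ODE. The two natural bases of solutions, adapted to the fusion of the degenerate field with a neighbouring insertion, have explicit power-law exponents fixed by the operator product expansion, and their coefficients are proportional to $\overline{H}$ with one $\beta_i$ shifted by $\pm\gamma$ and the adjacent cosmological constants shifted accordingly; matching the two bases through the classical hypergeometric connection coefficients yields a first shift equation. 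Repeating the construction with the dual degenerate weight $-\tfrac{2}{\gamma}$ produces a second, independent shift equation with shifts by $\tfrac{4}{\gamma}$.

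I would then check that $\mathcal{I}$ satisfies the identical pair of shift equations; this is the heaviest special-function computation and proceeds by translating the contour $\mathcal{C}$ in \eqref{formule_PT} and applying the shift relations for $S_{\frac{\gamma}{2}}$ together with the duality of $\Gamma_{\frac{\gamma}{2}}$, the prefactors and the exponential quadratic in the $\sigma_i$ being arranged precisely so that both sides transform in the same way. Having matched both shift equations, I would conclude by a uniqueness argument: the ratio $\overline{H}/\mathcal{I}$ is invariant under $\beta_i \mapsto \beta_i + \gamma$ and $\beta_i \mapsto \beta_i + \tfrac{4}{\gamma}$, and since $\gamma\mathbb{Z} + \tfrac{4}{\gamma}\mathbb{Z}$ is dense for $\gamma^2 \notin \mathbb{Q}$, analyticity of both objects in the parameters forces this ratio to be constant. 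The constant is then pinned to $1$ using a known normalization, namely the special case of $\overline{H}$ already computed in \cite{interval}, and the remaining (rational $\gamma^2$) cases follow by continuity in $\gamma$.

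The hard part will be twofold. First, establishing the hypergeometric ODE and, above all, the operator product expansion asymptotics rigorously in the presence of complex cosmological constants: the half-space condition of Definition \ref{half-space} is exactly what allows the fractional powers to be defined and the contours to be deformed without crossing $(-\infty,0)$, but the analytic continuation of the GMC moments in the $\mu_i$ (rather than in a single real $\mu_B$ as in \cite{Disk}) must be controlled, and one must track the reflection terms that appear when a shifted weight $\beta_i + \gamma$ leaves the admissible region \eqref{para_H}. Second, verifying that $\mathcal{I}$ obeys the shift equations is delicate because both the pole structure of the integrand and the prescription for the contour $\mathcal{C}$ change under the shifts; making the contour deformations rigorous, in particular the bookkeeping of the residues crossed and the convergence of the shifted integral, is where most of the technical effort will lie.
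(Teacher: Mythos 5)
Your toolkit (degenerate insertions, hypergeometric ODEs, OPE, shift equations, two-period uniqueness) is the right one, but the order of your argument is circular at the decisive point. You propose to prove $\overline{H}=\mathcal{I}$ first and only afterwards extract $\overline{R}$ via the limit \eqref{cc4}. However, the second shift equation for $\overline{H}$ --- the one coming from the dual degenerate weight $-\frac{2}{\gamma}$, which you need in order to have the second period $\frac{4}{\gamma}$ in the uniqueness argument --- cannot be written as a closed functional equation in $\overline{H}$ alone. In the admissible range for the dual auxiliary function one checks from \eqref{para_H} (with $q$ built from $\chi=\frac{2}{\gamma}$) that necessarily $\beta_i>\frac{2}{\gamma}$ for each $i$, hence $\beta_1+\frac{2}{\gamma}>\frac{4}{\gamma}>Q$: the direct fusion channel always violates the Seiberg-type bound, and the coefficient $C_2$ of $t^{1-C}$ is forced to be of the reflected form $\overline{R}(\beta_1,\mu_1,\mu_2)\,\overline{H}^{(2Q-\beta_1-\frac{2}{\gamma},\beta_2,\beta_3)}$ (this is the OPE with reflection, Lemmas \ref{lem_reflection_ope2} and \ref{lem_reflection_ope1}). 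Converting this into a relation among $\overline{H}$'s with shifted $\beta_1$ requires both the reflection principle of Lemma \ref{reflection_H} and the $\frac{2}{\gamma}$-shift equation for $\overline{R}$, i.e.\ it requires precisely the two-point function \eqref{main_th2_R} that you postpone to the very end. This is why the paper proceeds in the opposite order: it derives the $\frac{\gamma}{2}$-shift equations for $\overline{H}$, collapses them via Lemma \ref{lim_H_R} into shift equations for $\overline{R}$, establishes the reflection principle and the analytic continuations, derives the $\frac{4}{\gamma}$-shift equation for $\overline{R}$ (using the formula for $\overline{R}(\beta,1,0)$ from \cite{interval} as an essential quantitative input, not merely a normalization), solves $\overline{R}$, and only then closes the dual shift equation for $\overline{H}$ and runs the uniqueness argument.

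There is a second, independent gap in your uniqueness step. Unlike the situation for $\overline{G}$ or $\overline{R}$, the shift equations for $\overline{H}$ are three-term relations (they couple $\overline{H}^{(\beta_1-\chi)}$, $\overline{H}^{(\beta_1+\chi)}$ and an $\overline{H}$ with shifted $\beta_2$), so the fact that $\overline{H}$ and $\mathcal{I}$ both satisfy them does \emph{not} make the ratio $\overline{H}/\mathcal{I}$ periodic: the solution space of a three-term recursion is two-dimensional over the field of periodic functions. The paper's proof of \eqref{main_th2_H} deals with this by combining the two shift equations into a pure $\beta_1$-recursion with steps $\pm 2\chi$ and, crucially, by using the reflection principle $H(x)=S(x)H(2Q-x)$ (which again involves $\overline{R}$) as a boundary condition, so that a solution is determined by its values on an interval of length $\gamma$ (resp.\ $\frac{4}{\gamma}$); only then are the auxiliary functions $c_1,c_2$ genuinely periodic and the irrationality-of-period argument applies, with the constant fixed by the special value $\overline{H}^{(2Q-\beta_2-\beta_3,\beta_2,\beta_3)}=1$. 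Your final step of reading off $\overline{R}$ from the residue of $\mathcal{I}$ would be fine once $\overline{H}=\mathcal{I}$ is known, but as the argument stands it cannot get off the ground.
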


For a statement of these results as giving the law of a random variable involving GMC, see Section \ref{app:laws_GMC}. Let us now mention the physics references where these exact formulas have been proposed. The formulas for $\overline{U}$ and $\overline{G}$ appeared in statistical physics respectively in the works \cite{FyBo} and \cite{Ostro2}. These formulas also appeared in the works on the Liouville CFT side, respectively in \cite{FZZ} and \cite{bulk_boundary}, provided that one takes a suitable limit to set the bulk cosmological constant $\mu$ to $0$ in the expressions of  \cite{FZZ, bulk_boundary}. To the best of our knowledge the connection between the two sets of works \cite{FyBo, Ostro2} and \cite{FZZ, bulk_boundary} was unknown to physicists. Lastly the formulas for $\overline{R}$ and $\overline{H}$ were found respectively in \cite{FZZ} and \cite{three_point}, taking again the limit $\mu \rightarrow 0$. 

An important remark is that these exact formulas now provide an analytic continuation of the probabilistic definitions to the whole complex plane in all of the parameters $\alpha, \beta, \beta_i, \sigma_i$. Notice also this shows that the variables $\sigma_i$ are the correct parametrization of the boundary cosmological constants $\mu_i$ in order to obtain a meromorphic function (as in the original $\mu_i$ parameters the functions $\overline{R}$ and $\overline{H}$ are multivalued).

Before moving on to the proof of these results we will explain in the next three subsections how the boundary two-point function $\overline{R}$ can be viewed as a reflection coefficient, detail the applications and outlooks of our work, and lastly  present an outline of our proof strategy.

\subsection{The reflection coefficient}\label{sec_def_reflection}

We explain here how the boundary two-point function $\overline{R}(\beta, \mu_1, \mu_2)$, also known as the reflection coefficient, provides a tail expansion result for one-dimensional Gaussian multiplicative chaos measures. A more detailed discussion of this phenomenon is provided in \cite{interval}, see also \cite{DOZZ2} for the first probabilistic analysis of the reflection coefficient in the sphere case.
We start by explaining how we can give a direct probabilistic definition to $\overline{R}(\beta, \mu_1, \mu_2)$ without using the limit of \eqref{cc4}. Following \cite{Mating} we use the standard radial decomposition around the point $0$ of the covariance \eqref{covariance} of $X$ restricted to $(-1,1)$, i.e. we write for $ s \geq 0 $,
\begin{equation}\label{radial}
X( e^{-s/2}) = B_s + Y( e^{-s/2}), \quad X(-e^{-s/2}) = B_s + Y(-e^{-s/2}),
\end{equation}
where $B_s$ is a standard Brownian motion and $Y$ is an independent Gaussian process that can be defined on the whole plane with covariance given for $x, y \in \mathbb{C}$ by:
\begin{equation}
\mathbb{E}[ Y(x) Y(y)] = 2 \ln \frac {|x| \vee |y|}{\vert x -y \vert}.
\end{equation}
We introduce for $\lambda>0$ the process that will be used in the definition below,
\begin{equation}\label{mathcal B}
\mathcal{B}^{\lambda}_s := 
\begin{cases}
\hat{B}_s-\lambda s  \quad s\ge 0\\
\bar{B}_{-s}+\lambda s \quad s<0,
\end{cases}
\end{equation}
where $(\hat{B}_s-\lambda s)_{s\ge 0}$ and $(\bar{B}_{s}-\lambda s)_{s\ge 0}$ are two independent Brownian motions with negative drift conditioned to stay negative. Now for $\beta \in (\frac{\gamma}{2},Q)$ and $\mu_1,\mu_2$ satisfying the constraint of Definition \ref{half-space} we can give an alternative definition of $\overline{R}(\beta, \mu_1, \mu_2)$:
\begin{equation}\label{def_R2}
\overline{R}(\beta, \mu_1, \mu_2) = \mathbb{E} \left[ \left( \frac{1}{2} \int_{- \infty}^{\infty} e^{ \frac{\gamma}{2} \mathcal{B}_s^{\frac{Q -\beta}{2}} } \left( \mu_2 e^{\frac{\gamma}{2} Y(e^{-s/2})} + \mu_1 e^{\frac{\gamma}{2} Y(-e^{-s/2})}  \right) ds \right)^{\frac{2}{\gamma}(Q - \beta)} \right].
\end{equation}

We now provide a lemma proven in Section \ref{sec_H_R} that shows that both definitions \eqref{cc4} and \eqref{def_R2} of $\overline{R}(\beta, \mu_1, \mu_2)$ are equivalent.
\begin{lemma}\label{lim_H_R} Assume that $\mu_1,\mu_2, \mu_3 \in \mathbb{C} $ obey the constraint of Definition \ref{half-space}. Consider $\beta_1, \beta_2, \beta_3$ satisfying $\frac{\gamma}{2}\lor \beta_2 < \beta_1 <Q$, $\beta_2 >0$, and $ \beta_1 - \beta_2 < \beta_3 < Q$. Taking \eqref{def_R2} as the definition of $\overline{R}(\beta, \mu_1, \mu_2)$ the following limit holds:
\begin{equation}
\lim_{\beta_3 \downarrow \beta_1 - \beta_2} (\beta_2 + \beta_3 - \beta_1) \overline{H}^{(\beta_1, \beta_2, \beta_3)}_{(\mu_1, \mu_2, \mu_3)}  = 2 (Q-\beta_1) \overline{R}(\beta_1, \mu_1, \mu_2).
\end{equation}
A similar result holds when $\beta_1 = \beta_2$ and $0<\beta_3 < Q$:
\begin{equation}
\lim_{\beta_3 \downarrow 0} \beta_3 \overline{H}^{(\beta_1, \beta_2, \beta_3)}_{(\mu_1, \mu_2, \mu_3)}  = 2 (Q-\beta_1)\left(\overline{R}(\beta_1, \mu_1, \mu_2) + \overline{R}(\beta_1, \mu_2, \mu_3) \right).
\end{equation}
\end{lemma}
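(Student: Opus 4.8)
The plan is to read both limits as statements about the heavy tail of the GMC mass defining $\overline{H}$ in \eqref{cc3}, and to extract that tail through a Williams-type path decomposition of the radial Brownian motion of \eqref{radial}. Write $\overline{H}^{(\beta_1,\beta_2,\beta_3)}_{(\mu_1,\mu_2,\mu_3)} = \mathbb{E}[M^p]$, where $M$ denotes the GMC integral in \eqref{cc3} and $p = \frac{1}{\gamma}(2Q-\overline{\beta})$ with $\overline{\beta}=\beta_1+\beta_2+\beta_3$. The first observation is the algebraic identity $\beta_2+\beta_3-\beta_1 = \gamma(p^\ast-p)$, where $p^\ast := \frac{2}{\gamma}(Q-\beta_1)$ is the value of $p$ at the endpoint $\beta_3=\beta_1-\beta_2$. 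Under the hypotheses $\frac{\gamma}{2}\vee\beta_2<\beta_1$, exactly one of the finiteness constraints $p<\frac{2}{\gamma}(Q-\beta_i)$ of \eqref{para_H} becomes saturated as $\beta_3\downarrow\beta_1-\beta_2$, namely the one at $i=1$ attached to the insertion of weight $\beta_1$ at the origin, and it is the divergence of $\mathbb{E}[M^p]$ at this threshold that the prefactor $\beta_2+\beta_3-\beta_1$ compensates. The claim thus reduces to $\lim_{p\uparrow p^\ast}(p^\ast-p)\mathbb{E}[M^p] = p^\ast\,\overline{R}(\beta_1,\mu_1,\mu_2)$, which by an elementary Tauberian argument amounts to identifying the tail coefficient of $M$ (governed, for real $\mu_i$, by $\mathbb{P}(|M|>t)\sim \overline{R}(\beta_1,\mu_1,\mu_2)\,t^{-p^\ast}$).

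Second, I would localise the tail at the point $0$. Splitting $M = M^{(0)} + M^{\mathrm{rest}}$, with $M^{(0)}$ the restriction of the integral to a fixed neighbourhood $(-\delta,\delta)$ of the origin, the remaining constraints at $1$ and at $\infty$ hold strictly at $p^\ast$ (because $\beta_2<\beta_1$, $\beta_2>0$ and $\beta_1>\frac{\gamma}{2}$), so $\mathbb{E}[(M^{\mathrm{rest}})^{p^\ast}]<\infty$; hence $M^{\mathrm{rest}}$ carries a strictly lighter tail and, once multiplied by $(p^\ast-p)\to 0$, contributes nothing in the limit, the cross terms being controlled the same way. On $(-\delta,\delta)$ the factors $g(x)$ and $|x-1|^{-\gamma\beta_2/2}$ are bounded and replaced by their values at $0$ up to errors that vanish with $\delta$. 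The complex nature of $d\mu$ is handled here via the half-space condition of Definition \ref{half-space}: since $\mu_1,\mu_2$ lie in a common half-plane, the integrand and its positive linear combinations remain in a fixed half-plane, so the fractional power is unambiguous and the relevant moment is finite by Proposition \ref{lem:GMC-moment}.

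Third, I would insert the radial decomposition \eqref{radial} around $0$. Writing $x=\pm e^{-s/2}$ and collecting the powers of $s$ from the insertion $|x|^{-\gamma\beta_1/2}$, the Jacobian, and the Wick renormalisation of $B_s$, a direct computation shows the exponential weight becomes $e^{\frac{\gamma}{2}(B_s-\lambda s)}$ with drift exactly $\lambda=\frac{Q-\beta_1}{2}$, as in \eqref{mathcal B}, while the two sides of the origin assemble into $\mu_2 e^{\frac{\gamma}{2}Y(e^{-s/2})}+\mu_1 e^{\frac{\gamma}{2}Y(-e^{-s/2})}$. The heavy tail of $M^{(0)}$ is produced by the rare event that $(B_s-\lambda s)_{s\ge 0}$ reaches a high maximum; conditioning on this maximum and applying Williams' path decomposition turns $B_s$ into the two-sided process $\mathcal{B}^{\lambda}_s$ conditioned to stay negative on each side, recentred at the argmax. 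Since the maximum of $B_s-\lambda s$ is exponentially distributed with rate $2\lambda$ and $\frac{4\lambda}{\gamma}=p^\ast$, this yields a tail of index $p^\ast$ whose coefficient is precisely $\mathbb{E}[W^{p^\ast}]$ with $W=\frac{1}{2}\int_{-\infty}^{\infty} e^{\frac{\gamma}{2}\mathcal{B}^{\lambda}_s}\big(\mu_2 e^{\frac{\gamma}{2}Y(e^{-s/2})}+\mu_1 e^{\frac{\gamma}{2}Y(-e^{-s/2})}\big)\,ds$, i.e. exactly $\overline{R}(\beta_1,\mu_1,\mu_2)$ of \eqref{def_R2}. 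This settles the first limit.

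For the second limit, the hypothesis $\beta_1=\beta_2$ makes the constraints at both $0$ and $1$ saturate simultaneously as $\beta_3\downarrow 0$, where now $p^\ast=\frac{2}{\gamma}(Q-\beta_1)$ and $\beta_3=\gamma(p^\ast-p)$. Repeating the localisation at each of the two well-separated points produces two independent heavy-tail contributions; near $1$ the adjacent cosmological constants are $\mu_2$ on $(0,1)$ and $\mu_3$ on $(1,\infty)$, giving $\overline{R}(\beta_1,\mu_2,\mu_3)$, and since the two tail events come from disjoint regions the coefficients add, yielding $\overline{R}(\beta_1,\mu_1,\mu_2)+\overline{R}(\beta_1,\mu_2,\mu_3)$. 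The main obstacle throughout is making the tail/Williams step rigorous: one must prove uniform integrability to upgrade the tail equivalence into the stated moment limit, control uniformly in $p\uparrow p^\ast$ the errors from truncating at $\delta$ and from freezing $g$ and $|x-1|$, and establish the finiteness and exact value of $\mathbb{E}[W^{p^\ast}]$ for complex $\mu_i$ while tracking the phase through the half-space condition. This is the heart of the argument and follows the strategy developed for the real-parameter interval case in \cite{interval} (itself modelled on \cite{DOZZ2}), adapted here to the metric \eqref{def_metric_g} and to complex cosmological constants.
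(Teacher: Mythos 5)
Your proposal follows essentially the same route as the paper's proof in Section \ref{sec_H_R}: set $\epsilon = \frac{1}{\gamma}\left(\beta_3-(\beta_1-\beta_2)\right)$ so that the moment exponent is $p_1-\epsilon$ with $p_1=\frac{2}{\gamma}(Q-\beta_1)$ (your $p^\ast$), localize the divergence at the insertion(s), use the radial decomposition \eqref{radial} together with Williams' Theorem \ref{theo Williams} to factor the local GMC mass as $e^{\frac{\gamma}{2}M}$ times the random variable of \eqref{def_R2}, conclude from the exponential law \eqref{equation 5.18} of $M$ and independence, and obtain additivity of the two local contributions when $\beta_1=\beta_2$.

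The one step that would fail as literally stated is the reduction to a modulus-tail estimate plus a Tauberian argument. Under Definition \ref{half-space} the coefficient $\overline{R}(\beta_1,\mu_1,\mu_2)$ is genuinely complex, so the asserted asymptotics $\mathbb{P}(|M|>u)\sim \overline{R}\,u^{-p_1}$ is not a meaningful statement, and no Tauberian theorem converts information about the tail of $|M|$ into asymptotics of $\E[M^p]$ for a branch power of a complex variable: the phase of $M$ on the tail event is precisely what produces the complex coefficient, and it is discarded by passing to $|M|$. The paper never goes through tails. It works at the level of moments throughout, proving that the $L^1$ distances $\E\left[\left|A^{p_1-\epsilon}-B^{p_1-\epsilon}\right|\right]$ between the successive comparison quantities are $o(\epsilon^{-1})$; the positive-coefficient inputs are imported from \cite{interval}, and they are upgraded to complex linear combinations by two explicit devices: an interpolation-plus-H\"older bound when $p_1>1$, and, when $0<p_1\le 1$, the inequality $\left|(\sum_{i}\mu_i x_i)^{p_1}-(\sum_{i}\mu_i x_i')^{p_1}\right|\le c\sum_{i}|x_i^{p_1}-x_i'^{p_1}|$ obtained by differentiating $(x_1,x_2,x_3)\mapsto(\mu_1x_1^{1/p_1}+\mu_2x_2^{1/p_1}+\mu_3x_3^{1/p_1})^{p_1}$. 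Your own closing mechanism is in fact the correct one and is exactly what the paper computes: the divergence is carried entirely by the positive, Pareto-distributed scalar $e^{\frac{\gamma}{2}M}$, independent of the complex variable $W$ of \eqref{def_R2}, so that $\epsilon\,\E[(e^{\frac{\gamma}{2}M}W)^{p_1-\epsilon}]=\epsilon\,\E[e^{\frac{\gamma}{2}(p_1-\epsilon)M}]\,\E[W^{p_1-\epsilon}]\to p_1\,\overline{R}(\beta_1,\mu_1,\mu_2)$. The repair is therefore to delete the tail formulation and run this comparison directly on moments, which is where the two inequalities above become the essential (and in your write-up missing) ingredient.
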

Let us now state how the value of $\overline{R}(\beta, \mu_1, \mu_2)$ provides a very general first order tail expansion for the probability of a one-dimensional GMC measure to be large. For this discussion we choose $\mu_1, \mu_2 \in[0,\infty)$ with at most one of the two parameters being $0$, and we introduce the notation:
\begin{equation}
I_{\eta_1, \eta_2}(\beta) := \int_{-\eta_1}^{\eta_2} \frac{1}{|x|^{\frac{\beta \gamma}{2}}}  e^{\frac{\gamma}{2}X(x)} \left( \mu_1 \mathbf{1}_{ \{x< 0 \}} + \mu_2 \mathbf{1}_{\{ x>0 \} }  \right)dx.
\end{equation}
In the above $\eta_1,\eta_2 \in (0,1)$. Now the tail expansion result is the following:
\begin{proposition}\label{proposition reflection}
For $\beta \in ( \frac{\gamma}{2}, Q) $ and any $\eta_1,\eta_2 \in (0,1)$, we have the following tail expansion for $I_{\eta_1, \eta_2}(\beta)$ as $u \rightarrow \infty$ and for some $\nu >0$:
\begin{equation}\label{tail_result1}
\mathbb{P} ( I_{\eta_1, \eta_2}(\beta) >u )  = \frac{ \overline{R}(\beta, \mu_1, \mu_2)}{u^{ \frac{2}{\gamma}(Q - \beta) }} + O ( \frac{1}{u^{ \frac{2}{\gamma}(Q - \beta)+ \nu }  }   ).
\end{equation}
\end{proposition}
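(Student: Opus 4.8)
The plan is to localize the tail event to a neighborhood of the singularity at the origin, carry out the radial (Williams) decomposition of the field there, and extract the leading-order tail through a Tauberian-type estimate for exponential functionals of a drifted Brownian motion, following the strategy of \cite{interval} (and originally \cite{DOZZ2}). First I would split
\[
I_{\eta_1, \eta_2}(\beta) = \int_{(-\delta,\delta)} (\cdots) + \int_{(-\eta_1,-\delta)\cup(\delta,\eta_2)} (\cdots)
\]
for a small fixed $\delta>0$. On the far piece the weight $|x|^{-\beta\gamma/2}$ is bounded, so this is an ordinary GMC mass whose moment of order $p$ is finite for every $p<\frac{4}{\gamma^2}$; since $\beta>\frac{\gamma}{2}$ forces $\frac{4}{\gamma^2}>\frac{2}{\gamma}(Q-\beta)$, the far piece has a strictly lighter tail and should be absorbable into the error term. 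Because the near and far pieces are correlated (they live on the same GFF), the union-type bound separating them must be run through conditioning on the field away from $0$, but the outcome is that only the near-origin integral governs the leading behaviour.

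Next, on $(-\delta,\delta)$ I would apply the decomposition \eqref{radial}, writing $X(\pm e^{-s/2}) = B_s + Y(\pm e^{-s/2})$ with $B$ a Brownian motion and $Y$ an independent field, and change variables $x = \pm e^{-s/2}$. Collecting the deterministic power $|x|^{-\beta\gamma/2}=e^{s\beta\gamma/4}$, the Jacobian $\tfrac12 e^{-s/2}$, and the GMC renormalization $-\frac{\gamma^2}{8}s$ of the radial part, the drift attached to $\frac{\gamma}{2}B_s$ becomes $\frac{\beta\gamma}{4}-\frac12-\frac{\gamma^2}{8}=\frac{\gamma}{2}\bigl(-\frac{Q-\beta}{2}\bigr)$. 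Hence the near-origin integral equals, up to the independent $Y$-factors,
\[
\tfrac12\int_{a}^{\infty} e^{\frac{\gamma}{2}\left(B_s-\frac{Q-\beta}{2}s\right)}\Bigl(\mu_2\, e^{\frac{\gamma}{2}Y(e^{-s/2})}+\mu_1\, e^{\frac{\gamma}{2}Y(-e^{-s/2})}\Bigr)\,ds,
\]
with $a=a(\delta)$. The negative drift $-\frac{Q-\beta}{2}$ is precisely what produces the exponent $\frac{2}{\gamma}(Q-\beta)$, matching the process $\mathcal{B}^{(Q-\beta)/2}$ of \eqref{mathcal B}.

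The core step is a tail estimate for this exponential functional. I would condition on the running maximum $\mathcal{M}=\sup_s\bigl(B_s-\frac{Q-\beta}{2}s\bigr)$: for a Brownian motion of variance rate $\tfrac{\gamma^2}{4}$ and drift $-\frac{\gamma}{4}(Q-\beta)$ this maximum is exponentially distributed with rate $\frac{2}{\gamma}(Q-\beta)$, which yields the polynomial tail $u^{-\frac{2}{\gamma}(Q-\beta)}$. Williams' path decomposition at the argmax splits the trajectory into a piece before and a piece after the maximum; after recentering, each becomes a Brownian motion with drift $-\frac{Q-\beta}{2}$ conditioned to stay negative, which is exactly the two-sided glued process $\mathcal{B}^{(Q-\beta)/2}_s$. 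Sending $u\to\infty$, the location of the maximum decouples from the $Y$-field and the leading constant reorganizes into the expectation \eqref{def_R2}, namely $\overline{R}(\beta,\mu_1,\mu_2)$ (whose finiteness on $\beta\in(\frac{\gamma}{2},Q)$ follows because the conditioned process keeps the functional integrable at exponent $\frac{2}{\gamma}(Q-\beta)$).

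The main obstacle, as in all such tail results, is making the last step quantitative so as to obtain a genuine power saving $\nu>0$ rather than a mere $o(u^{-\frac{2}{\gamma}(Q-\beta)})$. This requires controlling the rate of convergence in the Williams decomposition and in the conditioning on $\mathcal{M}$, establishing uniform integrability of the $Y$-contributions $e^{\frac{\gamma}{2}Y(\pm e^{-s/2})}$ so that truncating the $s$-range at $a$ versus $-\infty$ costs only a power of $u$, and verifying that the dependence on the cutoffs $\eta_1,\eta_2$ (equivalently on $a$) enters only the error: the difference between the finite and infinite $s$-ranges must be shown to be exponentially small in $a$, which is what makes the leading coefficient independent of $\eta_1,\eta_2$ as claimed. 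Reconciling these error estimates with the localization argument of the first paragraph, so that the cross terms between the near and far pieces are genuinely lower order, is where most of the technical work will lie.
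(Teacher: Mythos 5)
Your proposal is correct and follows essentially the same route as the paper, which itself simply invokes the argument of \cite{interval}: localization of the tail event near the insertion, the radial decomposition \eqref{radial}, conditioning on the exponentially distributed running maximum, and Williams' decomposition (Theorem \ref{theo Williams}) to identify the leading constant as the expectation \eqref{def_R2} defining $\overline{R}(\beta,\mu_1,\mu_2)$. Your drift computation $\frac{\beta\gamma}{4}-\frac{1}{2}-\frac{\gamma^2}{8}=-\frac{\gamma}{2}\cdot\frac{Q-\beta}{2}$ and the identification of the tail exponent $\frac{2}{\gamma}(Q-\beta)$ both match the paper's framework exactly.
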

The proof of this proposition follows exactly the same steps as for the case of $\mu_1 =0, \mu_2 >0$ considered in \cite{interval}. Notice that we impose the condition $ \beta \in (\frac{\gamma}{2}, Q)$. This is crucial for the tail behavior of $I_{\eta_1, \eta_2}(\beta)$ to be dominated by the insertion and this is precisely why the asymptotic expansion is independent of the choice of $\eta_1$ and $\eta_2$. It also explains why the radial decomposition \eqref{radial} is natural as it is well suited to study $X$ around a particular point. If one is interested in the case where $\beta < \frac{\gamma}{2}$ (or simply $\beta =0$), a different argument known as the localization trick is required to obtain the tail expansion, see \cite{tail} for more details.

\begin{figure}[!htp]
\centering
\includegraphics[width=0.35\linewidth]{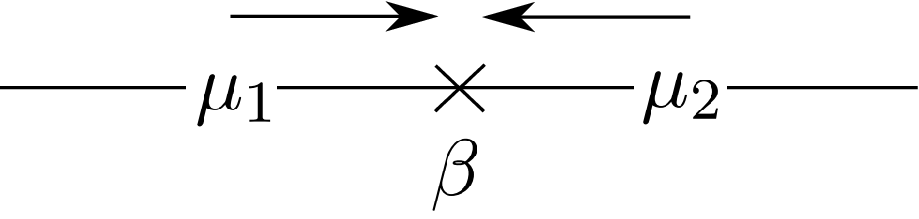}
\caption{$\overline{R}(\beta,\mu_1,\mu_2)$}
\end{figure}
The above picture summarizes what the reflection coefficient computes. In the range $\beta \in (\frac{\gamma}{2},Q)$, the tail expansion of the GMC is dominated by the insertion. The parameters $\mu_1, \mu_2 $ tune the weights of both sides as we approach the insertion. For more details and results on tail expansions of GMC measures with the reflection coefficients see the works \cite{lacoin, tail, Wong-tail}.

\subsection{Discussion and perspectives}\label{sec_discussion}

In the following subsections we provide details about applications of our work as well as future directions of research.

\subsubsection{Link with our previous work \cite{interval} on the unit interval}\label{app:link_int}
We first detail here the precise correspondence between our previous work \cite{interval} and the results of this paper. In \cite{interval} we derived an exact formula for the following quantity
\begin{equation}\label{result_int}
M(\gamma, p, a, b) := \mathbb{E}\left[ \left( \int_0^1 x^a (1-x)^b e^{\frac{\gamma}{2}X(x)} dx \right)^p \right],
\end{equation}
using a method involving observables and differential equations similar to the one of the present paper. After writing the paper \cite{interval}, it remained a mystery to us how this result and its proof strategy fitted into the framework of Liouville CFT. It turns out that the formula for \eqref{result_int} is actually a special case of equation \eqref{main_th2_H} of Theorem \ref{main_th2}, under the following choice of parameters
\begin{equation}\label{eq:para_reduce}
a = - \frac{\gamma \beta_1}{2}, \quad b = - \frac{\gamma \beta_2}{2}, \quad p = \frac{1}{\gamma}(2Q - \sum_{i=1}^3 \beta_i), \quad \mu_1 = \mu_3 =0, \quad \mu_2 =1.
\end{equation}

Indeed by setting $\mu_1 = \mu_3 =0, \: \mu_2 =1$, one only keeps the portion of the GMC measure on the unit interval $[0,1]$ (notice also $g(x) =1$ for $x\in [0,1]$).  The precise computational check that the exact formula $\mathcal{I}$ for $\overline{H}$ reduces to the formula of \cite{interval} for \eqref{result_int} is performed in Section \ref{sec:consistency_int}. The result of \cite{interval} is therefore giving a special case of the boundary three-point function of boundary Liouville CFT. This connection was to the best of our knowledge unknown to physicists.

An additional mystery of \cite{interval} concerns the observables introduced in the proof that satisfy the hypergeometric equation. In the present paper, as explained in Section \ref{sec:outline}, we introduce the observables $G_{\chi}(t)$ and $H_{\chi}(t)$ that correspond to adding in the integrand of the GMC integral the fractional power $(t-x)^{\frac{\gamma \chi }{2}}$, where $\chi =\frac{\gamma}{2}$ or $\frac{2}{\gamma}$. The function $H_{\chi}(t)$ reduces to the observable used in \cite{interval} under a similar choice of parameters as \eqref{eq:para_reduce}. Using the understanding of the present paper, we can now see that these observables correspond to adding a degenerate boundary insertion to a correlation function of boundary LCFT. The term degenerate means the weight of the added boundary insertion needs to be equal to $-\chi$ for $\chi =\frac{\gamma}{2}$ or $\frac{2}{\gamma}$. A similar procedure has been used in the works \cite{DOZZ1, DOZZ2, remy} up to one subtlety, which is that in our present case there are no absolute values around $(t-x)^{\frac{\gamma \chi }{2}}$.

Indeed, if one adds an insertion on the boundary at the location $t$ of weight $-\chi$, thanks to the Girsanov Theorem \ref{girsanov}, one should be getting a term $|t-x|^{\frac{\gamma \chi}{2}}$ inside the GMC integral. This is the situation of the works \cite{DOZZ1, DOZZ2, remy}, but with the difference that in those references the degenerate insertion is added in the bulk of the domain. The reason why in our case with the boundary degenerate insertion one needs to remove the absolute values is actually present in the physics literature on LCFT, although again the link to the GMC models was unknown to physicists and unknown to us at the time we wrote \cite{interval}. In \cite{FZZ} it is claimed that in order for the second order BPZ equation to hold for boundary LCFT in the case where the degenerate insertion is placed on the boundary, a relation needs to hold between the two cosmological constants $\mu_i, \mu_{i+1}$ to the left and right of the degenerate insertion. In the case where the bulk cosmological constant is not present, meaning $\mu=0$, this condition simply reduces to $\mu_{i+1} = e^{\pm i \pi \frac{\gamma \chi}{2} } \mu_i$. Therefore it is easy to see that an equivalent way to encode this condition is simply to remove the absolute values around $|t-x|^{\frac{\gamma \chi}{2}}$, which is precisely what is done in \cite{interval} and in the present paper. Let us mentioned we have also checked that if one does add the absolute values the differential equations of Section \ref{sec_BPZ} do not hold.

Lastly a similar link can be established for the reflection coefficient $\overline{R}(\beta, \mu_1, \mu_2)$. By setting $\mu_1 =0, \: \mu_2=1$, our formula \eqref{main_th2_R} reduces to the formula given in \cite[Proposition~1.5.]{interval}. The notation used in \cite{interval} for $\overline{R}(\beta, 0, 1)$ is $\overline{R}_1^{\partial}(\alpha)$.

\subsubsection{Laws for one-dimensional GMC}\label{app:laws_GMC}
It is illustrative to state our results not as exact formulas for moments of GMC but as giving the law of a random variable involving GMC. We give here a summary of all the laws of one-dimensional GMC that can be derived from our results. Using Lemma \ref{link_D_H}, we  first write $\overline{G}$ as a moment of GMC on the unit circle, which will then give the law of the total mass of GMC on the unit circle with an insertion at $1$ of weight $\beta$. The formula for $\overline{U}$ then corresponds to the special case with no insertion. To extract a law of GMC from the exact formula for $\overline{H}$, as explained in the previous Section \ref{app:link_int}, we need to restrict ourselves to the special case $\mu_1 = \mu_3 =0$ and $\mu_2 =1$. We will then obtain the law of the total mass of GMC on the unit interval $[0,1]$ with insertions at both endpoints. Let us start by stating the laws on the circle.

\begin{corollary}\label{cor_law1} (Result of \cite{remy}) The following equality in law holds
\begin{align}
\frac{1}{2 \pi} \int_0^{2 \pi} e^{ \frac{\gamma}{2} X_{\mathbb{D}}(e^{i \theta})} d \theta = \frac{1}{\Gamma(1 - \frac{\gamma^2}{4})} \mathcal{E}(1)^{- \frac{\gamma^2}{4}}.
\end{align}
Here  $X_{\mathbb{D}}$ is the GFF on $\mathbb{D}$ with covariance given by \eqref{GFF_D} and  $ \mathcal{E}(1)$ is an exponential law of parameter $1$.
\end{corollary}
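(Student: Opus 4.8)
The plan is to derive the stated equality in law directly from the known formula for $\overline{U}(\alpha)$ in Theorem \ref{FyBo} by a moment-matching argument. Write $M := \frac{1}{2\pi}\int_0^{2\pi} e^{\frac{\gamma}{2}X_{\mathbb{D}}(e^{i\theta})}\,d\theta$ for the total mass of the GMC on the unit circle, and set $p := \frac{2}{\gamma}(Q-\alpha)$, so that as $\alpha$ ranges over $(\frac{\gamma}{2},\infty)$ the exponent $p$ ranges over $(-\infty,\frac{4}{\gamma^2})$. First I would invoke Lemma \ref{link_D_H}, which rewrites the moment of GMC on $\mathbb{R}$ defining $\overline{U}(\alpha)$ as a moment of $M$ on the unit circle after conformally mapping $\mathbb{H}$ to $\mathbb{D}$ (sending $i\mapsto 0$); concretely this yields $\overline{U}(\alpha) = (2^{-\frac{\gamma\alpha}{2}}2\pi)^{p}\,\mathbb{E}[M^{p}]$, where the explicit prefactor collects the conformal Jacobian and the $2\pi$ from the normalization of $M$.

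Substituting the formula of Theorem \ref{FyBo} and using $\frac{\gamma Q}{2}=1+\frac{\gamma^2}{4}$, so that $\frac{\gamma\alpha}{2}-\frac{\gamma^2}{4}=1-\frac{p\gamma^2}{4}$, the prefactors cancel and one obtains the clean identity
\[
\mathbb{E}[M^{p}] = \frac{\Gamma(1-\frac{p\gamma^2}{4})}{\Gamma(1-\frac{\gamma^2}{4})^{p}}, \qquad p<\frac{4}{\gamma^2}.
\]
On the other side, for $Z := \frac{1}{\Gamma(1-\frac{\gamma^2}{4})}\mathcal{E}(1)^{-\frac{\gamma^2}{4}}$ a direct computation with the density $e^{-t}\,dt$ of $\mathcal{E}(1)$ gives $\mathbb{E}[Z^{p}] = \Gamma(1-\frac{\gamma^2}{4})^{-p}\int_0^\infty t^{-\frac{p\gamma^2}{4}}e^{-t}\,dt = \frac{\Gamma(1-\frac{p\gamma^2}{4})}{\Gamma(1-\frac{\gamma^2}{4})^{p}}$, valid for the same range $p<\frac{4}{\gamma^2}$. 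Thus $M$ and $Z$ have matching moments on an entire interval of exponents.

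To upgrade the matching of moments to equality in law I would pass to the variable $V := (\Gamma(1-\frac{\gamma^2}{4})M)^{-\frac{4}{\gamma^2}}$, whose integer moments correspond to the negative moments of $M$ at $p=-\frac{4n}{\gamma^2}$ (all finite since $M>0$ almost surely, and lying in the validity range $p<\frac{4}{\gamma^2}$). Evaluated at these exponents the moment identity collapses to $\mathbb{E}[V^n]=\Gamma(1+n)=n!$, which are exactly the moments of $\mathcal{E}(1)$. Since the sequence $n!$ satisfies Carleman's condition, the exponential law is determined by its moments, so $V\stackrel{d}{=}\mathcal{E}(1)$, which is equivalent to the claimed equality $M\stackrel{d}{=}Z$.

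The hard part will not be analytic but structural: one must correctly track the constants produced by Lemma \ref{link_D_H} when transporting the GMC from $\mathbb{R}$ (with the background metric $g$) to the unit circle, so that the prefactor $(2^{-\frac{\gamma\alpha}{2}}2\pi)^{p}$ emerges and cancels against the corresponding factor in Theorem \ref{FyBo}. The only genuinely probabilistic subtlety is the moment-determinacy step, and it is handled cleanly by passing to the \emph{negative} moments of $M$, which turn into the integer moments $n!$ of an exponential variable — moment-determinate by Carleman — rather than trying to work directly with the positive moments of $M$, which exist only for $p<\frac{4}{\gamma^2}$ and hence do not determine the law on their own.
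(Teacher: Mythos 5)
Your proposal is correct and follows essentially the same route as the paper, which presents Corollary \ref{cor_law1} as an equivalent statement of Theorem \ref{FyBo} obtained by transferring the GMC moment from $\mathbb{R}$ to the unit circle via Lemma \ref{link_D_H} (with $\beta=0$); your prefactor bookkeeping, the resulting moment identity $\mathbb{E}[M^{p}]=\Gamma(1-\tfrac{p\gamma^2}{4})/\Gamma(1-\tfrac{\gamma^2}{4})^{p}$ for $p<\tfrac{4}{\gamma^2}$, and the upgrade to equality in law via the negative moments and Carleman's condition are all accurate. The only difference is that you spell out the moment-determinacy step, which the paper leaves implicit (it is the standard argument from \cite{remy}).
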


\begin{corollary}\label{cor_law2} The following equality in law holds
\begin{equation}
\frac{1}{2\pi}  \int_0^{2 \pi} \frac{1}{|e^{i \theta} -1|^{\frac{\gamma \beta}{2}}} e^{ \frac{\gamma}{2} X_{\mathbb{D}}(e^{i \theta})} d \theta = (\frac{4}{\gamma^2})^{\frac{\gamma^2}{4}} \frac{1}{\Gamma(1 - \frac{\gamma^2}{4})} X_1 X_2,
\end{equation}
where $X_1, X_2$ are two independent random variables in $\mathbb{R_+}$ with the following laws:

\begin{align*}
X_1 &= \beta_{2,2}^{-1}(1, \frac{4}{\gamma^2} ; \frac{4}{\gamma^2},  1 + \frac{\beta}{\gamma}, 1 + \frac{\beta}{\gamma}), \\
X_2 &= \beta_{1,0}^{-1}(\frac{4}{\gamma^2}; \frac{2 \beta}{\gamma} + \frac{4}{\gamma^2}+1).
\end{align*}
Here $X_{\mathbb{D}}$ is again the GFF on $\mathbb{D}$ with covariance given by \eqref{GFF_D}, and $\beta_{1,0}$ and  $\beta_{2,2}$ are special beta laws. $\beta_{2,2}$ is defined in Section \ref{sec:double_gamma} and $\beta_{1,0}$ is a law with moments given by $\E[\beta_{1,0}(a;b)^q] = a^{\frac{q}{a}} \frac{\Gamma(\frac{q+b}{a})}{\Gamma(\frac{b}{a})}$.
\end{corollary}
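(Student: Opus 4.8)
The plan is to reduce the statement to the explicit formula for $\overline{G}(\alpha,\beta)$ from Theorem \ref{main_th1} and then to recognize the resulting moments as those of the product $X_1 X_2$. Write $Y$ for the random variable on the left-hand side, namely the total mass of the GMC on $\partial\mathbb{D}$ weighted by $|e^{i\theta}-1|^{-\gamma\beta/2}$. By Lemma \ref{link_D_H}, which rewrites $\overline{G}(\alpha,\beta)$ as a moment of GMC on the unit circle, there is an explicit conformal prefactor $C(\alpha,\beta)$ such that for every admissible $\alpha$ one has $\mathbb{E}[Y^{p}] = C(\alpha,\beta)^{-1}\,\overline{G}(\alpha,\beta)$, where $p = \tfrac{2}{\gamma}(Q-\alpha-\tfrac{\beta}{2})$. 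As $\alpha$ ranges over the interval allowed in Theorem \ref{main_th1} with $\beta$ fixed, $p$ ranges over an interval of positive reals, so this determines $\mathbb{E}[Y^{p}]$ as an explicit function of $p$.

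Next I would substitute the formula \eqref{eq:main_th1} for $\overline{G}(\alpha,\beta)$ and isolate the scalar prefactor. The factor $\left(\tfrac{2^{\frac{\gamma}{2}(\frac{\beta}{2}-\alpha)}\,2\pi}{\Gamma(1-\gamma^2/4)}\right)^{\frac{2}{\gamma}(Q-\alpha-\beta/2)}$ combined with $C(\alpha,\beta)^{-1}$ should, once everything is written in the variable $p$, collapse to $\big((\tfrac{4}{\gamma^2})^{\gamma^2/4}\,\Gamma(1-\gamma^2/4)^{-1}\big)^{p}$, matching exactly the deterministic constant raised to the power $p$ in the corollary (the conformal weights carried by the map $\mathbb{H}\to\mathbb{D}$ are what cancel the $\alpha,\beta$-dependence of the base). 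The remaining ratio of double Gamma functions $\Gamma_{\frac{\gamma}{2}}$ must then be shown to factor as $\mathbb{E}[X_1^{p}]\,\mathbb{E}[X_2^{p}]$. I would recall that $X_2=\beta_{1,0}^{-1}$ has moments governed by ordinary Gamma functions, $\mathbb{E}[X_2^{p}]=\mathbb{E}[\beta_{1,0}(a;b)^{-p}]$ with the explicit formula in the statement, while $X_1=\beta_{2,2}^{-1}$ has moments expressed through $\Gamma_{\frac{\gamma}{2}}$ and $S_{\frac{\gamma}{2}}$ as defined in Section \ref{sec:double_gamma}. Using the shift relations for $\Gamma_{\frac{\gamma}{2}}$ I would split the double-Gamma ratio into the product of these two moment expressions and read off the parameters $a=\tfrac{4}{\gamma^2}$, $b=\tfrac{2\beta}{\gamma}+\tfrac{4}{\gamma^2}+1$ for $\beta_{1,0}$, and the quadruple $(\tfrac{4}{\gamma^2};1+\tfrac{\beta}{\gamma},1+\tfrac{\beta}{\gamma})$ together with $1$ for $\beta_{2,2}$.

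Finally I would upgrade the equality of moments to an equality in law. Since $Y$ has a power-law tail only moments of order $p$ below a finite threshold are finite, but the identity above holds for $p$ in an interval on which both sides extend to analytic functions of $p$ on a common vertical strip, where they coincide as Mellin transforms. By injectivity of the Mellin transform on such a strip the laws of $Y$ and of $(\tfrac{4}{\gamma^2})^{\gamma^2/4}\,\Gamma(1-\gamma^2/4)^{-1}X_1 X_2$ agree, which is the claimed identity; the independence of $X_1$ and $X_2$ is precisely what lets the factorized moment expression be interpreted as a product of two Mellin transforms.

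The main obstacle will be the algebraic factorization in the second step: correctly applying the functional equations of $\Gamma_{\frac{\gamma}{2}}$ and $S_{\frac{\gamma}{2}}$ to separate the single double-Gamma ratio coming from Theorem \ref{main_th1} into two pieces with exactly the parameters claimed for $\beta_{2,2}$ and $\beta_{1,0}$, and verifying that this separation genuinely corresponds to a product of two independent variables rather than a formal rearrangement. A secondary technical point is confirming determinacy by continuing both Mellin transforms to a common strip where injectivity applies.
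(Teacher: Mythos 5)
Your proposal is correct and follows exactly the route the paper intends: the paper itself gives no detailed argument, merely declaring Corollary \ref{cor_law2} an equivalent statement of Theorem \ref{main_th1}, and your chain (Lemma \ref{link_D_H} to convert the circle moment into $\overline{G}(\alpha,\beta)$, substitution of the exact formula \eqref{eq:main_th1} with $p=\tfrac{2}{\gamma}(Q-\alpha-\tfrac{\beta}{2})$, identification of the resulting expression with $c^{p}\,\E[X_1^{p}]\,\E[X_2^{p}]$ via the shift equations for $\Gamma_{\frac{\gamma}{2}}$, and Mellin-transform injectivity on a common strip) is precisely that equivalence made explicit. The only caveat is bookkeeping, not substance: the powers of $2$ from Lemma \ref{link_D_H} and Theorem \ref{main_th1} cancel outright, leaving $\Gamma(1-\tfrac{\gamma^2}{4})^{-p}$, and the factor $(\tfrac{4}{\gamma^2})^{\gamma^2 p/4}$ emerges only after absorbing the $a^{-p/a}$ term from the $\beta_{1,0}$ moments, so the split you describe lands as claimed once that is tracked.
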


\begin{proof}
It was shown in \cite[Theorem 4.1]{Ostro2} that if a random variable admits moments given by the right hand side of \eqref{eq:main_th1}, then it is equal in law to the decomposition given by the above corollary. Since our Theorem  \eqref{main_th1} establishes precisely this fact for $\frac{1}{2\pi}  \int_0^{2 \pi} |e^{i \theta} -1|^{-\frac{\gamma \beta}{2}} e^{ \frac{\gamma}{2} X_{\mathbb{D}}(e^{i \theta})} d \theta $, it follows that the corollary holds. See also the review \cite{Ostro_review} for more details.
\end{proof}

Corollary \ref{cor_law1} is an equivalent statement of Theorem \ref{FyBo}. This law was conjectured in \cite{FyBo} and proven in \cite{remy}. Similarly Corollary \ref{cor_law2} is an equivalent statement of Theorem \ref{main_th1}. This law was conjectured by Ostrovsky in \cite{Ostro2}. Lastly we finish with the law on the unit interval which is equivalent to equation \eqref{main_th2_H} of Theorem \ref{main_th2} in the special case $\mu_1 = \mu_3 =0$ and $\mu_2 =1$. This law was conjectured in \cite{FLeR, Ostro1} and first proved in \cite{interval}, see also the discussions in \cite{interval} for more details.

\begin{corollary}\label{cor_law3} (Result of \cite{interval}) The following equality in law holds
\begin{equation}
\int_{0}^{1}  x^{a}(1-x)^{b} e^{\frac{\gamma}{2} X( x) } d x = 2 \pi 2^{-(3(1 + \frac{\gamma^2}{4})+ 2( a + b))   }  L Y_{\gamma} X_1 X_2 X_3 ,
\end{equation}
where $L, Y_{\gamma}, X_1, X_2, X_3$ are five independent random variables in $\mathbb{R_+}$ with the following laws:
\begin{align*}
L &= \exp (\mathcal{N}(0, \gamma^2 \ln 2 )), \\
Y_{\gamma} &= \frac{1}{\Gamma(1 - \frac{\gamma^2}{4})} \mathcal{E}(1)^{-\frac{\gamma^2}{4}}, \\
X_1 &= \beta_{2,2}^{-1}(1, \frac{4}{\gamma^2} ;1 +  \frac{4}{\gamma^2}(1 + a),  \frac{2(b-a)}{\gamma^2}, \frac{2(b-a)}{\gamma^2}), \\
X_2 &= \beta_{2,2}^{-1}( 1,\frac{4}{\gamma^2} ;1 +  \frac{2}{\gamma^2}(2 + a + b),  \frac{1}{2},  \frac{2}{\gamma^2}  ), \\
X_3 &= \beta_{2,2}^{-1}( 1,\frac{4}{\gamma^2} ; 1 + \frac{4}{\gamma^2},  \frac{1}{2} + \frac{2}{\gamma^2}(1 + a +b ) ,  \frac{1}{2} + \frac{2}{\gamma^2}(1 + a +b )  ).
\end{align*}
Here again $\mathcal{E}(1)$ is an exponential law of parameter $1$ and $\beta_{2,2}$ is a special beta law defined in Section \ref{sec:double_gamma}.
\end{corollary}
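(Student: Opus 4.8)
The plan is to deduce this equality in law from the main formula \eqref{main_th2_H} in three steps: reduce the positive moments of the left-hand side to a special value of $\overline{H}$, evaluate that value explicitly via Theorem \ref{main_th2}, and then pass from equal moments to equal laws by a Mellin-transform argument. Write $Z := \int_0^1 x^{a}(1-x)^{b} e^{\frac{\gamma}{2}X(x)}\,dx$. Choosing $\beta_1,\beta_2,\beta_3$ and $p$ according to the dictionary \eqref{eq:para_reduce}, namely $a = -\frac{\gamma\beta_1}{2}$, $b = -\frac{\gamma\beta_2}{2}$ and $p = \frac{1}{\gamma}(2Q-\overline{\beta})$, I would first observe that on the segment $(0,1)$ one has $g\equiv 1$, $|x|=x$ and $|x-1|=1-x$, so that setting $\mu_1=\mu_3=0$, $\mu_2=1$ in \eqref{cc3} collapses the boundary integral onto $(0,1)$ and yields exactly $\mathbb{E}[Z^p] = \overline{H}^{(\beta_1,\beta_2,\beta_3)}_{(0,1,0)}$. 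This identification is valid precisely for $p$ in the range prescribed by \eqref{para_H}, which for fixed $a,b$ is a real interval of positive length.

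Next I would compute this quantity with Theorem \ref{main_th2}, which identifies $\overline{H}^{(\beta_1,\beta_2,\beta_3)}_{(0,1,0)}$ with the explicit function $\mathcal{I}$ of \eqref{formule_PT}, the $\sigma_i$ being read off from $\mu_1=\mu_3=0$, $\mu_2=1$ in the limiting sense of Definition \ref{def_sigma} (the vanishing $\mu_1,\mu_3$ push $\mathrm{Im}(\sigma_1),\mathrm{Im}(\sigma_3)$ to infinity). In this degenerate regime several of the $S_{\frac{\gamma}{2}}$ factors and the associated poles degenerate, so the contour integral in \eqref{formule_PT} collapses and $\mathcal{I}$ reduces to a finite product of ordinary and double Gamma factors $\Gamma$, $\Gamma_{\frac{\gamma}{2}}$ together with explicit powers of $2$ and $2\pi$. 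This reduction is the main obstacle and is carried out in Section \ref{sec:consistency_int}; its output is exactly the closed formula for $M(\gamma,p,a,b)$ from \cite{interval}. I would then read this factorized expression as a product of moments of independent random variables: the quadratic-in-$p$ power of $2$ matches $\mathbb{E}[L^p]=2^{p^2\gamma^2/2}$, the factor $\Gamma(1-\tfrac{p\gamma^2}{4})/\Gamma(1-\tfrac{\gamma^2}{4})^{p}$ matches the Fyodorov–Bouchaud variable $\mathbb{E}[Y_\gamma^p]$, and the three ratios of $\Gamma_{\frac{\gamma}{2}}$ functions match the moments of the inverse beta laws $X_1,X_2,X_3$ recorded in Section \ref{sec:double_gamma}. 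Hence $\mathbb{E}[Z^p]$ equals the $p$-th moment of $2\pi\,2^{-(3(1+\frac{\gamma^2}{4})+2(a+b))}\,L\,Y_\gamma\,X_1X_2X_3$ for all real $p$ in the interval above.

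Finally I would upgrade this moment identity to the stated equality in law. Let $W$ denote the product on the right-hand side; both $Z$ and $W$ are positive, and for any real $c$ in the interior of the interval the moment function $p\mapsto\mathbb{E}[Z^p]$ is finite and analytic on the complex strip $\{p_-<\mathrm{Re}(p)<p_+\}$ (and likewise for $W$, using independence of its factors), since $|Z^{p}|=Z^{\mathrm{Re}(p)}$ is integrable there. As the two analytic functions agree on a real segment with an accumulation point, they agree on the whole strip, in particular on the line $\mathrm{Re}(p)=c$. Writing $\mathbb{E}[Z^{c+it}]=\mathbb{E}[Z^c]\,\widetilde{\mathbb{E}}[e^{it\log Z}]$ under the tilt $d\widetilde{\mathbb{P}}\propto Z^c\,d\mathbb{P}$, equality along this line says the characteristic functions of $\log Z$ and $\log W$ coincide, so their laws agree and hence so do those of $Z$ and $W$. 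This is the delicate point: the lognormal factor $L$ is not determined by its integer moments, so the conclusion genuinely relies on knowing the moments along a continuum of $p$ (equivalently, on the analytic Mellin transform on a strip), not merely on an integer moment sequence.
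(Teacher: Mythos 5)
Your proposal follows the same route the paper itself sketches in Sections \ref{app:link_int}, \ref{app:laws_GMC} and \ref{sec:consistency_int}: identify the moments of $Z=\int_0^1 x^a(1-x)^b e^{\frac{\gamma}{2}X(x)}dx$ with $\overline{H}^{(\beta_1,\beta_2,\beta_3)}_{(0,1,0)}$, evaluate this via Theorem \ref{main_th2} in the degenerate limit $\mathrm{Im}(\sigma_1),\mathrm{Im}(\sigma_3)\to+\infty$, and recover the law by Mellin inversion. Your final step (analyticity of $p\mapsto\mathbb{E}[Z^p]$ on a complex strip, agreement on a real segment, equality of characteristic functions of $\log Z$ under a tilt) is correct, and you are right that this is genuinely needed because the lognormal factor $L$ is not determined by its integer moments; the paper avoids spelling this out by attributing the law to \cite{interval}.

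There is, however, a genuine gap in your first step. You claim that the identification $\mathbb{E}[Z^p]=\overline{H}^{(\beta_1,\beta_2,\beta_3)}_{(0,1,0)}$ is valid "precisely for $p$ in the range prescribed by \eqref{para_H}, which for fixed $a,b$ is a real interval of positive length". This is false for most admissible $(a,b)$. Under the dictionary \eqref{eq:para_reduce} the third weight $\beta_3=2Q-\beta_1-\beta_2-\gamma p$ is a function of $p$, so the two conditions of \eqref{para_H} involving the insertion at infinity, namely $\beta_3<Q$ and $p<\frac{2}{\gamma}(Q-\beta_3)$, turn into \emph{lower} bounds $p>\frac{1}{\gamma}(Q-\beta_1-\beta_2)$ and $p>\frac{2}{\gamma}(Q-\beta_1-\beta_2)$, while the remaining conditions give the upper bound $p<\frac{4}{\gamma^2}\wedge\frac{2}{\gamma}(Q-\beta_1)\wedge\frac{2}{\gamma}(Q-\beta_2)$. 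The resulting window is non-empty only if $\beta_1>0$, $\beta_2>0$ and $\beta_1+\beta_2>\frac{\gamma}{2}$, i.e. only if $a<0$, $b<0$ and $a+b<-\frac{\gamma^2}{4}$. In particular for $a=b=0$, which the corollary must cover, the window is empty: one would need simultaneously $p>\frac{2Q}{\gamma}=1+\frac{4}{\gamma^2}$ and $p<\frac{4}{\gamma^2}$. So as written your moment identity has no anchor for generic $(a,b)$, and the Mellin argument has nothing to propagate.

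The gap is repairable, but it requires analytic continuation in the insertion weights and not only in $p$. With $\mu_1=\mu_3=0$ the GMC moment $\mathbb{E}[Z^p]$ is finite and analytic on the whole connected region
\begin{equation*}
U=\Bigl\{(a,b,p)\ :\ a,b>-\bigl(1+\tfrac{\gamma^2}{4}\bigr),\ \ p<\tfrac{4}{\gamma^2}\wedge\tfrac{2}{\gamma}(Q-\beta_1)\wedge\tfrac{2}{\gamma}(Q-\beta_2)\Bigr\},
\end{equation*}
the $\beta_3$-constraints being irrelevant there because the integral never reaches the insertion at infinity. You should anchor the identity between $\mathbb{E}[Z^p]$ and the explicit formula of Section \ref{sec:consistency_int} on the open subset of $U$ where \eqref{para_H} does hold (for instance $\beta_1,\beta_2$ close to $Q$, i.e. $a,b$ close to $-(1+\frac{\gamma^2}{4})$, with $p$ slightly negative), and then extend it to all of $U$ by uniqueness of analytic continuation, both sides being analytic on $U$ (Lemma \ref{lem_analycity}-type arguments for the probabilistic side, the explicit expression for the other). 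Only after this does one obtain, for an arbitrary fixed admissible pair $(a,b)$, the moment identity on a $p$-interval of positive length, at which point your Mellin step correctly finishes the proof. A minor additional point: the equality of Theorem \ref{main_th2} at the degenerate point $\mu_1=\mu_3=0$ itself requires continuity of the GMC side as $\mu_1,\mu_3\to 0$, since $\mathcal{I}$ only makes sense at finite $\sigma_i$; this is easy (monotone convergence) but should be said.
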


One can convince oneself that these are the most general laws that can be extracted from the formulas we have proved. In order to obtain laws with more insertion points, or to obtain the joint law of the mass of GMC on several arcs, one needs to compute correlations with more insertion points. This will then require implementing the conformal bootstrap procedure, see Section \ref{app:bootstrap}.

\subsubsection{Application to toric conformal blocks and the modular kernel}\label{app:blocks}

Very recently in \cite{Blocks}, a GMC expression has been proposed for the one-point conformal block for LCFT on the torus. The main result of \cite{Blocks} is that this probabilistic definition matches the formal power series given in physics by Zamolodchikov's recursion and shows convergence of this series. More precisely, the expression of \cite{Blocks} for conformal blocks is given by, for parameters $\beta \in (-\frac{4}{\gamma},Q)$,\footnote{In \cite{Blocks} this parameter $\beta$ is called $\alpha$, but we use here the notation $\beta$ in order to keep the convention of this paper for insertions on the boundary.} $P \in \mathbb{R}$, $q \in (0,1)$,
\begin{equation}\label{eq_blocks}
\mathcal{G}^{\beta}_{\gamma, P}(q) := \frac{q^{\frac{1}{12}(1 - \frac{\beta \gamma}{2} - \frac{\beta^2}{2})}
\eta(q)^{ \frac{\beta \gamma}{2} - \beta^2 - 1}}{Z}
\E\left[\left(\int_0^1 \vert \Theta_\tau(x)\vert ^{-\frac{\beta \gamma}{2}} e^{\pi \gamma P x} e^{\frac{\gamma}{2} Y_\tau(x)} dx \right)^{-\frac{\beta}{\gamma}} \right]
\end{equation}
where $Z$ is a normalization constant, $\Theta_{\tau}$ is the Jacobi theta function, $\eta(q)$ the Dedekind eta function and $Y_{\tau}$ is a log-correlated field which can be thought of as the restriction of a 2d GFF on the torus to one of the loops of the torus (see \cite{Blocks} for more details). Both $\Theta_{\tau}$ and $Y_{\tau}$ depend on the parameter $q $ related to the moduli $\tau$ of the torus by $ q = e^{i \pi \tau}$. The proof strategy of \cite{Blocks} contains steps similar to those detailed in Section \ref{sec:outline} to prove the results of the current paper. In particular one needs again to perform the operator product expansion with reflection and in order to obtain an explicit answer the formula of Theorem \ref{main_th2} for the boundary two-point function is required in its full generality. Indeed \cite{Blocks} uses the following fact coming from Theorem \ref{main_th2}:
\begin{align*}
&\frac{\overline{R}(\beta , 1,  e^{ - i \pi + \pi \gamma P})}{\overline{R}(\beta + \frac{2}{\gamma} -\frac{\gamma}{2}, 1, e^{- i \pi \frac{ \gamma^2}{4} + \pi \gamma P})} =   \frac{2(2 \pi)^{\frac{4}{\gamma^2} -1} \Gamma(\frac{2\beta }{\gamma}) \Gamma(1 - \frac{2 \beta }{\gamma})}{\gamma(Q -\beta) \Gamma(1 - \frac{\gamma^2}{4})^{\frac{4}{\gamma^2} -1 } \Gamma(\frac{\gamma \beta}{2} - \frac{\gamma^2}{2}) \Gamma( 1 - \frac{\gamma \beta}{2} + \frac{\gamma^2}{4}) } \frac{1 - e^{\frac{4 \pi P}{\gamma} - \frac{4 i \pi}{\gamma^2} + i \pi \frac{2 \beta}{\gamma}} }{1 + e^{ \pi \gamma  P - \frac{ i \pi \gamma^2}{2} + i \pi \frac{\gamma \beta}{2}} }.
\end{align*}
Furthermore, the normalization $Z$ of the conformal block $\mathcal{G}^{\beta}_{\gamma, P}(q)$ is explicitly given by
\begin{align}\label{eq:Z_blocks}
Z &:=   \E\left[\left( \frac{1}{2 \pi} \int_0^{2\pi} \vert e^{i \theta} - 1 \vert^{-\frac{\beta \gamma}{2}} e^{ \frac{ \gamma P \theta }{2}} e^{\frac{\gamma}{2} X_{\mathbb{D}}(e^{ i \theta})} d \theta \right)^{-\frac{\beta}{\gamma}}\right]\\
&= \left( \frac{\gamma}{2} \right)^{  \frac{\gamma \beta}{4} } e^{-\frac{\pi \beta P}{2}} \Gamma(1 - \frac{\gamma^2}{4})^{\frac{\beta}{\gamma}}\frac{\Gamma_{\frac{\gamma}{2}}(Q - \frac{\beta}{2}) \Gamma_{\frac{\gamma}{2}}(\frac{2}{\gamma} + \frac{\beta}{2}) \Gamma_{\frac{\gamma}{2}}(Q - \frac{\beta}{2} - i P) \Gamma_{\frac{\gamma}{2}}(Q - \frac{\beta}{2} + i P) }{\Gamma_{\frac{\gamma}{2}}(\frac{2}{\gamma}) \Gamma_{\frac{\gamma}{2}}(Q  - i P) \Gamma_{\frac{\gamma}{2}}(Q  + i P) \Gamma_{\frac{\gamma}{2}}(Q - \beta) }, \nonumber
\end{align}
where in the GMC expression $X_{\mathbb{D}}$ is the GFF on $\mathbb{D}$ with covariance given by \eqref{GFF_D}. As an output of the proof of \cite{Blocks}, the GMC expectation above is explicitly evaluated by the formula given in the second line, requiring  again the exact formula for the boundary two-point function \eqref{main_th2_R}. It is enlightening to compare \eqref{eq:Z_blocks} to the result of Theorem \ref{main_th1}, which using the GFF $X_{\mathbb{D}}$ on the disk $\mathbb{D}$ can be restated as:

\begin{equation}\label{eq:G_GMC}
\E\left[ \left( \frac{1}{2 \pi} \int_0^{2\pi} \frac{1}{|e^{i \theta} -1|^{\frac{\gamma \beta}{2}}} e^{\frac{\gamma}{2} X_{\mathbb{D}}(e^{i\theta})}  d\theta\right)^{\frac{2Q-2\alpha - \beta}{\gamma}}\right] =   \frac{ \Gamma(\frac{\gamma\alpha}{2}+\frac{\gamma\beta}{4}-\frac{\gamma^2}{4}) \Gamma_{\frac{\gamma}{2}}(\alpha-\frac{\beta}{2} )  \Gamma_{\frac{\gamma}{2}}(\alpha+\frac{\beta}{2}  ) \Gamma_{\frac{\gamma}{2}}(Q - \frac{\beta}{2}  )^2}{\Gamma(1-\frac{\gamma^2}{4})^{\frac{2}{\gamma}(Q-\alpha-\frac{\beta}{2})} \Gamma_{\frac{\gamma}{2}}(Q - \beta ) \Gamma_{\frac{\gamma}{2}}(\alpha )^2\Gamma_{\frac{\gamma}{2}}(Q) }.
\end{equation}
Using \eqref{eq:shift_S} one can show that both \eqref{eq:G_GMC} and \eqref{eq:Z_blocks} degenerate to the same formula if one choose $\alpha = Q$ in \eqref{eq:G_GMC} and $P=0$ in \eqref{eq:Z_blocks}. Both \eqref{eq:G_GMC} and \eqref{eq:Z_blocks} are thus a special case of the following conjectured formula, for $P \in \mathbb{R}$, $\beta < Q$, $\frac{\gamma}{2} - \alpha < \frac{\beta}{2} < \alpha $:
\begin{align}\label{eq:Z_blocks_gen}
& \E\left[\left( \frac{1}{2 \pi} \int_0^{2 \pi}  | e^{ i \theta} -1 |^{-\frac{\beta \gamma}{2}} e^{\frac{\gamma P \theta}{2} } e^{\frac{\gamma}{2} X_{\mathbb{D}}(e^{ i \theta})} d \theta \right)^{\frac{2}{\gamma}(Q - \alpha - \frac{\beta}{2})}\right]\\
& = e^{\pi P(Q -\alpha -\frac{\beta}{2})} \frac{\Gamma( \frac{\gamma \alpha}{2} + \frac{\gamma \beta}{4} - \frac{\gamma^2}{4} ) \Gamma_{\frac{\gamma}{2}}(\alpha  - \frac{\beta}{2}  ) \Gamma_{\frac{\gamma}{2}}(Q - \frac{\beta}{2} - iP ) \Gamma_{\frac{\gamma}{2}}(Q - \frac{\beta}{2} + iP  ) \Gamma_{\frac{\gamma}{2}}(\alpha + \frac{\beta}{2} ) }{\Gamma(1 - \frac{\gamma^2}{4} )^{\frac{2}{\gamma}(Q -\alpha - \frac{\beta}{2})} \Gamma_{\frac{\gamma}{2}}(\alpha -iP  ) \Gamma_{\frac{\gamma}{2}}(\alpha + iP) \Gamma_{\frac{\gamma}{2}}(Q - \beta ) \Gamma_{\frac{\gamma}{2}}(Q) }. \nonumber
\end{align}
This formula is compatible with both \eqref{eq:G_GMC} and \eqref{eq:Z_blocks} as well as with the expression coming from Selberg type integrals if one choose $\frac{2}{\gamma}(Q -\alpha - \frac{\beta}{2})$ to be a positive integer, see \cite[Equation (1.17)]{Selberg}. We leave repeating our methods to prove \eqref{eq:Z_blocks_gen} for a future work.

Lastly, as a follow-up to their work, the authors of \cite{Blocks} are looking into the problem of the modular transformation of the one-point toric conformal block. For $\tau \in \mathbb{H}$, consider $q=e^{i \pi \tau}$ and $\tilde q = e^{-i  \pi \tau^{-1}}$. The conjecture transformation rule states that
\begin{equation}\label{modular_eq}
	{\tilde q}^{\frac{P^2}{2}} \mathcal{G}^{\beta}_{\gamma, P}(\tilde q) = \int_{\mathbb{R}} \mathcal{M}_{\gamma,\beta}(P, P')q^{\frac{P'^2}{2}} \mathcal{G}^{\beta}_{\gamma, P'}(q) dP'
\end{equation}
for a certain explicit modular kernel $\mathcal{M}_{\gamma,\beta}(P, P')$. It turns out that proving equation \eqref{modular_eq} will crucially require the exact formulas for $\overline{G}$ and $\overline{H}$ given by Theorems \ref{main_th1} and \ref{main_th2}.

\subsubsection{Applications to SLE and the mating-of-trees framework}\label{sec:sle}
Very recently important progress has been made to connect three types of integrability in conformally invariant probability: the one coming from LCFT which is the subject of the present paper, the one of the Schramm-Loewner evolutions (SLE), and the one of the celebrated mating-of-trees framework of \cite{Mating}. The SLE curves are a family of canonical conformally invariant random planar curves that describe the scaling limit of many models of 2D statistical physics at criticality. The mating-of-trees framework is an encoding of the so-called quantum surfaces - an alternative but equivalent description of the random surface described by LCFT - and of SLE on these surfaces in terms of Brownian motion. It is also instrumental in understanding the scaling limit of random planar maps. See \cite{AHS} and references therein for more details.

In the paper \cite{AHS} the authors precisely obtain integrability results for SLE using the conformal welding of SLE decorated quantum surfaces, and the connection between these quantum surfaces and LCFT. The exact formulas for the correlation functions of LCFT are thus a necessary ingredient. More precisely \cite{AHS} proves an exact formula for the law of a conformal derivative of a variant of SLE called $\mathrm{SLE}_{\kappa}(\rho_-; \rho_+)$, which uses our formulas \eqref{main_th2_R} and \eqref{main_th2_H}.

Furthermore, the mating-of-trees framework encodes certain quantum surfaces in terms of a 2D Brownian motion. In \cite{Mating} the covariance of this Brownian motion was given up to an unknown constant, which has now been explicitly computed in \cite{ARS} using as input the formula \eqref{main_th2_R}. Then \cite{ARS} takes this as an input and applies conformal welding techniques to prove Conjecture \ref{conj1} below. See the next subsection for details.

\subsubsection{Boundary LCFT with bulk and boundary Liouville potentials.}\label{app:bulk_boundary}

As done in the physics literature \cite{FZZ, three_point}, it is actually possible to work with both a bulk and a boundary Liouville potential. The Liouville action will then have the more general form:
\begin{align}\label{liouville_action_bulk}
S_L(X) &= \frac{1}{4 \pi} \int_{\mathbb{H}} \left( \vert \partial^g X \vert^2 + Q R_g X + 4 \pi \mu e^{\gamma X} \right)d \lambda_g + \frac{1}{2 \pi} \int_{ \mathbb{R}} \left( Q K_g X + 2 \pi \mu_B e^{\frac{\gamma}{2} X} \right)d \lambda_{\partial g}.
\end{align}

The basic correlation functions $U, G, R, H$ can then also be expressed using the GMC measure following the framework of \cite{Disk}. In this case the expressions will not reduce to a simple moment of GMC but involve a more complicated functional containing both a GMC measure integrated over the area and over the boundary of the domain. Physicists have then proposed exact formulas for this general case \cite{FZZ, bulk_boundary, three_point}. In order to state the expected results, we must redefine the variables $\sigma_i$ in this generalized case where $\mu >0$, the definition of the $\mu_i$ remaining unchanged. The new relation defining the $\sigma_i$ is given by
\begin{equation}\label{general_sigma}
\mu_i = \sqrt{\frac{\mu}{\sin(\pi\frac{\gamma^2}{4})}}\cos(\pi\gamma(\sigma_i-\frac{Q}{2})).
\end{equation}
We state here as conjectures the two simplest formulas in this more general case predicted in \cite{FZZ}.

\begin{conjecture}\label{conj1} (Bulk one-point function with $\mu, \mu_B >0$, has now been proved in \cite{ARS}). Consider parameters $\alpha \in (\frac{\gamma}{2}, Q)$, $\mu, \mu_B >0$. The probabilistic expression for $U(\alpha)$ is equal to the following exact formula
\begin{equation}
U(\alpha) = \frac{4}{\gamma} \left( \pi \mu \frac{\Gamma(\frac{\gamma^2}{4})}{\Gamma(1 -\frac{\gamma^2}{4})} \right)^{\frac{Q - \alpha}{\gamma} } \Gamma(\frac{\alpha \gamma}{2} - \frac{\gamma^2}{4} ) \Gamma(\frac{2 \alpha}{\gamma} - \frac{4}{\gamma^2} -1 ) \cos \left(2 \pi (\alpha - Q) (\sigma - \frac{Q}{2}) \right),
\end{equation}
where the parameter $\sigma$ is defined through \eqref{general_sigma} with $\mu_i = \mu_B$ and $\sigma_i = \sigma$.
\end{conjecture}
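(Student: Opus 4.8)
The plan is to adapt to the case $\mu>0$ the differential-equation strategy that underlies both this paper and \cite{remy,DOZZ1,DOZZ2}. First I would give $U(\alpha)$ a probabilistic meaning directly from the action \eqref{liouville_action_bulk}: integrating out the constant mode $c$ in the path integral, exactly as in \cite{Disk} but keeping $\mu>0$, produces an explicit prefactor (the Gamma factors coming from the Gaussian-type integral over $c$) times the moment of a functional that now couples a bulk GMC over $\mathbb{H}$ with weight $\mu$ to a boundary GMC over $\mathbb{R}$ with weight $\mu_B$. In contrast with the $\mu=0$ case treated here, this is no longer a pure moment of boundary GMC, which is the source of all the new difficulties.

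Next I would introduce an auxiliary observable by adding a degenerate boundary insertion $e^{-\chi\phi(t)}$ at a point $t\in\mathbb{R}$, with $\chi=\frac{\gamma}{2}$ first and $\chi=\frac{2}{\gamma}$ afterwards. By the Girsanov Theorem \ref{girsanov} this inserts a factor $(t-x)^{\frac{\gamma\chi}{2}}$ against the boundary GMC, taken without absolute values as explained in Section \ref{app:link_int}, together with the corresponding factors against the bulk GMC. The consistency of the degenerate insertion forces a relation between the cosmological constants on the two arcs of $\mathbb{R}\setminus\{t\}$ it creates; when $\mu=0$ this is $\mu_{i+1}=e^{\pm i\pi\frac{\gamma\chi}{2}}\mu_i$, while for $\mu>0$ it is encoded by the reparametrization \eqref{general_sigma} of the $\mu_i$ in terms of the $\sigma_i$.

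I would then prove that, as a function of the single cross-ratio formed from the bulk point $z$, its image $\bar z$ and the boundary point $t$, this observable solves a second-order hypergeometric BPZ equation. This is the probabilistic version of the level-two null-vector equation and would be obtained by Gaussian integration by parts, as in Section \ref{sec_BPZ}. Analyzing the two fusion channels as $t$ tends to the insertion, that is the operator product expansion whose subleading term is governed by the reflection mechanism of Section \ref{sec_def_reflection}, and matching with the connection coefficients of the hypergeometric equation yields shift equations relating $U(\alpha)$ to $U(\alpha\pm\frac{\gamma}{2})$; the $\chi=\frac{2}{\gamma}$ insertion gives the complementary shifts by $\frac{2}{\gamma}$. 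The two Gamma factors $\Gamma(\frac{\gamma}{2}(\alpha-\frac{\gamma}{2}))$ and $\Gamma(\frac{2}{\gamma}(\alpha-Q))$ in the conjectured formula are precisely the solutions of these two shift equations, while the even function $\cos(2\pi(\alpha-Q)(\sigma-\frac{Q}{2}))$ arises from the two-dimensional solution space expressed in the $\sigma$ variable. For $\gamma^2\notin\mathbb{Q}$ the two families of shifts fix $U(\alpha)$ up to a single multiplicative constant, which is then pinned down by one computable normalization, for instance a special value of $\alpha$ or consistency of the $\sigma$-dependence with Theorem \ref{FyBo}.

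The main obstacle will be establishing the BPZ equation rigorously once the bulk GMC is present. In the $\mu=0$ setting of this paper the observable is a pure boundary GMC moment, and differentiating under the expectation is controlled by the integrability of Proposition \ref{lem:GMC-moment}; with $\mu>0$ the same manipulations must be justified for a coupled bulk-and-boundary functional, which demands new moment and integrability bounds for the two-dimensional GMC near $z$, $\bar z$ and $t$, as well as control of the interaction between the boundary degenerate field and the bulk image charge at $\bar z$. The second delicate point is the connection problem: one must solve it so that the answer assembles into the precise $\sigma$-dependent cosine rather than a single exponential, and it is exactly here that the parametrization \eqref{general_sigma} is indispensable.
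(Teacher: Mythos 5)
You should first be aware that the statement you set out to prove is not proved in the paper at all: it is stated as Conjecture \ref{conj1}, and the authors explicitly defer its proof to the work in preparation \cite{ARS}, which they announce will use the SLE curve, conformal welding and the mating-of-trees framework of \cite{Mating} --- a toolkit entirely different from the BPZ/shift-equation route you propose. (They do say that the methods of the present paper will be extended to $R$, $G$, $H$ with $\mu>0$ in another work, but for the bulk one-point function specifically the announced method is the welding one.) So there is no proof in the paper to compare yours against, and your proposal has to be judged on its own as a research program.

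As a program it has genuine gaps, the first of which occurs at your very first step. With both $\mu>0$ and $\mu_B>0$, integrating out the zero mode does \emph{not} produce ``an explicit prefactor times the moment of a functional'': after the substitution $u=e^{\gamma c/2}$ one gets $\frac{2}{\gamma}\int_0^\infty u^{\frac{2}{\gamma}(\alpha-Q)-1}\,\E\left[\exp\left(-\mu u^2 A-\mu_B u L\right)\right]du$, where $A$ and $L$ denote the bulk and boundary GMC masses (with insertions), and because the two potentials scale differently in $u$ this integral cannot be collapsed into a Gamma factor times a single moment. Essentially all of the machinery of the paper rests on that moment structure --- the observables $G_\chi$, $H_\chi$ defined as moments, the finiteness Proposition \ref{lem:GMC-moment}, the analyticity Lemma \ref{lem_analycity}, the OPE expansions, and the probabilistic definition \eqref{def_R2} of the reflection coefficient --- so every one of these ingredients would have to be rebuilt for the coupled functional, and you do not indicate how. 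Second, the three steps that carry the actual mathematical content (the BPZ equation for the bulk-plus-boundary functional, the OPE with reflection in this setting, and the connection analysis that must produce the cosine $\cos(2\pi(\alpha-Q)(\sigma-\frac{Q}{2}))$ rather than a single exponential) are asserted, not carried out; these are precisely the obstacles you yourself name at the end, and naming them is not overcoming them. Finally, your normalization step is also unresolved: there is no evident computable special value of $\alpha$ when $\mu>0$, and matching with Theorem \ref{FyBo} in the limit $\mu\to0$ is singular, since by \eqref{general_sigma} one has $\cos(\pi\gamma(\sigma-\frac{Q}{2}))=\mu_B\sqrt{\sin(\pi\gamma^2/4)/\mu}\to\infty$, so $\sigma$ must escape to infinity in that limit. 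In short, your outline reproduces the heuristic physics picture behind the conjecture, but the points where it is incomplete are exactly the reasons the statement is a conjecture in this paper rather than a theorem.
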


\begin{conjecture}\label{conj2}(Boundary two-point function with $\mu, \mu_1, \mu_2 > 0$, has now been proved in \cite{ARSZ}). Consider parameters $\beta \in (\frac{\gamma}{2}, Q)$, $\mu, \mu_1, \mu_2 >0$, and let $\sigma_i$ be defined by \eqref{general_sigma}. The probabilistic expression for $R(\beta, \sigma_1, \sigma_2)$ is equal to the following exact formula
\begin{align}
R(\beta, \sigma_1, \sigma_2) = \left(\frac{\pi \mu (\frac{\gamma}{2})^{2-\frac{\gamma^2}{2}} \Gamma(\frac{\gamma^2}{4})}{\Gamma(1-\frac{\gamma^2}{4})} \right)^{\frac{Q-\beta}{\gamma}} \frac{\Gamma_{\frac{\gamma}{2}}(\beta - Q ) \Gamma_{\frac{\gamma}{2}}(Q- \beta )^{-1}}{S_{\frac{\gamma}{2}}(\frac{\beta}{2}+\sigma_1 +\sigma_2-Q) S_{\frac{\gamma}{2}}(\frac{\beta}{2}-\sigma_1-\sigma_2+Q)} \\ \nonumber
\times \frac{1}{ S_{\frac{\gamma}{2}}(\frac{\beta}{2} + \sigma_2- \sigma_1)S_{\frac{\gamma}{2}}(\frac{\beta}{2} + \sigma_1- \sigma_2) }.
\end{align}
\end{conjecture}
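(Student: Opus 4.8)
The plan is to adapt the BPZ and shift-equation strategy behind Theorem~\ref{main_th2} to the action~\eqref{liouville_action_bulk} containing both a bulk and a boundary potential. First I would give $R(\beta,\sigma_1,\sigma_2)$ a probabilistic expression in the spirit of~\eqref{def_R2}, but replacing the pure boundary moment by the full Liouville functional: one raises to the power $\frac{2}{\gamma}(Q-\beta)$ the sum of a boundary GMC term (weighted by $\mu_1,\mu_2$ on the two arcs, obtained through the radial decomposition~\eqref{radial} and the Girsanov Theorem~\ref{girsanov}) and a bulk GMC term $\mu\,e^{\gamma X}$ integrated over $\mathbb{H}$, with finiteness for $\beta\in(\frac{\gamma}{2},Q)$ checked as in Proposition~\ref{proposition reflection}. (One could instead first try to establish the mixed three-point function and pass to the limit as in Lemma~\ref{lim_H_R}, but since that object is itself only conjectural, working directly with the two-point function is cleaner.) Before invoking BPZ I would extract the $\mu$-dependence from the zero-mode scaling that already produces the prefactors of Definition~\ref{def_four_correls}: this factors out the power $\big(\pi\mu(\frac{\gamma}{2})^{2-\gamma^2/2}\Gamma(\gamma^2/4)/\Gamma(1-\gamma^2/4)\big)^{\frac{Q-\beta}{\gamma}}$ and shows that the remaining structure constant depends on $(\mu,\mu_1,\mu_2)$ only through the scale-invariant ratios $\mu_i/\sqrt{\mu}$, that is precisely through the variables $\sigma_i$ of~\eqref{general_sigma}. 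This already matches the conjectured prefactor and reduces the problem to identifying a function $f(\beta,\sigma_1,\sigma_2)$.

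Next I would add a degenerate boundary insertion of weight $-\chi$, with $\chi\in\{\frac{\gamma}{2},\frac{2}{\gamma}\}$, at a point $t$, forming observables analogous to the $H_\chi(t)$ of Section~\ref{sec:outline}. The decisive new feature compared with the $\mu=0$ case is that the second-order BPZ equation now holds only when the bulk constant $\mu$ and the two boundary constants on either side of $t$ obey the Fateev--Zamolodchikov--Zamolodchikov compatibility relation, which is exactly the relation encoded in~\eqref{general_sigma}. Once this is verified at the probabilistic level, $H_\chi(t)$ solves a hypergeometric ODE in $t$, and its two boundary fusion asymptotics as $t$ approaches an insertion can be analyzed; the reflected subleading term is governed by $R$ exactly as in Proposition~\ref{proposition reflection}.

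The connection coefficients of the hypergeometric function then produce two functional equations relating $f(\beta)$ to $f(\beta+\gamma)$ and to $f(\beta+\frac{4}{\gamma})$, with coefficients built from $\Gamma$, $\Gamma_{\frac{\gamma}{2}}$, $S_{\frac{\gamma}{2}}$ and the trigonometric factors $\cos(\pi\gamma(\sigma_i-\frac{Q}{2}))$. I would then verify, using the shift relations for $\Gamma_{\frac{\gamma}{2}}$ and $S_{\frac{\gamma}{2}}$ recalled in Section~\ref{sec_special_func}, that the conjectured ratio solves both. Since $\gamma\mathbb{Z}+\frac{4}{\gamma}\mathbb{Z}$ is dense whenever $\gamma^2/4$ is irrational, meromorphicity in $\beta$ forces $f$ to coincide with the conjectured expression up to a multiplicative constant, and analyticity in $\gamma$ extends the identity to all $\gamma\in(0,2)$. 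The constant is fixed by the degenerate limit $\mu\to 0$, in which~\eqref{general_sigma} collapses to the exponential parametrization of Definition~\ref{def_sigma} and $R$ must reduce to the already proven formula~\eqref{main_th2_R}, or, should that limit prove delicate, by a direct special-value computation.

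The \textbf{main obstacle} is establishing the BPZ equation rigorously in the presence of the bulk chaos. Unlike the $\mu=0$ case, where everything reduces to a one-dimensional GMC moment, the relevant functional is genuinely two-dimensional, so differentiating it twice in $t$, justifying the interchange of differentiation and expectation, and controlling the contribution of the bulk measure near the degenerate insertion all demand new moment and integrability estimates for coupled bulk--boundary GMC. In particular, proving probabilistically that the BPZ equation forces the relation~\eqref{general_sigma} --- which is what legitimizes removing the absolute values around $(t-x)^{\gamma\chi/2}$ --- is the crux of the argument, and I would expect the analytic continuation in the $\sigma_i$ needed to reach the half-space regime of Definition~\ref{half-space} to require further care.
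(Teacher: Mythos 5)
There is nothing in the paper to compare your proposal against: the statement you were asked to prove is Conjecture \ref{conj2}, which the paper explicitly does \emph{not} prove. Section \ref{app:bulk_boundary} states that with both bulk and boundary potentials the correlation functions ``will not reduce to a simple moment of GMC but involve a more complicated functional,'' states the two formulas as conjectures, and defers their proofs to forthcoming work with M.~Ang and X.~Sun (via SLE, conformal welding and mating of trees for Conjecture \ref{conj1}, and an extension of the present paper's methods for the general case). So your plan should be judged as a research roadmap, and as such it is aligned with what the authors themselves announce; but it is not a proof, and the gaps are exactly where the mathematical content lies.

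Concretely, three steps you describe are genuinely open and cannot be imported from the paper. First, with $\mu>0$ the zero-mode integration no longer produces a Gamma-function prefactor times a one-dimensional GMC moment: the $c$-integral of $e^{(\beta-Q)c}\exp\left(-\mu e^{\gamma c}A - e^{\frac{\gamma c}{2}}B\right)$ (with $A$ the bulk and $B$ the boundary GMC mass) is an irreducibly two-parameter functional, so every supporting result of the paper --- finiteness of moments (Proposition \ref{lem:GMC-moment}), analyticity in the insertion weights (Lemma \ref{lem_analycity}), the OPE-with-reflection estimates (Lemmas \ref{lem_reflection_ope2}, \ref{lem_reflection_ope1}), and the $\overline{H}\to\overline{R}$ limit (Lemma \ref{lim_H_R}) --- must be rebuilt from scratch for this coupled functional; none carries over verbatim. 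Second, the BPZ equation itself: your observation that it should hold only under the compatibility relation between $\mu$ and the boundary constants adjacent to the degenerate insertion is correct in spirit (this is the FZZ condition, which in the $\sigma$-parametrization \eqref{general_sigma} becomes a jump of $\pm\frac{\chi}{2}$ across the insertion), but verifying it probabilistically requires controlling the bulk chaos near the degenerate point, which is the crux you yourself flag as unresolved. Third, your normalization step is more delicate than you indicate: taking $\mu\to 0$ at fixed $\sigma_1,\sigma_2$ sends $\mu_1,\mu_2\to 0$ as well by \eqref{general_sigma}, so recovering \eqref{main_th2_R} requires a joint limit with $\sigma_i$ escaping to infinity, which is why the paper only says the reduction works ``in a suitable way''; a cleaner anchor would be the special value $R(Q,\sigma_1,\sigma_2)$, in analogy with $\overline{R}(Q,\mu_1,\mu_2)=1$ used in the proof of \eqref{main_th2_R}. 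In short: correct strategy, honestly flagged obstacles, but the obstacles are the proof.
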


These formulas as well as the modified condition \eqref{general_sigma} can be reduced to the case of the present paper by taking in a suitable way the limit $\mu \rightarrow 0 $. In the recent papers \cite{ARS} and \cite{ARSZ}, both Conjecture \ref{conj1} and \ref{conj2} have now been verified. As explained in Section \ref{sec:sle}, the proof of \cite{ARS} uses the conformal welding of SLE curves and quantum surfaces, but requires in one step our formula \eqref{main_th2_R}. In the work \cite{ARSZ} with M. Ang and X. Sun, we have proven Conjecture \ref{conj2} as well as the generalization of the formulas for $H$ and $G$ using again a blend of the method of the present paper combined with conformal welding of quantum surfaces.

 \subsubsection{Conformal bootstrap for boundary LCFT}\label{app:bootstrap} 
 
With the four basic correlation functions on $\mathbb{H}$ computed, to completely solve boundary LCFT one must then compute correlations on $\mathbb{H}$ with more insertion points or in higher genus (such as on an annulus). This requires the conformal bootstrap method first proposed in \cite{BPZ}, which claims that correlations with more points or in higher genus can be expressed in terms of the basic correlations and of the conformal blocks, a universal function completely specified by the representation theory of the Virasoro algebra. In the simpler setup of boundaryless surfaces, an example of a conformal block is given by \eqref{eq_blocks}. This one-point toric conformal block allows to compute the one-point function of LCFT on the torus in terms of the three-point function on the sphere, see the discussions in \cite{Blocks} for more details. For an example in physics of a bootstrap decomposition in the case of a domain with boundary, see \cite{Martinec} for the case of the annulus.
 
At the level of mathematics the bootstrap problem has recently been solved in the groundbreaking work \cite{GKRV} in the case of the $N$-point function on the Riemann sphere and then in \cite{GKRV2} for the case of arbitrary correlations on any boundaryless Riemann surface. In the boundary case the method of \cite{GKRV, GKRV2} has recently been adapted to the case of the annulus in \cite{Wu} and the more general boundary cases are a work in progress by the authors of \cite{GKRV, Wu}. As an illustration we give the statement \cite[Theorem 1.3]{Wu} on the annulus $\mathbb{A} := \{ z \: \vert \: |q| \leq |z| \leq 1 \} $ giving the bootstrap formula for the boundary one-point LCFT correlation on $\mathbb{A}$:
\begin{align}
&\langle e^{\frac{\beta}{2} \phi(1)} \rangle_{\mathbb{A}} = c_1 \int_{\mathbb{R}} G(Q + iP, \beta) U(Q -iP) q^{\frac{P^2}{2}} \mathcal{F}_{\mathbb{A}}(q, \beta, P) dP.
\end{align}
Similarly for the boundary 4-point function or the 1-bulk-2-boundary point function on $\mathbb{H}$ one expects the following bootstrap formulas:
\begin{align}
\langle e^{\frac{\beta_1}{2} \phi(0)} e^{\frac{\beta_2}{2} \phi(s)} e^{\frac{\beta_3}{2} \phi(1)} e^{\frac{\beta_4}{2} \phi(\infty)} \rangle &= c_2 \int_{\mathbb{R}} H^{(\beta_1, \beta_2, Q + i P)}_{(\mu_1, \mu_2, \mu_3)} H^{(Q -iP, \beta_3, \beta_4)}_{(\mu_1, \mu_3, \mu_4)} s^{\frac{P^2}{2}} \mathcal{F}_{\mathbb{H},1}(s, \beta_i, P) dP,\\
\langle e^{\frac{\beta_1}{2} \phi(0)} e^{\frac{\beta_2}{2} \phi(s)} e^{\alpha \phi(i)}  \rangle &= c_3 \int_{\mathbb{R}} H^{(\beta_1, \beta_2, Q + i P)}_{(\mu_1, \mu_2, \mu_3)} (G(\alpha, Q-iP) )_{\vert \mu_B = \mu_1} s^{\frac{P^2}{2}} \mathcal{F}_{\mathbb{H},2}(s,\alpha, \beta_i, P) dP
\end{align}
Here $\mathcal{F}_{\mathbb{A}}(q, \beta, P), \mathcal{F}_{\mathbb{H},1}(s, \beta_i, P)$, and $\mathcal{F}_{\mathbb{H},2}(s,\alpha, \beta_i, P)$ are instances of conformal blocks and $c_1, c_2, c_3$ are simple constants. $\mathcal{F}_{\mathbb{A}}(q, \beta, P) $ can actually be related to \eqref{eq_blocks}, see \cite{Wu}. Notice how these statements involve the functions $U, G, H$ that we have computed. For another discussion in the probability literature of the boundary bootstrap see also \cite[Section 5]{BaWong} on fusion in boundary LCFT.

\subsection{Outline of the proof}\label{sec:outline}

We summarize here the main steps of the proof and the intermediate results that will lead us to Theorems \ref{main_th1} and \ref{main_th2}. Our proof strategy follows the one of the previous works \cite{DOZZ1, remy, interval} but there are many novel difficulties that must be resolved due to the fact that we are forced to work with complex valued quantities (instead of positive as in the cited works). The computations are also much more involved.

\begin{itemize}
\item \textbf{BPZ differential equations.} Since LCFT is a conformal field theory, correlation functions containing a field with a degenerate insertion are predicted to obey a differential equation known as the BPZ equation. Therefore if one considers a correlation function where one of the boundary insertion points has a weight  $\beta = - \frac{\gamma}{2} $ or $ -\frac{2}{\gamma}$, then the whole correlation will obey the BPZ equation.\footnote{It is also possible to consider degenerate insertions in the bulk but they will not be used in the present paper. See the discussion in Section \ref{app:link_int} for more details.}  More precisely, for $\chi = \frac{\gamma}{2}$ or $\frac{2}{\gamma}$ and $t \in \mathbb{H}$, we will consider the following observables,
\begin{align*}
G_{\chi}(t) & =  \mathbb{E} \left[ \left(\int_{\mathbb{R}} \frac{(t-x)^{\frac{\gamma\chi}{2}}}{|x-i|^{\gamma\alpha  } }   g(x)^{\frac{\gamma^2}{8}(p-1)} e^{\frac{\gamma}{2} X(x) } d x \right)^{p} \right] \: \: \text{where} \: \: p = \frac{2}{\gamma}(Q-\alpha-\frac{\beta}{2}+\frac{\chi}{2}),
 \\
H_{\chi}(t)  &= \mathbb{E} \left[ \left( \intr \frac{(t-x)^{\frac{\gamma\chi}{2}}}{|x|^{\frac{\gamma \beta_1 }{2}} |x-1|^{\frac{\gamma \beta_2 }{2}}}  g(x)^{\frac{\gamma^2}{8}(q-1)} e^{\frac{\gamma}{2} X(x)} d \mu(x) \right)^{q} \right] \: \: \text{where} \: \: q = \frac{1}{\gamma}(2 Q  - \beta_1 - \beta_2 - \beta_3 + \chi).
\end{align*}
The functions $G_{\chi}(t) $ and $H_{\chi}(t)$ will be used respectively to prove Theorem \ref{main_th1} and Theorem \ref{main_th2}. In Section \ref{sec_BPZ}  we show that $H_{\chi}(t)$ obeys a hypergeometric equation and similarly for $G_{\chi}(t)$ after an extra change of variable. It is then possible to write down explicitly the solution space, writing it here to illustrate the discussion for $H_{\chi}(t)$,
\begin{align*}
H_{\chi}(t) &= C_1 F(A,B,C,t) + C_2 t^{1 - C} F(1 + A-C, 1 +B - C, 2 -C, t) \\
 &= B_1 F(A,B,1+A+B- C, 1 -t) + B_2 (1-t)^{C -A -B} F(C- A, C- B, 1 + C - A - B , 1 -t),
\end{align*}
where $A,B,C$ are known parameters depending on $\gamma, \beta_1, \beta_2, \beta_3$ and the $C_1,C_2,B_1,B_2$ are parameters that parametrize the solution space of the hypergeometric equation. These last four parameters are unknown at this stage of the proof.

\item \textbf{Operator product expansion (OPE).}  The next step is to perform an asymptotic analysis directly on the probabilistic definition of $H_{\chi}(t)  $ (and similarly for $G_{\chi}(t) $) to identify the constants $C_1,C_2,B_1,B_2$  in terms of $ \overline{H}$, the quantity we are interested in computing. For instance by sending $t$ to $0$, one immediately obtains the result
\begin{equation}
C_1 = \overline{H}^{(\beta_1 - \frac{\gamma}{2}, \beta_2, \beta_3)}_{( \mu_1, e^{\frac{i \pi  \gamma^2}{4}} \mu_2, e^{\frac{i \pi  \gamma^2}{4}} \mu_3)}.
\end{equation}
In the case where $\chi = \frac{\gamma}{2}$ and for a suitable range of $\beta_i$ in which $\beta_1 \in (\frac{\gamma}{2}, \frac{2}{\gamma})$, one can obtain by a straightforward analysis that:
\begin{equation}
C_2  = q \frac{\Gamma(-1+\frac{\gamma \beta_1}{2} - \frac{\gamma^2}{4} ) \Gamma(1 - \frac{\gamma \beta_1}{2})}{\Gamma(- \frac{\gamma^2}{4})} \left( \mu_1 - \mu_2 e^{i \pi \frac{\gamma \beta_1}{2}} \right)  \overline{H}^{(\beta_1 + \frac{\gamma}{2}, \beta_2, \beta_3)}_{( \mu_1, e^{\frac{i \pi  \gamma^2}{4}} \mu_2,  e^{\frac{i \pi  \gamma^2}{4}}\mu_3)}.
\end{equation}

\item \textbf{OPE with reflection.} The method described above only works for the first degenerate weight $\chi =\frac{\gamma}{2}$, and only in a very specific domain of parameters. In the case of $\chi = \frac{2}{\gamma}$, or for $\chi =\frac{\gamma}{2}$ but with $\beta_1$ chosen close to $Q$, the asymptotic analysis required to identified $C_2$ will be much more involved. It is called the OPE with reflection as the boundary two-point function - also called the reflection coefficient - will always appear in the asymptotic. Carrying this out one finds the answer:
\begin{equation}
C_2 = \frac{2(Q - \beta_1)}{\gamma} \frac{\Gamma(\frac{2}{\gamma}(\beta_1 -Q)) \Gamma(\frac{2}{\gamma}(Q -\beta_1) -q ) }{\Gamma(-q)} \overline{R}(\beta_1, \mu_1, \mu_2) \overline{H}^{(2 Q - \beta_1 - \frac{\gamma}{2} , \beta_2 , \beta_3)}_{( \mu_1, e^{i\pi\frac{\gamma^2}{4}}\mu_2, e^{\frac{i \pi  \gamma^2}{4}} \mu_3)}.
\end{equation} 
This phenomenon was known to physicists and its probabilistic description is one of the major achievements of \cite{DOZZ2}. Repeating this in our case requires non-trivial work as we are dealing with complex valued observables and many of the inequalities in \cite{DOZZ2} do not work in our case. We overcome this difficulty in Lemmas \ref{lem_reflection_ope2} and \ref{lem_reflection_ope1}.
\item \textbf{Shift equations and analytic continuation.}  Once we have derived expressions for the coefficients $C_1,C_2,B_1,B_2$, the theory of hypergeometric equations will imply a non trivial relation on our quantities of interest. For instance one has the following relation between $C_1, C_2, B_1$ in the case of the hypergeometric equation satisfied by the function $H_{\chi}(t)$:
\begin{equation}
B_1 = \frac{\Gamma(\chi(\beta_1-\chi)) \Gamma(1 - \chi\beta_2 + \chi^2)}{\Gamma(\chi(\beta_1-\chi+q\frac{\gamma}{2}) \Gamma(1 - \chi\beta_2 + \chi^2 -q\frac{\gamma\chi}{2}) } C_1 + \frac{\Gamma(2  - \chi\beta_1+\chi^2) \Gamma(1 - \chi\beta_2 + \chi^2)}{\Gamma(1 + \frac{q \gamma\chi}{2}) \Gamma(2 - \chi(\beta_1 + \beta_2-2\chi+q\frac{\gamma}{2}))   } C_2.
\end{equation}
These equations will then translate to functional equations on $\overline{G}$ and $\overline{H}$ that we will refer to as shift equations because they will involve our functions of interest at shifted values of the insertion weight, the shift being $ \pm \chi$ for $\chi = \frac{\gamma}{2}$ or $\frac{2}{\gamma}$. A key observation is that the shift equation obtained for $\chi = \frac{\gamma}{2}$ allows to analytically continue our probabilistic definitions of $\overline{H}$ and $\overline{G}$ to meromorphic functions defined in a suitable domain. The procedure is analogous to the well-known example of the Gamma function where the functional equation $\Gamma(x+1) = x \Gamma(x)$ can be used to extend the Gamma function to a meromorphic function of $\mathbb{C}$ with prescribed poles. In our case the poles will also be prescribed by the shift equations. Once established this analytic continuation will then be used to derive a second shift equation corresponding to $\chi = \frac{2}{\gamma}$.

\item \textbf{Shift equations imply the result.} The final step is simply to check that the two shift equations obtained for a specific correlation function completely specify its value. Let us explain this for $\overline{G}(\alpha, \beta)$. Assume $\gamma^2 \notin \mathbb{Q}$. The shift equations imply a relation between the correlation at $\beta $ and $ \beta + \gamma$ and between the correlation at $\beta$ and $\beta + \frac{4}{\gamma}$. Since the ratio of the two periods is not in $\mathbb{Q}$, the shift equations uniquely specify the function up to the knowledge of one value which can be computed when $\beta =0$. One then has $\overline{G}(\alpha, 0) = \overline{U}(\alpha)$ which is known from the previous work \cite{remy}. By using the special functions $\Gamma_{\frac{\gamma}{2}}$, $S_{\frac{\gamma}{2}}$ introduced in Section \ref{sec_special_func}, it is also possible to explicitly construct an analytic function satisfying the same shift equations. Therefore the correlation function must be equal to this analytic function, and we can extend the result to the case where $\gamma^2 \in \mathbb{Q}$ by continuity in $\gamma$.
\end{itemize}

The rest of the paper is organized as follows. Section \ref{sec_bulk_boundary} gives the proof of Theorem \ref{main_th1}, taking as an input the value of the boundary two-point function $\overline{R}$ plus the fact that the observable $G_{\chi}(t)$ obeys the BPZ equation. Section \ref{sec_3pt} proves Theorem \ref{main_th2} using similarly the fact that $H_{\chi}(t)$ satisfies the BPZ equation. Section \ref{sec_BPZ} proves that $G_{\chi}(t)$ and $H_{\chi}(t)$ indeed satisfy the BPZ equations. Finally the Appendix \ref{sec_appendix} collects probabilistic facts used in the main text, technical estimates on the GMC measures, the relation between moments of GMC on $\mathbb{R}$ and $\partial \mathbb{D}$, and finally the required facts about the special functions $\Gamma_{\frac{\gamma}{2}}, S_{\frac{\gamma}{2}}, \mathcal{I}$.

\textbf{Acknowledgements.} The authors would like to thank R\'emi Rhodes and Vincent Vargas for making us discover Liouville CFT. We would also like to thank Morris Ang, Guillaume Baverez, and Xin Sun for many helpful discussions, as well as Lorenz Eberhardt who helped us fix the uniqueness argument for the boundary three-point function. Finally we would like to thank the two anonymous referees for their careful reading of this manuscript and for their numerous comments that helped improve this paper. G.R. was supported by an NSF mathematical sciences postdoctoral research fellowship, NSF Grant DMS-1902804. T.Z. was supported by a grant from R\'egion Ile-de-France.

\section{The bulk-boundary correlator}\label{sec_bulk_boundary}

In this section we will prove Theorem \ref{main_th1}. To compute our quantity of interest $\overline{G}(\alpha, \beta)$ we will show it obeys two functional equations that will completely specify its value. We thus need to show:
\begin{proposition}\label{shift_bulk_boundary} (Shift equations for $\overline{G}(\alpha, \beta)$)
For every fixed $\alpha > Q$, the function $\beta \rightarrow \overline{G}(\alpha, \beta)$ originally defined for $\beta \in (\gamma- 2 \alpha, Q )$ admits a meromorphic extension in a complex neighborhood of the real line and this extension satisfies the following two equations,
\begin{align}
\overline{G}(\alpha,\beta+\gamma)&= \frac{\Gamma(1-\frac{\gamma^2}{4})}{2^{\frac{\gamma\beta}{2}} \pi}\frac{ \Gamma(\frac{\gamma\alpha}{2}-\frac{\gamma\beta}{4}-\frac{\gamma^2}{4}) \Gamma(1-\frac{\gamma\beta}{4} )^2}{\Gamma(\frac{\gamma\alpha}{2}+\frac{
\gamma\beta}{4}-\frac{\gamma^2}{4}) \Gamma(1-\frac{\gamma\beta}{2} ) \Gamma(1-\frac{\gamma\beta}{2}-\frac{\gamma^2}{4})} \overline{G}(\alpha, \beta), \label{G_shift1} \\
\overline{G}(\alpha,\beta+\frac{4}{\gamma})  &= \frac{\gamma^2 \Gamma(1-\frac{\gamma^2}{4})^{\frac{4}{\gamma^2}}}{2^{\frac{2\beta}{\gamma}+1}(2\pi)^{\frac{4}{\gamma^2}}}\frac{\Gamma(\frac{2 \alpha}{\gamma} -\frac{\beta}{\gamma} -\frac{4}{\gamma^2}) \Gamma(1-\frac{\beta}{\gamma})^2}{ \Gamma(-1+\frac{2\alpha}{\gamma}+\frac{\beta}{\gamma}) \Gamma(1-\frac{2\beta}{\gamma})\Gamma(1-\frac{2\beta}{\gamma} -\frac{4}{\gamma^2}) } \overline{G}(\alpha,\beta), \label{G_shift2}
\end{align}
viewed as equalities of meromorphic functions.
\end{proposition}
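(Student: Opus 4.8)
The plan is to derive each functional equation as a connection identity for a single Gauss hypergeometric equation, run once with degenerate weight $\chi=\frac{\gamma}{2}$ to produce \eqref{G_shift1} and once with $\chi=\frac{2}{\gamma}$ to produce \eqref{G_shift2}. The input is the observable $G_{\chi}(t)$ of Section \ref{sec:outline}, which inserts a single degenerate boundary field through the factor $(t-x)^{\frac{\gamma\chi}{2}}$ and carries the exponent $p=\frac{2}{\gamma}(Q-\alpha-\frac{\beta}{2}+\frac{\chi}{2})=\frac{2}{\gamma}(Q-\alpha-\frac{\beta-\chi}{2})$ attached to $\overline{G}(\alpha,\beta-\chi)$; thus $G_{\chi}(t)$ is the bulk-boundary correlator deformed by one degenerate field at $t$. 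By Section \ref{sec_BPZ} it solves a second order BPZ equation, and after the change of variable recorded there it satisfies a Gauss hypergeometric equation with parameters $A,B,C$ explicit in $\gamma,\alpha,\beta,\chi$. Writing its solution in the bases at the two finite singular points,
\begin{align*}
G_{\chi}(t)&=C_1\,F(A,B,C,w)+C_2\,w^{1-C}F(1+A-C,1+B-C,2-C,w)\\
&=B_1\,F(A,B,1+A+B-C,1-w)+B_2\,(1-w)^{C-A-B}F(C-A,C-B,1+C-A-B,1-w),
\end{align*}
with $w$ the transformed variable, reduces the proof to determining $C_1,C_2,B_1,B_2$.

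The core step is a probabilistic operator product expansion carried out directly on the definition of $G_{\chi}(t)$. The two finite singular points correspond to the degenerate field meeting the bulk insertion and meeting the boundary insertion. At the bulk point the observable is analytic, since there $t$ stays away from the support of the GMC on $\mathbb{R}$ and $(t-x)^{\frac{\gamma\chi}{2}}$ has no singularity; this forces $G_{\chi}$ to be one distinguished solution, fixing the corresponding coefficient ratio. At the boundary point the factor $(t-x)^{\frac{\gamma\chi}{2}}$ localizes the asymptotics onto the fusion with the boundary insertion, and reading off the two local exponents expresses the coefficients there as explicit prefactors times $\overline{G}(\alpha,\beta)$ (leading channel) and $\overline{G}(\alpha,\beta-\gamma)$ resp. $\overline{G}(\alpha,\beta-\frac{4}{\gamma})$ (subleading, reflected channel). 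For $\chi=\frac{\gamma}{2}$ away from $\beta\approx Q$ this is the straightforward expansion, while for $\chi=\frac{2}{\gamma}$ (and $\chi=\frac{\gamma}{2}$ near $Q$) the reflected term is invisible to the naive expansion and one must perform the operator product expansion with reflection, in which the universal constant governing the reflected term is the boundary two-point function $\overline{R}$ of Theorem \ref{main_th2}; this is exactly where the value of $\overline{R}$ enters as an input. The Gauss connection formula, whose entries are explicit ratios of ordinary Gamma functions, then expresses the distinguished solution in the boundary basis, and after simplifying the prefactors with the reflection and duplication identities for the Gamma function this collapses to the announced two-term relations \eqref{G_shift1} and \eqref{G_shift2}.

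Two structural issues must be handled. First, the statement is about meromorphic functions, whereas the probabilistic definition \eqref{cc2} of $\overline{G}(\alpha,\beta)$ is only valid on the strip $\frac{\gamma}{2}-\alpha<\frac{\beta}{2}<\alpha$ with $\beta<Q$. I would upgrade $\beta\mapsto\overline{G}(\alpha,\beta)$ to a meromorphic function of $\beta$ near the real axis using the $\chi=\frac{\gamma}{2}$ relation itself, in exact analogy with the way $\Gamma(x+1)=x\Gamma(x)$ extends the Gamma function: the equation ties the function on overlapping strips of width $\gamma$, and since the explicit prefactor on the right is meromorphic, iterating it produces the extension with prescribed poles. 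Only once this continuation is in place does the $\chi=\frac{2}{\gamma}$ analysis become meaningful and deliver \eqref{G_shift2}, so the order of operations matters. Second, all the GMC asymptotics have to be justified with quantitative error control and with careful bookkeeping of the arguments of the complex quantities $(t-x)^{\frac{\gamma\chi}{2}}$ entering the observable.

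The main obstacle I anticipate is the operator product expansion with reflection for $\chi=\frac{2}{\gamma}$ (and for $\chi=\frac{\gamma}{2}$ with $\beta$ close to $Q$). In the positive setting of \cite{DOZZ2, remy, interval} the reflected asymptotics were extracted by exploiting monotonicity and positivity of the GMC integrand, but here the integrand and the candidate limit are genuinely complex valued, so those comparison arguments are unavailable and must be replaced by the dedicated estimates of Lemmas \ref{lem_reflection_ope2} and \ref{lem_reflection_ope1}. By contrast, assembling the hypergeometric equation, reading off the Gauss connection coefficients, and simplifying the resulting Gamma prefactors is routine.
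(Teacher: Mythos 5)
Your overall architecture coincides with the paper's: the observable $G_{\chi}(t)$, its hypergeometric equation, the OPE at the boundary insertion (direct for $\chi=\frac{\gamma}{2}$, with reflection and with $\overline{R}$ as external input for $\chi=\frac{2}{\gamma}$ and for $\beta$ near $Q$), and the ordering ``first shift equation $\Rightarrow$ meromorphic continuation $\Rightarrow$ second shift equation'' all match Section \ref{sec_bulk_boundary}. The genuine gap is in the device you propose for turning the connection data into a two-term relation. Under the change of variables $s=\frac{1}{1+t^2}$ that you adopt from the paper, the two finite singular points $s=0$ and $s=1$ are the images of $t=\infty$ (the boundary insertion) and of $t=0$, which is a \emph{generic} boundary point, namely the branch point of the two-to-one map $t\mapsto s$; its exponents are $0$ and $C-A-B=\frac{1}{2}$, encoding the even/odd decomposition in $t$. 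The bulk collision $t=\pm i$ sits at $s=\infty$, not at a finite singular point. Consequently nothing forces a coefficient to vanish at $s=1$: the observable generically has both components there. If one runs your plan literally --- kill the coefficient of the singular solution at the finite point taken to be the bulk point, then apply the real-entry connection formula \eqref{connection1} --- one concludes that $\tilde{C}_2/\tilde{C}_1$ is a real ratio of Gamma values. This contradicts the OPE computation in the proof of Lemma \ref{bb_shift1}: there $\tilde{C}_1=\overline{G}(\alpha,\beta-\frac{\gamma}{2})>0$ while $\tilde{C}_2$ carries the genuinely complex factor $1-e^{i\pi(3-2C)}=1+e^{-2i\pi C}$. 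So the step, as described, fails.

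What the paper does instead is introduce a \emph{second} observable $\hat{G}_{\chi}(s)=s^{p\gamma\chi/4}G_{\chi}(-t)$ (the same analytic function restricted to the other quadrant), identify $\hat{C}_1,\hat{C}_2$ (same moduli as $\tilde{C}_1,\tilde{C}_2$ but different phases), and use only the phase-free formula \eqref{connection1} together with the matching $\tilde{B}_1=\hat{B}_1$ at $s=1$; the phase mismatch between the two coefficient pairs is exactly what collapses the identity to a two-term shift equation. Your idea is in fact salvageable, and then it becomes a genuinely different derivation: $G_{\chi}$ is analytic on all of $\mathbb{H}$, and since the exponents of the $t$-equation at $t=i$ differ by the non-integer $1+\chi^{2}-\chi\alpha$, single-valuedness around $t=i$ does kill the coefficient of the non-analytic solution at $s=\infty$ (this is the mechanism of \cite{remy}). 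But to exploit it you must use the connection coefficients between the $s=0$ and $s=\infty$ bases with the branch conventions of the half-plane $s\in-\mathbb{H}$, where the column multiplying the $s^{1-C}$ solution acquires the phase $e^{i\pi(C-1)}$; with that factor the resulting ratio reproduces precisely the phase $1+e^{-2i\pi C}$ and, using $C-A-B=\frac{1}{2}$, collapses to \eqref{G_shift1} and \eqref{G_shift2}. In other words, your route is the monodromy-around-$t=i$ reformulation of the paper's two-observable matching (a loop around $i$ inside $\mathbb{H}$ exchanges the two quadrants), but the phase bookkeeping in the connection step --- which you classify as routine --- is exactly where the proof lives; treated naively it yields a false, real-valued shift equation.
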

Using Proposition \ref{shift_bulk_boundary} and the fact that $\overline{U}(\alpha)$ is known from the previous work \cite{remy}, it is easy to prove the value of $\overline{G}(\alpha, \beta)$.

\begin{proof}[Proof of Theorem \ref{main_th1}] Let $r(\alpha,\beta)$ be defined as $\overline{G}(\alpha,\beta)$ divided by its expected expression, namely the right hand side of equation \eqref{eq:main_th1}. Assume first that $\gamma^2 \notin \mathbb{Q}$. Using the two shift equations of Proposition \ref{shift_bulk_boundary} and the shift equations satisfied by $\Gamma_{\frac{\gamma}{2}}$ given in Section \ref{sec:double_gamma}, one can check that $r(\alpha,\beta + \gamma) = r(\alpha,\beta)$ and $r(\alpha,\beta + \frac{4}{\gamma}) = r(\alpha,\beta)$. Since $\gamma^2 \notin \mathbb{Q}$, these two periodicity relations imply that $r$ is constant in $\beta$. Since the value of $\overline{G}(\alpha,0)$  is given by $\overline{U}(\alpha)$ which is known, we can check that $r(\alpha, 0) =1$. Hence $r(\alpha,\beta) =1$ for all $\beta$ and the constraint on $\gamma$ can be lifted by a simple continuity argument. Therefore we have proved:
\begin{equation}
\overline{G}(\alpha, \beta)=  \left( \frac{2^{\frac{\gamma}{2}(-\alpha+\frac{\beta}{2})} 2\pi}{\Gamma(1-\frac{\gamma^2}{4})} \right)^{\frac{2}{\gamma}(Q-\alpha-\frac{\beta}{2})} \frac{ \Gamma(\frac{\gamma\alpha}{2}+\frac{\gamma\beta}{4}-\frac{\gamma^2}{4}) \Gamma_{\frac{\gamma}{2}}(\alpha-\frac{\beta}{2} )  \Gamma_{\frac{\gamma}{2}}(\alpha+\frac{\beta}{2}  ) \Gamma_{\frac{\gamma}{2}}(Q - \frac{\beta}{2}  )^2}{ \Gamma_{\frac{\gamma}{2}}(Q - \beta ) \Gamma_{\frac{\gamma}{2}}(\alpha )^2\Gamma_{\frac{\gamma}{2}}(Q) }.
\end{equation}
\end{proof}
To show Proposition \ref{shift_bulk_boundary}, we will use the solvability coming from the BPZ equations of Liouville theory. For $\chi = \frac{\gamma}{2} \text{ or }\frac{2}{\gamma}$, we denote:
\begin{equation}
p = \frac{2}{\gamma}(Q-\alpha-\frac{\beta}{2}+\frac{\chi}{2}).
\end{equation}
We now introduce two auxiliary functions corresponding to the two values of $\chi = \frac{\gamma}{2} \text{ or }\frac{2}{\gamma}$, for $t\in \H$:
\begin{equation}
G_{\chi}(t)  =  \mathbb{E} \left[ \left(\int_{\mathbb{R}} \frac{(t-x)^{\frac{\gamma\chi}{2}}}{|x-i|^{\gamma\alpha  } } g(x)^{\frac{\gamma^2}{8}(p-1)} e^{\frac{\gamma}{2} X(x) }  d x \right)^{p} \right].
\end{equation}
The parameter range where $G_{\chi}(t)$ is well-defined is:
\begin{equation}
\beta < Q, \quad \text{and} \quad p < \frac{4}{\gamma^2} \wedge \frac{2}{\gamma}(Q - \beta).
\end{equation}
Let us justify why $G_{\chi}(t)$ is well-defined under these conditions. First for $\chi = \frac{\gamma}{2}$ since $(t-x) $ is always contained in the upper-half plane we can define $(t-x)^{\frac{\gamma^2}{4}}$ by choosing the argument to be in $[0,\pi]$.  This means for $t \in \H$ and for either value of $\chi$, the GMC integral
\begin{equation}\label{eq2.7}
\int_{\mathbb{R}} \frac{(t-x)^{\frac{\gamma\chi}{2}}}{|x-i|^{\gamma\alpha  } }  g(x)^{\frac{\gamma^2}{8}(p-1)} e^{\frac{\gamma}{2} X(x) }  d x
\end{equation}
is a random complex number almost surely contained in $\overline{\H}$. We can thus define its $p$ power again by choosing an argument in $[0, \pi]$. Finally Proposition \ref{lem:GMC-moment} given in appendix states the moment $p$ of \eqref{eq2.7} is finite.

Assume now $t \in \{r e^{i \theta} \: | \: r> 0, \:  \theta \in (0 , \frac{\pi}{2})\} $ and perform the change of variable $s = \frac{1}{1+t^2}$. The variable $s$ then belongs to the set $s \in - \mathbb {H} $. We choose the argument of $s$ to be in $(-\pi, 0)$ and define $\sqrt{1-s} = t \sqrt{s}$. Now set:
\begin{equation}
\tilde{G}_{\chi}(s) = s^{p\frac{\gamma \chi}{4}}G_{\chi}(t).
\end{equation}
Then one has
\begin{equation}\label{eq:def_G_tilde}
\tilde{G}_{\chi}(s)  =  \mathbb{E} \left[ \left( \int_{\mathbb{R}} \frac{(\sqrt{1-s}-\sqrt{s}x)^{\frac{\gamma\chi}{2}}}{|x-i|^{\gamma\alpha  } }  e^{\frac{\gamma}{2} X(x) } g(x)^{\frac{\gamma^2}{8}(p-1)} d x \right)^{p} \right],
\end{equation}
where the argument of the GMC integral can be chosen in $(-\pi, \pi)$. We will introduce a dual set of auxiliary functions corresponding to, for $t \in \{r e^{i \theta} \: | \: r> 0, \:  \theta \in (-\frac{\pi}{2},0)\} $, $s = \frac{1}{1+t^2}$ with argument this time in $(0,\pi)$, $\sqrt{1-s} = t \sqrt{s}$, and:
\begin{equation}
 \hat{G}_{\chi}(s) = s^{p\frac{\gamma \chi}{4}}G_{\chi}(-t).
\end{equation}
One lands on the expression
\begin{equation}
\hat{G}_{\chi}(s)  =  \mathbb{E} \left[ \left(\int_{\mathbb{R}} \frac{(-\sqrt{1-s}-\sqrt{s}x)^{\frac{\gamma\chi}{2}}}{|x-i|^{\gamma\alpha  } }  e^{\frac{\gamma}{2} X(x) } g(x)^{\frac{\gamma^2}{8}(p-1)} d x \right)^{p} \right].
\end{equation}
The above GMC integral in the expectation avoids the branch cut $(0,\infty)$ and its argument is chosen to be in $(0, 2\pi)$.
We prove in Section \ref{sec_BPZ1} that $\tilde{G}_{\chi}(s)$ obeys the following hypergeometric equation,
\begin{align}
s(1-s)\partial_s^2 \tilde{G}_{\chi}(s) +(C-(A+B+1)s)\partial_s \tilde{G}_{\chi}(s) - AB\tilde{G}_{\chi}(s) = 0,
\end{align}
with parameters given by:
\begin{align}
A = -p\frac{\gamma \chi}{4}, B = 1+\chi(\chi - \alpha -p\frac{\gamma}{4}), C = \frac{3}{2}+\chi(\chi -\alpha - p\frac{\gamma}{2}).
\end{align}
The exact same equation also holds for $\hat{G}_{\chi}(s)$. As detailed in Section \ref{sec_special_func}, one can explicitly write the solution space of the equation around $s=0$ and $s=1$, under the assumption that $C$ and  $ C - A - B $ are not integers:\footnote{The values excluded here are recovered by an easy continuity argument.}
\begin{align}
\tilde{G}_{\chi}(s) &= \tilde{C}_1 F(A,B,C,s) + \tilde{C}_2 s^{1 - C} F(1 + A-C, 1 +B - C, 2 -C, s)\\
 &= \tilde{B}_1 F(A,B,1+A+B- C, 1 -s) + \tilde{B}_2 (1-s)^{C -A -B} F(C- A, C- B, 1 + C - A - B , 1 -s), \nonumber
\end{align}
\begin{align}
\hat{G}_{\chi}(s) &= \hat{C}_1 F(A,B,C,s)+ \hat{C}_2 s^{1 - C} F(1 + A-C, 1 +B - C, 2 -C, s) \\
 &= \hat{B}_1 F(A,B,1+A+B- C, 1 -s) + \hat{B}_2 (1-s)^{C -A -B} F(C- A, C- B, 1 + C - A - B , 1 -s). \nonumber
\end{align}
Here $ \tilde{C}_1, \tilde{C}_2 , \tilde{B}_1, \tilde{B}_2, \hat{C}_1, \hat{C}_2, \hat{B}_1, \hat{B}_2 $ are all real constants that parametrize the different basis of solutions. Since the solution space is two-dimensional, there is a change of basis formula \eqref{connection1} that relates $\tilde{C}_1, \tilde{C}_2$ with $ \tilde{B}_1, \tilde{B}_2$ and similarly for $\hat{C}_1, \hat{C}_2, \hat{B}_1, \hat{B}_2 $. In the following we will relate several of these coefficients to $\overline{G}$ and it is precisely the change of basis that will lead us to the shift equations of Proposition \ref{shift_bulk_boundary}.

\subsection{First shift equation}

In this section we prove the first shift equation \eqref{G_shift1} in the range of parameters where $\overline{G}(\alpha, \beta)$ is well-defined probabilistically, namely the range $\beta < Q$, $\frac{\gamma}{2} - \alpha < \frac{\beta}{2} < \alpha $ given in Definition \ref{def_four_correls}.
\begin{lemma}\label{bb_shift1}
For $\alpha, \beta \in \mathbb{R}$ satisfying $\beta < \frac{2}{\gamma} - \frac{\gamma}{2}$ and $ \frac{\gamma}{2} - \alpha < \frac{\beta}{2} < \alpha - \frac{\gamma}{2}$, the following equation holds:
\begin{align}
\overline{G}(\alpha,\beta+\gamma)= \frac{1}{2^{\frac{\gamma\beta}{2}} \pi} \frac{\Gamma(1-\frac{\gamma^2}{4}) \Gamma(\frac{\gamma\alpha}{2}-\frac{\gamma\beta}{4}-\frac{\gamma^2}{4}) \Gamma(1-\frac{\gamma\beta}{4} )^2}{\Gamma(\frac{\gamma\alpha}{2}+\frac{
\gamma\beta}{4}-\frac{\gamma^2}{4}) \Gamma(1-\frac{\gamma\beta}{2} ) \Gamma(1-\frac{\gamma\beta}{2}-\frac{\gamma^2}{4})} \overline{G}(\alpha, \beta).
\end{align}
\end{lemma}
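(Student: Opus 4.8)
\emph{Approach.} Throughout I set $\chi = \frac{\gamma}{2}$, so that the two fusion channels opened by the degenerate insertion are separated by a total weight $\gamma$, and I run the construction of $G_\chi(t)$ with its insertion weight taken to be the ``central'' value $\beta+\frac{\gamma}{2}$, so that the relevant channels will be $\overline{G}(\alpha,\beta)$ and $\overline{G}(\alpha,\beta+\gamma)$. The plan is to combine three ingredients: the hypergeometric equation satisfied by $\tilde{G}_\chi(s)$ and $\hat{G}_\chi(s)$ with the parameters $A,B,C$ recorded above (established in Section \ref{sec_BPZ1}); the connection formula \eqref{connection1} relating the bases of solutions around $s=0$ and $s=1$; and an operator product expansion performed directly on the probabilistic definition. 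Concretely, I would read off $\tilde{C}_1,\tilde{C}_2,\hat{C}_1,\hat{C}_2$ from the asymptotics as $s\to0$, extract a single scalar identity at $s=1$ from the continuity of $G_\chi$ at the boundary point $0$, and let the connection coefficients turn this into the functional relation \eqref{G_shift1}.

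\emph{Coefficients at $s=0$.} Since $s=\frac{1}{1+t^2}$, the limit $s\to0$ is $t\to\infty$, where the degenerate point fuses with the boundary insertion at infinity. Expanding $(t-x)^{\frac{\gamma\chi}{2}}=t^{\frac{\gamma\chi}{2}}(1-x/t)^{\frac{\gamma\chi}{2}}$ inside the GMC integral, the leading term reproduces, thanks to the tuned power of $g(x)$ in $G_\chi(t)$, exactly the moment defining $\overline{G}(\alpha,\beta)$, giving $\tilde{C}_1=c_1\,\overline{G}(\alpha,\beta)$ for an explicit constant $c_1$; the identical computation for $\hat{G}_\chi$ yields $\hat{C}_1=\lambda_1\tilde{C}_1$, where $\lambda_1=e^{i\pi \frac{\gamma\chi}{2}p}$ is the phase produced by the sign flip $\sqrt{1-s}\mapsto-\sqrt{1-s}$ in the definition of $\hat{G}_\chi$. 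The first non-analytic correction, of order $s^{1-C}$, originates from the region where $x$ is comparable to $t$ and produces the second fusion channel $\tilde{C}_2=c_2\,\overline{G}(\alpha,\beta+\gamma)$, with $\hat{C}_2=\lambda_2\tilde{C}_2$ for explicit $c_2,\lambda_2$. The parameter window $\beta<\frac{2}{\gamma}-\frac{\gamma}{2}$ and $\frac{\gamma}{2}-\alpha<\frac{\beta}{2}<\alpha-\frac{\gamma}{2}$ is precisely what places both $\overline{G}(\alpha,\beta)$ and $\overline{G}(\alpha,\beta+\gamma)$ in the convergent range of Definition \ref{def_four_correls}, so that this second channel is a bona fide GMC moment and no reflection term is needed; finiteness of all the moments involved comes from Proposition \ref{lem:GMC-moment}.

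\emph{The relation at $s=1$ and conclusion.} The point $s=1$ corresponds to $t\to0$, a boundary point carrying no insertion. Because $G_\chi(t)$ is defined and continuous up to $t=0$ from inside $\mathbb{H}$ (the argument of $(t-x)$ staying in $[0,\pi]$), the two branches $\tilde{G}_\chi$ and $\hat{G}_\chi$, which are the two preimages of $s$ under $t\mapsto\frac{1}{1+t^2}$, share the same value $G_\chi(0)$ at $s=1$. Moreover a direct check gives $C-A-B=\frac{1}{2}$ for every choice of parameters, so the solution $(1-s)^{C-A-B}$ vanishes at $s=1$ and the constant terms of the two $s=1$ expansions are exactly $\tilde{B}_1=\hat{B}_1=G_\chi(0)$. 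Writing the connection formula for the constant term, $\tilde{B}_1=M_{11}\tilde{C}_1+M_{12}\tilde{C}_2$ and $\hat{B}_1=M_{11}\hat{C}_1+M_{12}\hat{C}_2$ with $M_{11}=\frac{\Gamma(C)\Gamma(C-A-B)}{\Gamma(C-A)\Gamma(C-B)}$ and $M_{12}=\frac{\Gamma(2-C)\Gamma(C-A-B)}{\Gamma(1-A)\Gamma(1-B)}$, and subtracting, the unknown $G_\chi(0)$, a genuine three-point-like quantity we cannot evaluate in closed form, drops out and leaves $M_{11}(1-\lambda_1)\tilde{C}_1+M_{12}(1-\lambda_2)\tilde{C}_2=0$. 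Substituting $\tilde{C}_1=c_1\overline{G}(\alpha,\beta)$ and $\tilde{C}_2=c_2\overline{G}(\alpha,\beta+\gamma)$ gives $\overline{G}(\alpha,\beta+\gamma)=-\frac{c_1 M_{11}(1-\lambda_1)}{c_2 M_{12}(1-\lambda_2)}\,\overline{G}(\alpha,\beta)$. It then remains to insert the explicit $A,B,C$ for $\chi=\frac{\gamma}{2}$ and simplify the resulting ratio of ordinary Gamma functions and phases into the coefficient appearing in \eqref{G_shift1}, a direct computation using standard Gamma identities.

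\emph{Main obstacle.} The delicate point is the extraction of the subleading coefficient $\tilde{C}_2$ (and $\hat{C}_2$) as $s\to0$: one must justify exchanging the expectation with the asymptotic expansion of the \emph{complex-valued} GMC integral and isolate the $s^{1-C}$ contribution together with its exact constant, while the integrand carries a nontrivial argument. In contrast with the positive setting of \cite{remy, interval}, the required dominated-convergence and uniform-integrability estimates cannot exploit positivity and must instead pass through the complex moment bounds of Section \ref{sec:useful_facts}. Keeping the branch conventions for $(t-x)^{\frac{\gamma\chi}{2}}$ and for the fractional power consistent between $\tilde{G}_\chi$ and $\hat{G}_\chi$ is exactly what fixes the phases $\lambda_1,\lambda_2$, and hence the precise prefactor of the shift equation.
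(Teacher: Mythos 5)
Your proposal is correct and follows essentially the same route as the paper's proof: the hypergeometric equation for $\tilde{G}_{\frac{\gamma}{2}}$ and $\hat{G}_{\frac{\gamma}{2}}$, identification of the $s\to 0$ coefficients via the OPE without reflection, the matching $\tilde{B}_1=\hat{B}_1$ at $s=1$ (which you justify, as the paper leaves implicit, by $C-A-B=\tfrac12$), and elimination of the unknown constant term through the connection formula \eqref{connection1}; the paper merely centers the degenerate fusion at weight $\beta$ and relabels $\beta\to\beta+\tfrac{\gamma}{2}$ at the end, whereas you center at $\beta+\tfrac{\gamma}{2}$ from the start. The only presentational difference is that the paper first derives the relation under the auxiliary restriction $\alpha>Q$, $\frac{\gamma}{2}-\frac{2}{\gamma}<\beta<\frac{2}{\gamma}$ and then reaches the stated range of the lemma via the analyticity of Lemma \ref{lem_analycity}, a step worth keeping in mind when claiming the OPE estimates directly on the full window.
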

\begin{proof}
We start off with the following parameter choices:
\begin{equation}\label{para_choice_1}
\chi = \frac{\gamma}{2}, \quad  \alpha > Q, \quad \frac{\gamma}{2} <  \beta < \frac{2}{\gamma}.
\end{equation}
In the case of $\chi = \frac{\gamma}{2}$ we can actually assume $t \in (0, +\infty)$ which means that $s \in (0,1)$.
By sending $s$ to $0$ one automatically gets that:
\begin{equation}
\tilde{C}_1 = \overline{G}(\alpha, \beta-\frac{\gamma}{2}), \quad \hat{C}_1 = e^{i\pi p\frac{\gamma^2}{4}}\overline{G}(\alpha, \beta-\frac{\gamma}{2}).
\end{equation}
Although we cannot express $\tilde{B}_1$ and $\hat{B}_1$ in terms of the bulk-boundary correlator $\overline{G}$, by setting $s=1$ one obtains the equality:
\begin{equation}
\tilde{B}_1 = \hat{B}_1.
\end{equation}
In order to derive an expression for $\tilde{C}_2$, we have to expand $\tilde{G}_{\frac{\gamma}{2}}(s)$ up to the order $s^{1-C}$. In this case $C = \frac{1}{2} - \frac{\gamma^2}{8} + \frac{\gamma \beta}{4}$.  The parameter choice \eqref{para_choice_1} implies that $ 0 < 1 -C <1$. Following the analysis of \cite{interval}, we expand the difference  $\tilde{G}_{\frac{\gamma}{2}}(s) - \tilde{G}_{\frac{\gamma}{2}}(0)$ as $s \to 0$ using the simple formula
\begin{align}
A(s)^p - A(0)^p = p (A(s) - A(0)) A(0)^{p-1} + o(\vert A(s) - A(0) \vert),
\end{align}
where in our case $A(s)$ is chosen to be the random variable:
\begin{align}
A(s) := \int_{\mathbb{R}} \frac{(\sqrt{1-s}-\sqrt{s}x)^{\frac{\gamma^2}{4}}}{|x-i|^{\gamma\alpha  } }  e^{\frac{\gamma}{2} X(x) } g(x)^{\frac{\gamma^2}{8}(p-1)} d x.
\end{align}
We can first compute the expectation of the error term:
\begin{align}
\E [\vert A(s) - A(0) \vert ] &= \E \left[ \left \vert \int_{\mathbb{R}} \frac{(\sqrt{1-s}-\sqrt{s}x)^{\frac{\gamma^2}{4}} -1 }{|x-i|^{\gamma\alpha  } }  e^{\frac{\gamma}{2} X(x) } g(x)^{\frac{\gamma^2}{8}(p-1)} d x \right \vert \right] \\
& \leq   \int_{\mathbb{R}} \frac{\vert (\sqrt{1-s}-\sqrt{s}x)^{\frac{\gamma^2}{4}} -1 \vert }{ |x-i|^{\gamma\alpha  } }  g(x)^{\frac{\gamma^2}{8}(p-1)} d x \leq c s^{\frac{1}{2}(\gamma \alpha -1 + \frac{\gamma^2}{2}(p-1) )} \left(  \intr du \frac{\vert (1-u)^{\frac{\gamma^2}{4}}-1 \vert}{|u|^{\gamma\alpha + (p-1)\frac{\gamma^2}{2} } } \right). \nonumber
\end{align}
To obtain the last inequality we have performed the change of variable $u = \sqrt{s} x$. The integral over $u$ one obtains is converging both at $u=0$ and as $u \rightarrow \pm \infty $ thanks to the condition $\beta \in(\frac{\gamma}{2}, \frac{2}{\gamma})$ given by \eqref{para_choice_1}. Since the exponent of $s$ is equal to  $1 - C$, this implies that $\E [o(\vert A(s) - A(0) \vert) ] = o(s^{1-C})$.
  
Now moving to the other piece and applying Theorem \ref{girsanov} we obtain:
\begin{align}\label{OPE_no_reflection}
&\tilde{G}_{\frac{\gamma}{2}}(s) - \tilde{G}_{\frac{\gamma}{2}}(0)  \\
& = p\intr dx_1\frac{(\sqrt{1-s}-\sqrt{s}x_1)^{\frac{\gamma^2}{4}}-1}{|x_1-i|^{\gamma\alpha  } } g(x_1)^{\frac{\gamma^2}{8}(p-1)}  \mathbb{E} \left[ e^{\frac{\gamma}{2} X(x_1) } \left(\int_{\mathbb{R}} \frac{g(x)^{\frac{\gamma^2}{8}(p-1)}}{|x-i|^{\gamma\alpha  } }  e^{\frac{\gamma}{2} X(x) }  d x \right)^{p-1} \right] + o(s^{1-C}) \nonumber \\ \nonumber
& = p\intr dx_1\frac{(\sqrt{1-s}-\sqrt{s}x_1)^{\frac{\gamma^2}{4}}-1}{|x_1-i|^{\gamma\alpha  } }  \mathbb{E} \left[ \left(\int_{\mathbb{R}} \frac{ g(x)^{\frac{\gamma^2}{8}(p-2)} }{|x-i|^{\gamma\alpha  } |x-x_1|^{\frac{\gamma^2}{2}} }  e^{\frac{\gamma}{2} X(x) } d x \right)^{p-1} \right] + o(s^{1-C}) \\ \nonumber
& \overset{x_1 = \frac{u}{\sqrt{s}}}{=} p\intr  \frac{du}{\sqrt{s}}  \frac{(\sqrt{1-s}- u)^{\frac{\gamma^2}{4}}-1}{|\frac{u}{\sqrt{s}}-i|^{\gamma\alpha  } }  \mathbb{E} \left[ \left(\int_{\mathbb{R}} \frac{ g(x)^{\frac{\gamma^2}{8}(p-2)} }{|x-i|^{\gamma\alpha  } |x-\frac{u}{\sqrt{s}}|^{\frac{\gamma^2}{2}} }  e^{\frac{\gamma}{2} X(x) } d x \right)^{p-1} \right] + o(s^{1-C}) \\ \nonumber
& = s^{1-C} p \left(  \intr du \frac{(1-u)^{\frac{\gamma^2}{4}}-1}{|u|^{\gamma\alpha + (p-1)\frac{\gamma^2}{2} } } \right) \overline{G}(\alpha, \beta+\frac{\gamma}{2}) + o(s^{1-C}). \nonumber
\end{align} 
The correct way to interpret the above integral over $\mathbb{R}$ is by writing
\begin{equation}
 \intr du \frac{(1-u)^{\frac{\gamma^2}{4}}-1}{|u|^{\gamma\alpha + (p-1)\frac{\gamma^2}{2} } }  = \int_{\mathbb{R}_+ } du \frac{(1+u)^{\frac{\gamma^2}{4}}-1}{u^{\gamma\alpha  + (p-1)\frac{\gamma^2}{2}} } -e^{i \pi (\gamma \alpha + (p-1)\frac{\gamma^2}{2})} \int_{\mathbb{R}_+ e^{i\pi}} du \frac{(1+u)^{\frac{\gamma^2}{4}}-1}{u^{\gamma\alpha + (p-1)\frac{\gamma^2}{2} } },
\end{equation}
where here $\mathbb{R}_+e^{i \pi} $ means that the integral should be understood as a contour integral on $(-\infty,0)$ passing just above the point $u = -1$.
In Section \ref{section useful integrals} we give the exact value of this integral in terms of the Gamma function \eqref{equation integral 1}. 
Putting everything together we have shown,
\begin{align}
& \tilde{G}_{\frac{\gamma}{2}}(s) - \tilde{G}_{\frac{\gamma}{2}}(0) = s^{1-C} p\frac{\Gamma(2C-2)\Gamma(2-2C-\frac{\gamma^2}{4})}{\Gamma(-\frac{\gamma^2}{4})}(1-e^{i \pi (3-2C)}) \overline{G}(\alpha, \beta+\frac{\gamma}{2}) + o(s^{1-C}) \nonumber \\
 & \Rightarrow \: \tilde{C}_2 = p\frac{\Gamma(2C-2)\Gamma(2-2C-\frac{\gamma^2}{4})}{\Gamma(-\frac{\gamma^2}{4})}(1-e^{i \pi (3-2C)}) \overline{G}(\alpha, \beta+\frac{\gamma}{2}).
\end{align}
$\hat{C}_2$ can be calculated in a similar manner, using this time:
\begin{align*}
 \intr du \frac{(-1-u)^{\frac{\gamma^2}{4}}-(-1)^{\frac{\gamma^2}{4}}}{|u|^{\gamma\alpha + (p-1)\frac{\gamma^2}{2}}} &= e^{i\pi \frac{\gamma^2}{4}} \left( \int_{\mathbb{R}_+ } du \frac{(1+u)^{\frac{\gamma^2}{4}}-1}{u^{\gamma\alpha+ (p-1)\frac{\gamma^2}{2}  } } -  e^{- i \pi (\gamma \alpha + (p-1)\frac{\gamma^2}{2})}\int_{\mathbb{R}_+ e^{-i\pi} } du \frac{(1+u)^{\frac{\gamma^2}{4}}-1}{u^{\gamma\alpha+ (p-1)\frac{\gamma^2}{2}  } }\right)\\
& = p\frac{\Gamma(2C-2)\Gamma(2-2C-\frac{\gamma^2}{4})}{\Gamma(-\frac{\gamma^2}{4})}e^{i \pi p\frac{\gamma^2}{4}}(1-e^{-i \pi (3-2C)}).
\end{align*}
Hence:
\begin{equation}
\hat{C}_2 = p\frac{\Gamma(2C-2)\Gamma(2-2C -\frac{\gamma^2}{4})}{\Gamma(-\frac{\gamma^2}{4})} e^{i \pi p\frac{\gamma^2}{4}}(1-e^{-i \pi (3-2C)}) \overline{G}(\alpha, \beta+\frac{\gamma}{2}).
\end{equation}
Now we have the connection formula \eqref{connection1} expressing $\tilde{B}_1$ in terms of $\tilde{C}_1, \tilde{C}_2$ and similarly for $\hat{B}_1,\hat{C}_1,\hat{C}_2$. 
Using the fact that $\tilde{B}_1 = \hat{B}_1$ and our expressions for $\tilde{C}_1, \tilde{C}_2,\hat{C}_1,\hat{C}_2$ in terms of $\overline{G}$ we deduce:
\begin{align*}
&\frac{\Gamma(C)\Gamma(C-A-B)}{\Gamma(C-A)\Gamma(C-B)}\overline{G}(\alpha, \beta-\frac{\gamma}{2})(1-e^{-i 2\pi A})\\ 
&+ p \frac{\Gamma(2-C)\Gamma(C-A-B)}{\Gamma(1-A)\Gamma(1-B)} \frac{\Gamma(2C-2)\Gamma(2-2C -\frac{\gamma^2}{4})}{\Gamma(-\frac{\gamma^2}{4})} (1-e^{i \pi (3-2C)})(1+e^{i \pi (2B-2)}) \overline{G}(\alpha, \beta+\frac{\gamma}{2})=0.
\end{align*}
We thus land on the following shift equation, which is simplified using \eqref{prop_gamma}:
\begin{align*}
\frac{\overline{G}(\alpha,\beta+\frac{\gamma}{2})}{\overline{G}(\alpha, \beta-\frac{\gamma}{2})} &= \frac{\Gamma(C)\Gamma(1-A)\Gamma(1-B)}{p\Gamma(2-C)\Gamma(C-A)\Gamma(C-B) } \frac{\Gamma(-\frac{\gamma^2}{4})}{\Gamma(2C-2)\Gamma(2-2C -\frac{\gamma^2}{4})} \frac{\sin(\pi A)}{2 \cos(\pi C) \cos(\pi B)}\\
&= \frac{\Gamma(C)\Gamma(1-A)\Gamma(1-B)}{p\Gamma(2-C)\Gamma(\frac{1}{2}+A)\Gamma(\frac{1}{2}+B) } \frac{\Gamma(-\frac{\gamma^2}{4})}{\Gamma(2C-2)\Gamma(2-2C -\frac{\gamma^2}{4})} \frac{\sin(\pi C)\sin(\pi A)}{\sin(2\pi C) \sin(\pi (\frac{1}{2}+B))}\\
& = \frac{\Gamma(\frac{1}{2}-B)\Gamma(1-B)}{p\Gamma(1-C)\Gamma(2-C)\Gamma(A)\Gamma(\frac{1}{2}+A) } \frac{\Gamma(3-2C)\Gamma(-\frac{\gamma^2}{4}) }{\Gamma(2-2C -\frac{\gamma^2}{4})} \\
&= \frac{\Gamma(1-\frac{\gamma^2}{4}) \Gamma(1-2B) \Gamma(\frac{3}{2}-C)^2}{\Gamma(1+2A) \Gamma(2-2C) \Gamma(2-2C-\frac{\gamma^2}{4})}  \frac{1}{2^{2C-1} \pi}\\
&=\frac{\Gamma(1-\frac{\gamma^2}{4}) \Gamma(\frac{\gamma\alpha}{2}-\frac{\gamma\beta}{4}-\frac{\gamma^2}{8}) \Gamma(1-\frac{\gamma\beta}{4} +\frac{\gamma^2}{8})^2}{\Gamma(\frac{\gamma\alpha}{2}+\frac{
\gamma\beta}{4}-\frac{3\gamma^2}{8}) \Gamma(1-\frac{\gamma\beta}{2} +\frac{\gamma^2}{4}) \Gamma(1-\frac{\gamma\beta}{2})}\frac{1}{2^{\frac{\gamma\beta}{2}-\frac{\gamma^2}{4}} \pi}.
\end{align*}
Then by replacing $\beta$ by $\beta + \frac{\gamma}{2}$ one lands on the equation of Lemma \ref{bb_shift1}. To extend it to the wider range of validity in $\alpha$ and $\beta$ one uses the analyticity of $\overline{G}$ with respect to these parameters shown in Lemma \ref{lem_analycity}.
\end{proof}
One consequence of Lemma \ref{bb_shift1} is that it allows to analytically continue $\overline{G}$ as a meromorphic function of $\beta$ defined in a complex neighborhood of the real line.
\begin{lemma}\label{bb_continuation}
Fix $\alpha >Q$. The function $\beta \mapsto \overline{G}(\alpha, \beta)$ originally defined for $ \beta < Q$, $ \frac{\gamma}{2} - \alpha < \frac{\beta}{2} < \alpha$ admits a meromorphic extension in a complex neighborhood of the real line.
\end{lemma}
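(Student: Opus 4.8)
The plan is to bootstrap the continuation from the single shift equation \eqref{G_shift1} of Lemma \ref{bb_shift1}, in direct analogy with the way one extends the Euler Gamma function from the relation $\Gamma(x+1)=x\Gamma(x)$. I would write \eqref{G_shift1} in the compact form
\begin{equation}
\overline{G}(\alpha,\beta+\gamma) = \Phi(\beta)\,\overline{G}(\alpha,\beta),
\end{equation}
where $\Phi(\beta)$ is the explicit prefactor appearing on the right-hand side of \eqref{G_shift1}. Since $\Phi$ is a product and ratio of Gamma functions multiplied by the entire factor $2^{-\frac{\gamma\beta}{2}}\pi^{-1}$, it is a meromorphic function of $\beta$ on all of $\mathbb{C}$ that is not identically zero; in particular its zeros and poles are isolated, which is all that the construction will require.

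First I would record the input. By Lemma \ref{lem_analycity} the map $\beta\mapsto\overline{G}(\alpha,\beta)$ is holomorphic on a complex neighborhood $\mathcal{N}$ of the real interval $I=(\gamma-2\alpha,Q)$ on which it is probabilistically defined; here I use $\alpha>Q$, so that the constraint $\frac{\beta}{2}<\alpha$ is inactive and the upper endpoint is exactly $Q$. Although Lemma \ref{bb_shift1} established \eqref{G_shift1} only for real $\beta$ in a subinterval of $I$, both sides are meromorphic where defined and agree on a real segment of positive length, so by the identity theorem the relation $\overline{G}(\alpha,\beta+\gamma)=\Phi(\beta)\overline{G}(\alpha,\beta)$ holds as an identity of meromorphic functions wherever both members are defined.

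Next I would iterate the relation in both directions. To extend rightward I set $\overline{G}(\alpha,\beta+\gamma):=\Phi(\beta)\overline{G}(\alpha,\beta)$, defining the function on $\mathcal{N}+\gamma$; to extend leftward I rearrange the relation as $\overline{G}(\alpha,\beta):=\Phi(\beta)^{-1}\overline{G}(\alpha,\beta+\gamma)$, defining it on $\mathcal{N}-\gamma$. Repeating both steps produces a candidate extension on $\bigcup_{k\in\mathbb{Z}}(\mathcal{N}+k\gamma)$, which is a complex neighborhood of the whole real line. The quantitative point that makes this single-valued is that $I$ has length $Q-\gamma+2\alpha$, which exceeds $\gamma$ whenever $\alpha>Q$ and $\gamma\in(0,2)$ (indeed $Q+2\alpha>2\gamma$ reduces to $12>\gamma^2$ after inserting $\alpha>Q$), so the consecutive translates $\mathcal{N}+k\gamma$ overlap pairwise. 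Because each step multiplies or divides by the meromorphic factor $\Phi$, the resulting extension is meromorphic, its poles arising from the poles of $\Phi$ in the rightward steps and from the zeros of $\Phi$ in the leftward steps.

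The only place where genuine care is needed is the consistency of the gluing: one must verify that the two competing definitions agree on every nonempty overlap $(\mathcal{N}+k\gamma)\cap(\mathcal{N}+(k+1)\gamma)$, equivalently that extending from $\mathcal{N}$ to $\mathcal{N}+\gamma$ and then back through the inverse relation recovers the original function. This is exactly the content of the meromorphic identity of the second paragraph combined with the length estimate $L>\gamma$ guaranteeing that the overlaps are nonempty, and the identity theorem then forces agreement on each overlap. I would also note that the inverting (leftward) step is legitimate because $\Phi$ is not identically zero, which is immediate from its explicit Gamma-function expression. Assembling these pieces yields the desired meromorphic extension of $\beta\mapsto\overline{G}(\alpha,\beta)$ to a complex neighborhood of the real line.
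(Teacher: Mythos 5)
Your proposal is correct and follows essentially the same route as the paper: the paper's (very terse) proof likewise combines the analyticity of $\overline{G}(\alpha,\cdot)$ on a complex neighborhood of the probabilistic domain (Lemma \ref{lem_analycity}) with the $\gamma$-shift equation \eqref{G_shift1} of Lemma \ref{bb_shift1} to continue meromorphically along the real line, with poles prescribed by the Gamma factors. Your write-up simply makes explicit the details the paper leaves implicit — the identity-theorem upgrade of the shift relation to a meromorphic identity, the overlap condition on the translates of the neighborhood, and the gluing consistency.
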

\begin{proof}
Lemma \ref{lem_analycity} shows that $\overline{G}(\alpha, \beta)$ is complex analytic in a complex neighborhood of the real domain $\{ (\alpha, \beta) \in \mathbb{R}^2 \: | \: \beta < Q,  \frac{\gamma}{2} - \alpha < \frac{\beta}{2} < \alpha  \}$ where it is defined probabilistically using GMC. For a fixed $\alpha >Q$, the shift equation of Lemma \ref{bb_shift1} then shows $\beta \mapsto \overline{G}(\alpha, \beta)$ can be meromorphically continued in $\beta$ to a complex neighborhood of the whole real line, the pole structure being prescribed by the Gamma functions in the shift equation.
\end{proof}

\subsection{Second shift equation}\label{section ope reflection 1}

We will now derive an expression of $\tilde{C}_2$ in a different manner corresponding to the so-called operator product expansion (OPE) with reflection. This computation will be valid for $ \frac{2}{\gamma} <  \beta < Q$. For $\chi = \frac{\gamma}{2}$ this will give us the reflection principle and for $\chi = \frac{2}{\gamma}$ it will allow us to obtain the second shift equation on $\beta$. A complete proof of the following steps can be found in \cite{DOZZ2}. We first perform a change of variable $x \to \frac{1}{x}$ on the GMC integral in the expression of $\tilde{G}_{\chi}(s)$:

\begin{equation}
\tilde{G}_{\chi}(s)  =  \mathbb{E} \left[ \left(\int_{\mathbb{R}}\frac{\left(\sqrt{1-s}\,x-\sqrt{s}\right)^{\frac{\gamma\chi}{2}}}{|x-i|^{\gamma\alpha  }|x|^{\frac{\gamma\beta}{2}} }  e^{\frac{\gamma}{2} X(x) } g(x)^{\frac{\gamma^2}{8}(p-1)} dx \right)^{p} \right].
\end{equation}

Note that $C = \frac{1}{2} - \frac{\chi^2}{2} + \frac{\chi \beta}{2}$. For all $\beta \in (Q-\beta_0, Q)$ where $\beta_0$ is a small positive number, and for $\alpha > \frac{1}{\gamma} + \frac{\gamma}{2}$, the following asymptotic is then shown in Lemma \ref{lem_reflection_ope2} for the case $\chi =\frac{2}{\gamma}$ and in Lemma  \ref{lem_reflection_ope1} for the case of $\chi =\frac{\gamma}{2}$:

\begin{align}
\tilde{G}_{\chi}(s) - \tilde{G}_{\chi}(0) = -s^{1-C}\frac{\Gamma(1-\frac{2(Q-\beta)}{\gamma}) \Gamma(-p+\frac{2}{\gamma}(Q-\beta))}{\Gamma(-p)}\overline{R}(\beta, 1, e^{i\pi\frac{\gamma\chi}{2}}) \overline{G}(\alpha, 2Q-\beta -\chi) + o(s^{1-C}).
\end{align}
In the above $s$ is chosen in $(0,1) $ for $\chi = \frac{\gamma}{2}$ and in $ (-1,0)$ for $ \chi =\frac{2}{\gamma}$. From this we can deduce the expression of $\tilde{C}_2$, still for $\beta \in (Q - \beta_0,Q)$,
\begin{equation}
\tilde{C}_2 = -\frac{\Gamma(1-\frac{2(Q-\beta)}{\gamma}) \Gamma(-p+\frac{2}{\gamma}(Q-\beta))}{\Gamma(-p)}\overline{R}(\beta, 1, e^{i\pi\frac{\gamma\chi}{2}}) \overline{G}(\alpha, 2Q-\beta -\chi).
\end{equation}
The range of validity of the above expression can be extended from $\beta \in (Q- \beta_0, Q)$ to the range $\beta \in (\frac{2}{\gamma}, Q) $ by using analyticity in the parameter $\beta$. Indeed, Lemma \ref{lem_analycity} implies the analyticity in $\beta$ in a complex neighborhood of $(\frac{2}{\gamma}, Q)$ of both $\tilde{G}_{\chi}(s)$ and $\overline{G}(\alpha, 2Q-\beta -\chi)$. The analyticity of $\tilde{G}_{\chi}(s)$ then implies the analyticity of $\tilde{C}_2$ and the analyticity of $\overline{R}(\beta, 1, e^{i\pi\frac{\gamma\chi}{2}})$ is known from the exact formula for $\overline{R}$ proved in Section \ref{sec_3pt}. Thus we extend the equality to $\beta \in (\frac{2}{\gamma}, Q)$. From this we can deduce:
\begin{lemma}[Reflection principle for $\overline{G}(\alpha,\beta)$]\label{reflection1} We can analytically continue the definition of $\overline{G}(\alpha, \beta) $ in $\beta$ beyond the point $\beta = Q$ by using the following formula
\begin{align} \label{eq:reflection1}
 \overline{G}(\alpha,\beta) = -\frac{\Gamma(\frac{2 \beta}{\gamma}-\frac{4}{\gamma^2})\Gamma(\frac{2\alpha}{\gamma}-\frac{\beta}{\gamma})}{ \Gamma(-1+\frac{2\alpha}{\gamma}+\frac{\beta}{\gamma}-\frac{4}{\gamma^2})} \overline{R}(\beta,1,1)\overline{G}(\alpha, 2Q-\beta ).
\end{align}
This equation should be viewed as an equality of meromorphic functions and it gives a definition of $\overline{G}(\alpha, 2Q-\beta )$ valid for $\alpha, \beta$ satisfying $\beta \in (\frac{\gamma}{2}, Q)$ and $\frac{\gamma}{2} - \alpha < \frac{\beta}{2} < \alpha $.
\end{lemma}

\begin{proof}
We work with $\chi = \frac{\gamma}{2}$. We have seen two ways of calculating $\tilde{C}_2$ based on the value of $\beta$:
\begin{equation}
\tilde{C}_2 = 
\begin{cases}
p\frac{\Gamma(2C-2)\Gamma(2-2C-\frac{\gamma^2}{4})}{\Gamma(-\frac{\gamma^2}{4})}(1-e^{i \pi (3-2C)}) \overline{G}(\alpha, \beta+\frac{\gamma}{2}), & \beta<\frac{2}{\gamma},\\
-\frac{\Gamma(1-\frac{2(Q-\beta)}{\gamma}) \Gamma(-p+\frac{2}{\gamma}(Q-\beta))}{\Gamma(-p)}\overline{R}(\beta, 1, e^{i\pi\frac{\gamma^2}{4}}) \overline{G}(\alpha, 2Q-\beta -\frac{\gamma}{2}), &  \frac{2}{\gamma}<\beta <Q.
\end{cases}
\end{equation}
Since $\tilde{G}_{\chi}(s)$ is complex analytic in $\beta$ in a complex neighborhood of $\frac{2}{\gamma}$, this implies the analyticity of $\tilde{C}_2$ around $ \beta = \frac{2}{\gamma}$. This implies that there is an equality between the two expressions for $\tilde{C}_2$ viewed as meromorphic functions of $\beta$ in a neighborhood of $\frac{2}{\gamma}$. Lastly from equation \eqref{equation R2} of Section \ref{sec_3pt} we have a shift equation that relates  $\overline{R}(\beta+\frac{\gamma}{2}, 1, 1)$ and $ \overline{R}(\beta, 1, e^{i\pi\frac{\gamma^2}{4}})$. Therefore we can rewrite the relation in the desired way claimed in the lemma. \end{proof} 

With both Lemma \ref{bb_continuation} and Lemma \ref{reflection1} we can now finish the proof of Proposition \ref{shift_bulk_boundary}. 

\begin{proof}[Proof of Proposition \ref{shift_bulk_boundary}] 
We switch to $\chi = \frac{2}{\gamma}$ to deduce the second shift equation. Thanks to Lemma \ref{bb_continuation} we can view $\beta \rightarrow \overline{G}(\alpha, \beta)$ as meromorphic function defined in a complex neighborhood of $\mathbb{R}$. We start from:
\begin{equation}
\tilde{C}_2 = -\frac{\Gamma(1-\frac{2(Q-\beta)}{\gamma}) \Gamma(-p+\frac{2}{\gamma}(Q-\beta))}{\Gamma(-p)}\overline{R}(\beta, 1, e^{i\pi }) \overline{G}(\alpha, 2Q-\beta - \frac{2}{\gamma}).
\end{equation}
By applying equation \eqref{eq:reflection1} of Lemma \ref{reflection1} with $\beta$ replaced by $\beta+\frac{2}{\gamma}$, we obtain:
\begin{equation}
\tilde{C}_2 =  \frac{\Gamma(-1+\frac{2\alpha}{\gamma}+\frac{\beta}{\gamma}-\frac{2}{\gamma^2}) \Gamma(1-\frac{2(Q-\beta)}{\gamma}) }{\Gamma(\frac{2 \beta}{\gamma} ) \Gamma(-1 + \frac{2\alpha}{\gamma} +\frac{\beta}{2} - \frac{6}{\gamma^2} )} \frac{ \overline{R}(\beta, 1, e^{i\pi })}{\overline{R}(\beta + \frac{2}{\gamma},1,1)} \overline{G}(\alpha, \beta + \frac{2}{\gamma}).
\end{equation}

Using the shift equation \eqref{eq:shift_r_inproof2} on $\overline{R}$ with $\mu_1= \mu_2 =1$ and $\beta $ replaced by $\beta - \frac{2}{\gamma}$ we can write:
\begin{align}
\frac{\overline{R}(\beta, 1, e^{i\pi})}{\overline{R}(\beta+\frac{2}{\gamma}, 1, 1)} 
=- \frac{2}{\gamma(Q-\beta)} (2\pi)^{\frac{4}{\gamma^2}-1}  \frac{\Gamma(\frac{2\beta}{\gamma}) \Gamma(1-\frac{2\beta}{\gamma})}{\Gamma(1-\frac{\gamma^2}{4})^{\frac{4}{\gamma^2}} } (1-e^{-i\pi (\frac{2\beta}{\gamma} - \frac{4}{\gamma^2})}).
\end{align}
Plugging this formula into the expression of $\tilde{C}_2$ one obtains:
\begin{align*}
\tilde{C}_2&=   \frac{\frac{4}{\gamma^2} (2\pi)^{\frac{4}{\gamma^2}-1}}{\Gamma(1-\frac{\gamma^2}{4})^{\frac{4}{\gamma^2}}}  \frac{\Gamma(-1+\frac{2\beta}{\gamma}-\frac{4}{\gamma^2}) \Gamma(1-\frac{2\beta}{\gamma}) \Gamma(-1+\frac{2\alpha}{\gamma}+\frac{\beta}{\gamma}-\frac{2}{\gamma^2})}{\Gamma(-1+\frac{2\alpha}{\gamma}+\frac{\beta}{\gamma}-\frac{6}{\gamma^2})} (1-e^{-i\pi (\frac{2\beta}{\gamma} - \frac{4}{\gamma^2})}) \overline{G}(\alpha,\beta+\frac{2}{\gamma}),\\
&= \frac{\frac{4}{\gamma^2} (2\pi)^{\frac{4}{\gamma^2}-1}}{\Gamma(1-\frac{\gamma^2}{4})^{\frac{4}{\gamma^2}}}  \frac{\Gamma(2C-2) \Gamma(1-\frac{2\beta}{\gamma})\Gamma(-1+\frac{2\alpha}{\gamma}+\frac{\beta}{\gamma}-\frac{2}{\gamma^2}) }{ \Gamma(2A)} (1+e^{i\pi \frac{2}{\gamma}(Q-\beta)}) \overline{G}(\alpha,\beta+\frac{2}{\gamma}).
\end{align*}
We can find easily the other coefficients:
\begin{align*}
\tilde{C}_1 = \overline{G}(\alpha, \beta-\frac{2}{\gamma}), \quad \hat{C}_1 = e^{i\pi p}\overline{G}(\alpha, \beta-\frac{2}{\gamma}), \quad  \hat{C_2} = e^{i\pi (p-\frac{2}{\gamma}(Q-\beta))} \tilde{C}_2, \quad \tilde{B}_1 = \hat{B}_1.
\end{align*}
As in the previous subsection we can thus write:
\begin{align*}
\frac{\Gamma(C)\Gamma(C-A-B)}{\Gamma(C-A)\Gamma(C-B)} \tilde{C}_1 (1-e^{i\pi p}) + \frac{\Gamma(2-C)\Gamma(C-A-B)}{\Gamma(1-A)\Gamma(1-B)} \tilde{C}_2 (1-e^{i\pi (p-\frac{2}{\gamma}(Q-\beta))})=0.
\end{align*}
Then we can deduce the shift equation,
\begin{align*}
\frac{\overline{G}(\alpha,\beta+\frac{2}{\gamma})}{\overline{G}(\alpha,\beta-\frac{2}{\gamma})} & = \frac{ \frac{\gamma^2}{4} (2\pi)^{1-\frac{4}{\gamma^2}} \Gamma(1-\frac{\gamma^2}{4})^{\frac{4}{\gamma^2}}}{\Gamma(2C-2) \Gamma(1-\frac{2\beta}{\gamma})\Gamma(-1+\frac{2\alpha}{\gamma}+\frac{\beta}{\gamma}-\frac{2}{\gamma^2})}
\frac{ \Gamma(2A) \Gamma(C)\Gamma(1-A)\Gamma(1-B)}{\Gamma(2-C)\Gamma(A+\frac{1}{2})\Gamma(B+\frac{1}{2})}\\
& \quad \times \frac{\sin( \pi A)}{2\sin(\pi (B+\frac{1}{2}))\sin(\pi (C-\frac{1}{2}))}\\
 &= \frac{\frac{\gamma^2}{4} \Gamma(1-\frac{\gamma^2}{4})^{\frac{4}{\gamma^2}}}{(2\pi)^{\frac{4}{\gamma^2}-1}2^{2C-1}\pi} \frac{\Gamma(1-2B)\Gamma(\frac{3}{2}-C)^2}{ \Gamma(2-2C) \Gamma(1-\frac{2\beta}{\gamma})\Gamma(-1+\frac{2\alpha}{\gamma}+\frac{\beta}{\gamma}-\frac{2}{\gamma^2})}\\
&= \frac{\frac{\gamma^2}{4} \Gamma(1-\frac{\gamma^2}{4})^{\frac{4}{\gamma^2}}}{(2\pi)^{\frac{4}{\gamma^2}}2^{-1+\frac{2\beta}{\gamma}-\frac{4}{\gamma^2}}}\frac{\Gamma(\frac{2 \alpha}{\gamma} -\frac{\beta}{\gamma} -\frac{2}{\gamma^2}) \Gamma(1-\frac{\beta}{\gamma}+\frac{2}{\gamma^2})^2}{\Gamma(1-\frac{2\beta}{\gamma}+\frac{4}{\gamma^2})\Gamma(1-\frac{2\beta}{\gamma})  \Gamma(-1+\frac{2\alpha}{\gamma}+\frac{\beta}{\gamma}-\frac{2}{\gamma^2})},
\end{align*}
and finally:
\begin{equation}
\frac{\overline{G}(\alpha,\beta+\frac{4}{\gamma})}{\overline{G}(\alpha,\beta)} = \frac{\frac{\gamma^2}{4} \Gamma(1-\frac{\gamma^2}{4})^{\frac{4}{\gamma^2}}}{(2\pi)^{\frac{4}{\gamma^2}}2^{-1+\frac{2\beta}{\gamma}}} \frac{\Gamma(\frac{2 \alpha}{\gamma} -\frac{\beta}{\gamma} -\frac{4}{\gamma^2}) \Gamma(1-\frac{\beta}{\gamma})^2}{\Gamma(1-\frac{2\beta}{\gamma})\Gamma(1-\frac{2\beta}{\gamma} -\frac{4}{\gamma^2}) \Gamma(-1+\frac{2\alpha}{\gamma}+\frac{\beta}{\gamma}) }.
\end{equation}
Hence we have proven Proposition \ref{shift_bulk_boundary}.
\end{proof}

\section{The boundary two-point and three-point functions}\label{sec_3pt}

The goal of this section is to prove Theorem \ref{main_th2}. We follow roughly the same steps as in the previous section, except we will derive explicitly the expression for the boundary two-point function $\overline{R}$ used in the proof of Theorem \ref{main_th1}. Again we will rely on the hypergeometric equations shown in Section \ref{sec_BPZ} to obtain shift equations on $\overline{H}$. The difference here is that the functional equation obtained will contain $3$ terms instead of $2$, see for instance equation \eqref{eq_3pt_1} below. Throughout this section we will use:
\begin{equation}
q = \frac{1}{\gamma}(2 Q  - \beta_1 - \beta_2 - \beta_3 + \chi).
\end{equation}
We introduce the auxiliary function for $\chi = \frac{\gamma}{2} \text{ or }\frac{2}{\gamma}$ and $t\in \H$,
\begin{equation}\label{eq:def_H_chi}
H_{\chi}(t)  = \mathbb{E} \left[ \left( \intr \frac{(t-x)^{\frac{\gamma\chi}{2}}}{|x|^{\frac{\gamma \beta_1 }{2}} |x-1|^{\frac{\gamma \beta_2 }{2}}}  g(x)^{\frac{\gamma^2}{8}(q-1)} e^{\frac{\gamma}{2} X(x)} d \mu(x) \right)^{q} \right],
\end{equation}
where:
\begin{equation}
d\mu(x) = \mu_1\mathbf{1}_{(-\infty,0)}(x)dx + \mu_2  \mathbf{1}_{(0,1)}(x)dx + \mu_3 \mathbf{1}_{(1,\infty)}(x)dx.
\end{equation}
To start the range of parameters we want to work with is:
\begin{equation}\label{para_Ht_1}
\beta_i < Q, \: \:   \mu_1 \in (0,\infty), \: \: \mu_2, \mu_3 \in - \overline{\H} \quad \text{and} \quad   q < \frac{4}{\gamma^2} \wedge \min_i \frac{2}{\gamma}(Q - \beta_i).
\end{equation}
By $\mu_2, \mu_3 \in - \overline{\H} $ we mean that their argument is chosen in $[-\pi,0]$. Furthermore, to define $(t-x)^{\frac{\gamma\chi}{2}}$ in the case $\chi = \frac{\gamma}{2}$, for $t \in \mathbb{H}$ and $x \in \mathbb{R}$ we choose the argument of $(t-x)$ in $(0, \pi)$. With this choice the GMC integral
\begin{equation}
\intr \frac{(t-x)^{\frac{\gamma\chi}{2}}}{|x|^{\frac{\gamma \beta_1 }{2}} |x-1|^{\frac{\gamma \beta_2 }{2}}}  g(x)^{\frac{\gamma^2}{8}(q-1)} e^{\frac{\gamma}{2} X(x)} d \mu(x),
\end{equation}
never hits the line $(-\infty,0)$ and so its argument can be chosen to be in $(-\pi, \pi)$ and its $q$ power is thus well-defined. The finiteness of this moment is then given by Proposition \ref{lem:GMC-moment}.
Now $t \mapsto H_{\chi}(t)$ is holomorphic in $\H$ and it is shown in Section \ref{sec_BPZ} that $H_{\chi}(t)$ satisfies the hypergeometric equation,
\begin{equation}
t(1-t)\partial_t^2 H_{\chi}(t) + (C-(A+B+1)t)\partial_t H_{\chi}(t)-AB H_{\chi}(t) = 0,
\end{equation}
with parameters:
\begin{equation}
A = -q\frac{\gamma\chi}{2}, \quad B = -1+ \chi(\beta_1+\beta_2-2\chi + q\frac{\gamma}{2}), \quad C = \chi(\beta_1 - \chi).
\end{equation}
We will also use the auxiliary function $\tilde{H}_{\chi}(t)$, 
\begin{equation}
\tilde{H}_{\chi}(t)  = \mathbb{E} \left[ \left( \intr \frac{(x-t)^{\frac{\gamma\chi}{2}}}{|x|^{\frac{\gamma \beta_1 }{2}} |x-1|^{\frac{\gamma \beta_2 }{2}}}  g(x)^{\frac{\gamma^2}{8}(q-1)} e^{\frac{\gamma}{2} X(x)} d {\mu}(x) \right)^q \right],
\end{equation}
which is defined with the following parameter choices:
\begin{equation}
t \in -\H, \: \: \beta_i < Q, \: \:   \mu_1, \mu_2 \in - \overline{\H}, \: \: \mu_3 \in  (0,\infty), \quad \text{and} \quad   q < \frac{4}{\gamma^2} \wedge \min_i \frac{2}{\gamma}(Q - \beta_i).
\end{equation}
We choose the argument of $\mu_1, \mu_2$ in $[-\pi,0]$ and the argument $(x-t)$ in $[0, \pi]$. With these choices of parameters the GMC integral is again a complex number which is avoiding the half-line $(-\infty,0)$ and whose argument can be chosen in $(-\pi, \pi)$. $\tilde{H}_{\chi}(t)$ obeys the exact same hypergeometric equation as $ H_{\chi}(t)$. 

\subsection{First shift equation for the three-point function}

We start again by proving the first shift equation on $\overline{H}$ by setting $\chi = \frac{\gamma}{2}$ and working with the functions $H_{\frac{\gamma}{2}}(t)$ and $\tilde{H}_{\frac{\gamma}{2}}(t)$. For this first lemma the parameter range on the $\beta_i$ and $\mu_i$ is such that each $\overline{H}$ appearing is defined probabilistically (without analytic continuation) meaning the bounds \eqref{para_H} are satisfied.
\begin{lemma}\label{Shift1_H_proba} ($\frac{\gamma}{2}$-shift equations for $\overline{H}$) The following two shift equations for $\overline{H}$ hold,
\begin{align}\label{eq_3pt_1}
&\overline{H}^{(\beta_1 , \beta_2 - \frac{\gamma}{2}, \beta_3)}_{( \mu_1, \mu_2,   \mu_3)} = \frac{\Gamma(\frac{\gamma \beta_1}{2} - \frac{\gamma^2}{4}) \Gamma(1 - \frac{\gamma \beta_2}{2} + \frac{\gamma^2}{4})}{\Gamma(\frac{\gamma \beta_1}{2} +(q-1) \frac{\gamma^2}{4}) \Gamma(1 - \frac{\gamma}{2}\beta_2 - (q-1)\frac{\gamma^2}{4})) } \overline{H}^{(\beta_1 - \frac{\gamma}{2}, \beta_2, \beta_3)}_{( \mu_1, e^{\frac{i \pi \gamma^2}{4}} \mu_2,  \mu_3)} \\
&+ \frac{ q \Gamma(2 + \frac{\gamma^2}{4} - \frac{\gamma \beta_1}{2}) \Gamma(1 - \frac{\gamma \beta_2}{2} + \frac{\gamma^2}{4})\Gamma(-1+\frac{\gamma \beta_1}{2} - \frac{\gamma^2}{4} ) \Gamma(1 - \frac{\gamma \beta_1}{2}) }{\Gamma(1 + \frac{q \gamma^2}{4}) \Gamma(2 - \frac{\gamma}{2}(\beta_1 + \beta_2)  - (q-2)\frac{\gamma^2}{4} )  ) \Gamma(- \frac{\gamma^2}{4}) }   \left( \mu_1 - \mu_2 e^{i \pi \frac{\gamma \beta_1}{2}} \right)  \overline{H}^{(\beta_1 + \frac{\gamma}{2}, \beta_2, \beta_3)}_{( \mu_1, e^{\frac{i \pi  \gamma^2}{4}} \mu_2,   \mu_3)}, \nonumber
\end{align}
and,
\begin{align}\label{eq_3pt_2}
& \Gamma(1- \frac{\gamma \beta_2  }{2}) (\mu_3 -\mu_2 e^{ \frac{i\pi \gamma \beta_2}{2}}) \overline{H}^{(\beta_1 , \beta_2 + \frac{\gamma}{2} , \beta_3)}_{(\mu_1, e^{i\pi\frac{\gamma^2}{4}}\mu_2,   \mu_3)} = - \frac{4}{q \gamma^2} \frac{ \Gamma(1-\frac{\gamma^2}{4}) \Gamma( \frac{\gamma \beta_1}{2} - \frac{\gamma^2}{4} ) }{\Gamma( -  \frac{q \gamma^2}{4} ) \Gamma(-1 + \frac{\gamma \beta_1}{2} + \frac{\gamma \beta_2}{2} - \frac{\gamma^2}{2} + q\frac{\gamma^2}{4} ) }  \overline{H}^{(\beta_1 -\frac{\gamma}{2} , \beta_2, \beta_3)}_{(  \mu_1,  \mu_2,   \mu_3)}  \nonumber\\
&  \quad + \frac{ \Gamma(2  -  \frac{\gamma \beta_1}{2} + \frac{\gamma^2}{4}) \Gamma(1-\frac{\gamma \beta_1}{2}) \Gamma(\frac{\gamma \beta_1}{2} - \frac{\gamma^2}{4} -1 )}{\Gamma(1- \frac{ \gamma \beta_1}{2} + \frac{\gamma^2}{4} - q\frac{\gamma^2}{4} ) \Gamma( \frac{ \gamma \beta_2 }{2} - \frac{\gamma^2}{4} +q\frac{\gamma^2}{4})}(\mu_1 -\mu_2 e^{i\pi (\frac{\gamma^2}{4} - \frac{ \gamma \beta_1}{2})}) \overline{H}^{(\beta_1 + \frac{\gamma}{2} , \beta_2 , \beta_3)}_{(\mu_1, \mu_2,   \mu_3)},
\end{align}
provided that for every function $\overline{H}$ appearing, its parameters obey the constraint \eqref{para_H} required for $\overline{H}$ to be defined probabilistically. 
\end{lemma}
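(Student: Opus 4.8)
The plan is to use that $H_{\frac{\gamma}{2}}(t)$ solves the hypergeometric equation established in Section~\ref{sec_BPZ}, so that on $\{0<|t|<1\}$ it lies in a two–dimensional solution space for which we have two explicit Frobenius bases: one adapted to $t=0$, with coefficients $C_1,C_2$, and one adapted to $t=1$, with coefficients $B_1,B_2$. The Gauss connection formula \eqref{connection1} writes $B_1,B_2$ as explicit $\Gamma$-factor combinations of $C_1,C_2$. The whole content of the lemma then reduces to (i) identifying each of the four coefficients with a value of $\overline{H}$ at weights shifted by $\pm\tfrac{\gamma}{2}$, by performing an operator product expansion directly on the probabilistic definition of $H_{\frac{\gamma}{2}}(t)$ near $t=0$ (fusion with the insertion at $0$) and near $t=1$ (fusion with the insertion at $1$), and (ii) inserting these identifications into the two connection relations $B_1=a_{11}C_1+a_{12}C_2$ and $B_2=a_{21}C_1+a_{22}C_2$, which reproduce \eqref{eq_3pt_1} and \eqref{eq_3pt_2} respectively after simplifying the $\Gamma$-products with \eqref{prop_gamma}. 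Finiteness of all moments in the working range is guaranteed by Proposition~\ref{lem:GMC-moment}.

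First I would treat the fusion at $0$. Since $C=\tfrac{\gamma\beta_1}{2}-\tfrac{\gamma^2}{4}$ and $1-C\in(0,1)$ in the working range, one has $C_1=H_{\frac{\gamma}{2}}(0)$. With the branch $\arg(t-x)\in[0,\pi]$, the factor $(-x)^{\gamma\chi/2}$ equals $|x|^{\gamma\chi/2}$ on $(-\infty,0)$ and $|x|^{\gamma\chi/2}e^{i\pi\gamma^2/4}$ on $(0,\infty)$; combined with $|x|^{-\gamma\beta_1/2}$ this shifts $\beta_1\mapsto\beta_1-\tfrac{\gamma}{2}$ and rotates the cosmological constants on the arcs meeting $0$, so that $C_1$ is an $\overline{H}^{(\beta_1-\frac{\gamma}{2},\beta_2,\beta_3)}$ with rotated constants. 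The coefficient $C_2$ of $t^{1-C}$ is the genuinely analytic input: following the subleading OPE of \cite{interval} I would expand $H_{\frac{\gamma}{2}}(t)-H_{\frac{\gamma}{2}}(0)$, apply the Girsanov theorem (Theorem~\ref{girsanov}) to turn the insertion at $x$ into a power weight inside a residual GMC whose expectation is $\overline{H}^{(\beta_1+\frac{\gamma}{2},\beta_2,\beta_3)}$, rescale $x=tu$, and read off the $t^{1-C}$ prefactor. The $u$-integral over $\mathbb{R}$ splits across $u=0$ into a piece weighted by $\mu_1$ (from $(-\infty,0)$) and a piece weighted by $\mu_2$ (from $(0,1)$); the branch of $(t-x)^{\gamma\chi/2}$ and the contour excursion around $u=-1$ recombine these into $\mu_1-\mu_2 e^{i\pi\gamma\beta_1/2}$, while the integral itself evaluates to the ratio of $\Gamma$-functions recorded in \eqref{equation integral 1}. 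This yields the quoted $C_2$, proportional to $q\,(\mu_1-\mu_2 e^{i\pi\gamma\beta_1/2})\,\overline{H}^{(\beta_1+\frac{\gamma}{2},\beta_2,\beta_3)}$.

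The fusion at $1$ is identical after the left–right mirror that fixes the arc $(0,1)$ and swaps the pairs $(\beta_1,\mu_1)\leftrightarrow(\beta_2,\mu_3)$; this is precisely the purpose of the auxiliary function $\tilde H_{\frac{\gamma}{2}}$, whose reflected argument $t\in-\H$ and $\mu_i$-conventions are tailored to a clean expansion at the insertion carrying $\beta_2$. Consequently $B_1$ is an $\overline{H}$ at $\beta_2-\tfrac{\gamma}{2}$ and $B_2$ carries the mirror combination $\mu_3-\mu_2 e^{i\pi\gamma\beta_2/2}$ multiplying an $\overline{H}$ at $\beta_2+\tfrac{\gamma}{2}$, exactly matching the left-hand sides of \eqref{eq_3pt_1} and \eqref{eq_3pt_2}. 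Feeding $C_1,C_2,B_1,B_2$ into the two Gauss relations and reducing the resulting $\Gamma$-quotients with the reflection and shift identities \eqref{prop_gamma} produces the two stated equations; the identities, first obtained where every $\overline{H}$ satisfies \eqref{para_H}, are then upgraded to the full stated range using the analyticity of Lemma~\ref{lem_analycity}.

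The main obstacle is the subleading OPE producing $C_2$ (and its mirror $B_2$) in the complex setting. In \cite{interval, DOZZ2} the integrands are nonnegative, so the error terms $o(t^{1-C})$ are controlled by positivity and the limiting boundary integral is a convergent real integral; here the $\mu_i$ are complex and the integrand is complex-valued, so one must instead track the argument of $(t-x)^{\gamma\chi/2}$ through the rescaling $x=tu$ and fix the contour of the divergent $u$-integral (the excursion just above $u=-1$) so that the two sides of the singularity at $u=0$ reassemble into $\mu_1-\mu_2 e^{i\pi\gamma\beta_1/2}$ rather than into a spurious phase. Getting this branch-and-contour bookkeeping exactly right, and dominating the error uniformly without the crutch of positivity, is where essentially all the difficulty of the lemma is concentrated; the algebraic passage through the connection formula is by comparison routine.
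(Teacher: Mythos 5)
Your proposal follows essentially the same route as the paper's proof: leading and subleading OPE expansions at $t=0$ and $t=1$ (via Girsanov, rescaling, and the integral \eqref{equation integral 1}) to identify the Frobenius coefficients as shifted $\overline{H}$'s with the phase factor $\mu_1-\mu_2 e^{i\pi\gamma\beta_1/2}$, then the connection formula \eqref{connection1} and the analyticity of Lemma \ref{lem_analycity} to reach the stated parameter range. One caution on implementation: for the second equation \eqref{eq_3pt_2} the paper feeds the connection relation with $\tilde{C}_1,\tilde{C}_2,\tilde{B}_2$ all computed from the single function $\tilde{H}_{\frac{\gamma}{2}}$ (not a mix of $H_{\frac{\gamma}{2}}$'s coefficients $C_1,C_2$ with a mirrored $B_2$), since $H_{\frac{\gamma}{2}}$ and $\tilde{H}_{\frac{\gamma}{2}}$ carry different phase rotations of the $\mu_i$; your mirror description is consistent with this provided it is carried out that way.
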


\begin{proof}
We first choose the parameters $\beta_1, \beta_2, \beta_3$ and $\mu_1, \mu_2 , \mu_3$ so that they obey the constraint \eqref{para_Ht_1} plus the following extra constraint on $\beta_1$:
\begin{equation}\label{para_range_301}
\frac{\gamma}{2} <  \beta_1 < \frac{2}{\gamma}.
\end{equation}
The function $t \mapsto H_{\frac{\gamma}{2}}(t)$ is holomorphic on $\mathbb{H}$ and extends continuously on $\overline{\mathbb{H}}$. Using the basis of solutions of the hypergeometric equation recalled in Section \ref{sec_hypergeo}, we can write the following solutions around $t=0$, $t= 1$ and $t=\infty$, under the assumption that neither $C$, $C-A-B$, or $A-B$ are integers:\footnote{Again the values excluded here are recovered by a continuity argument.}
\begin{align}
H_{\frac{\gamma}{2}}(t) &= C_1 F(A,B,C,t) + C_2 t^{1 - C} F(1 + A-C, 1 +B - C, 2 -C, t) \\
 &= B_1 F(A,B,1+A+B- C, 1 -t) + B_2 (1-t)^{C -A -B} F(C- A, C- B, 1 + C - A - B , 1 -t) \nonumber \\ \nonumber
 &= D_1 e^{i\pi A} t^{-A} F(A,1+A-C,1+A-B,t^{-1}) + D_2 e^{i\pi B}t^{-B} F(B, 1 +B - C, 1 +B - A, t^{-1}).
\end{align}
The constants $C_1, C_2, B_1, B_2, D_1, D_2$ are again the real constants that parametrize the solution space around the different points. As was performed in Section \ref{sec_bulk_boundary} we will identify them by Taylor expansion. First we note that by setting $t=0$:
\begin{equation}
C_1 = H_{\frac{\gamma}{2}}(0) = \overline{H}^{(\beta_1 - \frac{\gamma}{2}, \beta_2, \beta_3)}_{( \mu_1, e^{\frac{i \pi  \gamma^2}{4}} \mu_2, e^{\frac{i \pi  \gamma^2}{4}} \mu_3)}.
\end{equation}
Next to find $C_2$ we go at higher order in the $t \rightarrow 0$ limit. We then expand the increment $H_{\frac{\gamma}{2}}(t) - H_{\frac{\gamma}{2}}(0)$ at first order following the same step as for \eqref{OPE_no_reflection}: 
\begin{align}
&H_{\frac{\gamma}{2}}(t) - H_{\frac{\gamma}{2}}(0) \\ \nonumber
&= q \int_{\mathbb{R}} d \mu(x_1) \frac{(t - x_1)^{\frac{\gamma^2}{4}}- (- x_1)^{\frac{\gamma^2}{4}}}{\vert x_1 \vert^{\frac{\gamma \beta_1}{2}} \vert x_1 - 1 \vert^{\frac{\gamma \beta_2}{2}}} g(x_1)^{\frac{\gamma^2}{8}(q-1)} \mathbb{E} \left[ e^{\frac{\gamma}{2} X(x_1)} \left( \intr \frac{(-x)^{\frac{\gamma^2}{4}} g(x)^{\frac{\gamma^2}{8}(q-1)}}{|x|^{\frac{\gamma \beta_1 }{2}} |x-1|^{\frac{\gamma \beta_2 }{2} } }   e^{\frac{\gamma}{2} X(x)} d \mu(x) \right)^{q-1} \right]\\
& \nonumber \quad + o(t^{1-C})\\ \nonumber
&= q \int_{\mathbb{R}} d \mu(x_1) \frac{(t - x_1)^{\frac{\gamma^2}{4}}- (- x_1)^{\frac{\gamma^2}{4}}}{\vert x_1 \vert^{\frac{\gamma \beta_1}{2}} \vert x_1 - 1 \vert^{\frac{\gamma \beta_2}{2}}} \mathbb{E} \left[ \left( \intr \frac{(-x)^{\frac{\gamma^2}{4}} g(x)^{\frac{\gamma^2}{8}(q-2)}}{|x|^{\frac{\gamma \beta_1 }{2}} |x-1|^{\frac{\gamma \beta_2 }{2} } |x-x_1|^{\frac{\gamma^2}{2}}}   e^{\frac{\gamma}{2} X(x)} d \mu(x) \right)^{q-1} \right] + o(t^{1-C})\\ \nonumber
&= q t^{1-C} \left(  \int_{\mathbb{R}} d u \left( \mu_1\mathbf{1}_{(-\infty,0)}(u) + \mu_2  \mathbf{1}_{(0,+\infty)}(u) \right) \frac{(1 - u)^{\frac{\gamma^2}{4}}- (- u)^{\frac{\gamma^2}{4}}}{\vert u \vert^{\frac{\gamma \beta_1}{2}} } \right) \overline{H}^{(\beta_1 + \frac{\gamma}{2}, \beta_2, \beta_3)}_{( \mu_1, e^{\frac{i \pi  \gamma^2}{4}} \mu_2, e^{\frac{i \pi  \gamma^2}{4}} \mu_3)} + o(t^{1-C}).
\end{align}

The integral above in front of $\overline{H}$ converges thanks to the condition \eqref{para_range_301} and can be evaluated using \eqref{equation integral 2}. Also notice with our conventions the argument of $(- x)$ is either $0$ or $\pi$. Hence one obtains:
\begin{equation}\label{eq_ope_gluing2}
C_2  = q \frac{\Gamma(-1+\frac{\gamma \beta_1}{2} - \frac{\gamma^2}{4} ) \Gamma(1 - \frac{\gamma \beta_1}{2})}{\Gamma(- \frac{\gamma^2}{4})} \left( \mu_1 - \mu_2 e^{i \pi \frac{\gamma \beta_1}{2}} \right)  \overline{H}^{(\beta_1 + \frac{\gamma}{2}, \beta_2, \beta_3)}_{( \mu_1, e^{\frac{i \pi  \gamma^2}{4}} \mu_2,  e^{\frac{i \pi  \gamma^2}{4}}\mu_3)}.
\end{equation}
Similarly by setting $t=1 $ we get:
\begin{align}
B_1 & = \overline{H}^{(\beta_1 , \beta_2 - \frac{\gamma}{2} , \beta_3)}_{( \mu_1, \mu_2, e^{\frac{i \pi  \gamma^2}{4}} \mu_3)}.
\end{align}
The connection formula \eqref{connection1} between $C_1$, $C_2$, and $B_1$ then implies the shift equation \eqref{eq_3pt_1} in the range of parameters constraint by \eqref{para_Ht_1} and \eqref{para_range_301}, after performing furthermore the replacement $\mu_3 \rightarrow e^{-\frac{i \pi \gamma^2}{4}} \mu_3 $ (which also rotates the domain where $\mu_3$ belongs).  To lift these constraint we then invoke the analyticity of $\overline{H}$ as a function of its parameters given by Lemma \ref{lem_analycity}. We have thus shown that \eqref{eq_3pt_1} holds whenever all three $\overline{H}$ appearing are well-defined as GMC quantities. Now we repeat these steps with $\tilde{H}_{\frac{\gamma}{2}}$ to obtain the shift equation with the opposite phase. We expand $\tilde{H}_{\frac{\gamma}{2}}(t)$, 
\begin{align}
\tilde{H}_{\frac{\gamma}{2}}(t) &= \tilde{C}_1 F(A,B,C,t) + \tilde{C}_2 t^{1 - C} F(1 + A-C, 1 +B - C, 2 -C, t) \\ \nonumber
 &= \tilde{B}_1 F(A,B,1+A+B- C, 1 -t) + \tilde{B}_2 (1-t)^{C -A -B} F(C- A, C- B, 1 + C - A - B , 1 -t) \\ \nonumber
 &= \tilde{D}_1 e^{i\pi A} t^{-A} F(A,1+A-C,1+A-B,t^{-1}) + \tilde{D}_2 e^{i\pi B}t^{-B} F(B, 1 +B - C, 1 +B - A, t^{-1}),
\end{align}
and compute in the same way the values of $ \tilde{C}_1, \tilde{C}_2,  \tilde{B}_2$: 
\begin{align}
\tilde{C}_1 &=   \overline{H}^{(\beta_1 - \frac{\gamma}{2} , \beta_2 , \beta_3)}_{(  e^{\frac{i \pi  \gamma^2}{4}} \mu_1,  \mu_2,   \mu_3)} , \\
\tilde{C}_2 &= q \frac{\Gamma(-1+\frac{\gamma \beta_1}{2} - \frac{\gamma^2}{4} ) \Gamma(1 - \frac{\gamma \beta_1}{2})}{\Gamma(- \frac{\gamma^2}{4})} \left( \mu_1 e^{i\pi\frac{\gamma^2}{4}} - \mu_2 e^{i \pi (\frac{\gamma^2}{4}-\frac{\gamma \beta_1}{2})} \right)  \overline{H}^{(\beta_1 + \frac{\gamma}{2} , \beta_2 , \beta_3)}_{(e^{\frac{i \pi  \gamma^2}{4}} \mu_1, \mu_2,   \mu_3)},\\
\tilde{B}_2 &= q \frac{\Gamma(-1+\frac{\gamma \beta_2}{2} - \frac{\gamma^2}{4} ) \Gamma(1 - \frac{\gamma \beta_2}{2})}{\Gamma(- \frac{\gamma^2}{4})} \left( \mu_3 - \mu_2 e^{i \pi \frac{\gamma \beta_1}{2}} \right)  \overline{H}^{(\beta_1 + \frac{\gamma}{2} , \beta_2 , \beta_3)}_{(e^{\frac{i \pi  \gamma^2}{4}} \mu_1, e^{\frac{i\pi\gamma^2}{4}}\mu_2 , \mu_3)}. 
\end{align}
Then the connection formula \eqref{connection1} implies the shift equation \eqref{eq_3pt_2}.
\end{proof}

\subsection{Solving the boundary two-point function}

At this point we will postpone computing the boundary three-point function $\overline{H}$ and focus on determining shift equations that will completely specify $\overline{R}$. Once we have proved the exact formula for $\overline{R}$, it will then be possible to finish computing $\overline{H}$. In a similar way the value of $\overline{R}$ was required in the proof of the value of $\overline{G}$ in Section \ref{sec_bulk_boundary}.

\subsubsection{First shift equation on the reflection coefficient}

We start again by proving a first shift equation for $\overline{R}(\beta_1, \mu_1, \mu_2)$ restricted to the case where $\overline{R}$ is defined probabilistically, meaning the parameters obey the bounds of Definition \ref{def_four_correls}.

\begin{lemma}\label{lem_shift_eq_R} Consider $\gamma \in (0,2)$, $\beta_1 \in (\frac{\gamma}{2}, \frac{2}{\gamma})$, $\mu_1, \mu_2 \in \mathbb{C}$ such that both pairs $(\mu_1, \mu_2)$ and $(\mu_1, e^{\frac{i \pi \gamma^2}{4}}\mu_2)$ obey the condition of Definition \ref{half-space}. Then $\overline{R}(\beta, \mu_1, \mu_2)$ obeys,
\begin{equation}\label{equation R1}
\overline{R}(\beta_1, \mu_1, \mu_2) = - \frac{\Gamma(-1+\frac{\gamma \beta_1}{2} - \frac{\gamma^2}{4} ) \Gamma(2 - \frac{\gamma \beta_1}{2}) }{ \Gamma(1- \frac{\gamma^2}{4}) }  \left( \mu_1 - \mu_2 e^{i \pi \frac{\gamma \beta_1}{2}} \right) \overline{R}(\beta_1 + \frac{\gamma}{2}, \mu_1,  e^{\frac{i \pi \gamma^2}{4}} \mu_2).
\end{equation}
Similarly for $\beta_1 \in (0, \frac{2}{\gamma} - \frac{\gamma}{2})$ and the same constraint on $\mu_1, \mu_2$ as before,
\begin{equation}\label{equation R2}
\overline{R}(\beta_1+\frac{\gamma}{2}, \mu_1, e^{\frac{i\pi\gamma^2}{4}}\mu_2) =-\frac{\Gamma(-1+\frac{\gamma \beta_1}{2} ) \Gamma(2 - \frac{\gamma \beta_1}{2}-\frac{\gamma^2}{4}) }{ \Gamma(1- \frac{\gamma^2}{4}) }  \left( \mu_1 - \mu_2 e^{-i \pi \frac{\gamma \beta_1}{2}} \right) \overline{R}(\beta_1 + \gamma, \mu_1, \mu_2).
\end{equation}
\end{lemma}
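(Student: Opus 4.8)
The plan is to derive both identities by feeding the reflection limit of Lemma \ref{lim_H_R} into the two $\frac{\gamma}{2}$-shift equations \eqref{eq_3pt_1} and \eqref{eq_3pt_2} for $\overline{H}$ proved in Lemma \ref{Shift1_H_proba}. The mechanism to exploit is that each $\overline{H}^{(b_1,b_2,b_3)}$ develops a reflection pole as $b_2+b_3\downarrow b_1$, whose ``residue'' in that variable is exactly $2(Q-b_1)\overline{R}(b_1,\cdot,\cdot)$ by \eqref{cc4} and Lemma \ref{lim_H_R}. So if the free weight $\beta_3$ is tuned so that two of the three $\overline{H}$'s in a shift equation blow up at the \emph{same} location while the third stays finite, then multiplying the identity by the common vanishing factor collapses the three-term relation into a two-term relation between reflection coefficients.

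Concretely, for \eqref{equation R1} I would take \eqref{eq_3pt_1} and send $\beta_3\downarrow \beta_1-\beta_2+\frac{\gamma}{2}$. At this value the left term $\overline{H}^{(\beta_1,\beta_2-\frac{\gamma}{2},\beta_3)}$ and the last term $\overline{H}^{(\beta_1+\frac{\gamma}{2},\beta_2,\beta_3)}$ both reach their first-insertion reflection pole (at weight $\beta_1$, resp.\ $\beta_1+\frac{\gamma}{2}$), and in both cases the relevant factor is $(\beta_2-\frac{\gamma}{2}+\beta_3-\beta_1)$. Multiplying \eqref{eq_3pt_1} through by this factor and invoking Lemma \ref{lim_H_R} converts these two terms into $2(Q-\beta_1)\overline{R}(\beta_1,\mu_1,\mu_2)$ and $2(Q-\beta_1-\frac{\gamma}{2})\overline{R}(\beta_1+\frac{\gamma}{2},\mu_1,e^{\frac{i\pi\gamma^2}{4}}\mu_2)$, while the remaining term $\overline{H}^{(\beta_1-\frac{\gamma}{2},\beta_2,\beta_3)}$ stays finite (its pole sits at $\beta_3=\beta_1-\beta_2-\frac{\gamma}{2}$) and is annihilated by the vanishing factor. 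It then remains to simplify the Gamma quotient: at the limiting value $q\to\frac{2}{\gamma}(Q-\beta_1)$ one checks, using $\Gamma(x+1)=x\Gamma(x)$ and $Q=\frac{\gamma}{2}+\frac{2}{\gamma}$, that $\Gamma(1-\frac{\gamma\beta_2}{2}+\frac{\gamma^2}{4})$ cancels against $\Gamma(2-\frac{\gamma}{2}(\beta_1+\beta_2)-(q-2)\frac{\gamma^2}{4})$ and that $\Gamma(1+\frac{q\gamma^2}{4})$ cancels $\Gamma(2+\frac{\gamma^2}{4}-\frac{\gamma\beta_1}{2})$, leaving precisely the prefactor $-\tfrac{\Gamma(-1+\frac{\gamma\beta_1}{2}-\frac{\gamma^2}{4})\Gamma(2-\frac{\gamma\beta_1}{2})}{\Gamma(1-\frac{\gamma^2}{4})}$ of \eqref{equation R1}; the $\beta_2,\beta_3$ and $\mu_3$ dependence disappears because neither surviving $\overline{R}$ sees the third arc.

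Equation \eqref{equation R2} I would obtain by the same pole-matching applied to the conjugate identity \eqref{eq_3pt_2}, which is the shift equation coming from the opposite-phase observable $\tilde{H}_{\frac{\gamma}{2}}$. The conjugate branch choice there (the argument of $(x-t)$ versus $(t-x)$) is exactly what turns the phase $e^{i\pi\frac{\gamma\beta_1}{2}}$ into its reflection $e^{-i\pi\frac{\gamma\beta_1}{2}}$ and reverses the rotation of the right marginal, sending $e^{\frac{i\pi\gamma^2}{4}}\mu_2$ back to $\mu_2$. After the analogous reflection limit and the same $\Gamma$-function bookkeeping one reads off the coefficient $-\tfrac{\Gamma(-1+\frac{\gamma\beta_1}{2})\Gamma(2-\frac{\gamma\beta_1}{2}-\frac{\gamma^2}{4})}{\Gamma(1-\frac{\gamma^2}{4})}$ stated in \eqref{equation R2}.

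I expect the main difficulty to be the bookkeeping around the limit rather than any single estimate, and it has three parts. First, one must verify the alignment of the two reflection poles and that the discarded term is genuinely regular with a finite prefactor, so that it really drops; this is where the precise value $q\to\frac{2}{\gamma}(Q-\beta_1)$ and the ranges $\beta_1\in(\frac{\gamma}{2},\frac{2}{\gamma})$, resp.\ $(0,\frac{2}{\gamma}-\frac{\gamma}{2})$, matter, ensuring every $\overline{H}$ involved stays inside the probabilistic window \eqref{para_H} and that both $(\mu_1,\mu_2)$ and $(\mu_1,e^{\frac{i\pi\gamma^2}{4}}\mu_2)$ satisfy Definition \ref{half-space}. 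Second, one must justify taking the $\beta_3$ limit termwise in the identity, which should follow from the joint analyticity of $\overline{H}$ in its parameters (Lemma \ref{lem_analycity}) together with the uniform error control already used to establish Lemma \ref{lim_H_R}. Third is the lengthy but purely algebraic cancellation of the $\beta_2,\beta_3,\mu_3$ dependence described above. Having established the equalities on the probabilistic ranges, I would finish by extending them to the full ranges claimed in the lemma through analytic continuation in $\beta_1$.
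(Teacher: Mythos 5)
Your treatment of \eqref{equation R1} is exactly the paper's proof: the same choice $\beta_2=\beta_1-\epsilon$, the limit $\beta_3\downarrow\beta_1-\beta_2+\frac{\gamma}{2}$ applied to \eqref{eq_3pt_1}, the same identification of the two coinciding reflection poles with the middle term killed, and the $\Gamma$-cancellations you describe (using $q\to\frac{2}{\gamma}(Q-\beta_1)$) are precisely the ones that occur; the ranges you impose also match the paper's.

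The gap is in your derivation of \eqref{equation R2}: the identity \eqref{eq_3pt_2} cannot support ``the same pole-matching''. Its three terms are $\overline{H}^{(\beta_1,\beta_2+\frac{\gamma}{2},\beta_3)}_{(\mu_1,e^{i\pi\gamma^2/4}\mu_2,\mu_3)}$, $\overline{H}^{(\beta_1-\frac{\gamma}{2},\beta_2,\beta_3)}_{(\mu_1,\mu_2,\mu_3)}$ and $\overline{H}^{(\beta_1+\frac{\gamma}{2},\beta_2,\beta_3)}_{(\mu_1,\mu_2,\mu_3)}$, whose principal reflection poles in the channel of Lemma \ref{lim_H_R} sit at $\beta_3=\beta_1-\beta_2-\frac{\gamma}{2}$, $\beta_3=\beta_1-\beta_2-\frac{\gamma}{2}$ and $\beta_3=\beta_1-\beta_2+\frac{\gamma}{2}$ respectively. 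The two terms you need to couple in order to produce \eqref{equation R2} --- the rotated-$\mu_2$ term at weight $\beta_1$ and the unrotated term at weight $\beta_1+\frac{\gamma}{2}$ --- therefore never blow up at the same point, so no limit of \eqref{eq_3pt_2} yields that two-term relation. Matching instead at $\beta_3\downarrow\beta_1-\beta_2-\frac{\gamma}{2}$, where the first two terms do blow up together, also fails: the third term cannot be discarded there (it has a subleading pole, coming from the lattice $\beta_2+\beta_3=(\beta_1+\frac{\gamma}{2})-\gamma-n\gamma-m\frac{4}{\gamma}$ visible in the pole structure of $\mathcal{I}$), and indeed dropping it would produce an identity whose left side carries the factor $(\mu_3-\mu_2e^{i\pi\gamma\beta_2/2})$ while its right side contains no $\mu_3$ at all. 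Finally, matching in the channel of the second insertion only reproduces \eqref{equation R1} after using the symmetry $\overline{R}(\beta,\nu_1,\nu_2)=\overline{R}(\beta,\nu_2,\nu_1)$.

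What is actually needed --- and what the paper means by running the argument ``along the same lines'' with $\tilde{H}_{\frac{\gamma}{2}}$ --- is the \emph{other} connection identity attached to $\tilde{H}_{\frac{\gamma}{2}}$: not the $(\tilde{B}_2,\tilde{C}_1,\tilde{C}_2)$ relation, which is what \eqref{eq_3pt_2} is, but the $(\tilde{B}_1,\tilde{C}_1,\tilde{C}_2)$ relation, where $\tilde{B}_1=\overline{H}^{(\beta_1,\beta_2-\frac{\gamma}{2},\beta_3)}_{(e^{i\pi\gamma^2/4}\mu_1,\,e^{i\pi\gamma^2/4}\mu_2,\,\mu_3)}$. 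That identity is the mirror of \eqref{eq_3pt_1} with the opposite phase pattern, and pole-matching it at $\beta_3\downarrow\beta_1-\beta_2+\frac{\gamma}{2}$ gives a relation between $\overline{R}(\beta_1,e^{i\pi\gamma^2/4}\mu_1,e^{i\pi\gamma^2/4}\mu_2)$ and $\overline{R}(\beta_1+\frac{\gamma}{2},e^{i\pi\gamma^2/4}\mu_1,\mu_2)$ with phase factor $e^{i\pi\gamma^2/4}\left(\mu_1-\mu_2e^{-i\pi\gamma\beta_1/2}\right)$. One then still must strip the leftover rotation of $\mu_1$ --- a step absent from your sketch --- using the homogeneity $\overline{R}(\beta,\lambda\nu_1,\lambda\nu_2)=\lambda^{\frac{2}{\gamma}(Q-\beta)}\overline{R}(\beta,\nu_1,\nu_2)$, clear from \eqref{def_R2}; after the substitution $\mu_2\to e^{i\pi\gamma^2/4}\mu_2$ and the relabeling $\beta_1\to\beta_1+\frac{\gamma}{2}$ this lands exactly on \eqref{equation R2}.
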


\begin{proof}

The key idea to derive the shift equations for $\overline{R}$ is to take suitable limits of the shift equations of Lemma \ref{Shift1_H_proba} to make $\overline{R}$ appear from $\overline{H}$. We will use extensively the Lemma \ref{lim_H_R} of Section \ref{sec_def_reflection} which provides this limit. Fix a $\beta_1 \in (\frac{\gamma}{2}, \frac{2}{\gamma}) $. Consider two small parameters $\epsilon, \eta >0$ and set $ \beta_2 = \beta_1  - \epsilon $, $\beta_3 = \beta_1 - \beta_2 +\frac{\gamma}{2} + \eta = \frac{\gamma}{2} + \epsilon +\eta$. Notice that for this parameter choice the three $\overline{H}$ functions appearing in the shift equation \eqref{eq_3pt_1} are well-defined. Now the idea is to match the poles of \eqref{eq_3pt_1} as $\eta$ goes to $0$ or in other words as $\beta_3$  goes to $ \beta_1 -\beta_2+\frac{\gamma}{2}$. By applying Lemma \ref{lim_H_R} we get:

\begin{align*}
&\lim_{\beta_3 \downarrow \beta_1 - \beta_2 + \frac{\gamma}{2}} (\beta_2 + \beta_3 - \beta_1 - \frac{\gamma}{2}) \overline{H}^{(\beta_1, \beta_2 - \frac{\gamma}{2}, \beta_3)}_{(\mu_1, \mu_2, \mu_3)}  = 2 (Q-\beta_1) \overline{R}(\beta_1, \mu_1, \mu_2) \\
&\lim_{\beta_3 \downarrow \beta_1 - \beta_2 + \frac{\gamma}{2}} (\beta_2 + \beta_3 - \beta_1 - \frac{\gamma}{2})  \frac{\Gamma(\frac{\gamma \beta_1}{2} - \frac{\gamma^2}{4}) \Gamma(1 - \frac{\gamma \beta_2}{2} + \frac{\gamma^2}{4})}{\Gamma(\frac{\gamma \beta_1}{2} +(q-1) \frac{\gamma^2}{4}) \Gamma(1 - \frac{\gamma}{2}\beta_2 - (q-1)\frac{\gamma^2}{4})) } \overline{H}^{(\beta_1 - \frac{\gamma}{2}, \beta_2, \beta_3)}_{( \mu_1, e^{\frac{i \pi \gamma^2}{4}} \mu_2,  \mu_3)} = 0  \\
& \lim_{\beta_3 \downarrow \beta_1 - \beta_2 + \frac{\gamma}{2}} (\beta_2 + \beta_3 - \beta_1 - \frac{\gamma}{2})  \Bigg[ \frac{ q \Gamma(2 + \frac{\gamma^2}{4} - \frac{\gamma \beta_1}{2}) \Gamma(1 - \frac{\gamma \beta_2}{2} + \frac{\gamma^2}{4})\Gamma(-1+\frac{\gamma \beta_1}{2} - \frac{\gamma^2}{4} ) \Gamma(1 - \frac{\gamma \beta_1}{2}) }{\Gamma(1 + \frac{q \gamma^2}{4}) \Gamma(2 - \frac{\gamma}{2}(\beta_1 + \beta_2)  - (q-2)\frac{\gamma^2}{4} )  ) \Gamma(- \frac{\gamma^2}{4}) } \\
& \quad \quad \quad  \quad \quad \quad  \quad \quad \quad \times  \left( \mu_1 - \mu_2 e^{i \pi \frac{\gamma \beta_1}{2}} \right)  \overline{H}^{(\beta_1 + \frac{\gamma}{2}, \beta_2, \beta_3)}_{( \mu_1, e^{\frac{i \pi  \gamma^2}{4}} \mu_2,   \mu_3)} \Bigg] \\
& =  \frac{\frac{2}{\gamma}(Q -\beta_1)   \Gamma(-1+\frac{\gamma \beta_1}{2} - \frac{\gamma^2}{4} ) \Gamma(1 - \frac{\gamma \beta_1}{2}) }{ \Gamma(- \frac{\gamma^2}{4}) }  \left( \mu_1 - \mu_2 e^{i \pi \frac{\gamma \beta_1}{2}} \right) 2(\frac{2}{\gamma} - \beta_1) \overline{R}(\beta_1 + \frac{\gamma}{2}, \mu_1,  e^{\frac{i \pi \gamma^2}{4}} \mu_2).\\
\end{align*}
This leads to the equation \eqref{equation R1} on the reflection coefficient.
By using the alternative auxiliary function $\tilde{H}_{\frac{\gamma}{2}}(t)$ along the same lines we obtain equation \eqref{equation R2}.
Indeed, equations \eqref{equation R1} (with $\beta = \beta_1$) and \eqref{equation R2} (with $\beta = \beta_1 + \frac{\gamma}{2}$) are both stated for $\beta \in (\frac{\gamma}{2}, \frac{2}{\gamma})$ and when viewed in terms of $\beta$ they differ only by a sign in each occurence of the complex unit $i$. Therefore their proofs are essentially identical and we omit the proof of \eqref{equation R2}. This completes the proof of the lemma.
\end{proof}

At this point in the proof we need to show $\overline{R}(\beta_1,\mu_1,\mu_2)$ is analytic in $\beta_1$ in the interval $(\frac{\gamma}{2},Q)$ and in $\mu_1, \mu_2$ in the complex domain where Definition \ref{half-space} is satisfied. For this we will again take a limit from the first shift equation. 

\begin{lemma}(Analyticity of $\overline{R}(\beta_1,\mu_1,\mu_2)$ in $\beta_1$ and $\mu_1, \mu_2$) \label{lem:analycity_R}
For all $\mu_1, \mu_2 $ obeying Definition \ref{half-space}, the function $\beta_1 \mapsto \overline{R}(\beta_1,\mu_1,\mu_2)$ is complex analytic on a complex neighborhood of any compact set $K \subset (\frac{\gamma}{2}, Q)$. For all $\beta_1 \in (\frac{\gamma}{2},Q)$, the function $(\mu_1, \mu_2) \mapsto \overline{R}(\beta_1,\mu_1,\mu_2) $ is complex analytic on any compact set $\tilde{K}$ contained in the open set of pairs $(\mu_1, \mu_2)$ obeying Definition \ref{half-space}.
\end{lemma}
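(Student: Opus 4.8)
The plan is to establish the two analyticity statements separately, in each case transferring the regularity of an auxiliary object to $\overline{R}$: for the dependence on $(\mu_1,\mu_2)$ I would work directly from the probabilistic representation \eqref{def_R2}, while for the dependence on $\beta_1$ I would, as the statement hints, pass to the limit in the first shift equation \eqref{eq_3pt_1} and exploit the already-established analyticity of $\overline{H}$.

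\emph{Analyticity in $(\mu_1,\mu_2)$.} Fix $\beta_1\in(\frac{\gamma}{2},Q)$ and work with \eqref{def_R2}. For almost every realization of $(\mathcal{B}^{\lambda},Y)$ the quantity $Z(\mu_1,\mu_2):=\frac12\int_{-\infty}^{\infty}e^{\frac{\gamma}{2}\mathcal{B}_s^{\frac{Q-\beta_1}{2}}}\big(\mu_2 e^{\frac{\gamma}{2}Y(e^{-s/2})}+\mu_1 e^{\frac{\gamma}{2}Y(-e^{-s/2})}\big)\,ds$ is affine, hence entire, in $(\mu_1,\mu_2)$, and the half-space condition of Definition \ref{half-space} guarantees that $Z$ almost surely lies in a fixed half-space avoiding $(-\infty,0)$, so that $Z^{\frac{2}{\gamma}(Q-\beta_1)}$ is a well-defined holomorphic function of $(\mu_1,\mu_2)$. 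Since $Y$ is real valued, $|Z|\le (|\mu_1|\vee|\mu_2|)\cdot\frac12\int e^{\frac{\gamma}{2}\mathcal{B}^{\frac{Q-\beta_1}{2}}_s}\big(e^{\frac{\gamma}{2}Y(e^{-s/2})}+e^{\frac{\gamma}{2}Y(-e^{-s/2})}\big)\,ds$, and the positive integral on the right has a finite moment of order $\frac{2}{\gamma}(Q-\beta_1)$, this being exactly the finiteness underlying $\overline{R}(\beta_1,1,1)$. This yields $\sup_{(\mu_1,\mu_2)\in\tilde K}\mathbb{E}\big[|Z|^{\frac{2}{\gamma}(Q-\beta_1)}\big]<\infty$ on any compact $\tilde K$ inside the open region of admissible pairs. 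With this uniform bound, Fubini's theorem lets me interchange $\mathbb{E}$ with any contour integral over a triangle in $\tilde K$, pathwise Cauchy's theorem makes the contour integral vanish, and Morera's theorem gives separate holomorphy in each of $\mu_1,\mu_2$; dominated convergence (with the same domination) gives continuity, so Osgood's lemma upgrades this to joint holomorphy of $(\mu_1,\mu_2)\mapsto\overline{R}(\beta_1,\mu_1,\mu_2)$.

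\emph{Analyticity in $\beta_1$.} Here the probabilistic representation is awkward because $\beta_1$ enters the \emph{law} of $\mathcal{B}^{\frac{Q-\beta_1}{2}}$ through its drift, so I would instead use the defining limit \eqref{cc4}, realized by passing to the limit in \eqref{eq_3pt_1} exactly as in the proof of Lemma \ref{lem_shift_eq_R}. Fixing an admissible $\beta_2$ and $\mu_3$, set $F_{\beta_3}(\beta_1,\mu_1,\mu_2):=\frac{1}{2(Q-\beta_1)}(\beta_2+\beta_3-\beta_1)\,\overline{H}^{(\beta_1,\beta_2,\beta_3)}_{(\mu_1,\mu_2,\mu_3)}$. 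For each fixed $\beta_3$ slightly above $\beta_1-\beta_2$, Lemma \ref{lem_analycity} gives joint complex analyticity of $F_{\beta_3}$ in $(\beta_1,\mu_1,\mu_2)$ on a complex neighborhood $D$ of $K$, while Lemma \ref{lim_H_R} gives pointwise convergence $F_{\beta_3}\to\overline{R}$ along the real slice as $\beta_3\downarrow\beta_1-\beta_2$. To promote this to analyticity of the limit I would invoke the Vitali--Porter theorem: a locally bounded family of holomorphic functions that converges pointwise on a subset with a limit point in $D$ converges locally uniformly, with holomorphic limit. Applying this along sequences $\beta_3^{(n)}\downarrow$ to the real points of $K$ (which accumulate in $D$) then shows $\overline{R}$ is holomorphic on $D$. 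Crucially, Vitali requires only \emph{local uniform boundedness} of $\{F_{\beta_3}\}$, which is a one-sided bound and far weaker than uniform convergence.

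\emph{Main obstacle.} The technical heart is precisely this uniform bound: one must show that $(\beta_2+\beta_3-\beta_1)\,\overline{H}^{(\beta_1,\beta_2,\beta_3)}_{(\mu_1,\mu_2,\mu_3)}$ stays bounded as $\beta_3\downarrow\beta_1-\beta_2$, uniformly over $(\beta_1,\mu_1,\mu_2)$ in a complex neighborhood of the relevant compact set. This is exactly the regime where $\overline{H}$ develops a simple pole and the prefactor degenerates, so one must quantify the rate of blow-up, essentially the operator-product/reflection estimate powering Lemma \ref{lim_H_R} but now with the complex $\mu_i$ and with control uniform in the parameters. A secondary nuisance is that the admissible range of the limit variable $\beta_3$ depends on $\beta_1$; I would dispose of it by fixing $\beta_2$ once and for all and shrinking $D$ so that $\mathrm{Re}(\beta_1-\beta_2)$ remains in a fixed compact subinterval, allowing a single net $\beta_3\downarrow$ to serve all relevant $\beta_1$ simultaneously.
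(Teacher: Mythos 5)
Your argument for the $(\mu_1,\mu_2)$ dependence is essentially sound and is a genuine alternative to the paper: working directly from \eqref{def_R2}, pathwise affinity of $Z$ in $(\mu_1,\mu_2)$, the half-space condition of Definition \ref{half-space} keeping $Z$ away from the branch cut $(-\infty,0]$, the domination $|Z^{p}|=|Z|^{p}\le (|\mu_1|\vee|\mu_2|)^{p}\,Z_{1,1}^{p}$ with $p=\frac{2}{\gamma}(Q-\beta_1)>0$ (where $Z_{1,1}$ denotes the integral with $\mu_1=\mu_2=1$, whose $p$-th moment is finite), together with Morera and Osgood, do give joint holomorphy on compacts of admissible pairs. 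The paper instead obtains both halves of the lemma from a single identity, as described below.

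The $\beta_1$ part, however, contains a structural gap that goes beyond the uniform bound you acknowledge leaving open. The Vitali--Porter theorem requires a \emph{fixed} sequence of holomorphic functions converging pointwise, on a set with an accumulation point in the domain, to a \emph{fixed} limit function. In Lemma \ref{lim_H_R} the convergence is along the moving net $\beta_3\downarrow\beta_1-\beta_2$, whose limit point depends on the variable $\beta_1$ itself. For any fixed sequence $\beta_3^{(n)}\downarrow c$, the pointwise limit of $F_{\beta_3^{(n)}}(\beta_1)$ is not $\overline{R}(\beta_1,\mu_1,\mu_2)$: wherever the parameters stay in the probabilistic range it equals $\frac{\beta_2+c-\beta_1}{2(Q-\beta_1)}\,\overline{H}^{(\beta_1,\beta_2,c)}_{(\mu_1,\mu_2,\mu_3)}$, which coincides with $\overline{R}(\beta_1,\mu_1,\mu_2)$ only at the single point $\beta_1=\beta_2+c$. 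So Vitali can only re-derive the analyticity of a function already covered by Lemma \ref{lem_analycity}, and recovers $\overline{R}$ at one point per sequence; it yields no regularity of $\beta_1\mapsto\overline{R}(\beta_1,\mu_1,\mu_2)$. Any repair amounts to controlling $(\beta_1,\beta_3)\mapsto(\beta_2+\beta_3-\beta_1)\overline{H}^{(\beta_1,\beta_2,\beta_3)}$ jointly up to the diagonal $\beta_3=\beta_1-\beta_2$, which is exactly the uniform OPE-with-reflection estimate you defer. The paper sidesteps all of this: it sets $\beta_1=\frac{\gamma}{2}+\eta$ and $\beta_2=\beta_3$ in the shift equation \eqref{eq_3pt_1}, multiplies by $\eta$, exchanges $\mu_2$ and $\mu_3$, and lets $\eta\to0_+$, using Lemma \ref{lim_H_R} only at fixed real parameters; taking then $\mu_3=0$ gives the exact identity $\overline{R}(\beta_2,\mu_1,\mu_2)=\overline{R}(\beta_2,\mu_1,0)+\frac{2\mu_1}{\gamma(Q-\beta_2)}\overline{H}^{(\gamma,\beta_2,\beta_2)}_{(\mu_1,0,\mu_2)}$, in which $\overline{R}(\cdot,\mu_1,0)$ is the explicit analytic formula from \cite{interval} and the remaining $\overline{H}$ is a regular (non-degenerate) GMC moment, analytic in $\beta_2$ and in $(\mu_1,\mu_2)$ by Lemma \ref{lem_analycity}. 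Analyticity of $\overline{R}$ in both sets of variables then follows from this identity, with no normal-family or uniform-limit argument at all.
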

\begin{proof}
In the shift equation \eqref{eq_3pt_1}, set $\beta_1 = \frac{\gamma}{2}+\eta$, $\frac{\gamma}{2}< \beta_2 = \beta_3<Q$. We multiply the shift equation \eqref{eq_3pt_1} by $\eta$, exchange $\mu_2$ and $\mu_3$, and let $\eta \to 0_+$. Thanks to Lemma \ref{lim_H_R} this yields:
\begin{align}
2(Q-\beta_2)\overline{R}(\beta_2, \mu_1,\mu_2)&= (Q-\beta_2)\left(\overline{R}(\beta_2, \mu_1,\mu_2)+ \overline{R}(\beta_2, \mu_1,e^{\frac{i \pi \gamma^2}{4}} \mu_3)\right) \nonumber \\
&+\frac{2}{\gamma}\left( \mu_1 - \mu_3 e^{i \pi \frac{\gamma^2}{4}} \right)  \overline{H}^{(\gamma, \beta_2, \beta_2)}_{( \mu_1, e^{\frac{i \pi  \gamma^2}{4}} \mu_3,   \mu_2)} \nonumber \\
\Rightarrow \overline{R}(\beta_2, \mu_1,\mu_2)=   \overline{R}(\beta_2, &\mu_1,e^{\frac{i \pi \gamma^2}{4}} \mu_3) +\frac{2}{\gamma(Q-\beta_2)}\left( \mu_1 - \mu_3 e^{i \pi \frac{\gamma^2}{4}} \right)  \overline{H}^{(\gamma, \beta_2, \beta_2)}_{( \mu_1, e^{\frac{i \pi  \gamma^2}{4}} \mu_3,   \mu_2)}.
\end{align}
Take $\mu_3= 0$ in the previous equation and fix a compact $K \subset (\frac{\gamma}{2}, Q)$. In our previous work \cite[Proposition 1.5]{interval} we have calculated the expression of $\overline{R}(\beta_2,\mu_1,0)$ and it is complex analytic in $\beta_2$ in a complex neighborhood of $K$. By the result of Lemma \ref{lem_analycity} we know the function $\overline{H}^{(\gamma, \beta_2, \beta_2)}_{( \mu_1, 0,   \mu_2)}$ is also complex analytic in $\beta_2$ in a complex neighborhood of $K$. Therefore  the above equation with $\mu_3= 0$ implies the claim of analyticity for $\beta_2 \mapsto \overline{R}(\beta_2,\mu_1,\mu_2)$. The exact same reasoning implies the analyticity of  $(\mu_1, \mu_2) \mapsto \overline{R}(\beta_2,\mu_1,\mu_2)$.
\end{proof}

\subsubsection{OPE with reflection and the reflection principle}\label{section ope reflection 2}
We now move to performing the OPE with reflection. We rely extensively on Lemma \ref{lem_reflection_ope2} and Lemma \ref{lem_reflection_ope1} giving the Taylor expansions using the reflection coefficient. As in Section \ref{section ope reflection 1} we first use OPE with reflection for $\chi = \frac{\gamma}{2}$ to obtain the reflection principle.

\begin{lemma}[Reflection principle for $\overline{H}$] \label{reflection_H} Consider parameters $\mu_1, \mu_2, \mu_3 $, $\beta_1, \beta_2, \beta_3$ such that $\beta_1 \in (\frac{\gamma}{2},Q)$ and satisfying the parameter range \eqref{para_H} for $ \overline{H}^{(\beta_1,\beta_2, \beta_3)}_{( \mu_1, \mu_2,  \mu_3)}$ and $\overline{R}(\beta_1,\mu_1,\mu_2)$ to be well-defined. Then one can meromorphically extend $\beta_1 \mapsto  \overline{H}^{(\beta_1,\beta_2, \beta_3)}_{( \mu_1, \mu_2,  \mu_3)}$ beyond the point $\beta_1 = Q$ by the following relation:
\begin{align}\label{eq:reflection_R_H}
  \overline{H}^{(\beta_1,\beta_2, \beta_3)}_{( \mu_1, \mu_2,  \mu_3)} = -\frac{\Gamma(\frac{2\beta_1}{\gamma}-\frac{4}{\gamma^2})\Gamma(\frac{\beta_2+\beta_3-\beta_1}{\gamma})}{\Gamma(\frac{\beta_1+\beta_2+\beta_3-2Q}{\gamma})} \overline{R}(\beta_1,\mu_1,\mu_2)\overline{H}^{(2Q-\beta_1,\beta_2 , \beta_3)}_{( \mu_1, \mu_2,  \mu_3)}.
\end{align}
The quantity $\overline{H}^{(2Q-\beta_1,\beta_2 , \beta_3)}_{( \mu_1, \mu_2,  \mu_3)}$ is thus well-defined as long as $ \overline{H}^{(\beta_1,\beta_2, \beta_3)}_{( \mu_1, \mu_2,  \mu_3)}  $ and $\overline{R}(\beta_1,\mu_1,\mu_2)$ are well-defined.
Similarly, for $(\mu_1, \mu_2)$ satisfying the constraint of Definition \ref{half-space}, we can analytically extend $\beta_1 \mapsto \overline{R}(\beta_1,\mu_1,\mu_2)$ to the range $(\frac{\gamma}{2}, Q + \frac{2}{\gamma})$ thanks to the relation:
\begin{align}\label{equation R reflect}
\overline{R}(\beta_1,\mu_1,\mu_2)\overline{R}(2Q-\beta_1,\mu_1,\mu_2) = \frac{1}{\Gamma(1-\frac{2(Q-\beta_1)}{\gamma}) \Gamma(1+\frac{2(Q-\beta_1)}{\gamma})}.
\end{align}
\end{lemma}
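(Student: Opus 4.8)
The plan is to follow the strategy already used for $\overline{G}$ in Lemma \ref{reflection1}, working with $\chi = \frac{\gamma}{2}$ and the observable $H_{\frac{\gamma}{2}}(t)$, and to read off the reflection principle from the fact that the coefficient $C_2$ in the $t\to 0$ expansion can be computed in two different ways depending on the value of $\beta_1$. For $\beta_1\in(\frac{\gamma}{2},\frac{2}{\gamma})$ the plain OPE of Lemma \ref{Shift1_H_proba} gives
\[
C_2 = q\frac{\Gamma(-1+\frac{\gamma\beta_1}{2}-\frac{\gamma^2}{4})\Gamma(1-\frac{\gamma\beta_1}{2})}{\Gamma(-\frac{\gamma^2}{4})}\left(\mu_1-\mu_2 e^{i\pi\frac{\gamma\beta_1}{2}}\right)\overline{H}^{(\beta_1+\frac{\gamma}{2},\beta_2,\beta_3)}_{(\mu_1,e^{\frac{i\pi\gamma^2}{4}}\mu_2,e^{\frac{i\pi\gamma^2}{4}}\mu_3)},
\]
whereas for $\beta_1$ in a left-neighborhood of $Q$ the OPE with reflection, supplied by Lemma \ref{lem_reflection_ope1}, produces instead
\[
C_2 = \frac{2(Q-\beta_1)}{\gamma}\frac{\Gamma(\frac{2}{\gamma}(\beta_1-Q))\Gamma(\frac{2}{\gamma}(Q-\beta_1)-q)}{\Gamma(-q)}\,\overline{R}(\beta_1,\mu_1,\mu_2)\,\overline{H}^{(2Q-\beta_1-\frac{\gamma}{2},\beta_2,\beta_3)}_{(\mu_1,e^{\frac{i\pi\gamma^2}{4}}\mu_2,e^{\frac{i\pi\gamma^2}{4}}\mu_3)}.
\]
Since $t\mapsto H_{\frac{\gamma}{2}}(t)$ is complex analytic in $\beta_1$ on a complex neighborhood of $(\frac{\gamma}{2},Q)$ by Lemma \ref{lem_analycity}, so is $C_2$, and the two formulas must agree as meromorphic functions of $\beta_1$.

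First I would equate these two expressions and substitute $\beta_1\to\beta_1-\frac{\gamma}{2}$, so that the two $\overline{H}$'s become $\overline{H}^{(\beta_1,\beta_2,\beta_3)}$ and $\overline{H}^{(2Q-\beta_1,\beta_2,\beta_3)}$ carrying the same shifted cosmological constants, which I then relabel back to $(\mu_1,\mu_2,\mu_3)$. The reflection coefficient appears at a shifted weight and phase, and I would invoke the shift equations \eqref{equation R1}--\eqref{equation R2} of Lemma \ref{lem_shift_eq_R} to bring it to $\overline{R}(\beta_1,\mu_1,\mu_2)$. The decisive cancellation, exactly as in the passage from the matched $\tilde{C}_2$ to the reflection principle for $\overline{G}$, is that the $\left(\mu_1-\mu_2 e^{i\pi(\cdots)}\right)$ prefactor generated by the $\overline{R}$ shift equation coincides with the one sitting in the non-reflected formula for $C_2$; the two cancel, so the resulting identity depends on $\mu_1,\mu_2$ only through $\overline{R}(\beta_1,\mu_1,\mu_2)$. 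Simplifying the surviving Gamma factors with the reflection formula $\Gamma(z)\Gamma(1-z)=\pi/\sin(\pi z)$ and the relation $q\gamma = 2Q-\beta_1-\beta_2-\beta_3+\frac{\gamma}{2}$ yields precisely the prefactor of \eqref{eq:reflection_R_H}. I expect this to be the main obstacle: faithfully tracking the branch choices and phases of $\mu_2,\mu_3$ across the two OPE regimes and through the $\overline{R}$ shift, exactly the complex-valued bookkeeping that replaces the positivity-based inequalities of \cite{DOZZ2} and Section \ref{section ope reflection 1}.

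The product identity \eqref{equation R reflect} then follows cheaply by applying \eqref{eq:reflection_R_H} twice. Writing $a=\frac{2\beta_1}{\gamma}-\frac{4}{\gamma^2}$, $b=\frac{\beta_2+\beta_3-\beta_1}{\gamma}$, $d=\frac{\beta_1+\beta_2+\beta_3-2Q}{\gamma}$, the prefactor of \eqref{eq:reflection_R_H} is $-\Gamma(a)\Gamma(b)/\Gamma(d)$, and replacing $\beta_1$ by $2Q-\beta_1$ sends it to $-\Gamma(2-a)\Gamma(d)/\Gamma(b)$, because $\frac{2(2Q-\beta_1)}{\gamma}-\frac{4}{\gamma^2}=2-a$, $\frac{\beta_2+\beta_3-(2Q-\beta_1)}{\gamma}=d$ and $\frac{(2Q-\beta_1)+\beta_2+\beta_3-2Q}{\gamma}=b$. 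Substituting the relation for $\overline{H}^{(2Q-\beta_1,\beta_2,\beta_3)}$ into the one for $\overline{H}^{(\beta_1,\beta_2,\beta_3)}$ and cancelling $\overline{H}^{(\beta_1,\beta_2,\beta_3)}$ (nonzero for generic parameters) leaves $1=\Gamma(a)\Gamma(2-a)\,\overline{R}(\beta_1,\mu_1,\mu_2)\overline{R}(2Q-\beta_1,\mu_1,\mu_2)$, the factors $\Gamma(b)$ and $\Gamma(d)$ having cancelled. Finally $\frac{2(Q-\beta_1)}{\gamma}=1-a$ turns $\Gamma(a)\Gamma(2-a)$ into $\Gamma(1-\frac{2(Q-\beta_1)}{\gamma})\Gamma(1+\frac{2(Q-\beta_1)}{\gamma})$, giving \eqref{equation R reflect}. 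The stated ranges of validity are read off from requiring the reflected arguments $2Q-\beta_1$ to lie in the probabilistic domains, with the needed analyticity provided by Lemmas \ref{lem_analycity} and \ref{lem:analycity_R}.
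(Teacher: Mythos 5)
Your proposal is correct and follows essentially the same route as the paper's proof: computing $C_2$ for $H_{\frac{\gamma}{2}}(t)$ once via the plain OPE (valid for $\beta_1\in(\frac{\gamma}{2},\frac{2}{\gamma})$) and once via the OPE with reflection (valid near $Q$), gluing the two expressions by analyticity of $C_2$ in $\beta_1$, shifting $\beta_1\to\beta_1-\frac{\gamma}{2}$ and invoking the $\overline{R}$ shift equation \eqref{equation R1} so that the $\bigl(\mu_1-\mu_2 e^{i\pi(\cdot)}\bigr)$ factors cancel, and finally obtaining \eqref{equation R reflect} by applying \eqref{eq:reflection_R_H} twice. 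Your explicit bookkeeping of the Gamma factors under $\beta_1\to 2Q-\beta_1$ (showing $a\to 2-a$, $b\to d$, $d\to b$) is a correct verification of a step the paper leaves implicit.
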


\begin{proof}
 Throughout the proof we keep the same notations as used in the proof of Lemma \ref{Shift1_H_proba} for the solution space of the hypergeometric equation satisfied by $H_{\frac{\gamma}{2}}(t)$. The first step is to assume $\beta_1 \in (Q - \beta_0, Q )$ so that we can apply the result of Lemma \ref{lem_reflection_ope1} and identify the value of $C_2$ to be:
\begin{equation}\label{eq_ope_gluing}
C_2 = \frac{2(Q - \beta_1)}{\gamma} \frac{\Gamma(\frac{2}{\gamma}(\beta_1 -Q)) \Gamma(\frac{2}{\gamma}(Q -\beta_1) -q ) }{\Gamma(-q)} \overline{R}(\beta_1, \mu_1, \mu_2) \overline{H}^{(2 Q - \beta_1 - \frac{\gamma}{2} , \beta_2 , \beta_3)}_{( \mu_1, e^{i\pi\frac{\gamma^2}{4}}\mu_2, e^{\frac{i \pi  \gamma^2}{4}} \mu_3)}.
\end{equation}
The key argument is to observe that since by Lemma \ref{lem_analycity} $ \beta_1 \mapsto H_{\frac{\gamma}{2}}(t)$ is complex analytic so is the coefficient $C_2$. By using this combined with the analyticity of $\overline{R}$ and $\overline{H}$, we can extend the range of validity of equation \eqref{eq_ope_gluing} from $\beta_1 \in (Q - \beta_0, Q )$ to $\beta_1 \in (\frac{2}{\gamma}, Q )$. Now equation \eqref{eq_ope_gluing2} derived in the the proof of Lemma \ref{Shift1_H_proba} gives us an alternative expression for $C_2$, which is valid for $\beta_1 \in (\frac{\gamma}{2}, \frac{2}{\gamma})$. The analyticity of $\beta_1 \mapsto C_2$ in a complex neighborhood of $\frac{2}{\gamma}$ then implies that one can ``glue" together the two expressions for $C_2$. More precisely the equality,
\begin{align}
&\frac{2(Q - \beta_1)}{\gamma} \frac{\Gamma(\frac{2}{\gamma}(\beta_1 -Q)) \Gamma(\frac{2}{\gamma}(Q -\beta_1) -q ) }{\Gamma(-q)} \overline{R}(\beta_1, \mu_1, \mu_2) \overline{H}^{(2 Q - \beta_1 - \frac{\gamma}{2} , \beta_2 , \beta_3)}_{( \mu_1, e^{i\pi\frac{\gamma^2}{4}}\mu_2, e^{\frac{i \pi  \gamma^2}{4}} \mu_3)}  \\ \nonumber
&=q \frac{\Gamma(-1+\frac{\gamma \beta_1}{2} - \frac{\gamma^2}{4} ) \Gamma(1 - \frac{\gamma \beta_1}{2})}{\Gamma(- \frac{\gamma^2}{4})} \left( \mu_1 - \mu_2 e^{i \pi \frac{\gamma \beta_1}{2}} \right)  \overline{H}^{(\beta_1 + \frac{\gamma}{2}, \beta_2, \beta_3)}_{( \mu_1, e^{\frac{i \pi  \gamma^2}{4}} \mu_2, e^{\frac{i \pi  \gamma^2}{4}} \mu_3)},
\end{align} 
provides the desired analytic continuation of $\overline{H}$. To land on the form of the reflection equation given in the lemma one needs to replace $\beta_1$ by $\beta_1 - \frac{\gamma}{2}$. This transforms $\overline{R}(\beta_1, \mu_1, \mu_2)$ into $\overline{R}(\beta_1 - \frac{\gamma}{2}, \mu_1, \mu_2)$ which we can shift back to $\overline{R}(\beta_1, \mu_1, e^{\frac{i \pi \gamma^2}{4}} \mu_2)$ using the shift equation \eqref{equation R1}. Lastly we perform the parameter replacement $ e^{\frac{i \pi \gamma^2}{4}} \mu_2$ to $\mu_2$ and $ e^{\frac{i \pi \gamma^2}{4}} \mu_3$ to $\mu_3$. Therefore this implies the claim of the reflection principle for $\overline{H}$. The claim for $\overline{R}$ is then an immediate consequence of applying twice \eqref{eq:reflection_R_H}, where once we replace $\beta_1$ by $2Q- \beta_1$.
\end{proof}

\subsubsection{Analytic continuation of $\overline{H}$ and $\overline{R}$}

At this stage we will use the shift equations we have derived to analytically continue $\overline{H}$ and $\overline{R}$ both in the parameters $\beta_i$ and $\mu_i$. The analytic continuations will be defined in a larger range of parameters than the one of Definition \ref{def_four_correls} required for the GMC expression to be well-defined. 

\begin{lemma}\label{analycity_R} (Analytic continuation of $ \overline{R}$)
For all $ \mu_1, \mu_2$ obeying the constraint of Definition \ref{half-space}, the function $\beta_1 \mapsto \overline{R}(\beta_1, \mu_1, \mu_2)$ originally defined on the interval $(\frac{\gamma}{2}, Q)$ extends to a meromorphic function defined in a complex neighborhood of $\mathbb{R}$ and satisfying the shift equation:
\begin{equation}\label{eq:shift_R_beta}
\overline{R}(\beta_1, \mu_1, \mu_2) = -\frac{\Gamma(-1+\frac{\gamma \beta_1}{2} - \frac{\gamma^2}{4} )\Gamma(2 - \frac{\gamma \beta_1}{2}-\frac{\gamma^2}{4}) }{ \Gamma(1- \frac{\gamma^2}{4})^2 } \frac{\pi}{\sin(\pi\frac{\gamma\beta_1}{2})} \left| \mu_1 - \mu_2 e^{i \pi \frac{\gamma \beta_1}{2}} \right|^2\overline{R}(\beta_1 + \gamma, \mu_1, \mu_2).
\end{equation}
Furthermore, for a fixed $\beta_1$ in the above complex neighborhood of $\mathbb{R}$, the function $ \overline{R}(\beta_1, e^{i \pi \gamma (\sigma_1-\frac{Q}{2})},e^{i \pi \gamma (\sigma_2-\frac{Q}{2})})$ extends to a meromorphic function of $(\sigma_1, \sigma_2)$ on $\mathbb{C}^2$.
\end{lemma}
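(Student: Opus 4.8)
The plan is to prove the two assertions in turn: first the meromorphic continuation in $\beta_1$ together with the $\gamma$-shift equation \eqref{eq:shift_R_beta}, and then the meromorphic continuation in $(\sigma_1,\sigma_2)$.

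\textbf{The $\gamma$-shift and continuation in $\beta_1$.} First I would compose the two $\frac{\gamma}{2}$-shift equations \eqref{equation R1} and \eqref{equation R2} of Lemma \ref{lem_shift_eq_R}. The key point is that the phase $e^{\frac{i\pi\gamma^2}{4}}$ that \eqref{equation R1} introduces in the second argument is precisely undone by \eqref{equation R2}, so substituting \eqref{equation R2} into \eqref{equation R1} produces an identity relating $\overline{R}(\beta_1,\mu_1,\mu_2)$ and $\overline{R}(\beta_1+\gamma,\mu_1,\mu_2)$ at the same $\mu_1,\mu_2$. In the resulting prefactor the product $\Gamma(2-\frac{\gamma\beta_1}{2})\Gamma(-1+\frac{\gamma\beta_1}{2})$ is rewritten by the reflection formula $\Gamma(w)\Gamma(1-w)=\pi/\sin(\pi w)$ as $-\pi/\sin(\pi\frac{\gamma\beta_1}{2})$, while the two linear factors combine into $(\mu_1-\mu_2 e^{i\pi\frac{\gamma\beta_1}{2}})(\mu_1-\mu_2 e^{-i\pi\frac{\gamma\beta_1}{2}})=|\mu_1-\mu_2 e^{i\pi\frac{\gamma\beta_1}{2}}|^2$; keeping track of the three minus signs one lands exactly on \eqref{eq:shift_R_beta}. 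This composition is legitimate on the overlap $\beta_1\in(\frac{\gamma}{2},\frac{2}{\gamma}-\frac{\gamma}{2})$ of the validity ranges of \eqref{equation R1}--\eqref{equation R2}, where all three $\overline{R}$'s are the analytic objects provided by Lemma \ref{lem:analycity_R}. This overlap is nonempty only for $\gamma<\sqrt2$; for $\gamma\ge\sqrt2$ I would establish the identity for $\gamma<\sqrt2$ and extend to all $\gamma\in(0,2)$ by continuity in $\gamma$.

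With \eqref{eq:shift_R_beta} in hand I would use it exactly as one uses $\Gamma(x+1)=x\Gamma(x)$. Starting from the complex neighborhood of $(\frac{\gamma}{2},Q)$ on which $\beta_1\mapsto\overline{R}(\beta_1,\mu_1,\mu_2)$ is analytic by Lemma \ref{lem:analycity_R}, solving \eqref{eq:shift_R_beta} for $\overline{R}(\beta_1,\cdot)$ pushes the definition to the left and solving it for $\overline{R}(\beta_1+\gamma,\cdot)$ pushes it to the right, each by increments of $\gamma$, covering a complex neighborhood of all of $\mathbb{R}$. Since the prefactor is meromorphic in $\beta_1$ (a ratio of Gamma functions, the factor $\pi/\sin(\pi\frac{\gamma\beta_1}{2})$, and the entire $\mu$-factor), the continuation is meromorphic with poles dictated by these Gamma and sine factors.

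\textbf{Continuation in $(\sigma_1,\sigma_2)$.} Fix $\beta_1$ in this neighborhood. Because $\mu_i=e^{i\pi\gamma(\sigma_i-\frac{Q}{2})}$ of Definition \ref{def_sigma} is a locally biholomorphic change of variables, the analyticity of $(\mu_1,\mu_2)\mapsto\overline{R}(\beta_1,\mu_1,\mu_2)$ on the half-space domain (Lemma \ref{lem:analycity_R}) transfers directly to analyticity of $(\sigma_1,\sigma_2)\mapsto\overline{R}$ on an open base region, e.g. the preimage of the right half-space, where $\mathrm{Re}(\sigma_i)\in(\frac{Q}{2}-\frac{1}{2\gamma},\frac{Q}{2}+\frac{1}{2\gamma})$ and $\mathrm{Im}(\sigma_i)$ is unconstrained (the half-space condition of Definition \ref{half-space} restricts only $\arg\mu_i$, i.e. $\mathrm{Re}(\sigma_i)$). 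To extend past this region I would use two devices. The first is the homogeneity relation $\overline{R}(\beta_1,\lambda\mu_1,\lambda\mu_2)=\lambda^{\frac{2}{\gamma}(Q-\beta_1)}\overline{R}(\beta_1,\mu_1,\mu_2)$, immediate from the definition \eqref{def_R2} for $\lambda>0$ (the integrand is linear in $(\mu_1,\mu_2)$) and extended by analytic continuation; with $\lambda=e^{i\pi\gamma c}$ it says the dependence on the diagonal variable $\sigma_1+\sigma_2$ is the explicit entire factor $e^{2\pi i c(Q-\beta_1)}$, so $\overline{R}$ continues freely along the diagonal. The second is a pure shift in the difference direction: composing \eqref{equation R1} twice sends $(\beta_1,\mu_2)$ to $(\beta_1+\gamma,e^{\frac{i\pi\gamma^2}{2}}\mu_2)$, and one inverse application of the now-available $\gamma$-shift \eqref{eq:shift_R_beta} returns $\beta_1+\gamma$ to $\beta_1$, giving a meromorphic identity between $\overline{R}(\beta_1,\mu_1,\mu_2)$ and $\overline{R}(\beta_1,\mu_1,e^{\frac{i\pi\gamma^2}{2}}\mu_2)$, i.e. a shift $\sigma_2\mapsto\sigma_2+\frac{\gamma}{2}$ at fixed $\beta_1,\sigma_1$.

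On the base region $\mathrm{Re}(\sigma_1-\sigma_2)$ ranges over a band of width $\frac{2}{\gamma}$, while homogeneity sweeps the orthogonal sum direction freely and the pure shift translates the difference by $\frac{\gamma}{2}$; since $\frac{\gamma}{2}\le\frac{2}{\gamma}$ for every $\gamma\in(0,2)$ the translated bands overlap, so there are no gaps and the two devices together extend $\overline{R}$ to a meromorphic function on all of $\mathbb{C}^2$. I expect the main obstacle to be the bookkeeping forced by the complex-valued setting rather than any single hard estimate: one must track the arguments of the various $\mu_i$ --- the half-space condition is exactly what renders the arguments, the linear factors $\mu_1-\mu_2 e^{\pm i\pi\frac{\gamma\beta_1}{2}}$, and the fractional powers unambiguous --- so that the successive uses of \eqref{equation R1}, \eqref{equation R2} and \eqref{eq:shift_R_beta} are mutually consistent, and one must carry the ``$\gamma<\sqrt2$ then continuity in $\gamma$'' argument through wherever the validity ranges of the $\frac{\gamma}{2}$-shift equations fail to overlap.
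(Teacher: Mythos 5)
Your derivation of the $\gamma$-shift equation itself is the same as the paper's: compose the two $\frac{\gamma}{2}$-shift equations \eqref{equation R1} and \eqref{equation R2} of Lemma \ref{lem_shift_eq_R}, note that the phase $e^{\frac{i\pi\gamma^2}{4}}$ cancels, and use $\Gamma(w)\Gamma(1-w)=\pi/\sin(\pi w)$ to produce the prefactor of \eqref{eq:shift_R_beta}; your algebra here is correct. The treatment of $(\sigma_1,\sigma_2)$ is also in the spirit of the paper (which simply says the continuation "follows from the shift equations"), and your homogeneity device is legitimate --- it is the same scaling the paper itself uses for $\overline{H}$ in \eqref{eq:scalingH}.

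The genuine gap is your handling of $\gamma\ge\sqrt2$. As you note, the composition needs the three points $\beta_1,\ \beta_1+\frac{\gamma}{2},\ \beta_1+\gamma$ to lie in the domain where $\overline{R}$ is already defined, and the probabilistic domain $(\frac{\gamma}{2},Q)$ has length $\frac{2}{\gamma}<\gamma$ once $\gamma>\sqrt2$. But the proposed fix --- "establish the identity for $\gamma<\sqrt2$ and extend by continuity in $\gamma$" --- is circular: for $\gamma\ge\sqrt2$ and any $\beta_1>\frac{\gamma}{2}$ one has $\beta_1+\gamma>Q$, so the right-hand side $\overline{R}(\beta_1+\gamma,\mu_1,\mu_2)$ of \eqref{eq:shift_R_beta} has no meaning yet, neither probabilistic nor by continuation; there is no well-defined function of $\gamma$ to which a continuity argument could apply, since the continuation in $\beta_1$ is precisely what is being constructed. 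Continuity in $\gamma$ works in this paper only for identities between objects that already exist for all $\gamma$ (e.g.\ closed-form expressions). The paper's proof avoids the problem by a step you omit entirely: it first invokes the reflection identity \eqref{equation R reflect} of Lemma \ref{reflection_H}, $\overline{R}(\beta_1,\mu_1,\mu_2)\overline{R}(2Q-\beta_1,\mu_1,\mu_2)=\big(\Gamma(1-\tfrac{2(Q-\beta_1)}{\gamma})\Gamma(1+\tfrac{2(Q-\beta_1)}{\gamma})\big)^{-1}$, to extend $\beta_1\mapsto\overline{R}(\beta_1,\mu_1,\mu_2)$ from $(\frac{\gamma}{2},Q)$ to $(\frac{\gamma}{2},Q+\frac{2}{\gamma})$, an interval of length $\frac{4}{\gamma}$, which exceeds $\gamma$ for every $\gamma\in(0,2)$. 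Only then does it compose the two shift equations, so that \eqref{eq:shift_R_beta} is obtained uniformly in $\gamma$ and can be iterated, Gamma-function style, to reach a complex neighborhood of all of $\mathbb{R}$. Without the reflection step (or some substitute extending the initial domain beyond length $\gamma$), your argument proves the lemma only for $\gamma<\sqrt2$.
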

\begin{proof} Fix $\mu_1$ and $\mu_2$ obeying the constraint of Definition \ref{half-space}. The function $\beta_1 \mapsto \overline{R}(\beta_1, \mu_1, \mu_2) $ is originally defined on $(\frac{\gamma}{2}, Q)$ which has length  $\frac{2}{\gamma}$, but using \eqref{equation R reflect} we can analytically extend its definition to an interval of length $\frac{4}{\gamma}$, i.e. the interval $\beta_1 \in (\frac{\gamma}{2}, Q + \frac{2}{\gamma})$. This gives us a large enough interval to successively apply both shift equations of Lemma \ref{lem_shift_eq_R} and derive the shift equation \eqref{eq:shift_R_beta} written above. The advantage of \eqref{eq:shift_R_beta} is it only shifts the parameter $\beta_1$ and thus combined with Lemma \ref{lem:analycity_R} it implies the analytic continuation of $\beta_1 \mapsto \overline{R}(\beta_1, \mu_1, \mu_2)$ to a meromorphic function defined in a complex neighborhood of the real line. Now that the function $\overline{R}$ has been analytically extended as a function of $\beta_1$, the analytic continuation in $(\sigma_1, \sigma_2)$ follows directly from the shift equations of Lemma \ref{lem_shift_eq_R}.
\end{proof}

\begin{lemma}\label{analycity_H}(Analytic continuation of $\overline{H}$)  Fix $\mu_1, \mu_2, \mu_3 $ obeying the condition of Definition \ref{half-space}. The function $(\beta_1, \beta_2, \beta_3) \mapsto \overline{H}^{(\beta_1 , \beta_2, \beta_3)}_{( \mu_1, \mu_2,   \mu_3)}$ originally defined in the parameter range given by \eqref{para_H} extends to a meromorphic function of the three variables in a small complex neighborhood of $\mathbb{R}^3$. Now fix $\beta_1, \beta_2, \beta_3$ in this complex neighborhood of $\mathbb{R}^3$ and write $\mu_{i} := e^{i \pi \gamma (\sigma_i-\frac{Q}{2})}$  with the convention that $\mathrm{Re}(\sigma_i) = \frac{Q}{2}$ when $\mu_i>0$. The function 
\begin{equation}
(\sigma_1, \sigma_2, \sigma_3) \mapsto \overline{H}^{(\beta_1 , \beta_2, \beta_3)}_{( e^{i \pi \gamma (\sigma_1-\frac{Q}{2})}, e^{i \pi \gamma (\sigma_2-\frac{Q}{2})}, e^{i \pi \gamma (\sigma_3-\frac{Q}{2})})}
\end{equation}
then extends to a meromorphic function of $\mathbb{C}^3$.
\end{lemma}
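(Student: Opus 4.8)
The plan is to adapt, in the richer three-variable setting, the strategy already carried out for $\overline{R}$ in Lemma \ref{analycity_R} and for $\overline{G}$ in Lemma \ref{bb_continuation}: start from the open domain \eqref{para_H}, where Lemma \ref{lem_analycity} supplies the joint analyticity of $\overline{H}$ in $(\beta_1,\beta_2,\beta_3)$ and in $(\mu_1,\mu_2,\mu_3)$, and then propagate this analyticity by the available functional relations. The three ingredients are the two $\tfrac\gamma2$-shift equations \eqref{eq_3pt_1} and \eqref{eq_3pt_2} of Lemma \ref{Shift1_H_proba}, which simultaneously shift a pair of weights by $\pm\tfrac\gamma2$ and rotate $\mu_2$ by $e^{\pm i\pi\gamma^2/4}$, and the reflection principle \eqref{eq:reflection_R_H} of Lemma \ref{reflection_H}, which trades $\beta_1$ for $2Q-\beta_1$ at fixed $\mu$. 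Throughout we use the manifest $S_3$-symmetry of $\overline{H}$ under simultaneous permutation of the triples $(\beta_i,\mu_i)$, which follows from conformal covariance since the insertions at $0,1,\infty$ may be permuted by a M\"obius map of $\mathbb{H}$, to transfer any relation obtained for one insertion to the other two.

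For the continuation in the $\beta_i$ at fixed $\mu$, I would first observe that \eqref{eq_3pt_1} rotates $\mu_2$ by $e^{+i\pi\gamma^2/4}$ while \eqref{eq_3pt_2} rotates it by $e^{-i\pi\gamma^2/4}$; composing the two, exactly as the two shift equations of Lemma \ref{lem_shift_eq_R} were composed into the pure $\beta_1$-shift \eqref{eq:shift_R_beta} for $\overline{R}$, cancels the rotation and produces relations that translate any chosen weight $\beta_i$ (together with its $S_3$-images) by a full period $\gamma$ with $\mu$ unchanged and with meromorphic, generically non-vanishing coefficients. The only structural difference from the $\overline{R}$ case is that each equation has three terms rather than two, so the composed relation is a multi-term recursion among unrotated values; this is harmless, since at each step one term can be solved for. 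Iterating downward in each $\beta_i$ is unobstructed, iterating upward reaches the line $\beta_i=Q$, and the reflection principle \eqref{eq:reflection_R_H} (and its permuted analogues) lets us cross it. Because \eqref{para_H} is open and three-dimensional, with extent in each $\beta_i$ exceeding $\gamma$ for suitable choices of the remaining two weights, the $\gamma\mathbb{Z}^3$-translates and reflections of its complexification cover a full complex neighborhood of $\mathbb{R}^3$, giving the claimed meromorphic extension in $(\beta_1,\beta_2,\beta_3)$.

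For the continuation in the $\sigma_i$ at fixed $\beta$, the key point is that once $\overline{H}$ is meromorphic in $\beta$ on a neighborhood of $\mathbb{R}^3$, equation \eqref{eq_3pt_2} read in the forward direction expresses the value at $\mu_2\mapsto e^{i\pi\gamma^2/4}\mu_2$, that is at $\sigma_2\mapsto\sigma_2+\tfrac\gamma4$ by Definition \ref{def_sigma}, purely in terms of $\overline{H}$ at the unrotated $\mu_2$ and at $\beta$ shifted by $\pm\tfrac\gamma2$; equation \eqref{eq_3pt_1}, after the relabeling $\mu_2\mapsto e^{-i\pi\gamma^2/4}\mu_2$, provides the opposite step $\sigma_2\mapsto\sigma_2-\tfrac\gamma4$. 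These are genuine shift equations in $\sigma_2$ alone with $\beta$ free. Since the half-space condition carves out, in the $\sigma_2$ variable, a vertical strip of real-part width $\tfrac1\gamma>\tfrac\gamma4$ (the remark after Definition \ref{def_sigma}) and of unbounded imaginary part, the real translates by $\pm\tfrac\gamma4$ fill the entire $\sigma_2$-plane through overlapping strips, yielding the extension in $\sigma_2$ to all of $\mathbb{C}$. The shifts in $\sigma_1$ and $\sigma_3$ follow from the $S_3$-symmetry (equivalently, from repeating the operator product expansion of Lemma \ref{Shift1_H_proba} around $t=1$ and $t=\infty$), and iterating all three gives the extension to $\mathbb{C}^3$.

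The hard part, as everywhere in this section, will be the bookkeeping forced by complex $\mu_i$: one must keep the argument of every GMC integral and of every linear combination such as $\mu_1-\mu_2 e^{\pm i\pi\gamma\beta_1/2}$ unambiguous under the chosen branch cut, which is precisely what Definition \ref{half-space} guarantees, and one must verify that the prefactors appearing in the composed multi-term recursions are not identically zero or singular in the regions where the recursions are applied, so that solving for the shifted value genuinely continues the function without introducing spurious poles or losing information. The remaining geometric verifications, that the $\gamma\mathbb{Z}^3$-translates of \eqref{para_H} together with the reflections reach a neighborhood of all of $\mathbb{R}^3$, and that the $\tfrac\gamma4$-translates fill $\mathbb{C}^3$ in the $\sigma$ variables, are routine but are exactly where the explicit inequalities in \eqref{para_H} and the width estimate $\tfrac1\gamma>\tfrac\gamma4$ are needed.
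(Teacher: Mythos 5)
Your treatment of the continuation in the $\beta_i$ is essentially the paper's: compose the two $\frac{\gamma}{2}$-shift equations \eqref{eq_3pt_1} and \eqref{eq_3pt_2} so that the rotations of $\mu_2$ cancel, obtain a pure translation in one weight, and cross $\beta_i=Q$ with the reflection principle \eqref{eq:reflection_R_H}. One piece of bookkeeping you gloss over: the composed relation is a \emph{three-term} recursion linking $\beta_1-\frac{\gamma}{2}$, $\beta_1+\frac{\gamma}{2}$ and $\beta_1+\frac{3\gamma}{2}$, so iterating it requires the function to be already known on a $\beta_1$-window of width strictly larger than $2\gamma$, not merely "exceeding $\gamma$" as you assert. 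The paper manufactures this window by first pushing $\beta_1$ below the probabilistic range with the two-term equation \eqref{eq:anaH_shift2} and then reflecting; that preliminary step uses the rotated cosmological constant and therefore already forces \emph{both} $\sigma_2$ and $\sigma_2+\frac{\gamma}{4}$ to satisfy Definition \ref{half-space}.

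This is exactly where the genuine gap in your proposal lies, in the continuation in $\sigma_2$. Every shift equation couples the value at $\sigma_2$ to the value at $\sigma_2+\frac{\gamma}{4}$, and both points must lie in the half-space strip for the equation (and for the whole $\beta$-continuation built on it) to be available. Consequently, after the $\beta$-continuation the function is controlled only for $\mathrm{Re}(\sigma_2)$ in a strip of width $\frac{1}{\gamma}-\frac{\gamma}{4}$, not the full width $\frac{1}{\gamma}$ your covering argument uses. Since $\frac{1}{\gamma}-\frac{\gamma}{4}<\frac{\gamma}{4}$ precisely when $\gamma>\sqrt{2}$, in that regime the $\frac{\gamma}{4}$-translates of the available strip do not overlap and your iteration cannot even begin; the paper flags this obstruction explicitly in Step 5 of its proof ("we do not always have enough room to apply the shift equations"). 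The paper's resolution is not another shift equation but a different ingredient altogether: the global scaling covariance \eqref{eq:scalingH}, which says that rotating all three $\mu_i$ by a common phase $e^{i\phi}$ multiplies $\overline{H}$ by the explicit factor $e^{\frac{i\phi}{\gamma}(2Q-\beta_1-\beta_2-\beta_3)}$. Once $\sigma_1$ and $\sigma_3$ have been extended to all of $\mathbb{C}$ (which the paper does \emph{first}, via the shift equations attached to the other two insertion points), a simultaneous translation of all three $\sigma_i$ moves $\sigma_2$ anywhere at the cost of an explicit phase. This ingredient, and the order of operations it imposes (free $\sigma_1,\sigma_3$ before $\sigma_2$, rather than $\sigma_2$ first and then symmetry, as you propose), is absent from your argument; as written, your proof establishes the lemma only for $\gamma<\sqrt{2}$.
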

\begin{proof}
Our starting point will be a domain $\mathcal{E}_1$ of the parameters $\beta_i, \sigma_i$ where the condition \eqref{para_H} is satisfied for  $\overline{H}$ to be well-defined as a GMC moment. We choose the domain
\begin{equation}
\mathcal{E}_1 := \left \{  \beta_i \in (\frac{2}{\gamma} - \delta, Q) \times [-\nu, \nu], \: \sigma_i \in (-\frac{1}{2\gamma} + \frac{Q}{2}, \frac{1}{2\gamma} + \frac{Q}{2}) \times \mathbb{R} \right \}.
\end{equation}
By $\beta_i \in (\frac{2}{\gamma} - \delta, Q)\times [-\nu, \nu]$ we mean $\mathrm{Re}(\beta_i) \in (\frac{2}{\gamma} - \delta, Q) $ and $\mathrm{Im}(\beta_i) \in [-\nu, \nu] $, the same convention is used for the domain of $\sigma_i$. We have introduced $\delta, \nu >0$ chosen small enough so that \eqref{para_H} holds for $\beta_i \in (\frac{2}{\gamma} - \delta, Q) $ and one can apply Lemma \ref{lem_analycity} to show analyticity of $\overline{H}$ in all of its variables on $\mathcal{E}_1$. The condition on $\delta$ is $\delta < \frac{1}{\gamma} - \frac{\gamma}{4}$. The proof will now be divided into several steps each corresponding to analytically extending the definition of $\overline{H}$ on a larger domain until we finally construct a meromorphic function on the domain claimed in the lemma.\\

\noindent \textbf{Step 1.} We extend $\overline{H}$ to a meromorphic function defined on the domain
\begin{equation}
\mathcal{E}_2 := \left \{ \beta_i \in (\frac{2}{\gamma} - \delta, Q + \frac{\gamma}{2} + \delta) \times [-\nu, \nu] , \:\sigma_i \in (-\frac{1}{2\gamma} + \frac{Q}{2}, \frac{1}{2\gamma} + \frac{Q}{2}) \times \mathbb{R} \right \}.
\end{equation}
To do this we simply need to apply three times the equation \eqref{eq:reflection_R_H}, once for each variable. To define the value of $\overline{H}$ for $\beta_1, \beta_2 \in (Q, Q + \frac{2}{\gamma}) $, $ \beta_3 \in (\frac{2}{\gamma}, Q)$, we apply twice \eqref{eq:reflection_R_H} to obtain:
\begin{equation}
\overline{H}^{(\beta_1 , \beta_2, \beta_3)}_{( \mu_1, \mu_2,   \mu_3)} = \frac{\Gamma(\frac{2 \beta_1}{\gamma} - \frac{4}{\gamma^2} ) \Gamma(\frac{2 \beta_2}{\gamma} - \frac{4}{\gamma^2} ) \Gamma( \frac{2Q - \beta_1 -\beta_2 + \beta_3}{\gamma} )}{\Gamma( \frac{ \beta_1 +\beta_2 + \beta_3 - 2Q}{\gamma} )} \overline{R}(\beta_1, \mu_1, \mu_2) \overline{R}(\beta_2, \mu_1, \mu_2) \overline{H}^{(2Q - \beta_1 ,2Q - \beta_2, \beta_3)}_{( \mu_1, \mu_2,   \mu_3)}
\end{equation}
Notice this equation is symmetric with respect to $\beta_1, \beta_2$ as excepted as we need to get the same answer whether we first apply the reflection with respect to $\beta_1$ or $\beta_2$. The procedure works similarly for $\beta_3$. Hence we have extended $\overline{H}$ to $\mathcal{E}_2$.\\

\noindent \textbf{Step 2.} Here we fix a $\sigma_2$ such $\sigma_2$ and $\sigma_2 + \frac{\gamma}{4}$ are both in  $ (-\frac{1}{2\gamma} + \frac{Q}{2}, \frac{1}{2\gamma} + \frac{Q}{2})\times \mathbb{R}$, in other words $\sigma_2 \in (-\frac{1}{2\gamma} + \frac{Q}{2}, \frac{1}{2\gamma} +\frac{Q}{2} -\frac{\gamma}{4} ) \times \mathbb{R} $. Notice this set is not empty since $\frac{\gamma}{4} < \frac{1}{\gamma}$. We will now first extend $\overline{H}$ to the following domain
\begin{align}
\mathcal{E}_3 := \Bigg \{ \beta_1 \in &(Q - \gamma -\delta, Q + \gamma + \delta) \times [-\nu, \nu] , \: \beta_2, \beta_3 \in (\frac{2}{\gamma} - \delta , Q +\frac{\gamma}{2} +\delta) \times [-\nu, \nu] ,\\
&  \sigma_1, \sigma_3 \in (-\frac{1}{2\gamma} + \frac{Q}{2}, \frac{1}{2\gamma} + \frac{Q}{2}) \times \mathbb{R}, \: \sigma_2 \in (-\frac{1}{2\gamma} +\frac{Q}{2}, \frac{1}{2\gamma} +\frac{Q}{2} -\frac{\gamma}{4} ) \times \mathbb{R} \Bigg \}. \nonumber
\end{align}
To do this we will use the shift equations on $\overline{H}$ written in a more compact form as
\begin{align}\label{eq:anaH_shift1}
&\overline{H}^{(\beta_1 , \beta_2 - \frac{\gamma}{2}, \beta_3)}_{( \mu_1, \mu_2,   \mu_3)} = f_1(\beta_1, \beta_2) \overline{H}^{(\beta_1 - \frac{\gamma}{2}, \beta_2, \beta_3)}_{( \mu_1, e^{\frac{i \pi \gamma^2}{4}} \mu_2,  \mu_3)} + f_2(\beta_1, \beta_2) \overline{H}^{(\beta_1 + \frac{\gamma}{2}, \beta_2, \beta_3)}_{( \mu_1, e^{\frac{i \pi  \gamma^2}{4}} \mu_2,   \mu_3)}, \\ \label{eq:anaH_shift2}
 &\overline{H}^{(\beta_1 , \beta_2 + \frac{\gamma}{2} , \beta_3)}_{(\mu_1, e^{i\pi\frac{\gamma^2}{4}}\mu_2,   \mu_3)} = f_3(\beta_1, \beta_2) \overline{H}^{(\beta_1 -\frac{\gamma}{2} , \beta_2, \beta_3)}_{(  \mu_1,  \mu_2,   \mu_3)}   + f_4(\beta_1, \beta_2) \overline{H}^{(\beta_1 + \frac{\gamma}{2} , \beta_2 , \beta_3)}_{(\mu_1, \mu_2,   \mu_3)},
\end{align}
for some explicit functions $f_1, f_2, f_3, f_4$. The main idea is that if in the above shift equations two of three terms are well-defined - meaning the parameters of the two $\overline{H}$ functions belong to $\mathcal{E}_2$ - the third term will be defined by the shift equation. Fix $\beta_2 \in (\frac{2}{\gamma} - \delta, Q) \times [-\nu, \nu]$, $\sigma_2 \in (-\frac{1}{2\gamma} +\frac{Q}{2}, \frac{1}{2\gamma} +\frac{Q}{2} -\frac{\gamma}{4} ) \times \mathbb{R}$. Then for $\beta_1 \in (\frac{2 }{\gamma} - \delta, Q) \times [-\nu, \nu] $, the shift \eqref{eq:anaH_shift2} defines $\overline{H}$ for $\beta_1 \in (\frac{2}{\gamma} - \frac{\gamma}{2} - \delta, \frac{2}{\gamma} - \delta] \times [-\nu, \nu] $. Indeed the two terms 
\begin{equation}
\overline{H}^{(\beta_1 , \beta_2 + \frac{\gamma}{2} , \beta_3)}_{(\mu_1, e^{i\pi\frac{\gamma^2}{4}}\mu_2,   \mu_3)}, \quad \overline{H}^{(\beta_1 + \frac{\gamma}{2} , \beta_2 , \beta_3)}_{(\mu_1, \mu_2,   \mu_3)},
\end{equation}
 are well-defined and so the shift equation provides the definition of $\overline{H}^{(\beta_1 -\frac{\gamma}{2} , \beta_2, \beta_3)}_{(  \mu_1,  \mu_2,   \mu_3)}$. Then by applying equation \eqref{eq:reflection_R_H} as in the first step we can extend the range of $\beta_1$ and $\beta_2$ respectively to $(Q -\gamma - \delta, Q +\gamma + \delta) \times [-\nu, \nu]$ and $(\frac{2}{\gamma}-\delta, Q + \frac{\gamma}{2} + \delta) \times [-\nu, \nu]$. Hence we have extended $\overline{H}$ to the set $\mathcal{E}_3$. Next we are going to extend $\overline{H}$ to the set
\begin{align}
\mathcal{E}_4 := \Bigg \{ & \beta_1 \in \mathbb{R} \times [-\nu, \nu],\: \beta_2, \beta_3 \in (\frac{2}{\gamma} -\delta, Q +\frac{\gamma}{2} +\delta) \times [-\nu, \nu], \\
 &\sigma_1, \sigma_3 \in (-\frac{1}{2\gamma} + \frac{Q}{2}, \frac{1}{2\gamma} + \frac{Q}{2}) \times \mathbb{R}, \: \sigma_2 \in (-\frac{1}{2\gamma} + \frac{Q}{2}, \frac{1}{2\gamma} +\frac{Q}{2} -\frac{\gamma}{4} ) \times \mathbb{R} \Bigg \}. \nonumber
\end{align}
To do this let us write equation \eqref{eq:anaH_shift2} with the shift $\beta_1 \rightarrow \beta_1 + \gamma$
\begin{align}\label{eq:anaH_shift3}
 \overline{H}^{(\beta_1+\gamma , \beta_2 + \frac{\gamma}{2} , \beta_3)}_{(\mu_1, e^{i\pi\frac{\gamma^2}{4}}\mu_2,   \mu_3)} &= f_3(\beta_1 + \gamma, \beta_2) \overline{H}^{(\beta_1 +\frac{\gamma}{2} , \beta_2, \beta_3)}_{(  \mu_1,  \mu_2,   \mu_3)}   + f_4(\beta_1 +\gamma, \beta_2) \overline{H}^{(\beta_1 + \frac{3 \gamma}{2} , \beta_2 , \beta_3)}_{(\mu_1, \mu_2,   \mu_3)},
\end{align}
and equation \eqref{eq:anaH_shift1} with the shifts $\beta_1 \rightarrow \beta_1 + \frac{\gamma}{2}$,  $\beta_2 \rightarrow \beta_2 + \frac{\gamma}{2}$
\begin{align}\label{eq:anaH_shift4}
\overline{H}^{(\beta_1  + \frac{\gamma}{2}, \beta_2 , \beta_3)}_{( \mu_1, \mu_2,   \mu_3)} &= f_1(\beta_1 + \frac{\gamma}{2}, \beta_2 + \frac{\gamma}{2} ) \overline{H}^{(\beta_1, \beta_2 + \frac{\gamma}{2}, \beta_3)}_{( \mu_1, e^{\frac{i \pi \gamma^2}{4}} \mu_2,  \mu_3)} + f_2(\beta_1 + \frac{\gamma}{2}, \beta_2 + \frac{\gamma}{2}) \overline{H}^{(\beta_1 +\gamma, \beta_2 + \frac{\gamma}{2}, \beta_3)}_{( \mu_1, e^{\frac{i \pi  \gamma^2}{4}} \mu_2,   \mu_3)}.
\end{align}
Then by substituting equations \eqref{eq:anaH_shift2} and \eqref{eq:anaH_shift3} into \eqref{eq:anaH_shift4} one obtains
\begin{align}\label{eq:shift_only_x}
\overline{H}^{(\beta_1  + \frac{\gamma}{2}, \beta_2 , \beta_3)}_{( \mu_1, \mu_2,   \mu_3)} &= f_1(\beta_1 + \frac{\gamma}{2}, \beta_2 + \frac{\gamma}{2} ) \left( f_3(\beta_1, \beta_2) \overline{H}^{(\beta_1 -\frac{\gamma}{2} , \beta_2, \beta_3)}_{(  \mu_1,  \mu_2,   \mu_3)}   + f_4(\beta_1, \beta_2) \overline{H}^{(\beta_1 + \frac{\gamma}{2} , \beta_2 , \beta_3)}_{(\mu_1, \mu_2,   \mu_3)} \right)\\ \nonumber
& + f_2(\beta_1 + \frac{\gamma}{2}, \beta_2 + \frac{\gamma}{2}) \left( f_3(\beta_1 + \gamma, \beta_2) \overline{H}^{(\beta_1 +\frac{\gamma}{2} , \beta_2, \beta_3)}_{(  \mu_1,  \mu_2,   \mu_3)}   + f_4(\beta_1 +\gamma, \beta_2) \overline{H}^{(\beta_1 + \frac{3 \gamma}{2} , \beta_2 , \beta_3)}_{(\mu_1, \mu_2,   \mu_3)} \right)
\end{align}
which can be rewritten as
\begin{align}\label{eq:anaH_shift5}
0 &=   f_1(\beta_1 + \frac{\gamma}{2}, \beta_2 + \frac{\gamma}{2} )  f_3(\beta_1, \beta_2) \overline{H}^{(\beta_1 -\frac{\gamma}{2} , \beta_2, \beta_3)}_{(  \mu_1,  \mu_2,   \mu_3)} \\ \nonumber
&  + \left( 1+  f_1(\beta_1 + \frac{\gamma}{2}, \beta_2 + \frac{\gamma}{2} )  f_4(\beta_1, \beta_2) +  f_2(\beta_1 + \frac{\gamma}{2}, \beta_2 + \frac{\gamma}{2})  f_3(\beta_1 + \gamma, \beta_2) \right) \overline{H}^{(\beta_1 + \frac{\gamma}{2} , \beta_2 , \beta_3)}_{(\mu_1, \mu_2,   \mu_3)}   \\ \nonumber
& + f_2(\beta_1 + \frac{\gamma}{2}, \beta_2 + \frac{\gamma}{2})  f_4(\beta_1 +\gamma, \beta_2) \overline{H}^{(\beta_1 + \frac{3 \gamma}{2} , \beta_2 , \beta_3)}_{(\mu_1, \mu_2,   \mu_3)}.
\end{align}
This is a three term shift equation with the big advantage to contain only shifts on $\beta_1$. Notice there is a distance $2\gamma$ between $ \mathrm{Re}(\beta_1) - \frac{\gamma}{2} $ and $\mathrm{Re}(\beta_1) + \frac{3\gamma}{2}$ and that the set $\mathcal{E}_3$ contains an interval of length strictly larger than $2 \gamma$ for $\mathrm{Re}(\beta_1)$. This means we have enough room to apply the shift equation \eqref{eq:anaH_shift5} to analytically extend $\overline{H}$ to the set $\mathcal{E}_4$.
Finally using the shift equations again we can extend $\overline{H}$ to 
\begin{align*}
\mathcal{E}_5 := \Bigg \{ \beta_1,\beta_2 \in \mathbb{R} & \times [-\nu, \nu],\:  \beta_3 \in (\frac{2}{\gamma} - \delta, Q +\frac{\gamma}{2} + \delta) \times [-\nu, \nu], \\
& \sigma_1, \sigma_3 \in (-\frac{1}{2\gamma} + \frac{Q}{2}, \frac{1}{2\gamma} + \frac{Q}{2}) \times \mathbb{R}, \: \sigma_2 \in (-\frac{1}{2\gamma} + \frac{Q}{2}, \frac{1}{2\gamma} +\frac{Q}{2} -\frac{\gamma}{4} ) \times \mathbb{R} \Bigg \}. 
\end{align*}
We actually extend in $\sigma_2$ to both $\sigma_2 \in (-\frac{1}{2\gamma} + \frac{Q}{2}, \frac{1}{2\gamma} +\frac{Q}{2} -\frac{\gamma}{4} ) \times \mathbb{R}$ and $ \sigma_2 - \frac{\gamma}{4} \in (-\frac{1}{2\gamma} +\frac{Q}{2}, \frac{1}{2\gamma} +\frac{Q}{2} -\frac{\gamma}{4} ) \times \mathbb{R}$. The starting domain for $\beta_2$ is $(\frac{2}{\gamma} - \delta, Q + \frac{\gamma}{2} + \delta) \times [-\nu, \nu] $. To construct the extension to $\beta_2 \in (\frac{2}{\gamma} - \frac{\gamma}{2} - \delta, \frac{2}{\gamma} -\delta] \times [-\nu, \nu]$ and $\sigma_2 \in (-\frac{1}{2\gamma} +\frac{Q}{2}, \frac{1}{2\gamma} +\frac{Q}{2} -\frac{\gamma}{4} ) \times \mathbb{R}$ one can use the shift equation \eqref{eq:anaH_shift1}. Then by the reflection equation \eqref{eq:reflection_R_H} one obtains all values of $\beta_2$ in $(Q - \gamma - \delta, Q + \gamma +\delta) \times [-\nu, \nu] $. For the case of $ \sigma_2 - \frac{\gamma}{4} \in (-\frac{1}{2\gamma} + \frac{Q}{2}, \frac{1}{2\gamma} +\frac{Q}{2} -\frac{\gamma}{4} ) \times \mathbb{R}$, one first uses equation \eqref{eq:anaH_shift2} to extend the range of $\beta_2$ to the range $\beta_2 \in [Q + \frac{\gamma}{2} +\delta, Q +\gamma + \delta) \times [-\nu, \nu]$ then again by the reflection equation \eqref{eq:reflection_R_H} we obtain the range $\beta_2 \in (Q - \gamma -\delta, Q + \gamma +\delta) \times [-\nu, \nu]$. By iterating this procedure one can extend the range of $\beta_2$ to $\mathbb{R} \times [-\nu, \nu]$, hence we have obtain the extension of $\overline{H}$ to all of $\mathcal{E}_5$. \\ 

\noindent \textbf{Step 3.} In this step we start by writing the shift equations on $\overline{H}$ to shift the parameters $\beta_2, \beta_3, \sigma_3$, namely
\begin{align}\label{eq:anaH_shift6}
&\overline{H}^{(\beta_1 , \beta_2, \beta_3 - \frac{\gamma}{2})}_{( \mu_1, \mu_2,   \mu_3)} = f_1(\beta_2, \beta_3) \overline{H}^{(\beta_1, \beta_2 - \frac{\gamma}{2}, \beta_3)}_{( \mu_1,  \mu_2, e^{\frac{i \pi \gamma^2}{4}} \mu_3)} + f_2(\beta_2, \beta_3) \overline{H}^{(\beta_1, \beta_2 + \frac{\gamma}{2}, \beta_3)}_{( \mu_1,  \mu_2,  e^{\frac{i \pi  \gamma^2}{4}} \mu_3)}, \\ \label{eq:anaH_shift7}
 &\overline{H}^{(\beta_1 , \beta_2  , \beta_3+ \frac{\gamma}{2})}_{(\mu_1, \mu_2,  e^{i\pi\frac{\gamma^2}{4}}  \mu_3)} = f_3(\beta_2, \beta_3) \overline{H}^{(\beta_1  , \beta_2 -\frac{\gamma}{2}, \beta_3)}_{(  \mu_1,  \mu_2,   \mu_3)}   + f_4(\beta_2, \beta_3) \overline{H}^{(\beta_1  , \beta_2 + \frac{\gamma}{2}, \beta_3)}_{(\mu_1, \mu_2,   \mu_3)}.
\end{align}
Using these equations we will perform the meromorphic continuation to the set
\begin{equation}
\mathcal{E}_6 := \left \{ \beta_i \in \mathbb{R} \times [-\nu, \nu],  \: \sigma_1 \in (-\frac{1}{2\gamma} + \frac{Q}{2}, \frac{1}{2\gamma} + \frac{Q}{2}) \times \mathbb{R}, \: \sigma_2 \in (-\frac{1}{2\gamma} +\frac{Q}{2}, \frac{1}{2\gamma} +\frac{Q}{2} -\frac{\gamma}{4} ) \times \mathbb{R}, \:  \sigma_3 \in \mathbb{C} \right \}.
\end{equation}
Notice here that the big simplification compared to the second step is that the domain of the variable $\beta_2$ has already been extended to $\mathbb{R} \times [-\nu, \nu] $ previously, so we only need to worry about extending $\sigma_3$ and $\beta_3$. %Lets first explain how to extend $\overline{H}$ to $\beta_3 \in (\frac{2}{\gamma} - \frac{\gamma}{2} -\delta, \frac{2}{\gamma} -\delta ] \times [-\nu, \nu]$, with $\sigma_3$ still in $(-\frac{1}{2\gamma} + \frac{Q}{2}, \frac{1}{2\gamma} + \frac{Q}{2}) \times \mathbb{R}$. 
Notice that at $\sigma_3 + \frac{\gamma}{4}$ the function $\overline{H}$ is not necessarily yet well-defined. Fix $\beta_3 \in (\frac{2}{\gamma} - \delta, Q +\frac{\gamma}{2} + \delta) \times [-\nu, \nu]$. Using the reflection equation \eqref{eq:reflection_R_H} we can rewrite \eqref{eq:anaH_shift7} as 
\begin{equation}\label{eq:anaH_shift8}
f_5(\beta_i, \sigma_i) \overline{H}^{(\beta_1 , \beta_2  , 2 Q - \beta_3 - \frac{\gamma}{2})}_{(\mu_1, \mu_2,  e^{i\pi\frac{\gamma^2}{4}}  \mu_3)} = f_3(\beta_2, \beta_3) \overline{H}^{(\beta_1  , \beta_2 -\frac{\gamma}{2}, \beta_3)}_{(  \mu_1,  \mu_2,   \mu_3)}   + f_4(\beta_2, \beta_3) \overline{H}^{(\beta_1  , \beta_2 + \frac{\gamma}{2}, \beta_3)}_{(\mu_1, \mu_2,   \mu_3)}.
\end{equation}
for some function $f_5$ containing the Gamma function and $\overline{R}$ which is meromorphic in all of its parameters. This equation \eqref{eq:anaH_shift8} then allows to define the $\overline{H}$ function in the l.h.s. still for $\beta_3 \in (\frac{2}{\gamma} - \delta, Q +\frac{\gamma}{2} + \delta) \times [-\nu, \nu]$  and $\sigma_3 \in (-\frac{1}{2\gamma} + \frac{Q}{2}, \frac{1}{2\gamma} + \frac{Q}{2}) \times \mathbb{R} $. Plugging this into equation \eqref{eq:anaH_shift6} with $\beta_3$ replaced by $2Q - \beta_3 - \frac{\gamma}{2}$ it gives a definition of both terms on the r.h.s. of \eqref{eq:anaH_shift6}, which therefore implies the function $\overline{H}^{(\beta_1 , \beta_2, \frac{4}{\gamma}-\beta_3  )}_{( \mu_1, \mu_2,   \mu_3)}$ is well-defined for $\beta_3 \in (\frac{2}{\gamma} - \delta, Q +\frac{\gamma}{2} + \delta) \times [-\nu, \nu]$ and $\sigma_3 \in (-\frac{1}{2\gamma} + \frac{Q}{2}, \frac{1}{2\gamma} + \frac{Q}{2}) \times \mathbb{R} $. This means we have extended the range of $\beta_3$ for $\overline{H}$ to the interval $(\frac{2}{\gamma} -\gamma - \delta, Q + \frac{\gamma}{2} + \delta) \times [-\nu, \nu] $ which by using the reflection equation \eqref{eq:reflection_R_H} can be extended to $(\frac{2}{\gamma} -\gamma - \delta, Q + \frac{3\gamma}{2} + \delta)  \times [-\nu, \nu] $, still for $\sigma_3 \in (-\frac{1}{2\gamma} + \frac{Q}{2}, \frac{1}{2\gamma} + \frac{Q}{2}) \times \mathbb{R} $. By iterating this procedure we can extend indefinitely the interval of $\beta_3$ meaning that we get $\mathbb{R}\times [-\nu, \nu]$. Now with $\beta_2, \beta_3$ both extended to $\mathbb{R} \times [-\nu, \nu]$, the shift equations \eqref{eq:anaH_shift6} and \eqref{eq:anaH_shift7} immediately imply the range of $\sigma_3$ can be extended to $\mathbb{C}$.

\noindent \textbf{Step 4.} Writing now the shift equations of $\overline{H}$ on the parameters $\beta_3, \beta_1, \sigma_1$, and noting we have already extended the range of $\beta_1, \beta_3$ to $\mathbb{R}\times [-\nu, \nu]$, one can now perform the analytic continuation in $\sigma_1$ to all of $\mathbb{C}$. Hence we have extended $\overline{H}$ to a meromorphic function defined on the set

\begin{equation}
\mathcal{E}_7 := \left \{ \beta_i \in \mathbb{R} \times [-\nu, \nu],  \: (\sigma_1,\sigma_3) \in \mathbb{C}^2, \: \sigma_2 \in (-\frac{1}{2\gamma} + \frac{Q}{2}, \frac{1}{2\gamma} + \frac{Q}{2} -\frac{\gamma}{4} ) \times \mathbb{R} \right \}.
\end{equation}

\noindent \textbf{Step 5.} Lastly we will remove the constraint on $\sigma_2$, namely $\sigma_2 \in (-\frac{1}{2\gamma} + \frac{Q}{2}, \frac{1}{2\gamma} + \frac{Q}{2} -\frac{\gamma}{4} ) \times \mathbb{R} $ introduced in step 2. The problem is this interval for $\sigma_2$ has a width $\frac{1}{\gamma} -\frac{\gamma}{4} $ which is smaller than $\frac{\gamma}{4} $ when $\gamma > \sqrt{2}$. Therefore we do not always have enough room to apply the shift equations to analytically continue in $\sigma_2$. Instead we will notice there is an extra property of $\overline{H}$ that we have not used yet, which is that one can perform a global scaling of the cosmological constants $\mu_1, \mu_2, \mu_3$. Indeed notice that for $\mu_1, \mu_2, \mu_3$ in the probabilistic range and $e^{i \phi}$ a small angle one can easily show using the GMC expression that
\begin{equation}\label{eq:scalingH}
 \overline{H}^{(\beta_1 , \beta_2 , \beta_3)}_{(e^{i \phi} \mu_1,  e^{i \phi} \mu_2,  e^{i \phi}  \mu_3)} = e^{\frac{i \phi}{\gamma} (2Q - \beta_1 - \beta_2 - \beta_3) } \overline{H}^{(\beta_1 , \beta_2 , \beta_3)}_{(\mu_1, \mu_2,   \mu_3)}.
\end{equation}
In terms of the $\sigma_i$ variables, this means that if one adds a global shift $A$ to all three $\sigma_i$, then $\overline{H}$ is multiplied by a factor $e^{ i \pi A (2Q - \beta_1 - \beta_2 - \beta_3) }$. As a sanity check, this scaling property can also be verified on the exact formula for $\overline{H}$, see Lemma \ref{lem:scaling_I}. Thus since \eqref{eq:scalingH} must hold for the meromorphic extension of $\overline{H}$ to $\mathcal{E}_7$, it allow to extend $\sigma_2$ to all of $\mathbb{C}$. This completes the proof. 
 
\end{proof}

\subsubsection{Second shift equation on the reflection coefficient}

Finally we will derive the second shift equation on $ \overline{R}(\beta_1, \mu_1, \mu_2)$ that will completely specify its value.
\begin{lemma}(Second shift equation for $ \overline{R}$). For all $ \mu_1, \mu_2 $ obeying the constraint of Definition \ref{half-space}, the meromorphic function $\beta_1 \mapsto \overline{R}(\beta_1, \mu_1, \mu_2)$ defined in a complex neighborhood of $\mathbb{R}$ satisfies the following shift equation:
\begin{align}\label{R_second_shift} 
\overline{R}(\beta, \mu_1, \mu_2)  = \frac{(2\pi)^{\frac{8}{\gamma^2}}}{\gamma^2(Q-\beta)(\frac{\gamma}{2}-\beta)}  \frac{1}{\Gamma(1-\frac{\gamma^2}{4})^{\frac{8}{\gamma^2}} \sin(\pi\frac{2\beta}{\gamma}) \sin(\pi (\frac{2\beta}{\gamma}+\frac{4}{\gamma^2}))}\left|\mu_1^{\frac{4}{\gamma^2}}-\mu_2^{\frac{4}{\gamma^2}}e^{i\pi \frac{2\beta}{\gamma}}\right|^2 \overline{R}(\beta+\frac{4}{\gamma}, \mu_1, \mu_2).
\end{align}
\end{lemma}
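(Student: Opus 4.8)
The plan is to derive the $\tfrac{4}{\gamma}$-shift exactly as the first shift \eqref{eq:shift_R_beta} was derived, but driven by the dual degenerate weight $\chi=\tfrac{2}{\gamma}$ in place of $\chi=\tfrac{\gamma}{2}$. Concretely, I would replay the argument of Lemma \ref{lem_shift_eq_R} together with the composition step used in Lemma \ref{analycity_R}, now with $\chi=\tfrac{2}{\gamma}$: first produce two \emph{elementary} $\tfrac{2}{\gamma}$-shift equations, one relating $\overline R(\beta,\mu_1,\mu_2)$ to $\overline R(\beta+\tfrac{2}{\gamma},\mu_1,e^{i\pi}\mu_2)$ and one relating the latter to $\overline R(\beta+\tfrac{4}{\gamma},\mu_1,\mu_2)$, and then compose them. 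Since $\tfrac{\gamma\chi}{2}=1$ for $\chi=\tfrac{2}{\gamma}$, the phase attached to the degenerate insertion is $e^{i\pi\frac{\gamma\chi}{2}}=e^{i\pi}$, which is why the intermediate reflection coefficient carries the argument $e^{i\pi}\mu_2$ and why the final factor involves the dual phase $e^{i\pi\frac{2\beta}{\gamma}}$, the $\gamma\leftrightarrow\tfrac{4}{\gamma}$ counterpart of the factor $e^{i\pi\frac{\gamma\beta}{2}}$ in \eqref{eq:shift_R_beta}.

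First I would work with $H_{\frac{2}{\gamma}}(t)$ and $\tilde H_{\frac{2}{\gamma}}(t)$, which by Section \ref{sec_BPZ} solve the hypergeometric equation with $A=-q$, $B=-1+\tfrac{2}{\gamma}(\beta_1+\beta_2)-\tfrac{8}{\gamma^2}+q$, $C=\tfrac{2\beta_1}{\gamma}-\tfrac{4}{\gamma^2}$, and write their solution bases around $t=0$ and $t=1$ together with the connection formula \eqref{connection1}. As in Lemma \ref{Shift1_H_proba}, sending $t\to0$ and $t\to1$ identifies $C_1$ with $\overline H^{(\beta_1-\frac{2}{\gamma},\beta_2,\beta_3)}$ and $B_1$ with $\overline H^{(\beta_1,\beta_2-\frac{2}{\gamma},\beta_3)}$ (with the appropriate $e^{i\pi}$ rotations of $\mu_2,\mu_3$). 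The crucial new point is the coefficient $C_2$: unlike the case $\chi=\tfrac{\gamma}{2}$, where $C_2$ came from a convergent first-order expansion, for $\chi=\tfrac{2}{\gamma}$ the analogous single-insertion integral diverges, so $C_2$ must be extracted through the operator product expansion with reflection of Lemma \ref{lem_reflection_ope2}, which expresses it via $\overline R(\beta_1,\mu_1,\mu_2)$ and $\overline H^{(2Q-\beta_1-\frac{2}{\gamma},\beta_2,\beta_3)}$. The connection formula then yields the $\tfrac{2}{\gamma}$-analogues of the shift equations \eqref{eq_3pt_1}--\eqref{eq_3pt_2} on $\overline H$.

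Next I would pass to reflection coefficients via the resonance limit of Lemma \ref{lim_H_R}, choosing $\beta_2,\beta_3$ so that $\beta_2+\beta_3\downarrow\beta_1+\tfrac{2}{\gamma}$ as in Lemma \ref{lem_shift_eq_R}; $\tilde H_{\frac{2}{\gamma}}$ supplies the second elementary shift with the conjugate phase. Here the already established tools must be invoked to avoid a degenerate cancellation of the two surviving reflection coefficients: the reflection relation \eqref{equation R reflect}, the reflection principle \eqref{eq:reflection_R_H}, and above all the analytic continuations of $\overline R$ in $\beta$ and $(\sigma_1,\sigma_2)$ and of $\overline H$ from Lemmas \ref{analycity_R} and \ref{analycity_H}, which legitimize composing the two elementary shifts across the point $\beta=Q$ and collapsing the intermediate reflected correlators. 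Composing, the two conjugate $\mu$-factors assemble into $\bigl|\mu_1^{4/\gamma^2}-\mu_2^{4/\gamma^2}e^{i\pi\frac{2\beta}{\gamma}}\bigr|^2$, while the Gamma prefactors collapse — through $\Gamma(x)\Gamma(1-x)=\pi/\sin(\pi x)$ and the shift identities of Section \ref{sec:double_gamma} — into the denominators $\sin(\pi\tfrac{2\beta}{\gamma})\sin(\pi(\tfrac{2\beta}{\gamma}+\tfrac{4}{\gamma^2}))$ and the prefactor $(2\pi)^{8/\gamma^2}\big/\bigl[\gamma^2(Q-\beta)(\tfrac{\gamma}{2}-\beta)\,\Gamma(1-\tfrac{\gamma^2}{4})^{8/\gamma^2}\bigr]$.

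The hard part will be the OPE with reflection for the dual weight $\chi=\tfrac{2}{\gamma}$ (Lemma \ref{lem_reflection_ope2}): this is where the reflection coefficient enters and where the \emph{dual} cosmological constant $\mu_i^{4/\gamma^2}$, rather than $\mu_i$ itself, must emerge, reflecting that a degenerate insertion of weight $-\tfrac{2}{\gamma}$ probes the dual boundary interaction. Performing this asymptotic analysis with complex-valued GMC integrals is the principal obstacle, since the $\mu_i$ only obey the half-space condition of Definition \ref{half-space} and the positivity-based estimates of \cite{DOZZ2} must all be reworked; tracking branch cuts and phases consistently, and checking that the analytic-continuation machinery of Lemmas \ref{analycity_R}--\ref{analycity_H} authorizes each step outside the probabilistic range, is equally delicate. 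A useful final consistency check is that the resulting equation respects the global scaling \eqref{eq:scalingH} of the $\mu_i$, exactly as the first shift \eqref{eq:shift_R_beta} does.
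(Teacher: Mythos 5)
Your setup is correct (the $\chi=\tfrac{2}{\gamma}$ hypergeometric data, the identification of $C_1,B_1$ at $t=0,1$, and the fact that $C_2$ must come from the OPE with reflection of Lemma \ref{lem_reflection_ope2}), and your targets — the two elementary shifts relating $\overline{R}(\beta,\mu_1,\mu_2)$ to $\overline{R}(\beta+\tfrac{2}{\gamma},\mu_1,e^{i\pi}\mu_2)$ and the latter to $\overline{R}(\beta+\tfrac{4}{\gamma},\mu_1,\mu_2)$, then composed — are exactly the paper's intermediate identities. But the mechanism you propose for extracting them, the resonance limit $\beta_2+\beta_3\downarrow\beta_1+\tfrac{2}{\gamma}$ applied to the $\tfrac{2}{\gamma}$-analogue of \eqref{eq_3pt_1} as in Lemma \ref{lem_shift_eq_R}, yields a tautology rather than a shift equation. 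Indeed, setting $\epsilon=\beta_2+\beta_3-\beta_1-\tfrac{2}{\gamma}$, the left-hand side times $\epsilon$ gives $2(Q-\beta_1)\overline{R}(\beta_1,\mu_1,\mu_2)$ by Lemma \ref{lim_H_R}; but the reflected term does not produce $\overline{R}(\beta_1+\tfrac{2}{\gamma},\cdot)$: the correlator $\overline{H}^{(2Q-\beta_1-\frac{2}{\gamma},\beta_2,\beta_3)}$ inside $C_2$ has GMC moment $-\epsilon/\gamma\to 0$, hence tends to a constant, while the factor $\Gamma(\tfrac{2}{\gamma}(Q-\beta_1)-q)=\Gamma(\epsilon/\gamma)\sim\gamma/\epsilon$ in Lemma \ref{lem_reflection_ope2} exactly compensates the $\epsilon$, so that $\epsilon C_2\to 2(Q-\beta_1)\overline{R}(\beta_1,\mu_1,\mu_2)$ as well. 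The same $\overline{R}(\beta_1,\mu_1,\mu_2)$ survives on both sides and cancels; pre-converting $C_2$ with the reflection principle \eqref{eq:reflection_R_H} only reshuffles the cancellation, since the ratio $\overline{R}(\beta_1,\cdot)/\overline{R}(\beta_1+\tfrac{2}{\gamma},\cdot)$ it introduces gets multiplied back by the $\overline{R}(\beta_1+\tfrac{2}{\gamma},\cdot)$ produced by the resonance of $\overline{H}^{(\beta_1+\frac{2}{\gamma},\beta_2,\beta_3)}$. This is precisely the circularity the paper flags in the proof of Proposition \ref{full_shift_H}: the closed-form $\chi=\tfrac{2}{\gamma}$ shift equations on $\overline{H}$ presuppose the very $\tfrac{2}{\gamma}$-shift of $\overline{R}$ you are trying to prove.

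The paper escapes this with a different degenerate limit and an external input your proposal never invokes. First, it eliminates $C_1$ between the two connection formulas \eqref{connection1} and \eqref{hpy1}, obtaining the relation \eqref{equation 3.21} among the reflected coefficients $B_2$, $C_2$, $D_2$ alone, each of which is a product $\overline{R}\cdot\overline{H}$ by the OPE with reflection. Second, it specializes to $\beta_1=\beta$, $\beta_2=\tfrac{\gamma}{2}+\eta$, $\beta_3=Q-\beta$, $\mu_3=0$ and lets $\eta\to0$: with this tuning all three reflected correlators resonate simultaneously (for instance the gap of $\overline{H}^{(2Q-\beta-\frac{2}{\gamma},\frac{\gamma}{2}+\eta,Q-\beta)}$ at its first insertion equals $\eta$), so $\eta C_2$ converges to a constant times $\overline{R}(\beta,\mu_1,\mu_2)\,\overline{R}(2Q-\beta-\tfrac{2}{\gamma},\mu_1,e^{i\pi}\mu_2)$, which by \eqref{equation R reflect} is exactly the sought ratio $\overline{R}(\beta,\mu_1,\mu_2)/\overline{R}(\beta+\tfrac{2}{\gamma},\mu_1,e^{i\pi}\mu_2)$, while $\eta D_2$ and $\eta^2 B_2$ degenerate into reflection coefficients with one vanishing cosmological constant. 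Third — and this is indispensable — those latter quantities are evaluated using the explicit interval formula $\overline{R}(\beta,1,0)=\overline{R}_1^{\partial}(\beta)$ from \cite{interval}, together with the limit $\lim_{\eta\to0}\overline{H}^{(\beta,Q-\eta,Q-\beta)}_{(\mu_1,\mu_2,0)}=2$ adapted from \cite{DOZZ2}; without this known input, \eqref{equation 3.21} merely ties unknown quantities together. As written, your argument stalls at the resonance step, and even granting a workable limit it lacks the \cite{interval} input needed to make the elementary $\tfrac{2}{\gamma}$-shift explicit.
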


\begin{proof}
We are now working exclusively with the choice $\chi = \frac{2}{\gamma}$.  There will be several steps that will successively require to choose different ranges of parameters. We first place ourselves in the following range of parameters:
\begin{equation}\label{eq:para_range_11}
t \in \mathbb{H}, \: \: \epsilon \in (0, \beta_0), \: \:  \beta_1 = \beta_2 = Q -\epsilon, \:\: \beta_3 = \frac{2}{\gamma} + \epsilon,  \: \: \mu_1 \in (0,+\infty), \:\: \mu_2,\mu_3 \in (-\infty,0).
\end{equation}
In the above $\epsilon $ is chosen small enough, smaller than the constant $\beta_0$ required to apply Lemma \ref{lem_reflection_ope2}. Notice also that in this range  $ q < \frac{4}{\gamma^2} \wedge \min_i \frac{2}{\gamma}(Q - \beta_i)$. Furthermore in the above the choice of $\mu_i$ is such that we can apply Proposition \ref{BPZ_eq_3pt} giving that $H_{\frac{2}{\gamma}}(t)$ obeys the hypergeometric equation. We can thus expand $H_{\frac{2}{\gamma}}(t)$ on the basis,
\begin{align}
H_{\frac{2}{\gamma}}(t) &= C_1 F(A,B,C,t) + C_2 t^{1 - C} F(1 + A-C, 1 +B - C, 2 -C, t) \\ \nonumber
 &= B_1 F(A,B,1+A+B- C, 1 -t) + B_2 (1-t)^{C -A -B} F(C- A, C- B, 1 + C - A - B , 1 -t) \\ \nonumber
 &= D_1 e^{i\pi A} t^{-A} F(A,1+A-C,1+A-B,t^{-1}) + D_2 e^{i\pi B}t^{-B} F(B, 1 +B - C, 1 +B - A, t^{-1}),
\end{align}
where again $C_1, C_2, B_1, B_2, D_1, D_2 $ are parametrizing the solution space around the points $0$, $1$, and $\infty$. As before by sending $t$ to $0$ and to $1$ one obtains:
\begin{equation}\label{eq:C_1_B_!}
C_1 = H_{\frac{2}{\gamma}}(0) = \overline{H}^{(\beta_1 - \frac{2}{\gamma}, \beta_2, \beta_3)}_{( \mu_1, e^{i \pi} \mu_2, e^{i \pi} \mu_3)}, \quad B_1 = H_{\frac{2}{\gamma}}(1) = \overline{H}^{(\beta_1 , \beta_2 - \frac{2}{\gamma}, \beta_3)}_{( \mu_1,  \mu_2, e^{i \pi} \mu_3)}.
\end{equation}
Let us make some comments on the values of the $\mu_i$ appearing in $C_1$ and $B_1$. For $C_1$ since $\mu_2, \mu_3$ are negative,  $\mu_1$, $e^{i \pi} \mu_2$, $e^{i \pi} \mu_3$ are all positive numbers and the function $\overline{H}$ appearing is thus well-defined as a GMC quantity. For $B_1$ the parameters  $\mu_1$ and $e^{i \pi} \mu_3$ are positive while $\mu_2$ is negative, so we are no longer under the constraint of Definition \ref{half-space}, but rather in a limiting case. Since the moment of the GMC of the $\overline{H}$ appearing in $B_1$ is positive, i.e. the moment is equal to $ \frac{\epsilon}{\gamma} $, we can still make sense of this GMC by a simple continuity argument. See the remark below Proposition \ref{lem:GMC-moment} in appendix for an explanation.
Since the condition required for Lemma \ref{lem_reflection_ope2}, $\beta_1 \in (Q-\beta_0, Q)$, is satisfied one then derives:
\begin{equation}
 C_2 = \frac{2(Q - \beta_1)}{\gamma} \frac{\Gamma(\frac{2}{\gamma}(\beta_1 -Q)) \Gamma(\frac{2}{\gamma}(Q -\beta_1) -q ) }{\Gamma(-q)} \overline{R}(\beta_1, \mu_1, \mu_2) \overline{H}^{(2 Q - \beta_1 - \frac{2}{\gamma} , \beta_2 , \beta_3)}_{( \mu_1, e^{i \pi} \mu_2,  e^{i \pi} \mu_3)}.
\end{equation}
Now we write the connection formula \eqref{connection1} linking $C_1, B_1, C_2$, setting $\chi = \frac{2}{\gamma}$ in the equation below:
\begin{equation}
B_1 = \frac{\Gamma(\chi(\beta_1-\chi)) \Gamma(1 - \chi\beta_2 + \chi^2)}{\Gamma(\chi(\beta_1-\chi+q\frac{\gamma}{2}) \Gamma(1 - \chi\beta_2 + \chi^2 -q\frac{\gamma\chi}{2}) } C_1 + \frac{\Gamma(2  - \chi\beta_1+\chi^2) \Gamma(1 - \chi\beta_2 + \chi^2)}{\Gamma(1 + \frac{q \gamma\chi}{2}) \Gamma(2 - \chi(\beta_1 + \beta_2-2\chi+q\frac{\gamma}{2}))   } C_2.
\end{equation}
In the range of parameters we have been working with, all three constants $C_1, B_1, C_2$ are well-defined probabilistic quantities involving GMC through a function $\overline{H}$. But now by the analytic continuation result of Lemma \ref{analycity_H} we can view the above identity as an identity of the analytic function $\overline{H}$, and thus it holds in the whole range of parameters where $\overline{H}$ has been analytically continued. By repeating the above strategy in the range of parameters,
\begin{equation}\label{eq:para_range_12}
t \in \mathbb{H}, \: \: \epsilon \in (0, \beta_0), \: \:  \beta_1 = \beta_2 = Q -\epsilon, \:\: \beta_3 = \frac{2}{\gamma} + \epsilon,  \: \: \mu_1, \mu_2 \in (0,+\infty), \:\: \mu_3 \in (-\infty,0),
\end{equation}
one can identify $B_1$, $B_2$, $C_1$. Then again we can write the connection formula \eqref{connection1} linking $B_1$, $B_2$, $C_1$ and extend the identity to an identity of analytic functions. We can proceed similarly for all the triples $(B_1,B_2, D_1)$, $(B_1, D_1, D_2)$, $(C_1,C_2, D_1)$, and $(C_1, D_1, D_2)$. One can deduce the appropriate parameter ranges of $\beta_i, \mu_i$ for each of these four cases by performing a cyclic permutation of the indices $1,2,3$ in the parameters ranges \eqref{eq:para_range_11} and \eqref{eq:para_range_12} used above. Viewing $\overline{H}$ as its analytic extension given by Lemma \ref{analycity_H}, the values of these remaining coefficients are as follows:
\begin{align}
D_1 &=  \overline{H}^{(\beta_1  , \beta_2, \beta_3 - \frac{2}{\gamma})}_{( e^{i \pi} \mu_1, e^{i \pi} \mu_2,  e^{i \pi}\mu_3)}, \\
 B_2 &= \frac{2(Q - \beta_2)}{\gamma} \frac{\Gamma(\frac{2}{\gamma}(\beta_2 -Q)) \Gamma(\frac{2}{\gamma}(Q -\beta_2) -q ) }{\Gamma(-q)} \overline{R}(\beta_2, e^{i \pi} \mu_2, e^{i \pi} \mu_3) \overline{H}^{(\beta_1,2 Q - \beta_2 - \frac{2}{\gamma} , \beta_3)}_{( \mu_1, \mu_2,  e^{i \pi} \mu_3)},\\
 D_2 &= \frac{2(Q - \beta_3)}{\gamma} \frac{\Gamma(\frac{2}{\gamma}(\beta_3 -Q)) \Gamma(\frac{2}{\gamma}(Q -\beta_3) -q ) }{\Gamma(-q)} \overline{R}(\beta_3, \mu_1, e^{i \pi} \mu_3) \overline{H}^{(\beta_1 , \beta_2 , 2Q-\beta_3 -\frac{2}{\gamma})}_{( e^{i \pi} \mu_1, e^{i \pi} \mu_2,  e^{i \pi} \mu_3)}.
\end{align}

With this at hand we apply the connection formulas coming from the hypergeometric equation in the following way. We use the relation \eqref{connection1} expressing $B_2$ in terms of $C_1$ and $C_2$, as well as 
\begin{align}
D_2 = \frac{\Gamma(C)\Gamma(A-B)}{\Gamma(A)\Gamma(C-B)}C_1 + e^{i \pi (1-C)}\frac{\Gamma(2-C)\Gamma(A-B)}{\Gamma(1-B)\Gamma(A-C+1)}C_2
\end{align}
coming from \eqref{hpy1} to eliminate $C_1$ and obtain the following relation:
\begin{align}\label{equation 3.21}
\frac{\Gamma(B)}{\Gamma(A+B-C)}B_2-\frac{\Gamma(C-B)}{\Gamma(A-B)}D_2 = \frac{\Gamma(2-C)}{\Gamma(A-C+1)}\left(  \frac{\Gamma(B)}{\Gamma(B-C+1)} -\frac{e^{i \pi (1-C)}\Gamma(C-B)}{\Gamma(1-B)} \right) C_2.
\end{align}
Let us now take $\beta_1 = \beta \in (\frac{\gamma}{2}, \frac{2}{\gamma})$, $\beta_2 = \frac{\gamma}{2}+\eta$, $\beta_3 = Q-\beta$, $\mu_3 = 0$ and $\mu_1, \mu_2$ such that the pairs $(\mu_1, \mu_2)$, $(\mu_1, e^{i \pi} \mu_2)$, $(e^{i \pi} \mu_1, e^{i \pi} \mu_2)$ all obey the condition of Definition \ref{half-space}.
We will study the asymptotic as $\eta\to 0$. Equation \eqref{equation 3.21} gives a relation between $C_2, B_2, D_2$ which respectively contain the $\overline{H}$ functions
\begin{align}
\overline{H}^{(2 Q - \beta_1 - \frac{2}{\gamma} , \beta_2 , \beta_3)}_{( \mu_1, e^{i \pi} \mu_2,  0)},   \quad \overline{H}^{(\beta_1,2 Q - \beta_2 - \frac{2}{\gamma} , \beta_3)}_{( \mu_1, \mu_2,  0)}, \quad \overline{H}^{(\beta_1 , \beta_2 , 2Q-\beta_3 -\frac{2}{\gamma})}_{( e^{i \pi} \mu_1, e^{i \pi} \mu_2,  0)}.
\end{align}

One can check that the parameter range we have specified is such all three of the above $\overline{H}$ functions obey the constraint \eqref{para_H} for the GMC definition to hold. We compute
\begin{equation}
q = \frac{4}{\gamma^2}-\frac{\eta}{\gamma}, \quad A = -\frac{4}{\gamma^2}+\frac{\eta}{\gamma}, \quad  B = \frac{2\beta}{\gamma}-\frac{4}{\gamma^2}+\frac{\eta}{\gamma}, \quad C = \frac{2\beta}{\gamma}-\frac{4}{\gamma^2},
\end{equation}
and 
\begin{align}
\lim_{\eta \to 0} \eta D_2 &=- 2(\frac{2}{\gamma}-\beta)\frac{\Gamma(\frac{2\beta}{\gamma}-\frac{4}{\gamma^2})\Gamma(1-\frac{2\beta}{\gamma})}{\Gamma(-\frac{4}{\gamma^2})} \overline{R}(Q-\beta, \mu_1, 0)\overline{R}(\beta+\frac{\gamma}{2}, e^{i \pi}\mu_1, 0),  \\
\lim_{\eta \to 0} \eta C_2 &= -2(\beta-\frac{\gamma}{2})\frac{\Gamma(\frac{2\beta}{\gamma}-\frac{4}{\gamma^2})\Gamma(1-\frac{2\beta}{\gamma})}{\Gamma(-\frac{4}{\gamma^2})}  \overline{R}(\beta, \mu_1, \mu_2)  \overline{R}(2Q-\beta-\frac{2}{\gamma}, \mu_1, e^{i \pi} \mu_2),\\
\lim_{\eta \to 0}\eta^2 B_2 &= -\frac{8}{\gamma} \lim_{\eta \to 0}\eta  \overline{R}(\frac{\gamma}{2}+\eta, e^{i \pi}\mu_2, 0).
\end{align}
To obtain the first two limits above one simply needs to apply Lemma \ref{lim_H_R}. For the limit of $B_2$ we use a limit calculated in \cite[Lemma 10.6] {DOZZ2}, which is straightforward to adapt to our case\footnote{In \cite{DOZZ2} one needs to multiply further by $\eta$ and the limit is $-4$. This difference come from the fact \cite{DOZZ2} states the result for the sphere correlation which is related to the GMC moment by an explicit prefactor.}
\begin{equation}
\lim_{\eta \to 0} \overline{H}^{(\beta , Q-\eta , Q-\beta)}_{(\mu_1, \mu_2,  0)} = 2.
\end{equation}
The moment of the GMC defining $\overline{H}$ in this limit is $\frac{\eta}{\gamma}$ and tends to $0$, this gives a contribution $1$ to the limit. But in this case there is also a concentration behavior at the insertion with parameter $Q-\eta$, this adds $1$ to the final limit.

At this stage we need to import the result from the interval case \cite[Proposition 1.5]{interval} where we have found the reflection coefficient $\overline{R}_1^{\partial}(\beta)$ with one of the $\mu_i$ set to $0$:
\begin{equation}
\overline{R}(\beta, 1, 0) =  \overline{R}_1^{\partial}(\beta) =  \frac{ (2 \pi)^{ \frac{2}{\gamma}(Q -\beta ) -\frac{1}{2}} (\frac{2}{\gamma})^{ \frac{\gamma}{2}(Q -\beta ) -\frac{1}{2} }  }{(Q-\beta) \Gamma(1 -\frac{\gamma^2}{4}  )^{ \frac{2}{\gamma}(Q -\beta ) } } \frac{ \Gamma_{\frac{\gamma}{2}}(\beta - \frac{\gamma}{2}  )}{\Gamma_{\frac{\gamma}{2}}(Q- \beta )}.
\end{equation}
The rest of the proof is now direct algebraic computations. Together with \eqref{equation R reflect} we have:
\begin{align}
\lim_{\eta \to 0} \eta D_2 &= -2(\frac{2}{\gamma}-\beta) \mu_1^{\frac{4}{\gamma^2}} \frac{\Gamma(\frac{2\beta}{\gamma}-\frac{4}{\gamma^2})\Gamma(1-\frac{2\beta}{\gamma})}{\Gamma(-\frac{4}{\gamma^2})} \frac{e^{i \pi(\frac{4}{\gamma^2} - \frac{2\beta}{\gamma})}}{\Gamma(1-\frac{2\beta}{\gamma})\Gamma(1+\frac{2\beta}{\gamma})} \frac{\overline{R}(\beta+\frac{\gamma}{2}, 1, 0)}{\overline{R}(\beta+Q, 1, 0)}, \nonumber \\ 
& =  \frac{4}{\gamma} (2\pi)^{\frac{4}{\gamma^2}-1} \mu_1^{\frac{4}{\gamma^2}}e^{i \pi(\frac{4}{\gamma^2} - \frac{2\beta}{\gamma})} \frac{\Gamma(\frac{2\beta}{\gamma}-\frac{4}{\gamma^2})\Gamma(-\frac{2\beta}{\gamma})}{\Gamma(-\frac{4}{\gamma^2}) \Gamma(1-\frac{\gamma^2}{4})^{\frac{4}{\gamma^2}}}, \\
\lim_{\eta \to 0} \eta C_2 &= \gamma \frac{\Gamma(\frac{2\beta}{\gamma}-\frac{4}{\gamma^2})}{\Gamma(-\frac{4}{\gamma^2})\Gamma(\frac{2\beta}{\gamma})}  \frac{\overline{R}(\beta, \mu_1, \mu_2) }{\overline{R}(\beta+\frac{2}{\gamma}, \mu_1, e^{i \pi } \mu_2)},\\
\lim_{\eta \to 0}\eta^2 B_2 & = -4 (2\pi)^{\frac{4}{\gamma^2}-1}\mu_2^{\frac{4}{\gamma^2}} e^{i\pi \frac{4}{\gamma^2}} \frac{1}{\Gamma(1-\frac{\gamma^2}{4})^{\frac{4}{\gamma^2}}}.
\end{align}
Putting all these into \eqref{equation 3.21}, we get:
\begin{align}
&4 (2\pi)^{\frac{4}{\gamma^2}-1} e^{i\pi \frac{4}{\gamma^2}} \frac{\Gamma(\frac{2\beta}{\gamma}-\frac{4}{\gamma^2})}{\Gamma(-\frac{4}{\gamma^2})\Gamma(1-\frac{\gamma^2}{4})^{\frac{4}{\gamma^2}}}(\mu_1^{\frac{4}{\gamma^2}}e^{-i\pi \frac{2\beta}{\gamma}}-\mu_2^{\frac{4}{\gamma^2}}) \\ \nonumber
&= -2\gamma(Q-\beta)e^{i\pi(-\frac{2\beta}{\gamma}+\frac{4}{\gamma^2})}\frac{\Gamma(\frac{2\beta}{\gamma}-\frac{4}{\gamma^2})}{\Gamma(-\frac{4}{\gamma^2})\Gamma(\frac{2\beta}{\gamma})\Gamma(1-\frac{2\beta}{\gamma})}  \frac{\overline{R}(\beta, \mu_1, \mu_2) }{\overline{R}(\beta+\frac{2}{\gamma}, \mu_1, e^{i \pi} \mu_2)}.
\end{align}
After simplification:
\begin{align}\label{eq:shift_r_inproof}
\frac{\overline{R}(\beta, \mu_1, \mu_2) }{\overline{R}(\beta+\frac{2}{\gamma}, \mu_1, e^{i \pi} \mu_2)} = -\frac{2}{\gamma(Q-\beta)} (2\pi)^{\frac{4}{\gamma^2}-1}  \frac{\Gamma(\frac{2\beta}{\gamma}) \Gamma(1-\frac{2\beta}{\gamma})}{\Gamma(1-\frac{\gamma^2}{4})^{\frac{4}{\gamma^2}}}(\mu_1^{\frac{4}{\gamma^2}}-\mu_2^{\frac{4}{\gamma^2}}e^{i\pi \frac{2\beta}{\gamma}}).
\end{align}
Similarly by repeating the same steps using the auxiliary function $\tilde{H}_{\chi}(t)$ one obtains the shift equation:
\begin{align}\label{eq:shift_r_inproof2}
\frac{\overline{R}(\beta+\frac{2}{\gamma}, \mu_1, e^{i \pi } \mu_2) }{\overline{R}(\beta+\frac{4}{\gamma}, \mu_1, \mu_2)} = -\frac{2}{\gamma(\frac{\gamma}{2}-\beta)} (2\pi)^{\frac{4}{\gamma^2}-1}  \frac{\Gamma(\frac{2\beta}{\gamma}+\frac{4}{\gamma^2}) \Gamma(1-\frac{2\beta}{\gamma}-\frac{4}{\gamma^2})}{\Gamma(1-\frac{\gamma^2}{4})^{\frac{4}{\gamma^2}}}(\mu_1^{\frac{4}{\gamma^2}}-\mu_2^{\frac{4}{\gamma^2}}e^{-i\pi \frac{2\beta}{\gamma}}).
\end{align}
Again we have omitted the details for the tilde case, as the computations are exactly the same using $\tilde{H}_{\chi}(t)$. One can deduce equation \eqref{eq:shift_r_inproof2} from \eqref{eq:shift_r_inproof} by first performing in \eqref{eq:shift_r_inproof} the parameter substitution $\beta \rightarrow \beta +\frac{2}{\gamma}$, $\mu_2 \rightarrow e^{-i \pi} \mu_2 $, and then replacing every complex unit $i$ by $-i$. While this operation formally corresponds to complex conjugation if $\beta, \mu_1, \mu_2$ are chosen real, the constraints on the parameters $\mu_i$ do not allow one to obtain \eqref{eq:shift_r_inproof2} rigorously in this way. It is necessary to use $\tilde{H}_{\chi}(t)$.
Hence we arrive at \eqref{R_second_shift}.
\end{proof}

\subsubsection{Solution of the shift equations on $\overline{R}$}

\begin{proof}[Proof of Theorem \ref{main_th2}, equation \eqref{main_th2_R}]
We introduce $\sigma_1, \sigma_2$ defined through the relation $\mu_{i} := e^{i \pi \gamma (\sigma_i-\frac{Q}{2})}$ with the convention that for positive $\mu_i$ one has $\mathrm{Re}(\sigma_i) = \frac{Q}{2}$. We can thus write for $\chi = \frac{\gamma}{2}$ or $\frac{2}{\gamma}$ that:
\begin{equation}
\left| \mu_1^{\frac{2\chi}{\gamma}} - \mu_2^{\frac{2\chi}{\gamma}} e^{i \pi \chi \beta} \right|^2 = 4 e^{2 i \pi \chi(\sigma_1+\sigma_2 -Q)}\sin \left( \frac{\pi \chi}{2}(\beta +2(\sigma_1 -\sigma_2)) \right) \sin \left( \frac{\pi \chi}{2}(\beta +2(\sigma_2 -\sigma_1)) \right).
\end{equation}
One can then rewrite the two shift equations under the following form,
\begin{align}
\frac{\overline{R}(\beta, \mu_1, \mu_2)}{\overline{R}(\beta + \gamma, \mu_1, \mu_2)} =& -\frac{\Gamma(-1+\frac{\gamma \beta}{2} - \frac{\gamma^2}{4} )\Gamma(2 - \frac{\gamma \beta}{2}-\frac{\gamma^2}{4})  }{ \Gamma(1- \frac{\gamma^2}{4})^2 } \frac{\pi}{\sin(\pi \frac{\gamma \beta}{2})}\\ \nonumber
&\times 4 e^{i \pi \gamma(\sigma_1+\sigma_2 -Q)} \sin \left( \frac{\pi \gamma}{4}(\beta +2(\sigma_1 -\sigma_2)) \right) \sin \left( \frac{\pi \gamma}{4}(\beta +2(\sigma_2 -\sigma_1)) \right),
\end{align}
\begin{align}
\frac{\overline{R}(\beta, \mu_1, \mu_2)}{\overline{R}(\beta +\frac{4}{\gamma}, \mu_1, \mu_2)} =& \frac{(2\pi)^{\frac{8}{\gamma^2}}}{\gamma^2(Q-\beta)(\frac{\gamma}{2}-\beta)}  \frac{1}{\Gamma(1-\frac{\gamma^2}{4})^{\frac{8}{\gamma^2}} \sin(\pi\frac{2\beta}{\gamma}) \sin(\pi (\frac{2\beta}{\gamma}+\frac{4}{\gamma^2}))}\\ \nonumber
&\times 4 e^{ \frac{4 i \pi }{\gamma}(\sigma_1+\sigma_2 -Q)} \sin \left( \frac{\pi }{\gamma}(\beta +2(\sigma_1 -\sigma_2)) \right) \sin \left( \frac{\pi }{\gamma}(\beta +2(\sigma_2 -\sigma_1)) \right).
\end{align}
These two shift equation completely specify the function $\overline{R}(\beta, \mu_1, \mu_2)$ as a function of the parameter $\beta$ up to one value. Since we know that $\overline{R}(Q, \mu_1, \mu_2) = 1$, the function $\overline{R}$ is thus uniquely specified and can be identified to be the function written in equation \eqref{main_th2_R} since it obeys the same two shift equations in $\beta$ and has the same value at $\beta = Q$.
\end{proof}

\subsection{Solving the boundary three-point function}\label{subsec:three_point}

With the value of $\overline{R}$ completely specified, we complete the proof of the expression for $\overline{H}$. The first step is to derive the additional shift equation in $\frac{2}{\gamma}$.

\subsubsection{The shift equations for $\overline{H}$}

\begin{proposition}[Shift equations for $\overline{H}$]\label{full_shift_H} Let $\chi = \frac{\gamma}{2}$ or $\frac{2}{\gamma}$. We have the following functional equations for $\overline{H}^{(\beta_1 , \beta_2, \beta_3)}_{( \mu_1, \mu_2,   \mu_3)}$, viewed as a meromorphic function of all of its parameters
\begin{align}\label{shift three point 1}
&\overline{H}^{(\beta_1 , \beta_2 - \chi, \beta_3)}_{( \mu_1, \mu_2,   \mu_3)} = \frac{\Gamma(\chi(\beta_1-\chi)) \Gamma(1 - \chi\beta_2 + \chi^2)}{\Gamma(\chi(\beta_1-\chi+q\frac{\gamma}{2})) \Gamma(1 - \chi\beta_2 + \chi^2 -q\frac{\gamma\chi}{2}) } \overline{H}^{(\beta_1 - \chi, \beta_2, \beta_3)}_{( \mu_1, e^{\frac{i \pi \gamma \chi}{2}} \mu_2,  \mu_3)} \\ \nonumber 
&- \frac{ \chi^2 (2\pi)^{\frac{2\chi}{\gamma}-1}}{\Gamma(1-\frac{\gamma^2}{4})^{\frac{2\chi}{\gamma}}} \frac{\pi\Gamma(-q+\frac{2\chi}{\gamma})\Gamma(1-\chi\beta_1) \Gamma(1 - \chi\beta_2 + \chi^2) (\mu_1^{\frac{2\chi}{\gamma}}-\mu_2^{\frac{2\chi}{\gamma}}e^{2i\pi\chi\beta_1})}{\sin(\pi\chi(\beta_1-\chi))\Gamma(-q)\Gamma(1 + \frac{q \gamma\chi}{2}) \Gamma(2 - \chi(\beta_1 + \beta_2-2\chi+q\frac{\gamma}{2})) }  \overline{H}^{(\beta_1 + \chi, \beta_2, \beta_3)}_{( \mu_1, e^{\frac{i \pi  \gamma\chi}{2}} \mu_2,   \mu_3)} ,
\end{align}
and:
\begin{align}\label{shift three point 2}
& \frac{\chi^2(2\pi)^{\frac{2\chi}{\gamma}-1}}{\Gamma(1-\frac{\gamma^2}{4})^{\frac{2\chi}{\gamma}}}\frac{\Gamma(-q+\frac{2\chi}{\gamma}) \Gamma(1-\chi\beta_2) }{\Gamma(-q)}(\mu_3^{\frac{2\chi}{\gamma}} -\mu_2^{\frac{2\chi}{\gamma}}e^{i\pi\chi\beta_2}) \overline{H}^{(\beta_1 , \beta_2 +\chi , \beta_3)}_{(\mu_1, e^{i\pi\frac{\gamma\chi}{2}}\mu_2,   \mu_3)} \\ \nonumber
&=\frac{\Gamma(\chi(\beta_1-\chi)) }{\Gamma( - q \frac{\gamma\chi}{2} ) \Gamma(-1 + \chi(\beta_1 + \beta_2-2\chi+q\frac{\gamma}{2}) ) }  \overline{H}^{(\beta_1 -\chi , \beta_2, \beta_3)}_{(  \mu_1,  \mu_2,   \mu_3)}  \nonumber\\
&+   \frac{\chi^2(2\pi)^{\frac{2\chi}{\gamma}-1}}{\Gamma(1-\frac{\gamma^2}{4})^{\frac{2\chi}{\gamma}}}\frac{\Gamma(2  - \chi \beta_1+\chi^2) \Gamma(-q+\frac{2\chi}{\gamma}) \Gamma(1-\chi\beta_1) \Gamma(\chi(\beta_1-Q))}{\Gamma(-q)\Gamma(1- \chi(\beta_1-\chi+q\frac{\gamma}{2} ) \Gamma( \chi\beta_2 - \chi^2 +q\frac{\gamma\chi}{2})}(\mu_1^{\frac{2\chi}{\gamma}} -\mu_2^{\frac{2\chi}{\gamma}}e^{i\pi\chi(\chi-\beta_1)}) \overline{H}^{(\beta_1 + \chi , \beta_2 , \beta_3)}_{(\mu_1, \mu_2,   \mu_3)}. \nonumber
\end{align}
\end{proposition}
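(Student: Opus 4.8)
The plan is to extract both shift equations from the connection formula \eqref{connection1} for the hypergeometric equation satisfied by $H_\chi(t)$ and $\tilde H_\chi(t)$, after identifying the connection coefficients $C_1,C_2,B_1,B_2,D_2$ as explicit combinations of $\overline H$ and $\overline R$. For $\chi=\frac{\gamma}{2}$ this is a repackaging of the $\frac{\gamma}{2}$-shift equations \eqref{eq_3pt_1}--\eqref{eq_3pt_2} of Lemma \ref{Shift1_H_proba} together with the reflection principle of Lemma \ref{reflection_H}; the genuinely new content is the case $\chi=\frac{2}{\gamma}$, where no coefficient can be read off by a naive Taylor expansion and the OPE with reflection must be used throughout. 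In all cases the identities will first be proved in a narrow window of $(\beta_i,\mu_i)$ in which every $\overline H$ appearing is a bona fide GMC moment and every branch choice on the $\mu_i$ is unambiguous, and then promoted to equalities of meromorphic functions of $(\beta_i,\sigma_i)$ using the analytic continuation of Lemma \ref{analycity_H}.

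First I would record the solution bases of the hypergeometric equation around $t=0,1,\infty$ exactly as in the proof of Lemma \ref{reflection_H}, and read off the elementary coefficients by letting $t\to0$ and $t\to1$: this yields $C_1=\overline H^{(\beta_1-\chi,\beta_2,\beta_3)}_{(\dots)}$ and $B_1=\overline H^{(\beta_1,\beta_2-\chi,\beta_3)}_{(\dots)}$ directly, with the appropriate $e^{i\pi}$-rotations of $\mu_2,\mu_3$. The delicate coefficients $C_2$ (and, for the second equation, $B_2$) I would obtain from the OPE with reflection, invoking Lemma \ref{lem_reflection_ope2} for $\chi=\frac{2}{\gamma}$ and Lemma \ref{lem_reflection_ope1} for $\chi=\frac{\gamma}{2}$ in the window $\beta_1\in(Q-\beta_0,Q)$, as in \eqref{eq_ope_gluing}. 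This produces a term proportional to $\overline R(\beta_1,\mu_1,\mu_2)\,\overline H^{(2Q-\beta_1-\chi,\beta_2,\beta_3)}_{(\dots)}$.

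The decisive step is then to make the prefactors explicit. Applying the reflection principle \eqref{eq:reflection_R_H} trades $\overline H^{(2Q-\beta_1-\chi,\dots)}$ for $\overline H^{(\beta_1+\chi,\dots)}$ at the cost of a second reflection factor; using the reflection relation \eqref{equation R reflect} together with the $\beta$-shift equations, the two $\overline R$ factors collapse, and inserting the closed form \eqref{main_th2_R} of $\overline R$ just established reorganizes the accumulated Gamma functions and phases into the compact coefficient $\frac{\chi^2(2\pi)^{\frac{2\chi}{\gamma}-1}}{\Gamma(1-\frac{\gamma^2}{4})^{\frac{2\chi}{\gamma}}}$ times the combination $(\mu_1^{\frac{2\chi}{\gamma}}-\mu_2^{\frac{2\chi}{\gamma}}e^{2i\pi\chi\beta_1})$ and the $\sin$-factor appearing in \eqref{shift three point 1}. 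Feeding $C_1,C_2,B_1$ into \eqref{connection1} yields \eqref{shift three point 1}; running the identical computation with $\tilde H_\chi$ (which fixes the conjugate phase on $\mu_2$) and with $B_2,D_2$ gives \eqref{shift three point 2}. Finally Lemma \ref{analycity_H} upgrades each probabilistic identity to the stated meromorphic equality.

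The hard part will be the $\chi=\frac{2}{\gamma}$ OPE with reflection: unlike \cite{DOZZ2} we must run the asymptotic analysis of Lemma \ref{lem_reflection_ope2} for complex-valued GMC integrals and track carefully the arguments of $\mu_1,\mu_2,\mu_3$ and the $e^{i\pi}$ rotations generated at each reflection, so that the branch of each fractional power is consistent across the connection formula. A secondary, purely algebraic, obstacle is verifying that after substituting the explicit $\overline R$ the products of $\Gamma_{\frac{\gamma}{2}}$, $S_{\frac{\gamma}{2}}$ and phase factors cancel down to exactly the elementary $\Gamma$-, $\sin$- and power-of-$\mu_i$ coefficients written in \eqref{shift three point 1}--\eqref{shift three point 2}.
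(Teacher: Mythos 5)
Your proposal is correct and follows essentially the same route as the paper: read off $C_1,B_1,\tilde C_1$ by Taylor expansion, obtain $C_2,\tilde C_2,\tilde B_2$ from the OPE with reflection (Lemmas \ref{lem_reflection_ope2} and \ref{lem_reflection_ope1}), convert $\overline R(\beta_1,\mu_1,\mu_2)\,\overline H^{(2Q-\beta_1-\chi,\beta_2,\beta_3)}$ into $\overline H^{(\beta_1+\chi,\beta_2,\beta_3)}$ via the reflection principle of Lemma \ref{reflection_H} combined with the shift equations for $\overline R$, then feed the coefficients into the connection formula \eqref{connection1} and upgrade to a meromorphic identity with Lemma \ref{analycity_H}. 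The only cosmetic differences are that the paper collapses the $\overline R$ factors using the shift equations of Lemma \ref{lem_shift_eq_R} rather than by inserting the closed formula \eqref{main_th2_R} (equivalent at this stage), and obtains the second equation from the relation expressing $\tilde B_2$ in terms of $\tilde C_1,\tilde C_2$ with the replacement $\beta_1\to\beta_1+\frac{\gamma}{2}$, rather than through $D_2$.
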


\begin{proof}
These shift equations all come from applying \eqref{connection1}. The first comes from the relation,
\begin{align}\label{eq_shift_H1}
B_1 &= \frac{\Gamma(\chi(\beta_1-\chi)) \Gamma(1 - \chi\beta_2 + \chi^2)}{\Gamma(\chi(\beta_1-\chi+q\frac{\gamma}{2}) \Gamma(1 - \chi\beta_2 + \chi^2 -q\frac{\gamma\chi}{2}) } C_1 + \frac{\Gamma(2  - \chi\beta_1+\chi^2) \Gamma(1 - \chi\beta_2 + \chi^2)}{\Gamma(1 + \frac{q \gamma\chi}{2}) \Gamma(2 - \chi(\beta_1 + \beta_2-2\chi+q\frac{\gamma}{2}))   } C_2,
\end{align}
and the second can be deduced from the following relation:
\begin{align}\label{eq_shift_H2}
\tilde{B}_2 = \frac{\Gamma(\chi(\beta_1-\chi)) \Gamma(-1 + \chi\beta_2 - \chi^2)}{\Gamma( - q \frac{\gamma\chi}{2} ) \Gamma(-1 + \chi(\beta_1 + \beta_2-2\chi+q\frac{\gamma}{2}) ) } \tilde{C}_1 + \frac{\Gamma(2  - \chi \beta_1+\chi^2) \Gamma(-1 + \chi\beta_2 - \chi^2)}{\Gamma(1- \chi(\beta_1-\chi+q\frac{\gamma}{2} ) \Gamma( \chi\beta_2 - \chi^2 +q\frac{\gamma\chi}{2}) } \tilde{C}_2.
\end{align}
We need to prove the case $\chi =\frac{2}{\gamma}$. It requires a little bit more effort than for deriving the $\chi =\frac{\gamma}{2}$ shift equations because the $\overline{R}$ function appears in the expressions of $C_2, \tilde{C}_2, \tilde{B}_2$ due to the OPE with reflection given by applying Lemma \ref{lem_reflection_ope2}. For instance the expression of $C_2$ is expressed as $\overline{R}(\beta_1, \mu_1, \mu_2) \overline{H}^{(2 Q - \beta_1 - \frac{2}{\gamma} , \beta_2 , \beta_3)}_{( \mu_1, e^{i\pi}\mu_2,  e^{i\pi} \mu_3)}$. To transform it into $ \overline{H}^{(\beta_1 + \frac{2}{\gamma} , \beta_2 , \beta_3)}_{( \mu_1, e^{i\pi} \mu_2, e^{i\pi} \mu_3)}$ we will need to apply the shift equation of Lemma \ref{lem_shift_eq_R} relating $\overline{R}(\beta_1, \mu_1, \mu_2)$ and $\overline{R}(\beta_1+\frac{2}{\gamma}, \mu_1, e^{i\pi} \mu_2)$ and then the reflection principle of Lemma \ref{reflection_H}. The same strategy has to be applied to derive the expressions for $\tilde{C}_2$ and $\tilde{B}_2$. This allows us to write:
\begin{align}
C_2 &= \frac{\chi^2(2\pi)^{\frac{2\chi}{\gamma}-1}}{\Gamma(1-\frac{\gamma^2}{4})^{\frac{2\chi}{\gamma}}}\frac{\Gamma(-q+\frac{2\chi}{\gamma}) \Gamma(1-\chi\beta_1) \Gamma(\chi(\beta_1-Q))}{\Gamma(-q)}(\mu_1^{\frac{2\chi}{\gamma}} -\mu_2^{\frac{2\chi}{\gamma}}e^{i\pi\chi\beta_1}) \overline{H}^{(\beta_1 + \chi , \beta_2 , \beta_3)}_{(\mu_1, e^{i\pi\frac{\gamma\chi}{2}}\mu_2,  e^{i\pi\frac{\gamma\chi}{2}}  \mu_3)},\\
\tilde{C}_2 &= \frac{\chi^2(2\pi)^{\frac{2\chi}{\gamma}-1}}{\Gamma(1-\frac{\gamma^2}{4})^{\frac{2\chi}{\gamma}}}\frac{\Gamma(-q+\frac{2\chi}{\gamma}) \Gamma(1-\chi\beta_1) \Gamma(\chi(\beta_1-Q))}{\Gamma(-q)}(\mu_1^{\frac{2\chi}{\gamma}} -\mu_2^{\frac{2\chi}{\gamma}}e^{i\pi\chi(\chi-\beta_1)}) \overline{H}^{(\beta_1 + \chi , \beta_2 , \beta_3)}_{(e^{i\pi\frac{\gamma\chi}{2}} \mu_1, \mu_2,   \mu_3)},\\
\tilde{B}_2 &= \frac{\chi^2(2\pi)^{\frac{2\chi}{\gamma}-1}}{\Gamma(1-\frac{\gamma^2}{4})^{\frac{2\chi}{\gamma}}}\frac{\Gamma(-q+\frac{2\chi}{\gamma}) \Gamma(1-\chi\beta_2) \Gamma(\chi(\beta_2-Q))}{\Gamma(-q)}(\mu_3^{\frac{2\chi}{\gamma}} -\mu_2^{\frac{2\chi}{\gamma}}e^{i\pi\chi\beta_2}) \overline{H}^{(\beta_1 , \beta_2 +\chi , \beta_3)}_{(e^{i\pi\frac{\gamma\chi}{2}} \mu_1, e^{i\pi\frac{\gamma\chi}{2}}\mu_2,   \mu_3)}.
\end{align}

We recall also the values of $C_1, \tilde{C}_1, B_1$, stated here for both $\chi = \frac{\gamma}{2}$ and $\frac{2}{\gamma}$:
\begin{align}
C_1 = \overline{H}^{(\beta_1 - \chi, \beta_2, \beta_3)}_{( \mu_1, e^{i \pi \frac{\gamma \chi}{2}} \mu_2, e^{i \pi  \frac{\gamma \chi}{2}} \mu_3)}, \quad B_1 = \overline{H}^{(\beta_1 , \beta_2 - \chi, \beta_3)}_{( \mu_1,  \mu_2, e^{i \pi  \frac{\gamma \chi}{2}} \mu_3)}, \quad  \tilde{C}_1 = \overline{H}^{(\beta_1 - \chi, \beta_2 , \beta_3)}_{(  e^{i \pi   \frac{\gamma \chi}{2}} \mu_1,  \mu_2,   \mu_3)}.
\end{align}

Putting all these into \eqref{eq_shift_H1} and \eqref{eq_shift_H2} proves the shift equations stated in the proposition. 
\end{proof}

\subsubsection{The exact formula $\mathcal{I}$ satisfies the shift equations and the reflection principle}
Take again $\mu_{i} := e^{i \pi \gamma (\sigma_i-\frac{Q}{2})}$  with the convention that $\mathrm{Re}(\sigma_i) = \frac{Q}{2}$ when $\mu_i>0$. Recall also that $\overline{\beta} = \beta_1 + \beta_2 + \beta_3$.

To show that $\overline{H}$ is equal to the exact formula $\mathcal{I}$ given by \eqref{formule_PT}, there are three steps that remain to be shown. 1) The function $\mathcal{I}$ satisfies the shift equations of Lemma \ref{full_shift_H} and the reflection principle of Lemma \ref{reflection_H}. 2) A solution of the shift equations of Lemma \ref{full_shift_H} satisfying also the reflection principle of Lemma \ref{reflection_H} is completely specified up to one global constant. 3) $\mathcal{I}$ and $\overline{H}$ are equal at one particular value of parameters. In the following we will show these three claims. We introduce the following notation:
\begin{equation}
\mathcal{I}
\begin{pmatrix}
\beta_1 , \beta_2, \beta_3 \\
\sigma_1,  \sigma_2,   \sigma_3 
\end{pmatrix} :=\int_{\mathcal{C}} \varphi^{(\beta_1 , \beta_2, \beta_3)}_{(  \sigma_1,  \sigma_2,   \sigma_3)}(r) dr.
\end{equation}
We always work with the parameter constraint $\mathrm{Re}\left( Q - \sigma_3 + \sigma_2 -\frac{\beta_2}{2} \right) > 0$ required for the integral over $\mathcal{C}$ in the definition of $\mathcal{I}$ to converge. Let us start by showing the lemma:
\begin{lemma}\label{lem:shift_I}
The function $\mathcal{I}
\begin{pmatrix}
\beta_1 , \beta_2, \beta_3 \\
\sigma_1,  \sigma_2,   \sigma_3 
\end{pmatrix}$ satisfies the shift equations satisfied by $\overline{H}$.
\end{lemma}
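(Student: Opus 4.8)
The plan is to verify that $\mathcal{I}$ obeys the two families of functional equations \eqref{shift three point 1} and \eqref{shift three point 2} directly from the definition \eqref{formule_PT}, reducing each to a single contour-integral identity. Throughout I would write $\mu_i = e^{i\pi\gamma(\sigma_i-\frac{Q}{2})}$, so that every $\mu$-dependent factor appearing in the shift equations, such as $\mu_1^{\frac{2\chi}{\gamma}} - \mu_2^{\frac{2\chi}{\gamma}}e^{2i\pi\chi\beta_1}$ and the phase rotations $\mu_2 \mapsto e^{i\pi\gamma\chi/2}\mu_2$, becomes an elementary trigonometric/exponential function of the $\sigma_i$ (the rotation of $\mu_2$ being exactly the shift $\sigma_2 \mapsto \sigma_2 + \frac{\chi}{2}$). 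First I would treat $\chi = \frac{\gamma}{2}$. The prefactor of $\mathcal{I}$ in \eqref{formule_PT} is built only from $\Gamma_{\frac{\gamma}{2}}$, $S_{\frac{\gamma}{2}}$, phases, and explicit elementary factors; under each shift $\beta_i \mapsto \beta_i \pm \frac{\gamma}{2}$ it transforms by a ratio that I can compute explicitly using the $\frac{\gamma}{2}$-shift relations for $\Gamma_{\frac{\gamma}{2}}$ and $S_{\frac{\gamma}{2}}$ recorded in Section \ref{sec:double_gamma}. After dividing out these prefactor ratios, what remains is a linear relation, with coefficients that are elementary products of trigonometric functions and ordinary Gamma functions, among the three contour integrals $\int_{\mathcal{C}}\varphi^{(\beta_1,\beta_2,\beta_3)}_{(\sigma_1,\sigma_2,\sigma_3)}(r)\,dr$ evaluated at the shifted parameter values.

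Second, I would establish that relation by the telescoping/contour-shift method standard for Barnes-type integrals. Applying the $\frac{\gamma}{2}$-shift relation for $S_{\frac{\gamma}{2}}$ (Section \ref{sec:double_gamma}), which multiplies it by an explicit sine factor, to every factor of the integrand shows that each shifted integrand $\varphi^{(\beta_i \pm \frac{\gamma}{2})}(r)$ equals the unshifted integrand $\varphi(r)$, evaluated at a translated argument $r \mapsto r \mp \frac{\gamma}{2}$, times an explicit ratio of sines of linear forms in $(\beta_i,\sigma_i,r)$. Consequently the desired three-term relation takes the shape $\int_{\mathcal{C}}\bigl[\Psi(r+\tfrac{\gamma}{2}) - \Psi(r)\bigr]\,dr = 0$ for a function $\Psi$ that is a minor modification of $\varphi$. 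This vanishes by translating the contour $\mathcal{C}\mapsto \mathcal{C}-\frac{\gamma}{2}$ in the first term: the translation is legitimate because $\varphi$, hence $\Psi$, decays exponentially as $|\mathrm{Im}(r)|\to\infty$ by the asymptotics of $S_{\frac{\gamma}{2}}$ proved in Appendix \ref{formula_I}, and because the contour $\mathcal{C}$ was chosen precisely to separate the two families of poles, so that no pole is crossed. Matching the elementary coefficients produced this way against those in \eqref{shift three point 1} and \eqref{shift three point 2} finishes the $\chi = \frac{\gamma}{2}$ case.

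For $\chi = \frac{2}{\gamma}$ I would repeat the argument verbatim, now using the $\frac{2}{\gamma}$-shift relations for $S_{\frac{\gamma}{2}}$ and $\Gamma_{\frac{\gamma}{2}}$ and translating the contour by $\frac{2}{\gamma}$ instead. This case does not follow for free from the previous one: although $\Gamma_{\frac{\gamma}{2}}$ and $S_{\frac{\gamma}{2}}$ are invariant under $\frac{\gamma}{2}\leftrightarrow\frac{2}{\gamma}$, the explicit prefactors in \eqref{formule_PT} (the powers of $2\pi$, of $\frac{2}{\gamma}$, and of $\Gamma(1-\frac{\gamma^2}{4})$) are not, so the computation must genuinely be redone, though its structure is identical.

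The main obstacle is the bookkeeping in the second step: one must check that the product of the prefactor ratio coming from the $\Gamma_{\frac{\gamma}{2}}$-shift, the sine factors generated by the $S_{\frac{\gamma}{2}}$-shift, and the phases induced by the $\mu_2$-rotation reproduces \emph{exactly} the coefficients of \eqref{shift three point 1} and \eqref{shift three point 2}, including every $e^{i\pi(\cdots)}$ phase. A secondary but essential point is to verify rigorously that the contour translation crosses no pole of $\Psi$ (or to account for any crossed residue as a boundary contribution) and to secure convergence of all integrals on a nonempty open set of parameters; the resulting identity of analytic functions then extends to the full range by the analytic continuation of $\mathcal{I}$ established in Appendix \ref{formula_I} together with that of $\overline{H}$ from Lemma \ref{analycity_H}.
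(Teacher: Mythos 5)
Your proposal is correct and follows essentially the same route as the paper: the paper likewise absorbs the $\mu$-rotations into the shift $\sigma_2 \mapsto \sigma_2 + \frac{\chi}{2}$, computes the ratios of shifted to unshifted integrands $\varphi$ via the shift relations of $\Gamma_{\frac{\gamma}{2}}$ and $S_{\frac{\gamma}{2}}$, chooses $\mathcal{C}$ (for parameters whose pole lattices have distinct imaginary parts, a restriction lifted afterwards by analyticity) with enough room to translate by $\pm\chi$, and treats both values of $\chi$ through one uniform computation. The only cosmetic discrepancy is that in the paper the first shift equation reduces to an integrand bracket that vanishes pointwise, so no contour translation is needed there; only the second shift equation requires the telescoping identity $\int_{\mathcal{C}}(T_{-\chi}-1)(\cdots)\,dr = 0$ together with the pole-free contour shift that you describe.
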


\begin{proof}

For the purpose of this proof we must specify carefully how the contour $\mathcal{C}$ is chosen, this is linked to what is written in the proof of Lemma \ref{lem:J}. If we look at the poles in $r$ of $\varphi$, there are three lattices of poles starting from $-(Q-\frac{\beta_2}{2}+\sigma_3-\sigma_2)$, $-(\frac{\beta_3}{2}+\sigma_3-\sigma_1)$, $-(Q-\frac{\beta_3}{2}+\sigma_3-\sigma_1)$ and extending to $-\infty$ by increments of $\frac{\gamma}{2}$ and $\frac{2}{\gamma}$. We call these the left lattices. Similarly we have three lattices of poles starting from $0$, $-(\frac{\beta_1}{2}-\frac{\beta_2}{2}+\sigma_3-\sigma_1)$, $-(Q-\frac{\beta_1}{2}-\frac{\beta_2}{2}+\sigma_3-\sigma_1) $ and extending to $+\infty$ by similar increments, we call them the right lattices. We are going to work under the assumption that  the poles of the six different latices all have different imaginary parts. This constraint can be easily lifted at the end by analyticity. For each function $\mathcal{I}$ appearing below, this assumption allows to choose the contour $\mathcal{C}$ starting from $- i \infty$, passing to the right of the left lattices of poles by a distance of at least $2 \chi$, to the left of the right lattices of poles by a distance of at least $2 \chi$, and finally continuing to $+ i \infty$. This allows us to shift the contour $\mathcal{C}$ by $\pm \chi $ without crossing any poles of the integrand $\varphi$.

Now checking that $\mathcal{I}$ satisfies the shift equations of Lemma \ref{full_shift_H} is equivalent to checking the following shift equations,

\begin{align}\label{shift three point 3}
& \mathcal{I}
\begin{pmatrix}
\beta_1 , \beta_2, \beta_3 \\
\sigma_1,  \sigma_2,   \sigma_3 
\end{pmatrix}  = \frac{\Gamma(\chi(\beta_1-\chi)) \Gamma(1 - \chi\beta_2 )}{\Gamma(1-\frac{\chi}{2}(\beta_2+ \beta_3 -\beta_1)) \Gamma(\frac{\chi}{2}(\beta_1+\beta_3-\beta_2-2\chi)) } \mathcal{I}
\begin{pmatrix}
\beta_1 -\chi , \beta_2 +\chi, \beta_3 \\
\sigma_1,  \sigma_2 + \frac{\chi}{2},   \sigma_3 
\end{pmatrix} \\ \nonumber
&- \frac{ \chi^2 (2\pi)^{\frac{2\chi}{\gamma}-1}}{\Gamma(1-\frac{\gamma^2}{4})^{\frac{2\chi}{\gamma}}} \frac{\pi\Gamma(\frac{1}{\gamma}(\overline{\beta}-\frac{2}{\chi}))\Gamma(1-\chi\beta_1) \Gamma(1 - \chi\beta_2 )  2i e^{i\pi \chi(\frac{\beta_1}{2}-\chi+\sigma_1+\sigma_2)}\sin(\pi\chi(\frac{\beta_1}{2}-\sigma_1+\sigma_2))}{\sin(\pi\chi(\beta_1-\chi))\Gamma(\frac{1}{\gamma}(\overline{\beta}-2Q))\Gamma(1 + \chi(Q-\frac{\overline{\beta}}{2})) \Gamma(1 - \frac{\chi}{2}(\beta_1 + \beta_2-\beta_3)) }
\mathcal{I}
\begin{pmatrix}
\beta_1 +\chi, \beta_2 + \chi, \beta_3 \\
\sigma_1,  \sigma_2 + \frac{\chi}{2},   \sigma_3 
\end{pmatrix},
\end{align}
and:
\begin{align}\label{shift three point 4}
& \frac{\chi^2(2\pi)^{\frac{2\chi}{\gamma}-1}}{\Gamma(1-\frac{\gamma^2}{4})^{\frac{2\chi}{\gamma}}}\frac{\Gamma(\frac{1}{\gamma}(\overline{\beta}-\frac{2}{\chi}
)) \Gamma(1-\chi\beta_2) }{\Gamma(\frac{1}{\gamma}(\overline{\beta}-2Q))}2 i e^{i\frac{\pi\gamma}{2}(\frac{\beta_2}{2}-\chi+\sigma_2+\sigma_3)}\sin(\pi\chi(\frac{\beta_2}{2}+\sigma_2-\sigma_3)) \mathcal{I}
\begin{pmatrix}
\beta_1 +\chi , \beta_2 +\chi, \beta_3 \\
\sigma_1,  \sigma_2 + \frac{\chi}{2},   \sigma_3 
\end{pmatrix} \\
& =\frac{\Gamma(\chi \beta_1) }{\Gamma( \frac{\chi}{2}(\overline{\beta}-2Q) ) \Gamma(\frac{\chi}{2}(\beta_1+\beta_2-\beta_3)) } \mathcal{I}
\begin{pmatrix}
\beta_1 , \beta_2, \beta_3 \\
\sigma_1,  \sigma_2,   \sigma_3 
\end{pmatrix} - \frac{\chi^2(2\pi)^{\frac{2\chi}{\gamma}-1}}{\Gamma(1-\frac{\gamma^2}{4})^{\frac{2\chi}{\gamma}}}\frac{\pi\Gamma(\frac{1}{\gamma}(\overline{\beta}-\frac{2}{\chi}))  \Gamma(1-\chi\beta_1-\chi^2))}{\sin(\pi\chi\beta_1) \Gamma(\frac{1}{\gamma}(\overline{\beta}-2Q))} \nonumber\\
&\quad \times  \frac{2 i e^{i\pi\chi(-\frac{\beta_1}{2}-\chi+\sigma_1+\sigma_2)}\sin(\pi\chi(-\frac{\beta_1}{2}-\sigma_1+\sigma_2))}{\Gamma(\frac{\chi}{2}(\beta_2+\beta_3-\beta_1-2\chi) ) \Gamma( 1-\frac{\chi}{2}(\beta_1+\beta_3-\beta_2)) } \mathcal{I}
\begin{pmatrix}
\beta_1 + 2\chi, \beta_2, \beta_3 \\
\sigma_1,  \sigma_2,   \sigma_3 
\end{pmatrix}. \nonumber
\end{align}
We calculate the ratios of the integrands,
\begin{align}
\frac{\varphi^{(\beta_1-\chi , \beta_2+\chi, \beta_3)}_{(  \sigma_1,  \sigma_2 +\frac{\chi}{2},   \sigma_3)}(r)}{\varphi^{(\beta_1 , \beta_2, \beta_3)}_{(  \sigma_1,  \sigma_2,   \sigma_3)}(r)} =& \frac{\Gamma(\frac{\chi}{2}(\beta_1+\beta_3-\beta_2-2\chi)) \Gamma(1-\frac{\chi}{2}(\beta_2+\beta_3-\beta_1)) \Gamma(1-\chi\beta_1+\chi^2) }{ \pi \Gamma(1-\chi \beta_2) }\\ \nonumber
&\times \sin(\pi \chi (\frac{\beta_1}{2}-\chi+\sigma_1-\sigma_2))\frac{ \sin(\pi \chi (-\frac{\beta_1}{2}+\frac{\beta_2}{2}+\sigma_1-\sigma_3-r))}{\sin(\pi \chi (\frac{\beta_2}{2}+\sigma_2-\sigma_3-r) )},\\
\frac{\varphi^{(\beta_1+\chi , \beta_2+\chi, \beta_3)}_{(  \sigma_1,  \sigma_2 +\frac{\chi}{2},   \sigma_3)}(r)}{\varphi^{(\beta_1 , \beta_2, \beta_3)}_{(  \sigma_1,  \sigma_2,   \sigma_3)}(r)} =& -\frac{\Gamma(1-\frac{\gamma^2}{4})^{\frac{2\chi}{\gamma}} \Gamma(\frac{1}{\gamma}(\overline{\beta}-2Q))\Gamma(1+\chi(Q-\frac{\overline{\beta}}{2})) \Gamma(1-\frac{\chi}{2}(\beta_1+\beta_2-\beta_3)) ie^{i\pi \chi (Q-\frac{\beta_1}{2}-\sigma_1-\sigma_2)} }{\chi^2 (2\pi)^{\frac{2\chi}{\gamma}} \Gamma(\frac{1}{\gamma}(\overline{\beta}-\frac{2}{\chi})) \Gamma(1-\chi \beta_1) \Gamma(1-\chi \beta_2)} \nonumber \\ 
&\times \frac{\sin(\pi\chi(\frac{\beta_1}{2}+\frac{\beta_2}{2}-\chi+\sigma_1-\sigma_3-r))}{\sin(\pi\chi(\frac{\beta_2}{2}+\sigma_2-\sigma_3-r))}.
\end{align}
If we plug $\mathcal{I}$ into equation \eqref{shift three point 3} and regroup terms on one side, we will get:
\begin{align}
\int_{\mathcal{C}} dr \, \varphi^{(\beta_1 , \beta_2, \beta_3)}_{(  \sigma_1,  \sigma_2,   \sigma_3)}(r)\Bigg[\frac{ \sin(\pi\chi(\frac{\beta_1}{2}-\chi+\sigma_1-\sigma_2))\sin(\pi\chi (-\frac{\beta_1}{2}+\frac{\beta_2}{2}+\sigma_1-\sigma_3-r))}{\sin(\pi\chi(\beta_1-\chi))\sin(\pi \chi(\frac{\beta_2}{2}+\sigma_2-\sigma_3-r) )}-1\\ \nonumber
+\frac{\sin(\pi\chi(\frac{\beta_1}{2}-\sigma_1+\sigma_2))\sin(\pi\chi(\frac{\beta_1}{2}+\frac{\beta_2}{2}-\chi+\sigma_1-\sigma_3-r))}{\sin(\pi\chi(\beta_1-\chi))\sin(\pi\chi(\frac{\beta_2}{2}+\sigma_2-\sigma_3-r))} \Bigg].
\end{align}
We can verify with some algebra that the integrand of the above integral equals $0$, hence $\mathcal{I}$ satisfies \eqref{shift three point 3}. To check the second shift equation, we will need additionally the ratio:

\begin{align}
\frac{\varphi^{(\beta_1+2\chi , \beta_2, \beta_3)}_{(  \sigma_1,  \sigma_2 ,   \sigma_3)}(r)}{\varphi^{(\beta_1 , \beta_2, \beta_3)}_{(  \sigma_1,  \sigma_2,   \sigma_3)}(r)} = &-\frac{\pi \Gamma(1-\frac{\gamma^2}{4})^{\frac{2\chi}{\gamma}} \Gamma(\frac{1}{\gamma}(\overline{\beta}-2Q)) \Gamma(1+\chi(Q-\frac{\overline{\beta}}{2})) \Gamma(1-\frac{\chi}{2}(\beta_1+\beta_2-\beta_3)) }{\chi^2 (2\pi)^{\frac{2\chi}{\gamma}} \Gamma(\frac{1}{\gamma}(\overline{\beta}-\frac{2}{\chi})) \Gamma(\frac{\chi}{2}(\beta_1+\beta_3-\beta_2)) \Gamma(1-\frac{\chi}{2}(\beta_2+\beta_3-\beta_1-2\chi))}\\ \nonumber
& \frac{ie^{i\pi \chi(Q-\frac{\beta_1}{2}-\sigma_1-\sigma_2)}\sin(\pi\chi(\frac{\beta_1}{2}+\frac{\beta_2}{2}-\chi+\sigma_1-\sigma_3-r))}{\Gamma(1-\chi\beta_1-\chi^2) \Gamma(1-\chi\beta_1)\sin(\pi\chi(\frac{\beta_1}{2}+\sigma_1-\sigma_2)) \sin(\pi\chi(\frac{\beta_1}{2}-\frac{\beta_2}{2}+\chi+\sigma_3-\sigma_1+r))}.
\end{align}
If we plug $\mathcal{I}$ into equation \eqref{shift three point 4} and regroup things on one side, we will get:
\begin{align}
&\frac{\Gamma(\chi \beta_1) }{\Gamma( \frac{\chi}{2}(\overline{\beta}-2Q) ) \Gamma(\frac{\chi}{2}(\beta_1+\beta_2-\beta_3)) } \int_{\mathcal{C}} dr \,  \varphi^{(\beta_1 , \beta_2, \beta_3)}_{(  \sigma_1,  \sigma_2,   \sigma_3)}(r)\\ \nonumber
 &\Bigg[ \frac{\sin(\pi\chi\beta_1)\sin(\pi\chi(\frac{\beta_2}{2}+\sigma_2-\sigma_3)) e^{i\pi\chi(-\frac{\beta_1}{2}+\frac{\beta_2}{2}-\sigma_1+\sigma_3)} }{\sin(\pi \chi(\frac{\overline{\beta}}{2}-\chi)) \sin(\frac{\pi \chi}{2}(\beta_1+\beta_2-\beta_3))} \frac{\sin(\pi\chi(\frac{\beta_1}{2}+\frac{\beta_2}{2}-\chi+\sigma_1-\sigma_3-r))}{\sin(\pi\chi(\frac{\beta_2}{2}+\sigma_2-\sigma_3-r))} \\ \nonumber
 & -1 + \frac{\sin(\frac{\pi\chi}{2}(\beta_1+\beta_3-\beta_2)) \sin(\frac{\pi\chi}{2}(\beta_2+\beta_3-\beta_1-2\chi)) e^{-i\pi\chi\beta_1 }}{\sin(\pi\chi(\frac{\overline{\beta}}{2}-\chi)) \sin(\frac{\pi\chi}{2}(\beta_1+\beta_2-\beta_3))} \frac{\sin(\pi\chi(\frac{\beta_1}{2}+\frac{\beta_2}{2}-\chi+\sigma_1-\sigma_3-r))}{ \sin(\pi\chi(\frac{\beta_1}{2}-\frac{\beta_2}{2}+\chi+\sigma_3-\sigma_1+r))} \Bigg].
\end{align}
After some algebra we will be able to write it in the form,

\begin{align}
&\frac{\Gamma(\chi \beta_1) }{\Gamma( \frac{\chi}{2}(\overline{\beta}-2Q) ) \Gamma(\frac{\chi}{2}(\beta_1+\beta_2-\beta_3)) } \frac{\sin(\pi\chi\beta_1)e^{i\pi\chi(-\frac{\beta_1}{2}-\frac{\beta_2}{2}+\chi-\sigma_1+\sigma_3)}}{\sin(\pi\chi(\frac{\overline{\beta}}{2}-\chi)) \sin(\frac{\pi \chi}{2}(\beta_1+\beta_2-\beta_3))} \int_{\mathcal{C}} dr \,  \varphi^{(\beta_1 , \beta_2, \beta_3)}_{(  \sigma_1,  \sigma_2,   \sigma_3)}(r)e^{i\pi\chi r} \\ \nonumber
&\Bigg[\frac{\sin(\pi\chi (\frac{\beta_1}{2}+\frac{\beta_2}{2}-\chi+\sigma_1-\sigma_3-r)) \sin(\pi\chi r)}{\sin(\pi\chi(\frac{\beta_2}{2}+\sigma_2-\sigma_3-r))}e^{i\pi\chi (\frac{\beta_2}{2}-\chi-\sigma_2+\sigma_3)}\\ \nonumber
&+\frac{\sin(\pi\chi(\frac{\beta_3}{2}-\sigma_1+\sigma_3+r)) \sin(\pi\chi(\frac{\beta_3}{2}-\chi+\sigma_1-\sigma_3-r)) }{\sin(\pi\chi(\frac{\beta_1}{2}-\frac{\beta_2}{2}+\chi+\sigma_3-\sigma_1+r))}
 \Bigg]\\ \nonumber
 =& \frac{\Gamma(\chi \beta_1) }{\Gamma( \frac{\chi}{2}(\overline{\beta}-2Q) ) \Gamma(\frac{\chi}{2}(\beta_1+\beta_2-\beta_3)) } \frac{\sin(\pi\chi\beta_1) \sin(\pi\chi(\frac{\beta_3}{2}-\sigma_1+\sigma_3)) e^{i\pi\chi(\frac{\beta_2}{2}+\frac{\beta_3}{2}-\sigma_1-\sigma_2+2\sigma_3)}}{\sin(\pi \chi(\frac{\overline{\beta}}{2}-\chi)) \sin(\frac{\pi\chi}{2}(\beta_1+\beta_2-\beta_3)) \sin(\pi\chi(\frac{\beta_1}{2}-\chi+\sigma_1-\sigma_2))}\\ \nonumber
 & \times \int_{\mathcal{C}}dr \, (T_{-\chi}-1) \left( \sin(\pi\chi(\frac{\beta_1}{2}+\frac{\beta_2}{2}-\chi+\sigma_1-\sigma_3-r))\varphi^{(\beta_1 , \beta_2, \beta_3)}_{(  \sigma_1,  \sigma_2+\chi,   \sigma_3+\chi)}(r) e^{i\pi\chi r} \right),
\end{align}
where $T_{-\chi} f(r) = f(r-\chi)$ for any function $f$. As explained in the first paragraph of the proof, the parameters can be chosen in such a way that it is possible to shift the contour $\mathcal{C}$ by $-\chi$ without crossing any poles. Hence the term written above vanishes and this finishes the proof.
\end{proof}
Next we move on to showing:
\begin{lemma}\label{lem:spe_val_I}
The function $\mathcal{I} $ satisfies the following two properties,
\begin{equation}
\mathcal{I}
\begin{pmatrix}
2Q-\beta_2-\beta_3 , \beta_2, \beta_3 \\
\sigma_1,  \sigma_2,   \sigma_3 
\end{pmatrix} = 1,
\end{equation}
and the reflection principle of Lemma \ref{reflection_H}.
\end{lemma}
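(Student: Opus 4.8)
The plan is to prove the two properties separately: the normalization by a contour--pinching/residue argument, and the reflection principle by exhibiting a symmetry of the integrand $\varphi^{(\beta_1,\beta_2,\beta_3)}_{(\sigma_1,\sigma_2,\sigma_3)}(r)$.

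\textbf{Normalization.} First I would note that $\beta_1 = 2Q-\beta_2-\beta_3$ is exactly the locus $\overline{\beta}=2Q$, where the prefactor of $\mathcal{I}$ in \eqref{formule_PT} carries the factor $1/\Gamma(\frac{\overline{\beta}-2Q}{\gamma})$, which vanishes linearly as $\overline{\beta}\to 2Q$ since $1/\Gamma(w)\sim w$ near $w=0$. For the product to equal $1$ the contour integral $\int_{\mathcal{C}}\varphi\,dr$ must develop a compensating simple pole in $(\overline{\beta}-2Q)$, and I would locate this as a pinch of $\mathcal{C}$. The base of the left lattice coming from the numerator factor $S_{\frac{\gamma}{2}}(\frac{\beta_3}{2}+\sigma_3-\sigma_1+r)$, at $r_0=-(\frac{\beta_3}{2}+\sigma_3-\sigma_1)$, collides with the base of the right lattice coming from the denominator factor $S_{\frac{\gamma}{2}}(2Q-\frac{\beta_1}{2}-\frac{\beta_2}{2}+\sigma_3-\sigma_1+r)$, because at $\overline{\beta}=2Q$ one has $Q-\frac{\beta_1}{2}-\frac{\beta_2}{2}=\frac{\beta_3}{2}$ and both bases equal $r_0$. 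The standard pinch estimate then yields $\int_{\mathcal{C}}\varphi\,dr\sim \mathrm{const}\cdot\frac{\mathrm{Res}_{r=r_0}\varphi}{\overline{\beta}-2Q}$, and multiplying by the vanishing prefactor gives a finite limit. I expect this limit to collapse to $1$ after using the residue and reflection formulas for $\Gamma_{\frac{\gamma}{2}}$ and $S_{\frac{\gamma}{2}}$ from Section \ref{sec_special_func} (the simple pole of $\Gamma_{\frac{\gamma}{2}}$ at $0$, the zero $S_{\frac{\gamma}{2}}(Q)=0$, and $S_{\frac{\gamma}{2}}(x)S_{\frac{\gamma}{2}}(Q-x)=1$). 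This is the same mechanism as in \cite{DOZZ2, interval} and is the most delicate step; the difficulty is purely the bookkeeping of the residue and the special values.

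\textbf{Reflection principle.} The key observation is that $\varphi^{(\beta_1,\beta_2,\beta_3)}_{(\sigma_1,\sigma_2,\sigma_3)}(r)$ is invariant under $\beta_1\mapsto 2Q-\beta_1$ at fixed $\sigma_1,\sigma_2,\sigma_3$. Indeed $\beta_1$ enters only through the two denominator factors $S_{\frac{\gamma}{2}}(Q+\frac{\beta_1}{2}-\frac{\beta_2}{2}+\sigma_3-\sigma_1+r)$ and $S_{\frac{\gamma}{2}}(2Q-\frac{\beta_1}{2}-\frac{\beta_2}{2}+\sigma_3-\sigma_1+r)$, whose arguments are exchanged under $\beta_1\mapsto 2Q-\beta_1$; the numerator factors and the exponential $e^{i\pi(-\frac{\beta_2}{2}+\sigma_2-\sigma_3)r}$ are untouched. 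Correspondingly the two right lattices of poles swap but occupy the same set of points, so $\mathcal{C}$ may be kept fixed (the remaining contour subtleties being absorbed into the analytic continuation of $\mathcal{I}$ established in Appendix \ref{formula_I}). Hence $\int_{\mathcal{C}}\varphi^{(\beta_1,\beta_2,\beta_3)}\,dr = \int_{\mathcal{C}}\varphi^{(2Q-\beta_1,\beta_2,\beta_3)}\,dr$.

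Given this invariance, the reflection principle of Lemma \ref{reflection_H} for $\mathcal{I}$ reduces to the purely algebraic identity
\begin{equation*}
\frac{\mathrm{Pref}(\beta_1)}{\mathrm{Pref}(2Q-\beta_1)} = -\frac{\Gamma(\frac{2\beta_1}{\gamma}-\frac{4}{\gamma^2})\Gamma(\frac{\beta_2+\beta_3-\beta_1}{\gamma})}{\Gamma(\frac{\overline{\beta}-2Q}{\gamma})}\,\overline{R}(\beta_1,\mu_1,\mu_2),
\end{equation*}
where $\mathrm{Pref}$ denotes the full prefactor multiplying the contour integral in \eqref{formule_PT} and $\overline{R}$ is given by the formula \eqref{main_th2_R} proved earlier in this section. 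I would check this directly: the ratio of the $\beta_1$-dependent $\Gamma_{\frac{\gamma}{2}}$ factors is turned into ordinary $\Gamma$'s using $S_{\frac{\gamma}{2}}(x)=\Gamma_{\frac{\gamma}{2}}(x)/\Gamma_{\frac{\gamma}{2}}(Q-x)$ and the shift relations of Section \ref{sec_special_func}, the factor $S_{\frac{\gamma}{2}}(\frac{\beta_1}{2}+\sigma_1-\sigma_2)$ in $\mathrm{Pref}$ combines with the $S_{\frac{\gamma}{2}}$ factors in $\overline{R}$, and the exponential prefactor accounts for the phase $e^{i\pi(\sigma_1+\sigma_2-Q)(Q-\beta)}$ in \eqref{main_th2_R}. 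The only real labor is matching the powers of $2\pi$, $\frac{2}{\gamma}$ and $\Gamma(1-\frac{\gamma^2}{4})$ and tracking the global phase; no new idea beyond the functional equations of the double Gamma function is required. Finally, the claim for $\overline{R}$ in Lemma \ref{reflection_H} needs no separate verification, since \eqref{main_th2_R} was already shown to satisfy \eqref{equation R reflect}.
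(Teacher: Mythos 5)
Your proposal is correct and follows essentially the same route as the paper's own proof: the paper establishes the normalization by observing that the prefactor $\Gamma(\frac{\overline{\beta}-2Q}{\gamma})^{-1}$ vanishes while exactly the two poles you identify, $r=-(\frac{\beta_3}{2}+\sigma_3-\sigma_1)$ and $r=-(Q-\frac{\beta_1}{2}-\frac{\beta_2}{2}+\sigma_3-\sigma_1)$, collapse as $\overline{\beta}\to 2Q$, extracting the divergent term by deforming $\mathcal{C}$ across one of them and applying the residue theorem, and it proves the reflection principle by the same observation that the integrand is invariant under $\beta_1\mapsto 2Q-\beta_1$ (the two $\beta_1$-dependent denominator $S_{\frac{\gamma}{2}}$ factors swap), reducing the statement to shift-equation algebra on the prefactor against the exact formula for $\overline{R}$. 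No gap; your pinch formulation and the paper's residue extraction are the same mechanism.
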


\begin{proof}
It is rather direct to observe that it satisfies the reflection principle, since the integrand of the contour integral of $\mathcal{I}$ is not changed when applying the transform $\beta_1 \to 2Q-\beta_1$. To conclude one just needs to use the shift equations of $\Gamma_{\frac{\gamma}{2}}$ and $S_{\frac{\gamma}{2}}$ given in Section \ref{sec:double_gamma}. To see the value at $\beta_1 = 2Q-\beta_2-\beta_3 $ equals $1$, we will need to apply the residue theorem. When $\beta_1$ approaches $2Q-\beta_2-\beta_3$ from the right hand side, we have in front of the contour integral a term $\Gamma(\frac{\overline{\beta}-2Q}{\gamma})^{-1}$ that goes to $0$. Additionally in the contour integral, the two poles at $r=-(\frac{\beta_3}{2}+\sigma_3-\sigma_1) $ and $r=-(Q-\frac{\beta_1}{2}-\frac{\beta_2}{2}+\sigma_3-\sigma_1)$ will collapse. To extract the divergent term, we can slightly modify the contour to let it go from the right hand side of $r=-(Q-\frac{\beta_1}{2}-\frac{\beta_2}{2}+\sigma_3-\sigma_1)$, this allows us to pick up the divergent term by residue theorem:
\begin{align}
\int_{\mathcal{C}} \frac{S_{\frac{\gamma}{2}}(Q-\frac{\beta_2}{2}+\sigma_3-\sigma_2+r) S_{\frac{\gamma}{2}}(\frac{\beta_3}{2}+\sigma_3-\sigma_1+r) S_{\frac{\gamma}{2}}(Q-\frac{\beta_3}{2}+\sigma_3-\sigma_1+r)}{S_{\frac{\gamma}{2}}(Q+\frac{\beta_1}{2}-\frac{\beta_2}{2}+\sigma_3-\sigma_1 + r) S_{\frac{\gamma}{2}}(2Q-\frac{\beta_1}{2}-\frac{\beta_2}{2}+\sigma_3-\sigma_1+r) S_{\frac{\gamma}{2}}(Q+r)}e^{i\pi(-\frac{\beta_2}{2}+\sigma_2-\sigma_3)r} \frac{dr}{i}\\ \nonumber
\underset{\beta_1\to 2Q-\beta_2-\beta_3 }{\sim} \frac{1}{2\pi (\frac{\overline{\beta}}{2}-Q)}  \frac{S_{\frac{\gamma}{2}}(\frac{\beta_1}{2}+\sigma_1-\sigma_2) S_{\frac{\gamma}{2}}(Q-\beta_3)}{ S_{\frac{\gamma}{2}}(\beta_1) S_{\frac{\gamma}{2}}(Q-\frac{\beta_3}{2}+\sigma_1-\sigma_3)}e^{i\pi(\frac{\beta_2}{2}-\sigma_2+\sigma_3)(\frac{\beta_3}{2}+\sigma_3-\sigma_1)}.  
\end{align}
We can check that when $\beta_1\to 2Q-\beta_2-\beta_3$, the term in front of the contour integral is equivalent to
\begin{align}
2\pi (\frac{\overline{\beta}}{2}-Q) S_{\frac{\gamma}{2}}(\beta_1)S_{\frac{\gamma}{2}}(\beta_3)\frac{e^{-i\pi(\frac{\beta_2}{2}-\sigma_2+\sigma_3)(\frac{\beta_3}{2}+\sigma_3-\sigma_1)} }{S_{\frac{\gamma}{2}}(\frac{\beta_1}{2}+\sigma_1-\sigma_2)  S_{\frac{\gamma}{2}}(\frac{\beta_3}{2}+\sigma_3-\sigma_1) }. 
\end{align}
This proves that $\mathcal{I}
\begin{pmatrix}
2Q-\beta_2-\beta_3 , \beta_2, \beta_3 \\
\sigma_1,  \sigma_2,   \sigma_3 
\end{pmatrix} = 1$.

\end{proof}

\subsubsection{Uniqueness of the shift equations on $\overline{H}$}
We will now finish the proof of Theorem \ref{main_th2}. There was a gap in the previous version of this proof, a correct argument can be found in \cite[Section 4.2]{ARSZ}. We give this corrected proof below, see \cite{ARSZ} for more details.
\begin{proof}[Proof of Theorem \ref{main_th2}, equation \eqref{main_th2_H}]
We need to show that the solution space of the system comprised of the shift equations of $\overline{H}$ combined with the reflection principle is of dimension at most one $1$. First assume $\gamma^2 \not \in \mathbb Q$. Instead of working directly with $\mathcal{I}$ and $\overline{H}$, we will instead match the following two functions:
\begin{align*}
&\mathcal{J}\begin{pmatrix}
\beta_1 , \beta_2, \beta_3 \\
\sigma_1,  \sigma_2,   \sigma_3 
\end{pmatrix}  :=  \int_{\mathcal{C}} \frac{S_{\frac{\gamma}{2}}(Q-\frac{\beta_2}{2}+\sigma_3-\sigma_2+r) S_{\frac{\gamma}{2}}(\frac{\beta_3}{2}+\sigma_3-\sigma_1+r) S_{\frac{\gamma}{2}}(Q-\frac{\beta_3}{2}+\sigma_3-\sigma_1+r)}{S_{\frac{\gamma}{2}}(Q+\frac{\beta_1}{2}-\frac{\beta_2}{2}+\sigma_3-\sigma_1+r) S_{\frac{\gamma}{2}}(2Q-\frac{\beta_1}{2}-\frac{\beta_2}{2}+\sigma_3-\sigma_1+r) S_{\frac{\gamma}{2}}(Q+r)}e^{i\pi(-\frac{\beta_2}{2}+\sigma_2-\sigma_3)r} \frac{dr}{i},\\
& \mathcal{J}_{\overline{H}}
\begin{pmatrix}
\beta_1 , \beta_2, \beta_3 \\
\sigma_1,  \sigma_2,   \sigma_3 
\end{pmatrix} := \overline{H}
\times \frac{\Gamma(1-\frac{\gamma^2}{4})^{\frac{2Q-\overline{\beta}}{\gamma}}\Gamma(\frac{\overline{\beta}-2Q}{\gamma})}{(2\pi)^{\frac{2Q-\overline{\beta}}{\gamma}+1}(\frac{2}{\gamma})^{(\frac{\gamma}{2}-\frac{2}{\gamma})(Q-\frac{\overline{\beta}}{2})-1}} \\
& \times  \frac{\Gamma_{\frac{\gamma}{2}}(Q) \Gamma_{\frac{\gamma}{2}}(Q-\beta_1) \Gamma_{\frac{\gamma}{2}}(Q-\beta_2) \Gamma_{\frac{\gamma}{2}}(Q-\beta_3) S_{\frac{\gamma}{2}}(\frac{\beta_1}{2}+\sigma_1-\sigma_2)  S_{\frac{\gamma}{2}}(\frac{\beta_3}{2}+\sigma_3-\sigma_1)}{\Gamma_{\frac{\gamma}{2}}(2Q-\frac{\overline{\beta}}{2})\Gamma_{\frac{\gamma}{2}}(\frac{\beta_1+\beta_3-\beta_2}{2})\Gamma_{\frac{\gamma}{2}}(Q-\frac{\beta_1+\beta_2-\beta_3}{2})\Gamma_{\frac{\gamma}{2}}(Q-\frac{\beta_2+\beta_3-\beta_1}{2})} \\
& \times
  e^{- i\frac{\pi}{2}(-(2Q-\frac{\beta_1}{2}-\sigma_1-\sigma_2)(Q-\frac{\beta_1}{2}-\sigma_1-\sigma_2) + (Q+\frac{\beta_2}{2}-\sigma_2-\sigma_3)(\frac{\beta_2}{2}-\sigma_2-\sigma_3)+(Q+\frac{\beta_3}{2}-\sigma_1-\sigma_3)(\frac{\beta_3}{2}-\sigma_1-\sigma_3) -2\sigma_3(2\sigma_3-Q)        )}.
\end{align*}
Namely we have removed the prefactor in front of the contour integral of $\mathcal{I}$ and done the same multiplication for $\overline{H}$. Now thanks to Proposition~\ref{full_shift_H} and Lemma~\ref{lem:shift_I}, both $\mathcal{J}$ and $\mathcal{J}_{\overline{H}}$ obey the same shift equations which have the following form, still for $\chi = \frac{\gamma}{2}$ or $\frac{2}{\gamma}$,
\begin{align}
& \mathcal{J}
\begin{pmatrix}
\beta_1 , \beta_2 - \chi, \beta_3 \\
\sigma_1,  \sigma_2,   \sigma_3 
\end{pmatrix} = f_1(\beta_1, \beta_2) \mathcal{J}
\begin{pmatrix}
\beta_1 - \chi , \beta_2, \beta_3 \\
\sigma_1,  \sigma_2 + \frac{\chi}{2},   \sigma_3 
\end{pmatrix} + f_2(\beta_1, \beta_2) \mathcal{J}
\begin{pmatrix}
\beta_1 +\chi, \beta_2, \beta_3 \\
\sigma_1,  \sigma_2 + \frac{\chi}{2},   \sigma_3 
\end{pmatrix}, \label{shift_3pt_sec4_1} \\ 
 & \mathcal{J}
\begin{pmatrix}
\beta_1 , \beta_2 +\chi, \beta_3 \\
\sigma_1,  \sigma_2 + \frac{\chi}{2},   \sigma_3 
\end{pmatrix} = f_3(\beta_1, \beta_2) \mathcal{J}
\begin{pmatrix}
\beta_1 - \chi, \beta_2, \beta_3 \\
\sigma_1,  \sigma_2,   \sigma_3 
\end{pmatrix}   + f_4(\beta_1, \beta_2) \mathcal{J}
\begin{pmatrix}
\beta_1 +\chi, \beta_2, \beta_3 \\
\sigma_1,  \sigma_2,   \sigma_3 
\end{pmatrix}, \label{shift_3pt_sec4_2}
\end{align}
where the functions $f_1, f_2, f_3, f_4$ are explicit meromorphic functions.
The advantage of this form of the shift equations is that the four functions $f_i$ all contain only functions which are $2/\chi$-periodic in $\beta_1$ for both values of $\chi$ (and no more gamma functions).
Now consider the shift equation \eqref{shift_3pt_sec4_1} with the parameter replacement $\beta_1 \rightarrow \beta_1 +  \chi $,  $\beta_2 \rightarrow \beta_2 + \chi $:
\begin{align*}
&\mathcal{J}
\begin{pmatrix}
\beta_1 + \chi, \beta_2, \beta_3 \\
\sigma_1,  \sigma_2,   \sigma_3 
\end{pmatrix} = f_1(\beta_1 + \chi, \beta_2 + \chi) \mathcal{J}
\begin{pmatrix}
\beta_1 , \beta_2 +\chi, \beta_3 \\
\sigma_1,  \sigma_2 + \frac{\chi}{2},   \sigma_3 
\end{pmatrix} + f_2(\beta_1 + \chi, \beta_2 + \chi) \mathcal{J}
\begin{pmatrix}
\beta_1 +2 \chi, \beta_2 +\chi, \beta_3 \\
\sigma_1,  \sigma_2 + \frac{\chi}{2},   \sigma_3 
\end{pmatrix}.
\end{align*}

In this equation the two $\mathcal{J}$ functions appearing on the right hand side can be expressed in terms of $\mathcal{J}$ functions involving only shifts on $\beta_1$ using twice equation \eqref{shift_3pt_sec4_2}, once as it is and once with the parameter replacement $\beta_1 \rightarrow \beta_1 + 2 \chi $. Performing one more global parameter replacement $\beta_1$ to $\beta_1 + \chi$, the conclusion is that we land on the following shift equation:
\begin{align}
0 &=   f_1(\beta_1 + 2 \chi, \beta_2 + \chi )  f_3(\beta_1 + \chi, \beta_2) \mathcal{J}
\begin{pmatrix}
\beta_1 , \beta_2, \beta_3 \\
\sigma_1,  \sigma_2,   \sigma_3 
\end{pmatrix} \\ \nonumber
&  + \left( -1+  f_1(\beta_1 + 2 \chi, \beta_2 + \chi )  f_4(\beta_1 + \chi, \beta_2) +  f_2(\beta_1 + 2 \chi, \beta_2 + \chi )  f_3(\beta_1 + 3 \chi, \beta_2) \right) \mathcal{J}
\begin{pmatrix}
\beta_1 +2 \chi, \beta_2, \beta_3 \\
\sigma_1,  \sigma_2,   \sigma_3 
\end{pmatrix}   \\ \nonumber
& + f_2(\beta_1 + 2 \chi , \beta_2 + \chi )  f_4(\beta_1 + 3 \chi, \beta_2) \mathcal{J}
\begin{pmatrix}
\beta_1 +4 \chi, \beta_2, \beta_3 \\
\sigma_1,  \sigma_2,   \sigma_3 
\end{pmatrix}.
\end{align}

Now that we have a shift equation only on the variable $\beta_1$, we can fix the five parameters $\beta_2, \beta_3, \sigma_1, \sigma_2, \sigma_3$ to some generic values and view all of the functions below as functions of the single parameter $\beta_1$. From now on we write simply $\mathcal{J}(\beta_1)$ to lighten notations. 
The above shift equations can be put into the form
\begin{align}\label{eq:shift_J}
 \mathcal{J}(\beta_1 + 4 \chi) + a_{\chi}(\beta_1) \mathcal{J}(\beta_1 + 2\chi) + b_{\chi}(\beta_1)  \mathcal{J}(\beta_1) =0,
\end{align}
where $a_{\chi}(\beta_1), b_{\chi}(\beta_1)$ have a simple expression in terms of $f_1, f_2, f_3, f_4$. To argue uniqueness we will introduce the matrices:
\begin{align*}
M_{1}(\beta_1) = \begin{pmatrix}
\mathcal{J}(\beta_1) & \mathcal{J}(\beta_1 - \frac{4}{\gamma})  \\
\mathcal{J}(\beta_1 - \gamma )& \mathcal{J}(\beta_1 - 2Q)
\end{pmatrix}, \quad M_{2}(\beta_1) = \begin{pmatrix}
\mathcal{J}_{\overline{H}}(\beta_1) & \mathcal{J}_{\overline{H}}(\beta_1 - \frac{4}{\gamma})  \\
\mathcal{J}_{\overline{H}}(\beta_1 - \gamma )& \mathcal{J}_{\overline{H}}(\beta_1 - 2Q)
\end{pmatrix}. 
\end{align*}
Set $\beta_0 := 2Q - \beta_2 - \beta_3$. We will show these matrices match at $\beta_1 = \beta_0$, or more precisely that:
\begin{equation}\label{eq:match_res}
\lim_{\beta_1\to \beta_0}(\beta_1 - \beta_0) M_1(\beta_1) = \lim_{\beta_1\to \beta_0}(\beta_1 - \beta_0) M_2(\beta_1).
\end{equation}
From Lemma \ref{lem:spe_val_I} we know that $\mathcal{I} =1$ when $\beta_0 = \beta_1$ and similarly for $\overline{H}$ which implies $\lim_{\beta_1\to \beta_0}(\beta_1 - \beta_0)\mathcal{J}(\beta_1) = \lim_{\beta_1\to \beta_0}(\beta_1 - \beta_0)\mathcal{J}_{\overline{H}}(\beta_1)$. Similarly one can show $\lim_{\beta_1\to \beta_0}(\beta_1 - \beta_0)\mathcal{J}(\beta_1 - \gamma) = \lim_{\beta_1\to \beta_0}(\beta_1 - \beta_0)\mathcal{J}_{\overline{H}}(\beta_1 - \gamma)$ by repeating the proof of Lemma \ref{lem:spe_val_I} for the case of $\mathcal{I}$ and by computing explicitly a first moment of GMC in the case of $\overline{H}$. Then by applying the reflection principle, one obtains that the remaining two limits match and thus the claim \eqref{eq:match_res} holds. See \cite{ARSZ} for more details.

Now let us write down the shift equations satisfied by the matrices $M_1, M_2$. One has
\begin{align*}
M_1(\beta_1 + \gamma) = A_{\frac{\gamma}{2}}(\beta_1) M_1(\beta_1), \quad M_1(\beta_1 + \frac{4}{\gamma}) =  M_1(\beta_1) A_{\frac{2}{\gamma}}(\beta_1)^\top, \quad A_{\chi}(\beta_1) := \begin{pmatrix}
- a_{\chi}(\beta_1) & - b_{\chi}(\beta_1)  \\
1 & 0
\end{pmatrix},
\end{align*}
and the same relations for $M_2$.
The fact that these first order shift equations on the matrices $M_1, M_2$ hold uses \eqref{eq:shift_J} combined with the fact that the function $a_{\chi}(\beta_1), b_{\chi}(\beta_1)$ are both $2/\chi$-periodic for both values of $\chi$.
Consider the determinant: $$D_1(\beta_1) = \det M_1(\beta_1) = \mathcal{J}(\beta_1) \mathcal{J}(\beta_1- 2 Q)- \mathcal{J}(\beta_1 -\gamma) \mathcal{J}(\beta_1 - \frac{4}{\gamma}).$$
It is easy to see that the residue of $D_1(\beta_1)$ at $\beta_1 = \beta_0$ is not zero. Since $D_1(\beta_1)$ is a nonzero meromorphic function, it must have isolated and at most countably many zeros. Therefore the matrix inverse $M_1(\beta_1)^{-1}$ is a well-defined 2-by-2 matrix whose entries are meromorphic functions of $\beta_1$.
Now consider $M(\beta_1) = M_2(\beta_1) M_1(\beta_1)^{-1}$. Then the matrix $M(\beta_1)$ satisfies:
\begin{equation}\label{eq:M-shift}
    M(\beta_1 + \gamma ) = A_{\frac{\gamma}{2}}(\beta_1) M(\beta_1) A_{\frac{\gamma}{2}}(\beta_1)^{-1}, \quad  M(\beta_1 + \frac{4}{\gamma}) =  M(\beta_1).
\end{equation}
Similarly as for $M_1$, since $\det A_{\frac{\gamma}{2}}(\beta_1) = b_{\frac{\gamma}{2}}(\beta_1)$   is nonzero  meromorphic function,  the matrix $A_{\frac{\gamma}{2}}(\beta_1)^{-1}$ is a well-defined 2-by-2 matrix with meromorphic entries. Thanks to equation \eqref{eq:match_res}, we known that $M(\beta_1)$ is the identity matrix for $\beta_1 = \beta_0$. By the standard argument, when $ \gamma^2 \notin \mathbb{Q}$, the two shift equations in~\eqref{eq:M-shift} imply $M(\beta_1)$ is the identity matrix for all $\beta_1$. The same holds for $ \gamma^2 \in \mathbb{Q}$ by continuity. Therefore $\mathcal{J} = \mathcal{J}_{\overline{H}}$ and hence $\overline{H} = \mathcal{I}$.
\end{proof}

\subsubsection{Consistency with the interval GMC}\label{sec:consistency_int}

Finally we include here the consistency check that the formula $\mathcal{I}$ matches the one of \cite{interval} in the special case $\mu_2 = 1$, $\mu_1 = \mu_3 =0$, see also the discussion of Section \ref{app:link_int}.
In terms of the $\sigma_i$ variables, we look at the limit $\textnormal{Im}(\sigma_1),\textnormal{Im}(\sigma_3) \to +\infty$ and $\sigma_2 = \frac{Q}{2}$ and use the asymptotic of the $S_{\frac{\gamma}{2}}$ function given by \eqref{eq:lim_S}. First let us do a change of variable $r \to r -\frac{\beta_3}{2}+\sigma_1-\sigma_3$, the contour integral will become
\begin{align}
\int_{\mathcal{C}} \frac{S_{\frac{\gamma}{2}}(Q-\frac{\beta_2}{2}-\frac{\beta_3}{2}+\sigma_1-\sigma_2+r) S_{\frac{\gamma}{2}}(r) S_{\frac{\gamma}{2}}(Q-\beta_3+r) e^{i\pi(-\frac{\beta_2}{2}+\sigma_2-\sigma_3)(-\frac{\beta_3}{2}+\sigma_1-\sigma_3)}}{S_{\frac{\gamma}{2}}(Q+\frac{\beta_1}{2}-\frac{\beta_2}{2}-\frac{\beta_3}{2}+r) S_{\frac{\gamma}{2}}(2Q-\frac{\overline{\beta}}{2}+r) S_{\frac{\gamma}{2}}(Q-\frac{\beta_3}{2}+\sigma_1-\sigma_3+r)}e^{i\pi(-\frac{\beta_2}{2}+\sigma_2-\sigma_3)r} \frac{dr}{i}.
\end{align}
We send $\textnormal{Im}(\sigma_1) \to +\infty$ and do the change $r\to -r$:
\begin{align}
e^{i\pi(-\frac{\beta_2}{2}+\sigma_2-\sigma_3)(-\frac{\beta_3}{2}+\sigma_1-\sigma_3)} e^{i\frac{\pi}{2}(-(Q-\frac{\beta_2}{2}-\frac{\beta_3}{2}+\sigma_1-\sigma_2)(-\frac{\beta_2}{2}-\frac{\beta_3}{2}+\sigma_1-\sigma_2)+(Q-\frac{\beta_3}{2}+\sigma_1-\sigma_3)(-\frac{\beta_3}{2}+\sigma_1-\sigma_3))} \nonumber \\
\times \int_{\mathcal{C}} \frac{S_{\frac{\gamma}{2}}(\frac{\beta_2}{2}+\frac{\beta_3}{2}-\frac{\beta_1}{2}+r) S_{\frac{\gamma}{2}}(\frac{\overline{\beta}}{2}-Q+r) }{ S_{\frac{\gamma}{2}}(Q+r) S_{\frac{\gamma}{2}}(\beta_3+r)}e^{-i2\pi(\sigma_2-\sigma_3)r} \frac{dr}{i}.
\end{align}
From the result of \cite{Ponsot 2000} on the b-hypergeometric functions, when $\textnormal{Im}(\sigma_3) \to +\infty$, the above contour integral (excluding the prefactor) converges to:
\begin{align}
\frac{S_{\frac{\gamma}{2}}(\frac{\beta_2}{2}+\frac{\beta_3}{2}-\frac{\beta_1}{2}) S_{\frac{\gamma}{2}}(\frac{\overline{ \beta}}{2}-Q) }{  S_{\frac{\gamma}{2}}(\beta_3)}.
\end{align}
The rest of the terms in $\mathcal{I}$ are much easier to analyse. Putting everything together and taking $\textnormal{Im}(\sigma_1),\textnormal{Im}(\sigma_3) \to +\infty$, $\sigma_2=\frac{Q}{2}$ will yield after simplification:
\begin{align}
\overline{H}^{(\beta_1,\beta_2,\beta_3)}_{(0,1,0)}=  \frac{(2\pi)^{\frac{2Q-\overline{\beta}}{\gamma}+1}(\frac{2}{\gamma})^{(\frac{\gamma}{2}-\frac{2}{\gamma})(Q-\frac{\overline{\beta}}{2})-1}}{\Gamma(1-\frac{\gamma^2}{4})^{\frac{2Q-\overline{\beta}}{\gamma}}\Gamma(\frac{\overline{\beta}-2Q}{\gamma})} 
 \frac{\Gamma_{\frac{\gamma}{2}}(\frac{\overline{\beta}}{2}-Q)\Gamma_{\frac{\gamma}{2}}(\frac{\beta_1+\beta_3-\beta_2}{2})\Gamma_{\frac{\gamma}{2}}(\frac{\beta_2+\beta_3-\beta_1}{2})\Gamma_{\frac{\gamma}{2}}(Q-\frac{\beta_1+\beta_2-\beta_3}{2})}{\Gamma_{\frac{\gamma}{2}}(Q) \Gamma_{\frac{\gamma}{2}}(Q-\beta_1) \Gamma_{\frac{\gamma}{2}}(Q-\beta_2) \Gamma_{\frac{\gamma}{2}}(\beta_3)}.
\end{align} 
It can be easily checked that this formula is exactly the same as what the authors have found in \cite{interval}.

\section{Proof of the BPZ differential equations}\label{sec_BPZ}

The goal of this section is to check the BPZ differential equations - reducing in our case to the standard hypergeometric equations - that have been used extensively in Sections \ref{sec_bulk_boundary} and \ref{sec_3pt}.

\subsection{Bulk-boundary case}\label{sec_BPZ1}

We prove here the differential equation used in Section \ref{sec_bulk_boundary}.

\begin{proposition}
Let $\chi = \frac{\gamma}{2}$ or $\frac{2}{\gamma}$, $p = \frac{2}{\gamma}(Q-\alpha-\frac{\beta}{2}+\frac{\chi}{2})$. Consider in the following parameter range,
\begin{equation}\label{para_G_BPZ}
 \beta < Q, \quad p < \frac{4}{\gamma^2} \wedge \frac{2}{\gamma}(Q -\beta), \quad t \in \mathbb{H},
\end{equation}
the auxiliary function,
\begin{equation}
G_{\chi}(t)  =  \mathbb{E} \left[ \left(\int_{\mathbb{R}} \frac{(t-x)^{\frac{\gamma\chi}{2}}}{|x-i|^{\gamma\alpha  } }   g(x)^{\frac{\gamma^2}{8}(p-1)} e^{\frac{\gamma}{2} X(x) } d x \right)^{p} \right].
\end{equation}
Consider furthermore for $t \in \{r e^{i \theta} \: | \: r> 0, \:  \theta \in (0 , \frac{\pi}{2})\} $ the change of variable $s=\frac{1}{1+t^2}$ and $\tilde{G}_{\chi}(s) = s^{p\frac{\gamma \chi}{4}}G_{\chi}(t)$. Then the function $\tilde{G}_{\chi}(s)$ obeys the hypergeometric equation,
\begin{align}
s(1-s)\partial_s^2 \tilde{G}_{\chi}(s) +(C-(A+B+1)s)\partial_s \tilde{G}_{\chi}(s) - AB\tilde{G}_{\chi}(s) = 0,
\end{align}
with the parameters defined by:
\begin{align}
A = -p\frac{\gamma \chi}{4}, B = 1+\chi(\chi - \alpha -p\frac{\gamma}{4}), C = \frac{3}{2}+\chi(\chi -\alpha - p\frac{\gamma}{2}).
\end{align}
\end{proposition}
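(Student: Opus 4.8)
The plan is to read $G_{\chi}(t)$ as the chiral correlation function obtained by inserting a degenerate field at the point $t$, entering only through its holomorphic factor $(t-x)^{\frac{\gamma\chi}{2}}$, for which the level-two null-vector relation of the Virasoro algebra predicts a second order BPZ differential equation. At the probabilistic level the proof is a direct Gaussian computation: one works first with a smooth regularization $X_{\epsilon}$ of the field $X$, where all manipulations are classical, establishes the differential identity, and then passes to the limit $\epsilon \to 0$ using the convergence of the GMC measure together with the moment bounds of Proposition \ref{lem:GMC-moment}. The parameter constraints \eqref{para_G_BPZ} are exactly what guarantees that every moment appearing is finite and that the boundary terms created below vanish.

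First I would differentiate $G_{\chi}(t)$ under the expectation. Writing $I = \intr (t-x)^{\frac{\gamma \chi}{2}} f(x) e^{\frac{\gamma}{2} X(x)} dx$ with $f(x) = g(x)^{\frac{\gamma^2}{8}(p-1)} |x-i|^{-\gamma \alpha}$, one obtains $\partial_t G_{\chi} = p \frac{\gamma \chi}{2}\, \E[I^{p-1} I_1]$ and $\partial_t^2 G_{\chi}$ as an explicit combination of $\E[I^{p-2} I_1^2]$ and $\E[I^{p-1} I_2]$, where $I_k$ denotes the integral with $(t-x)^{\frac{\gamma \chi}{2}}$ replaced by $(t-x)^{\frac{\gamma \chi}{2}-k}$. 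These moments are not independent of $G_{\chi}$ and $\partial_t G_{\chi}$, and the missing relation comes from Gaussian integration by parts. The heart of the argument is to apply the Cameron--Martin/Girsanov identity to a factor $e^{\frac{\gamma}{2} X(y)}$ pulled out of the integral. Using the covariance \eqref{covariance}, whose boundary $y$-derivative produces the Cauchy kernel $\partial_y \E[X(y)X(x)] = -\tfrac{2}{y-x} + (\text{metric terms})$, the integration by parts converts the $\partial_y X(y)$ contribution into a sum of the moments above plus a single total-derivative term $\intr \partial_y[\,\Phi(y)\,]\,dy$. This boundary term vanishes: the decay of the metric factor $g$ at infinity dictated by \eqref{def_metric_g} together with the conditions \eqref{para_G_BPZ} kills the contributions at $y \to \pm\infty$, and the apparent singularities at coincidence points are integrable. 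Collecting everything yields a closed linear relation between $G_{\chi}$, $\partial_t G_{\chi}$ and $\partial_t^2 G_{\chi}$; the cancellation that keeps the relation second order, rather than generating an extra independent functional, occurs precisely because $\chi \in \{\tfrac{\gamma}{2}, \tfrac{2}{\gamma}\}$ solves the level-two degeneracy condition, so that the relation $Q = \tfrac{\gamma}{2} + \tfrac{2}{\gamma}$ forces the obstruction to drop out.

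Finally I would perform the change of variables $s = \frac{1}{1+t^2}$, $\tilde{G}_{\chi}(s) = s^{p \frac{\gamma \chi}{4}} G_{\chi}(t)$. Since $G_{\chi}$ is holomorphic in $t$, the relation derived above is already a second-order ODE in $t$; the prefactor $s^{p\frac{\gamma\chi}{4}}$ and the substitution are chosen so that this ODE becomes the standard hypergeometric equation, after which a routine computation reads off $A = -p\frac{\gamma\chi}{4}$, $B = 1+\chi(\chi - \alpha - p\frac{\gamma}{4})$, and $C = \frac{3}{2}+\chi(\chi - \alpha - p\frac{\gamma}{2})$.

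The main obstacle I anticipate is not the algebra but the rigorous justification of the Gaussian calculus at the level of GMC: differentiating under the expectation with uniform-in-$\epsilon$ moment control, validating the integration by parts for the singular Cauchy kernel, and proving the vanishing of the total-derivative term. The delicate point is the treatment of the diagonal singularity $y = x$, where the kernel $\tfrac{1}{y-x}$ meets the GMC measure and where most of the technical estimates, following \cite{interval, DOZZ1}, will be concentrated.
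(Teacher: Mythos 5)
Your proposal follows essentially the same route as the paper's proof: differentiate under the expectation, use Girsanov to pull out an insertion $e^{\frac{\gamma}{2}X(x_1)}$, integrate by parts so that the Cauchy kernels $\frac{1}{x_1-x}$, $\frac{\alpha}{x_1\mp i}$ appear, observe that the system closes at second order only for $\chi\in\{\frac{\gamma}{2},\frac{2}{\gamma}\}$ (in the paper this is the vanishing of the factor $-\frac{\gamma}{2}+\chi$ for $\chi=\frac{\gamma}{2}$ and of the exponent $\frac{\gamma\chi}{2}-1$ for $\chi=\frac{2}{\gamma}$, plus the antisymmetry of $\frac{1}{x_1-x_2}$ killing the double-integral term), and finally apply the change of variables $s=\frac{1}{1+t^2}$ with the prefactor $s^{p\gamma\chi/4}$. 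The only presentational difference is that you propose to carry the $\epsilon$-regularization through the whole argument, whereas the paper performs the computation formally and defers the rigorous regularization to the companion work \cite{interval}.
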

\begin{remark}
As explained in Section \ref{sec_bulk_boundary}, in the change of variable from $t$ to $s$ the argument of $s$ is in $(-\pi,0)$ and one has $\sqrt{1-s }= t \sqrt{s}$. Furthermore, the exact same hypergeometric equation holds for the dual function $\hat{G}_{\chi}(s)$ introduced in Section \ref{sec_bulk_boundary}.
\end{remark}

\begin{proof}
For simplicity, we introduce the notations,
\begin{equation}
V_1 (x_1;t) = \mathbb{E} \left[ \left(\int_{\mathbb{R}} \frac{(t-x)^{\frac{\gamma\chi}{2}}}{|x-i|^{\gamma\alpha  }   |x-x_1|^{\frac{\gamma^2}{2}}}  e^{\frac{\gamma}{2} X(x) } g(x)^{\frac{\gamma^2}{8}(p-2)} d x \right)^{p-1} \right],
\end{equation}
\begin{equation}
V_2 (x_1,x_2;t) = \mathbb{E} \left[ \left(\int_{\mathbb{R}} \frac{(t-x)^{\frac{\gamma\chi}{2}}}{|x-i|^{\gamma\alpha  }  |x-x_1|^{\frac{\gamma^2}{2}} |x-x_2|^{\frac{\gamma^2}{2}}}  e^{\frac{\gamma}{2} X(x) } g(x)^{\frac{\gamma^2}{8}(p-3)} d x \right)^{p-2} \right].
\end{equation}
We will not be bothered here with the regularization procedure of the log-correlated field $X$ that must in principle be used to perform the computations. A fully rigorous proof implementing the regularization method can be found in \cite{interval}.
Let us compute the derivatives of the function $G_{\chi}(t)$ with the help of the Girsanov Theorem \ref{girsanov} as was done in equation \eqref{OPE_no_reflection}. By direct differentiation:
\begin{align}\label{eq:proof_bpz1}
\partial_t G_{\chi} = p \frac{\gamma \chi}{2} \intr dx_1 \frac{(t-x_1)^{\frac{\gamma\chi}{2} -1 }}{|x_1-i|^{\gamma\alpha  } } V_1(x_1;t).
\end{align}

We need to distinguish between the two values of $\chi$. First we set $\chi = \frac{\gamma}{2}$. By differentiating \eqref{eq:proof_bpz1} with respect to $t$ and then performing integration by parts we obtain:
\begin{align}
&\partial_t^2 G_{\frac{\gamma}{2}} = -p \frac{\gamma^2}{4} \intr dx_1 \frac{\partial_{x_1}(t-x_1)^{\frac{\gamma^2}{4} -1 }}{|x_1-i|^{\gamma\alpha  }  } V_1(x_1;t) + p \frac{\gamma^2}{4} \intr dx_1 \frac{(t-x_1)^{\frac{\gamma^2}{4} -1 }}{|x_1-i|^{\gamma\alpha  }  } \partial_{t}V_1(x_1;t)\\ \nonumber
&= -p\frac{\gamma^3}{8} \intr dx_1 \left( \frac{\alpha}{x_1-i} + \frac{\alpha}{x_1+i}  \right) \frac{(t-x_1)^{\frac{\gamma^2}{4}-1} }{|x_1-i|^{\gamma\alpha  }  } V_1(x_1;t) + p\frac{\gamma^2}{4} \intr dx_1 \frac{(t-x_1)^{\frac{\gamma^2}{4}-1}}{|x_1-i|^{\gamma\alpha  } } (\partial_{x_1} +\partial_{t})V_1(x_1;t).
\end{align}
We can compute the last term by using the symmetry between $x_1$ and $x_2$:
\begin{align}
&\intr dx_1 \frac{(t-x_1)^{\frac{\gamma^2}{4}-1}}{|x_1-i|^{\gamma\alpha  } } (\partial_{x_1}+\partial_{t})V_1(x_1;t)\\ \nonumber
&=(p-1)\frac{\gamma}{2}\intr \intr dx_1 dx_2 \left(-\frac{\gamma}{x_1-x_2} +\frac{\gamma}{2(t-x_2)}\right) \frac{(t-x_1)^{\frac{\gamma^2}{4}-1}(t-x_2)^{\frac{\gamma^2}{4}}}{|x_1-i|^{\gamma\alpha  } |x_2-i|^{\gamma\alpha  } |x_1-x_2|^{\frac{\gamma^2}{2}}} V_2(x_1,x_2;t)\\ \nonumber
&=(p-1)\frac{\gamma}{2}\intr \intr dx_1 dx_2 \left(- \frac{\gamma}{2} \frac{t- x_2}{x_1-x_2} + \frac{\gamma}{2} \frac{t- x_1}{x_1-x_2} +\frac{\gamma}{2} \right) \frac{(t-x_1)^{\frac{\gamma^2}{4}-1}(t-x_2)^{\frac{\gamma^2}{4}-1}}{|x_1-i|^{\gamma\alpha  } |x_2-i|^{\gamma\alpha  } |x_1-x_2|^{\frac{\gamma^2}{2}}} V_2(x_1,x_2;t) =0.
\end{align}

Note that the function $\frac{1}{x_1-x_2}$ is actually not integrable, but this argument can be made fully rigorous by working with a regularized version of $X$, in which case one would land on a smoothed version of $\frac{1}{x_1-x_2}$. See equation (3.2) in \cite{interval} for a precise regularization and the computations below equation (3.3) in \cite{interval} for the same steps using the regularization.
Putting these two steps together and performing simple algebra:
\begin{align}
&\partial_t^2 G_{\frac{\gamma}{2}} = -p\frac{\gamma^3}{8} \intr dx_1 \left( \frac{1}{t-x_1}\left(\frac{\alpha}{t-i} +\frac{\alpha}{t+i}\right) +\frac{\alpha}{(t-i)(x_1-i)}+ \frac{\alpha}{(t+i)(x_1+i)} \right) \frac{(t-x_1)^{\frac{\gamma^2}{4}} }{|x_1-i|^{\gamma\alpha  } } V_1(x_1;t) \nonumber \\ 
&= -\frac{\gamma}{2} \left(\frac{\alpha}{t-i} +\frac{\alpha}{t+i}\right) \partial_t G_{\frac{\gamma}{2}} -p\frac{\gamma^3}{8} \intr dx_1 \left( \frac{\alpha}{(t-i)(x_1-i)}+ \frac{\alpha}{(t+i)(x_1+i)} \right)\frac{(t-x_1)^{\frac{\gamma^2}{4}} }{|x_1-i|^{\gamma\alpha  }  } V_1(x_1;t).
\end{align}
Now moving to the case $\chi =\frac{2}{\gamma}$, first by using integration by parts:
\begin{align}
 & -\frac{\gamma \alpha}{2} \intr dx_1 \left( \frac{1}{x_1 - i} + \frac{1}{x_1 + i}  \right) \frac{1}{|x_1-i|^{\gamma\alpha  } } V_1(x_1;t) =  \intr dx_1 \partial_{x_1} \frac{1}{|x_1-i|^{\gamma\alpha  } } V_1(x_1;t)\\ \nonumber
 & = - \intr dx_1 \frac{1}{|x_1-i|^{\gamma\alpha  } } \partial_{x_1}  V_1(x_1;t)  = (p-1)\frac{\gamma^2}{4} \intr \intr dx_1 dx_2 \frac{1}{|x_1-i|^{\gamma\alpha  } |x_2-i|^{\gamma\alpha  }  |x_1-x_2|^{\frac{\gamma^2}{2}}} V_2(x_1,x_2;t).
\end{align}
Applying a derivative to \eqref{eq:proof_bpz1} and using the above equation we obtain:
\begin{align}
\partial_t^2 G_{\frac{2}{\gamma}} = p  \intr dx_1 \frac{1}{|x_1-i|^{\gamma\alpha  } } \partial_{t}V_1(x_1;t) &= p(p-1) \intr \intr dx_1 dx_2 \frac{1}{|x_1-i|^{\gamma\alpha  } |x_2-i|^{\gamma\alpha  }  |x_1-x_2|^{\frac{\gamma^2}{2}}} V_2(x_1,x_2;t) \nonumber \\
&= -\frac{2}{\gamma} p \alpha \intr dx_1 \left( \frac{1}{x_1 - i} + \frac{1}{x_1 + i}  \right) \frac{1}{|x_1-i|^{\gamma\alpha  } } V_1(x_1;t).
\end{align}
Using again \eqref{eq:proof_bpz1} this can be rewritten as:

\begin{align}
\partial_t^2 G_{\frac{2}{\gamma}} &= -\frac{2}{\gamma}\left(\frac{\alpha}{t-i} +\frac{\alpha}{t+i}\right) \partial_t G_{\frac{2}{\gamma}} - p\frac{2}{\gamma}\intr dx_1 \left( \frac{\alpha}{(t-i)(x_1-i)}+ \frac{\alpha}{(t+i)(x_1+i)} \right)\frac{t-x_1 }{|x_1-i|^{\gamma\alpha  } } V_1(x_1;t).
\end{align}

We can also write $G_{\chi}$ in a similar form. For both values of $\chi$, an integration by parts together with the symmetry shows that:
\begin{align}
(\frac{\gamma\chi}{2}+1) G_{\chi} =& (\frac{\gamma\chi}{2}+1) \mathbb{E} \left[ \int_{\mathbb{R}} \frac{(t-x_1)^{\frac{\gamma\chi}{2}}}{|x_1-i|^{\gamma\alpha  } }   g(x_1)^{\frac{\gamma^2}{8}(p-1)} e^{\frac{\gamma}{2} X(x_1) } d x_1 \left(\int_{\mathbb{R}} \frac{(t-x)^{\frac{\gamma\chi}{2}}}{|x-i|^{\gamma\alpha  } }   g(x)^{\frac{\gamma^2}{8}(p-1)} e^{\frac{\gamma}{2} X(x) } d x \right)^{p-1} \right] \nonumber  \\ \nonumber
=& - \intr dx_1 \frac{\partial_{x_1} (t-x_1)^{\frac{\gamma\chi}{2}+1}}{|x_1-i|^{\gamma\alpha  } } V_1(x_1;t)\\ \nonumber
=& -\frac{\gamma}{2}\intr dx_1 \left( \frac{\alpha}{x_1-i}+\frac{\alpha}{x_1+i}  \right)\frac{ (t-x_1)^{\frac{\gamma\chi}{2}+1}}{|x_1-i|^{\gamma\alpha  } } V_1(x_1;t)  + \intr dx_1 \frac{ (t-x_1)^{\frac{\gamma\chi}{2}+1}}{|x_1-i|^{\gamma\alpha  } } \partial_{x_1} V_1(x_1;t) \\ \nonumber
=& -\frac{\gamma}{2}\intr dx_1 \left( \frac{\alpha}{x_1-i}+\frac{\alpha}{x_1+i}  \right)\frac{ (t-x_1)^{\frac{\gamma\chi}{2}+1}}{|x_1-i|^{\gamma\alpha  } } V_1(x_1;t) +(p-1)\frac{\gamma^2}{4} G_{\chi}\\ 
=&-\frac{\gamma}{2}\intr dx_1 \left( \frac{\alpha(t-i)}{x_1-i}+\frac{\alpha(t+i)}{x_1+i} \right)\frac{ (t-x_1)^{\frac{\gamma\chi}{2}}}{|x_1-i|^{\gamma\alpha  }  } V_1(x_1;t) +((p-1)\frac{\gamma^2}{4}+\gamma\alpha) G_{\chi}.
\end{align}
Above to go from the third to the fourth line we have used symmetrization in the following way:
\begin{align}
\intr &dx_1 \frac{(t-x_1)^{\frac{\gamma\chi}{2}+1}}{|x_1-i|^{\gamma\alpha  }  } \partial_{x_1}V_1(x_1;t)\\ \nonumber
 &= -(p-1)\frac{\gamma^2}{2}\intr \intr dx_1 dx_2\frac{1}{x_1-x_2} \frac{(t-x_1)^{\frac{\gamma\chi}{2} +1 }(t-x_2)^{\frac{\gamma\chi}{2}}}{|x_1-i|^{\gamma\alpha  }  |x_2-i|^{\gamma\alpha  }  |x_1-x_2|^{\frac{\gamma^2}{2}}} V_2(x_1,x_2;t) \\ \nonumber
 &= (p-1) \frac{\gamma^2}{4} G_{\chi},  \qquad \text{by the symmetry} \quad  x_1 \leftrightarrow x_2.
\end{align}

Now we summarize the expressions of the derivatives,
\begin{align}
(2\chi+\frac{1}{\chi} - p\frac{\gamma}{2}-2\alpha)G_{\chi} =& -\intr dx_1 \left( \frac{\alpha(t-i)}{x_1-i}+\frac{\alpha(t+i)}{x_1+i}  \right)\frac{ (t-x_1)^{\frac{\gamma\chi}{2}}}{|x_1-i|^{\gamma\alpha  }  } V_1(x_1;t),\\
\partial_t G_{\chi} = &-p\frac{\gamma}{2} \intr dx_1 \left( \frac{\alpha}{x_1-i} + \frac{\alpha}{x_1+i} \right) \frac{(t-x_1)^{\frac{\gamma\chi}{2}}}{|x_1-i|^{\gamma\alpha  } } V_1(x_1;t),
\end{align}
and when $\chi = \frac{\gamma}{2}$ or $\frac{2}{\gamma}$,
\begin{small}
\begin{align}
\partial_t^2 G_{\chi} =-\chi\left(\frac{\alpha}{t-i} +\frac{\alpha}{t+i}\right) \partial_t G_{\chi} -p\frac{\gamma\chi^2}{2} \intr dx_1 \left( \frac{\alpha}{(t-i)(x_1-i)} + \frac{\alpha}{(t+i)(x_1+i)} \right)\frac{(t-x_1)^{\frac{\gamma\chi}{2}}}{|x_1-i|^{\gamma\alpha  } }V_1(x_1;t).
\end{align}
\end{small}
Combining everything implies that $G_{\chi}$ satisfies a differential equation:
\begin{align}
(t^2+1)\partial_t^2 G_{\chi} + 2\chi(\alpha-\chi)t\partial_t G_{\chi} + p\chi^2(\gamma\chi+\frac{\gamma}{2\chi} - p\frac{\gamma^2}{4}-\gamma\alpha)G_{\chi}=0.
\end{align}
Now consider $s=\frac{1}{1+t^2}$ and take the function,
\begin{equation}
\tilde{G}_{\chi}(s) = s^{p\frac{\gamma \chi}{4}}G_{\chi}(t).
\end{equation}
One then has:
\begin{align}
\partial_s \tilde{G}_{\chi}(s) =& - \frac{1}{2}s^{-\frac{3}{2}}(1-s)^{-\frac{1}{2}} s^{p\frac{\gamma \chi}{4}}\partial_t G_{\chi}(t)+p\frac{\gamma\chi}{4}s^{-1}\tilde{G}_{\chi}(s),\\
\partial_{s}^2 \tilde{G}_{\chi}(s) =& \frac{1}{4}\left( (3-p\frac{\gamma\chi}{2})s^{-1} -(1-s)^{-1} \right)s^{-\frac{3}{2}}(1-s)^{-\frac{1}{2}} s^{p\frac{\gamma \chi}{4}}\partial_t G_{\chi}(t)\\ \nonumber
& + \frac{1}{4}s^{-3}(1-s)^{-1} s^{p\frac{\gamma \chi}{4}}\partial_t^2 G_{\chi}(t) +p\frac{\gamma\chi}{4}s^{-1}\partial_s \tilde{G}_{\chi}(s)-p\frac{\gamma\chi}{4}s^{-2}\tilde{G}_{\chi}(s).
\end{align}
Then,
\begin{align}
t \partial_t G_{\chi} = &-2s(1-s)\partial_s \tilde{G}_{\chi} +p\frac{\gamma\chi}{2} (1-s)\tilde{G}_{\chi},\\
(t^2+1)\partial_t^2 G_{\chi} = & 4s^2(1-s)\partial_s^2\tilde{G}_{\chi} - p\gamma\chi s(1-s)\partial_s \tilde{G}_{\chi} +p\gamma\chi(1-s)\tilde{G}_{\chi}\\ \nonumber
&+2s \left((p\frac{\gamma\chi}{2}-4)s + 3-p\frac{\gamma\chi}{2}\right)\partial_s \tilde{G}_{\chi} -p\frac{\gamma\chi}{2} \left((p\frac{\gamma\chi}{2}-4)s + 3-p\frac{\gamma\chi}{2}\right)\tilde{G}_{\chi}\\ \nonumber
=& 4s^2(1-s)\partial_s^2\tilde{G}_{\chi} +2s((p\gamma\chi-4)s+3-p\gamma\chi)\partial_s \tilde{G}_{\chi} + p\frac{\gamma\chi}{2}\left((2-p\frac{\gamma\chi}{2}) s +p\frac{\gamma\chi}{2} -1 \right)\tilde{G}_{\chi}.
\end{align}
This allows to transform the equation of $G_{\chi}$ into a hypergeometric equation of $\tilde{G}_{\chi}$,
\begin{align}
s(1-s)\partial_s^2 \tilde{G}_{\chi} +(C-(A+B+1)s)\partial_s \tilde{G}_{\chi} - AB\tilde{G}_{\chi} = 0,
\end{align}
with the parameters defined by
\begin{align}
A = -p\frac{\gamma \chi}{4}, B = 1+\chi(\chi - \alpha -p\frac{\gamma}{4}), C = \frac{3}{2}+\chi(\chi -\alpha - p\frac{\gamma}{2}).
\end{align}

\end{proof}

\subsection{Boundary three-point case}\label{sec_BPZ2}
Moving on to the equation used in Section \ref{sec_3pt}.
\begin{proposition}\label{BPZ_eq_3pt}
Let $\chi = \frac{\gamma}{2} \text{ or }\frac{2}{\gamma}$ and $q = \frac{1}{\gamma}(2 Q  - \beta_1 - \beta_2 - \beta_3 + \chi)$. In the parameter range,
\begin{equation}
 \beta_i < Q, \quad \mu_1 \in (0, \infty), \: \: \mu_2, \mu_3 \in - \overline{\H}, \quad  q < \frac{4}{\gamma^2} \wedge \min_i \frac{2}{\gamma}(Q - \beta_i), \quad t \in \mathbb{H},
\end{equation}
we define the function,
\begin{equation}
H_{\chi}(t)  = \mathbb{E} \left[ \left( \intr \frac{(t-x)^{\frac{\gamma\chi}{2}}}{|x|^{\frac{\gamma \beta_1 }{2}} |x-1|^{\frac{\gamma \beta_2 }{2}}}  g(x)^{\frac{\gamma^2}{8}(q-1)} e^{\frac{\gamma}{2} X(x)} d \mu(x) \right)^q \right].
\end{equation}
Then $H_{\chi}(t)$ obeys the hypergeometric equation,
\begin{equation}
t(1-t)\partial_t^2 H_{\chi} + (C-(A+B+1)t)\partial_t H_{\chi}-ABH_{\chi} = 0,
\end{equation}
with parameters:
\begin{equation}
A =-q\frac{\gamma\chi}{2}, B = -1+ \chi(\beta_1+\beta_2-2\chi + q\frac{\gamma}{2}), C = \chi(\beta_1 - \chi).
\end{equation}
Furthermore the exact same hypergeometric equation holds for the dual function,
\begin{equation}
\tilde{H}_{\chi}(t)  = \mathbb{E} \left[ \left( \intr \frac{(x-t)^{\frac{\gamma\chi}{2}}}{|x|^{\frac{\gamma \beta_1 }{2}} |x-1|^{\frac{\gamma \beta_2 }{2}}}  g(x)^{\frac{\gamma^2}{8}(q-1)} e^{\frac{\gamma}{2} X(x)} d {\mu}(x) \right)^q \right],
\end{equation}
this time in the range of parameters:
\begin{equation}
t \in -\H, \: \: \beta_i < Q, \: \:   \mu_1, \mu_2 \in - \overline{\H}, \: \: \mu_3 \in  (0,\infty), \quad \text{and} \quad   q < \frac{4}{\gamma^2} \wedge \min_i \frac{2}{\gamma}(Q - \beta_i).
\end{equation}
\end{proposition}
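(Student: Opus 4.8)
The plan is to follow verbatim the strategy of the bulk-boundary case treated in Section \ref{sec_BPZ1}, adapting it to the two boundary insertions at $0$ and $1$. The degenerate insertion is the factor $(t-x)^{\frac{\gamma\chi}{2}}$, carrying weight $-\chi$, and the whole point is that for the two special values $\chi=\frac{\gamma}{2},\frac{2}{\gamma}$ a null-vector mechanism forces $H_\chi(t)$ to satisfy a second order ODE. As in the cited proof I work formally at the level of a mollified field $X_\epsilon$, the fully rigorous regularization being the one carried out in \cite{interval}; finiteness of all moments in the stated range is guaranteed by Proposition \ref{lem:GMC-moment}. First I would introduce the auxiliary functionals
\begin{equation}
V_1(x_1;t)=\mathbb{E}\left[\left(\intr \frac{(t-x)^{\frac{\gamma\chi}{2}}}{|x|^{\frac{\gamma\beta_1}{2}}|x-1|^{\frac{\gamma\beta_2}{2}}|x-x_1|^{\frac{\gamma^2}{2}}}g(x)^{\frac{\gamma^2}{8}(q-2)}e^{\frac{\gamma}{2}X(x)}d\mu(x)\right)^{q-1}\right],
\end{equation}
together with the analogous $V_2(x_1,x_2;t)$ carrying two extra factors $|x-x_i|^{-\frac{\gamma^2}{2}}$ and exponent $q-2$. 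These are exactly the objects produced by the Girsanov Theorem \ref{girsanov} each time one differentiates $H_\chi$ and the resulting insertion of $e^{\frac{\gamma}{2}X(x_1)}$ is reabsorbed into the GMC through its covariance, the kernel $\ln\frac{1}{|x-x_1|}$ generating the factor $|x-x_1|^{-\frac{\gamma^2}{2}}$.

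Then I would compute $\partial_t H_\chi$ in two ways. Direct differentiation gives $\partial_t H_\chi=q\frac{\gamma\chi}{2}\intr d\mu(x_1)\frac{(t-x_1)^{\frac{\gamma\chi}{2}-1}}{|x_1|^{\frac{\gamma\beta_1}{2}}|x_1-1|^{\frac{\gamma\beta_2}{2}}}V_1(x_1;t)$, while writing $\partial_t(t-x_1)^{\frac{\gamma\chi}{2}}=-\partial_{x_1}(t-x_1)^{\frac{\gamma\chi}{2}}$ and integrating by parts throws the derivative onto the remaining factors: the derivative hitting $V_1$ produces a $\frac{1}{x_1-x_2}$ Coulomb term inside $V_2$ which vanishes by the symmetry $x_1\leftrightarrow x_2$, and the derivative hitting $|x_1|^{-\frac{\gamma\beta_1}{2}}|x_1-1|^{-\frac{\gamma\beta_2}{2}}$ produces the rational factor $-\frac{\gamma\beta_1}{2x_1}-\frac{\gamma\beta_2}{2(x_1-1)}$, whose poles at $0$ and $1$ are the source of the singular points of the hypergeometric equation. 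Differentiating once more to get $\partial_t^2 H_\chi$, the crucial simplification occurs: the combined operator $(\partial_{x_1}+\partial_t)$ acting on $V_1$ yields, after symmetrizing in $x_1,x_2$, an overall factor $(-\frac{\gamma}{2}+\chi)$ multiplying the $V_2$ integral. For $\chi=\frac{\gamma}{2}$ this factor is identically zero, and for $\chi=\frac{2}{\gamma}$ one instead uses that $(t-x_1)^{\frac{\gamma\chi}{2}-1}=(t-x_1)^0\equiv 1$ is constant so the offending term drops out. In both cases the $V_2$ contributions collapse and the system closes. I would also rewrite $H_\chi$ itself by integrating by parts $\partial_{x_1}(t-x_1)^{\frac{\gamma\chi}{2}+1}$, producing a purely algebraic relation among $H_\chi$, the $V_1$-integral weighted by $\frac{1}{x_1},\frac{1}{x_1-1}$, and $\partial_t H_\chi$.

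Collecting the three relations for $H_\chi$, $\partial_t H_\chi$ and $\partial_t^2 H_\chi$, every $V_1$-integral appears multiplied by rational functions of $t$ whose only poles sit at $t=0,1$; eliminating these integrals and clearing the common denominator $t(1-t)$ produces precisely the equation $t(1-t)\partial_t^2 H_\chi+(C-(A+B+1)t)\partial_t H_\chi-AB H_\chi=0$, and matching coefficients fixes $A=-q\frac{\gamma\chi}{2}$, $B=-1+\chi(\beta_1+\beta_2-2\chi+q\frac{\gamma}{2})$, $C=\chi(\beta_1-\chi)$. Since the differential relations never see the phase conventions for $(t-x)^{\frac{\gamma\chi}{2}}$ nor the precise half-planes in which $t$ and the $\mu_i$ lie, the identical computation applied to $\tilde{H}_\chi(t)$ (with $(x-t)^{\frac{\gamma\chi}{2}}$, $t\in-\mathbb{H}$, and the dual choice of $\mu_i$) yields the same equation with the same $A,B,C$.

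The main obstacle is entirely in the rigorous justification of the formal integrations by parts. Three points require care and are handled by the regularization scheme of \cite{interval}: the non-integrable singularity of the Coulomb factor $\frac{1}{x_1-x_2}$, which only makes sense and vanishes after symmetrization once $X$ is replaced by $X_\epsilon$; the finiteness and differentiability under the expectation, again controlled through Proposition \ref{lem:GMC-moment}; and above all the boundary terms generated at the discontinuities $x_1=0,1$ of the measure $d\mu$ and at the singularities of $|x_1|^{-\frac{\gamma\beta_1}{2}}|x_1-1|^{-\frac{\gamma\beta_2}{2}}$. The reason these boundary contributions do not spoil the equation is exactly the absence of absolute values around $(t-x)^{\frac{\gamma\chi}{2}}$, i.e. its holomorphicity in $x$, as explained in Section \ref{app:link_int}; verifying that they cancel in the $\epsilon\to 0$ limit is the technical heart of the argument, while the remaining algebra leading to $A,B,C$ is routine though lengthy.
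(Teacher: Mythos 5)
Your proposal is correct and follows essentially the same route as the paper's proof in Section \ref{sec_BPZ2}: the Girsanov-generated functionals $V_1,V_2$, integration by parts with the Coulomb term $\frac{1}{x_1-x_2}$ killed by the $x_1\leftrightarrow x_2$ symmetry, the dichotomy between $\chi=\frac{\gamma}{2}$ (where the factor $-\frac{\gamma}{2}+\chi$ vanishes) and $\chi=\frac{2}{\gamma}$ (where one uses $\partial_{x_1}(t-x_1)^{\frac{\gamma\chi}{2}-1}=0$), and the boundary terms at $0$ and $1$ produced by the regularization, which enter the hypergeometric combination with an overall factor $\epsilon$ and vanish by the estimates of \cite{interval}. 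The only cosmetic difference is that the paper implements the regularization by excising the intervals $(-\epsilon,\epsilon)$ and $(1-\epsilon,1+\epsilon)$ and tracking explicit boundary functionals $E_{0,\epsilon}^{\pm}$, $E_{1,\epsilon}^{\pm}$, rather than phrasing things through a mollified field, but the mechanism is identical.
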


\begin{proof}
We denote for a small $\epsilon>0$,
\begin{equation}
R_{\epsilon} = \mathbb{R} \backslash \{(-\epsilon, \epsilon) \cup (1-\epsilon,1+\epsilon)\}.
\end{equation}
Consider
\begin{equation}
H_{\chi, \epsilon}(t)  =  \mathbb{E} \left[ \left( \int_{R_{\epsilon}} \frac{(t-x)^{\frac{\gamma\chi}{2}}}{|x|^{\frac{\gamma \beta_1 }{2}} |x-1|^{\frac{\gamma \beta_2 }{2}}}  g(x)^{\frac{\gamma^2}{8}(q-1)} e^{\frac{\gamma}{2} X(x)} d \mu(x) \right)^{q} \right].
\end{equation}
For simplicity, we introduce the notations,
\begin{equation}
V_{\epsilon}(x_1;t) = \mathbb{E} \left[ \left( \int_{R_{\epsilon}} \frac{(t-x)^{\frac{\gamma\chi}{2}}}{|x|^{\frac{\gamma \beta_1 }{2}} |x-1|^{\frac{\gamma \beta_2 }{2}}|x-x_1|^{\frac{\gamma^2}{2}}}  g(x)^{\frac{\gamma^2}{8}(q-2)} e^{\frac{\gamma}{2} X(x)} d \mu(x) \right)^{q-1} \right],
\end{equation}
\begin{align}
E_{0,\epsilon}^-(t) &= \mu_1\frac{(t+\epsilon)^{\frac{\gamma\chi}{2}}}{\epsilon^{\frac{\gamma \beta_1 }{2}} (1+\epsilon)^{\frac{\gamma \beta_2 }{2}}}V_{\epsilon}(-\epsilon;t), \quad \quad E_{0,\epsilon}^+(t) = \mu_2\frac{(t-\epsilon)^{\frac{\gamma\chi}{2}}}{\epsilon^{\frac{\gamma \beta_1 }{2}} (1-\epsilon)^{\frac{\gamma \beta_2 }{2}}}V_{\epsilon}(\epsilon;t),\\
E_{1,\epsilon}^{-}(t) &= \mu_2\frac{(t-1+\epsilon)^{\frac{\gamma\chi}{2}}}{(1-\epsilon)^{\frac{\gamma \beta_1 }{2}} \epsilon^{\frac{\gamma \beta_2 }{2}}}V_{\epsilon}(1-\epsilon;t), \quad E_{1,\epsilon}^{+}(t) = \mu_3\frac{(t-1-\epsilon)^{\frac{\gamma\chi}{2}}}{(1+\epsilon)^{\frac{\gamma \beta_1 }{2}} \epsilon^{\frac{\gamma \beta_2 }{2}}}V_{\epsilon}(1+\epsilon;t).
\end{align}

The proof follows the same step as the previous case, the only difference is that here we have additional boundary terms when performing integration by parts due to the presence of the insertions at $0$ and $1$. Similarly we compute,
\begin{align}
(2\chi+\frac{1}{\chi}-q\frac{\gamma}{2}-\beta_1-\beta_2)H_{\chi, \epsilon} = - \int_{R_{\epsilon}}d \mu(x_1) \left(\frac{\beta_1 t}{x_1}+\frac{\beta_2 (t-1)}{x_1-1}\right)\frac{(t-x_1)^{\frac{\gamma\chi}{2}}}{|x_1|^{\frac{\gamma \beta_1 }{2}} |x_1-1|^{\frac{\gamma \beta_2 }{2}}}V_{\epsilon}(x_1;t) \nonumber\\
+\frac{2}{\gamma} \left( -(t+\epsilon)E_{0,\epsilon}^-(t) + (t-\epsilon)E_{0,\epsilon}^+(t) - (t-1+\epsilon)E_{1,\epsilon}^-(t) + (t-1-\epsilon)E_{1,\epsilon}^+(t) \right),
\end{align}
\begin{align}
\partial_t H_{\chi, \epsilon} = -q\frac{\gamma}{2}\int_{R_{\epsilon}}d\mu(x_1) \left(\frac{\beta_1 }{x_1}+\frac{\beta_2}{x_1-1}\right)\frac{(t-x_1)^{\frac{\gamma\chi}{2}}}{|x_1|^{\frac{\gamma \beta_1 }{2}} |x_1-1|^{\frac{\gamma \beta_2 }{2}}}V_{\epsilon}(x_1;t) \nonumber\\
+ q\left( -E_{0,\epsilon}^-(t) + E_{0,\epsilon}^+(t) - E_{1,\epsilon}^-(t) + E_{1,\epsilon}^+(t) \right),
\end{align}
\begin{align}
\partial_t^2 H_{\chi, \epsilon} = -\chi\left(\frac{\beta_1}{t}+\frac{\beta_2}{t-1}\right)\partial_t H_{\chi, \epsilon}-q\frac{\gamma \chi^2}{2}\int_{R_{\epsilon}}d\mu(x_1) \left(\frac{\beta_1 }{tx_1}+\frac{\beta_2}{(t-1)(x_1-1)}\right)\frac{(t-x_1)^{\frac{\gamma\chi}{2}}}{|x_1|^{\frac{\gamma \beta_1 }{2}} |x_1-1|^{\frac{\gamma \beta_2 }{2}}}V_{\epsilon}(x_1;t) \nonumber\\
+ q\chi^2 \left( -\frac{1}{t+\epsilon} E_{0,\epsilon}^-(t) + \frac{1}{t-\epsilon}E_{0,\epsilon}^+(t) - \frac{1}{t-1+\epsilon}E_{1,\epsilon}^-(t) + \frac{1}{t-1-\epsilon}E_{1,\epsilon}^+(t) \right).
\end{align}
Then we have
\begin{align}
&t(1-t)\partial_t^2 H_{\chi, \epsilon} + (C-(A+B+1)t)\partial_t H_{\chi, \epsilon}-ABH_{\chi, \epsilon} \nonumber\\
& \quad =  q\chi^2 \left( \frac{\epsilon(1+\epsilon)}{t+\epsilon} E_{0,\epsilon}^-(t) + \frac{\epsilon(1-\epsilon)}{t-\epsilon}E_{0,\epsilon}^+(t) - \frac{\epsilon(1-\epsilon)}{t-1+\epsilon}E_{1,\epsilon}^-(t) - \frac{\epsilon(1+\epsilon)}{t-1-\epsilon}E_{1,\epsilon}^+(t) \right),
\end{align}
with the parameters given by:
\begin{equation}
A = -q\frac{\gamma\chi}{2}, B = -1+ \chi(\beta_1+\beta_2-2\chi + q\frac{\gamma}{2}), C = \chi(\beta_1 - \chi).
\end{equation}
To complete the proof the only thing left is to argue that the boundary terms $\epsilon E_{\cdot, \epsilon}^{\pm}(t)$ converge to $0$ as $\epsilon $ goes to $0$ locally uniformly in $t$. This has been done in \cite{interval}, in the arguments detailed below equation (3.8) of \cite{interval}. Finally the exact same argument works for $\tilde{H}_{\chi}(t)$.
\end{proof}

\section{Appendix}\label{sec_appendix}

\subsection{Useful facts in probability}\label{sec:useful_facts}
We start by explaining how to construct our GFF $X$ from the standard Neumann boundary GFF $X_\mathbb{D}$ on $\mathbb{D}$, also called free boundary GFF. This field has a covariance given by, for $x,y \in \mathbb{D}$,
\begin{equation}\label{GFF_D}
\E[X_\mathbb{D}(x)X_\mathbb{D}(y)] = \ln \frac{1}{|x-y||1- x\bar{y}|}.
\end{equation}
The field $X_\mathbb{D}$ has zero average on the unit circle. Notice also that if $x$ or $y$ is on the unit circle, the covariance \eqref{GFF_D} reduces to $-2 \ln |x-y|$. One can then conformally map the disk $\mathbb{D}$ equipped with the Euclidean metric to the upper-half plane $\mathbb{H}$ equipped with the metric $\hat{g}(x) = \frac{4}{|x+i|^4}$. By this map from the field $X_\mathbb{D}$ we obtain the field $X_{\hat{g}}$ defined on $\mathbb{H}$ which has covariance
\begin{equation}\label{GFF_hg}
 \E[X_{\hat{g}}(x)X_{\hat{g}}(y)] = \ln \frac{1}{|x-y||x-\bar{y}|} -\frac{1}{2}\ln \hat{g}(x) - \frac{1}{2} \ln \hat{g}(y),
 \end{equation}
and zero average on $\mathbb{R}$ in the metric $\hat{g}$. Finally the field $X$ can be obtained from the field $X_{\hat{g}}$ by simply setting:
\begin{equation}
X(x) = X_{\hat{g}}(x) - \frac{1}{\pi} \int_0^{\pi} X_{\hat{g}}(e^{i \theta}) d \theta.
\end{equation}

Next we state a result of finiteness of GMC moments covering all situations encountered in the main text. This follows from \cite[Corollary 3.10]{Disk} except that we need to consider complex $\mu_i$.
\begin{proposition}[Finiteness of moments of GMC]\label{lem:GMC-moment}
	Fix $\gamma \in (0, 2)$. The following claims hold.
	\begin{itemize}
		\item For $\beta < Q $ and $\frac{\gamma}{2} - \alpha < \frac{\beta}{2} < \alpha $ we have 
		\begin{equation}
		  \mathbb{E} \left[ \left(\int_{\mathbb{R}} \frac{g(x)^{\frac{\gamma}{4}(\frac{2}{\gamma}-\alpha-\frac{\beta}{2})}}{|x-i|^{\gamma\alpha  } }  e^{\frac{\gamma}{2} X(x) }  d x \right)^{\frac{2}{\gamma}(Q-\alpha-\frac{\beta}{2})} \right] < \infty.
		  \end{equation}
		\item  For $(\mu_i)_{i=1,2,3}$ satisfying Definition \ref{half-space}, $\beta_i < Q$,  $  {\frac{1}{\gamma}(2 Q  - \sum_{i=1}^3 \beta_i)} < \frac{4}{\gamma^2} \wedge \min_i \frac{2}{\gamma}(Q - \beta_i)$ we have

		\begin{equation}
		 \mathbb{E} \left[ \left| \intr \frac{g(x)^{\frac{\gamma}{8}(\frac{4}{\gamma}- \sum_{i=1}^3 \beta_i )} }{|x|^{\frac{\gamma \beta_1 }{2}}|x-1|^{\frac{\gamma \beta_2 }{2}}}  e^{\frac{\gamma}{2} X(x)} d \mu(x) \right|^{\frac{1}{\gamma}(2 Q  - \sum_{i=1}^3 \beta_i)} \right] < +\infty.
		\end{equation}
		\item For   $\chi \in \{\frac{\gamma}{2}, \frac{2}{\gamma}\}$,  
		 $p = \frac{2}{\gamma}(Q-\alpha-\frac{\beta}{2}+\frac{\chi}{2})$, $\beta < Q$, $ p < \frac{4}{\gamma^2} \wedge \frac{2}{\gamma}(Q - \beta)$, $t \in \mathbb{H}$
 we have 
		\begin{equation}
		\mathbb{E} \left[ \left| \int_{\mathbb{R}} \frac{(t-x)^{\frac{\gamma\chi}{2}}}{|x-i|^{\gamma\alpha  } }   g(x)^{\frac{\gamma^2}{8}(p-1)}  e^{\frac{\gamma}{2} X(x) } d x \right|^{p} \right]   <\infty.
		\end{equation}
		\item For  $\chi \in \{\frac{\gamma}{2}, \frac{2}{\gamma}\}$,  
		 $q = \frac{1}{\gamma}(2 Q  - \beta_1 - \beta_2 - \beta_3 + \chi)$,  $\beta_i < Q$,   $ \mu_1 \in (0,\infty)$,  $\mu_2, \mu_3 \in - \overline{\H}$, $  q < \frac{4}{\gamma^2} \wedge \min_i \frac{2}{\gamma}(Q - \beta_i)$, $t \in \mathbb{H}$
 we have 

		\begin{equation}
		\mathbb{E} \left[ \left| \intr \frac{(t-x)^{\frac{\gamma\chi}{2}}}{|x|^{\frac{\gamma \beta_1 }{2}} |x-1|^{\frac{\gamma \beta_2 }{2}}}  g(x)^{\frac{\gamma^2}{8}(q-1)} e^{\frac{\gamma}{2} X(x)} d \mu(x) \right|^{q} \right]  <\infty.
		\end{equation}
	\end{itemize}
\end{proposition}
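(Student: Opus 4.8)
The plan is to reduce all four finiteness claims to a single known fact: the finiteness of a \emph{positive} moment of a \emph{real, positive} one–dimensional GMC integral with insertions, which is established in \cite{Disk}. The only features that are genuinely new here are the complex cosmological constants $\mu_i$ entering $d\mu$ in the second and fourth claims, and the complex factors $(t-x)^{\frac{\gamma\chi}{2}}$ in the third and fourth claims. Both will be disposed of at the very start by the triangle inequality, which is available because in each of the four ranges the moment exponent is a positive number.

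First I would handle the complex measure. Writing $d\mu(x)=\sum_i \mu_i \mathbf{1}_{A_i}(x)\,dx$ with $A_1=(-\infty,0)$, $A_2=(0,1)$, $A_3=(1,\infty)$, one has for any integrand $f$ the pointwise bound
$$\Big|\int_{\mathbb{R}} f\, d\mu\Big| \le \Big(\max_i |\mu_i|\Big) \int_{\mathbb{R}} |f|\, e^{\frac{\gamma}{2}X(x)}\,dx,$$
so that the complex data contributes only the finite deterministic prefactor $(\max_i|\mu_i|)^{q}$. For the degenerate factor I would use $|(t-x)^{\frac{\gamma\chi}{2}}|=|t-x|^{\frac{\gamma\chi}{2}}$; since $t\in\H$ is fixed with $\mathrm{Im}(t)>0$, this factor has no singularity on $\mathbb{R}$ and obeys $|t-x|^{\frac{\gamma\chi}{2}}\le C_t(1+|x|)^{\frac{\gamma\chi}{2}}$. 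After these two reductions every one of the four moments is dominated by a constant times $\E\big[\big(\int_{\mathbb{R}}|f(x)|\,e^{\frac{\gamma}{2}X(x)}\,dx\big)^m\big]$ for a positive exponent $m$, where now $|f|$ is a positive function carrying exactly the power-law singularities and decay at infinity treated in \cite{Disk}.

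I would then invoke the real estimates of \cite{Disk}, the remaining task being only to check that the stated parameter ranges are the correct ones. Near each finite insertion, integrability of the $m$-th moment against the GMC is ensured by $\beta_i<Q$ (respectively the two-sided bound on $\frac{\beta}{2}$ for $\overline{G}$), which keeps the singularity below the threshold governed by $\frac{2}{\gamma}(Q-\beta_i)$; the bulk factor $|x-i|^{-\gamma\alpha}$ is bounded on $\mathbb{R}$ and so introduces no singularity. At infinity the decay $g(x)=|x|^{-4}$ combined with the weight of the insertion at $\infty$ yields convergence, and the moment exists as soon as $m<\frac{4}{\gamma^2}$, which is precisely the first term of the minima appearing in \eqref{para_H} and \eqref{para_G_BPZ}.

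I expect the main obstacle to be the fourth claim: the extra growth $(1+|x|)^{\frac{\gamma\chi}{2}}$ produced by the degenerate factor shifts the effective weight at infinity, and the hard part will be confirming that the built-in shift in $q=\frac{1}{\gamma}(2Q-\beta_1-\beta_2-\beta_3+\chi)$ together with the constraint $q<\frac{2}{\gamma}(Q-\beta_i)$ keeps this weight inside the admissible window. This is the one place where the bounds are used non-trivially; once it is verified, the complex data having already been neutralized by the modulus bound, all four expectations are finite.
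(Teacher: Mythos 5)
Your proposal has a genuine gap, and it is located at the very first step. You assert that ``in each of the four ranges the moment exponent is a positive number,'' but this is false: the stated hypotheses only bound the exponents \emph{above} by $\frac{4}{\gamma^2} \wedge \min_i \frac{2}{\gamma}(Q-\beta_i)$, and they allow (indeed, in this paper's applications they typically force) \emph{negative} exponents. For instance in the first claim the Seiberg-type regime $\alpha + \frac{\beta}{2} > Q$ gives $\frac{2}{\gamma}(Q-\alpha-\frac{\beta}{2})<0$, and the same happens for $q_0 = \frac{1}{\gamma}(2Q-\sum_i \beta_i)$, $p$ and $q$ in the other claims. Once the exponent is negative, your triangle-inequality reduction $\bigl|\int f\,d\mu\bigr| \le (\max_i|\mu_i|)\int |f|\,e^{\frac{\gamma}{2}X}\,dx$ gives a bound in the wrong direction: an upper bound on $|Z|$ is a \emph{lower} bound on $|Z|^{q_0}$ for $q_0<0$, so it proves nothing about finiteness. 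Worse, for negative moments the statement is simply not true for arbitrary complex $\mu_i$, because cancellations can make $\bigl|\int f\,d\mu\bigr|$ small with too much probability; this is exactly why Definition \ref{half-space} appears as a hypothesis, and the fact that your argument never invokes it is the structural symptom of the gap (see also the remark following the proposition in the paper, where the claim fails in a limiting case of that condition).

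The paper's proof agrees with yours only for nonnegative exponents (there it uses precisely your bound with $M=\max_i|\mu_i|$). For negative exponents it uses the half-space condition to get a \emph{lower} bound on the modulus of the complex GMC integral: choosing a unit vector $v_1$ inside the half-space $\mathcal{H}$ and writing $\mu_i = \lambda_i v_1 + \lambda_i' v_2$ with all $\lambda_i \ge 0$ and at least one $\lambda_i>0$, the projection of the integral onto $v_1$ is a genuine positive GMC integral $\int f\,d\lambda$, whence
\begin{equation*}
\E\left[ \left| \int f\, d\mu \right|^{q_0} \right] \le M' \,\E\left[ \left( \int f\, d\lambda \right)^{q_0} \right],
\end{equation*}
which reduces the problem to the real positive case of \cite{Disk}. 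The third and fourth claims are handled by the same projection idea, using the specific phase constraints there ($t\in\H$ so that $(t-x)^{\frac{\gamma\chi}{2}}$ stays in a fixed cone, $\mu_1>0$, $\mu_2,\mu_3 \in -\overline{\H}$) rather than your modulus bound $|(t-x)^{\frac{\gamma\chi}{2}}|=|t-x|^{\frac{\gamma\chi}{2}}$, which again only helps for positive exponents. To repair your proof you would need to split into the cases of nonnegative and negative exponent and supply this projection argument in the latter case; as written, the proposal only establishes the (easier) positive-moment half of each claim.
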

\begin{proof}

	For the first claim, since a positive function is integrated against the GMC measure, we are in the classical
	case of the existence of moments of GMC with an insertion of weight $\beta$ (here the insertion is placed at infinity, but this changes nothing to the result). Following \cite[Lemma~3.10]{Sphere},
	adapted to the case of one-dimensional GMC, the condition is thus $ \beta <Q$ and
	$\frac{2}{\gamma}(Q-\alpha-\frac{\beta}{2}) < \frac{4}{\gamma^2} \wedge \frac{2}{\gamma} (Q -\beta)$. One can check this last condition translates into $\frac{\gamma}{2} - \alpha < \frac{\beta}{2} < \alpha $. See also \cite[Corollary 3.10]{Disk}.
	
	The second claim does not fit exactly into the framework of \cite{Sphere} since the $\mu_i$ can be complex and therefore we have a complex valued quantity integrated against the GMC. Let $q_0 := \frac{1}{\gamma}(2 Q  - \sum_{i=1}^3 \beta_i)$.
	 For the case of positive moments
	$q_0 \geq 0$ one can simply use the bound
	\begin{align}
		\E\left[ \left|  \intr \frac{g(x)^{\frac{\gamma}{8}(\frac{4}{\gamma}- \sum_{i=1}^3 \beta_i )} }{|x|^{\frac{\gamma \beta_1 }{2}}|x-1|^{\frac{\gamma \beta_2 }{2}}}  e^{\frac{\gamma}{2} X(x)} d \mu(x)  \right|^{q_0}\right]  \leq M \E\left[\left(\intr \frac{g(x)^{\frac{\gamma}{8}(\frac{4}{\gamma}- \sum_{i=1}^3 \beta_i )} }{|x|^{\frac{\gamma \beta_1 }{2}}|x-1|^{\frac{\gamma \beta_2 }{2}}}  e^{\frac{\gamma}{2} X(x)} dx  \right)^{q_0}\right],
	\end{align}
	which is valid for $M = \max_i |\mu_i| > 0$.  The claim then reduces to the first case. 
	
	Now for negative moments corresponding to $q_0< 0$, this is precisely where we are going to use the half-space condition of Definition \ref{half-space} on the $\mu_i$ parameters.
	The condition implies $\mu_1, \mu_2, \mu_3$ are contained in a half-space $\mathcal{H}$. Let $v_1 \in \mathbb{C}$ be a normal
	vector contained in the half-space, and let $v_2 \in \mathbb{C}$ be perpendicular to $v_1$. For each $i =1,2,3$, write $\mu_i = \lambda_i v_1+ \lambda_i' v_2$ where $\lambda_i  \geq 0$ and $\lambda_i' \in \mathbb{R}$. By Definition \ref{half-space} at least one $\lambda_i$ is non zero. From this one can deduce the upper bound
	\begin{align}
		\E\left[ \left|  \intr \frac{g(x)^{\frac{\gamma}{8}(\frac{4}{\gamma}- \sum_{i=1}^3 \beta_i )} }{|x|^{\frac{\gamma \beta_1 }{2}}|x-1|^{\frac{\gamma \beta_2 }{2}}}  e^{\frac{\gamma}{2} X(x)} d \mu(x)  \right|^{q_0}\right] 
		\leq M' \E\left[ \left(  \intr \frac{g(x)^{\frac{\gamma}{8}(\frac{4}{\gamma}- \sum_{i=1}^3 \beta_i )} }{|x|^{\frac{\gamma \beta_1 }{2}}|x-1|^{\frac{\gamma \beta_2 }{2}}}  e^{\frac{\gamma}{2} X(x)} d \lambda(x)  \right)^{q_0}\right] 
	\end{align}
	for some $M' >0$ and where $d \lambda(x)$ is defined in the same as $d \mu(x)$ but with $\mu_i$ replaced by $\lambda_i$ . We can now apply again the first case to show finiteness. Lastly the third case and fourth cases are treated similarly to the second one.
\end{proof}

\begin{remark}
Let us discuss what happens in the above proposition if we are in the limiting case of the half-space condition of Definition \ref{half-space}, meaning the $\mu_i$ are lying on the boundary of the half-space. For instance assume $\mu_1, \mu_2 $ are real and positive and $\mu_3$ is $e^{i \pi}$ times a positive number. In this case the second claim of the above proposition will remain true if $q_0 := \frac{1}{\gamma}(2 Q  - \sum_{i=1}^3 \beta_i) \geq 0$, the proof being exactly the same. On the other hand if $q_0 <0$ then there are cancellations that appear and the claim is no longer true. It seems the condition for finiteness then becomes $q_0 >-1$ but this seems a little technical to show and we will not require it in the present paper.
\end{remark}

Finally we recall some theorems in probability that we will use without further justification. In the following $D$ is a compact subset of $\mathbb{R}^d$. 
\begin{theorem}[Girsanov theorem]\label{girsanov}
Let $(Z(x))_{x\in D}$ be a continuous centered Gaussian process and $Z$ a Gaussian variable which belongs to the $L^2$ closure of the vector space spanned by $(Z(x))_{x\in D}$. Let $F$ be a real continuous bounded function from $\mathcal{C}(D,\mathbb{R})$ to $\mathbb{R}$. Then we have the following identity:
\begin{equation}\label{equation Girsanov}
\E\left[e^{Z-\frac{\E[Z^2]}{2}}F((Z(x))_{x\in D})\right] = \E[F((Z(x)+\E[Z(x)Z])_{x\in D})].
\end{equation}
\end{theorem}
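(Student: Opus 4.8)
The plan is to reduce the infinite-dimensional identity \eqref{equation Girsanov} to a finite-dimensional Gaussian computation. First I would observe that both sides, viewed as functionals of $F$, are integrals against finite Borel measures on the Polish space $\mathcal{C}(D,\mathbb{R})$: the left-hand side integrates $F$ against the law of $(Z(x))_{x\in D}$ reweighted by the positive density $e^{Z-\frac12\E[Z^2]}$, while the right-hand side integrates $F$ against the law of the shifted process $(Z(x)+\E[Z(x)Z])_{x\in D}$. Since a finite Borel measure on $\mathcal{C}(D,\mathbb{R})$ is determined by its integrals against the measure-determining class of bounded continuous cylinder functionals, it suffices to prove the identity when $F((Z(x))_{x\in D}) = f(Z(x_1),\dots,Z(x_n))$ for finitely many points $x_1,\dots,x_n \in D$ and a bounded continuous $f\colon \mathbb{R}^n \to \mathbb{R}$.

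Next I would note that because $Z$ lies in the $L^2$-closure of the span of $(Z(x))_{x\in D}$, the vector $(Z,Z(x_1),\dots,Z(x_n))$ is jointly centered Gaussian, so the cylinder case is a finite-dimensional Cameron--Martin shift. To verify it I would test against the characters $f(y)=e^{i\langle\theta,y\rangle}$, $\theta\in\mathbb{R}^n$, which form a law-determining family, using the elementary fact that $\E[e^{V}]=e^{\frac12\E[V^2]}$ for any centered (possibly complex) Gaussian $V$, obtained by analytic continuation of the real moment generating function. Writing $V=Z+i\sum_j\theta_j Z(x_j)$, the left-hand side becomes
\begin{equation*}
e^{-\frac12\E[Z^2]}\,\E[e^{V}] = \exp\Big( i\sum_j \theta_j \E[Z(x_j)Z] - \tfrac12 \sum_{j,k}\theta_j\theta_k \E[Z(x_j)Z(x_k)]\Big),
\end{equation*}
while the right-hand side factorizes as $e^{i\sum_j \theta_j \E[Z(x_j)Z]}\,\E[e^{i\sum_j \theta_j Z(x_j)}]$, which yields exactly the same expression. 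Matching the two settles the cylinder case, and the measure-determining argument then gives the general $F$.

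The main obstacle is the reduction step: justifying rigorously that it suffices to treat cylinder functionals. The cleanest route is the measure-determining argument above, but it presupposes that the law of the process is a genuine Borel probability measure on $\mathcal{C}(D,\mathbb{R})$, which follows from the assumed sample-path continuity together with compactness of $D$. An alternative, more hands-on route would be to fix a countable dense set $\{x_k\}\subset D$, approximate $F$ by functionals $F_n$ depending only on $Z(x_1),\dots,Z(x_n)$ using uniform continuity of paths on the compact $D$, and pass to the limit by dominated convergence, the density $e^{Z-\frac12\E[Z^2]}$ being integrable; I would keep this as a fallback. Everything else reduces to the routine Gaussian manipulations sketched above and requires no further ideas.
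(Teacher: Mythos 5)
The paper does not actually prove this statement: Theorem \ref{girsanov} appears in the appendix under the heading ``we recall some theorems in probability that we will use without further justification,'' so there is no in-paper argument to compare yours against. Your proof is correct and is the standard one: reduce to cylinder functionals, observe that $(Z, Z(x_1),\dots,Z(x_n))$ is jointly centered Gaussian because $Z$ is an $L^2$-limit of linear combinations of process values, and verify the finite-dimensional identity on characteristic functions using $\E[e^{V}]=e^{\frac{1}{2}\E[V^2]}$ for $V=Z+i\sum_j\theta_j Z(x_j)$, justified by analytic continuation of the real moment generating function; both sides indeed reduce to $\exp\bigl(i\sum_j\theta_j\E[Z(x_j)Z]-\frac{1}{2}\sum_{j,k}\theta_j\theta_k\E[Z(x_j)Z(x_k)]\bigr)$. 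Two small points deserve to be made explicit in a write-up. First, for the right-hand side of \eqref{equation Girsanov} to define a Borel measure on $\mathcal{C}(D,\mathbb{R})$ you need the shift $x\mapsto\E[Z(x)Z]$ to be continuous; this holds because almost-sure continuity of a Gaussian process on the compact $D$ gives continuity in probability, which Gaussianity upgrades to $L^2$-continuity, and the shift is then the inner product of $Z$ with a continuous $L^2$-valued map. Second, the measure-determining step rests on the fact that, for $D$ compact, the Borel $\sigma$-algebra of the sup-norm topology on $\mathcal{C}(D,\mathbb{R})$ coincides with the $\sigma$-algebra generated by the evaluation maps, so that agreement of all finite-dimensional marginals identifies the two measures. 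With these two remarks filled in, your argument is complete; it is also worth noting that the paper's subsequent comment (that the theorem is applied to the non-continuous log-correlated field $X$ via a regularization procedure) lies outside the scope of the statement itself, so you were right not to address it.
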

When applied to our case, although the log-correlated field $X$ is not a continuous Gaussian process, we can still make the arguments rigorous by using a regularization procedure (see \cite[Appendix A.1]{interval} for a more detailed explanation). Next we recall Kahane's inequality:
\begin{theorem}[Kahane's inequality]\label{theo Kahane inequality}
Let $(Z_0(x))_{x\in D}$, $(Z_1(x))_{x\in D}$ be two continuous centered Gaussian processes such that for all $x,y\in D$:
\begin{equation}
\left|\E[Z_0(x)Z_0(y)] -\E[Z_1(x)Z_1(y)]\right| \le C.
\end{equation}
Define for $u\in [0,1]$:
\begin{align}
Z_u = \sqrt{1-u} Z_0 + \sqrt{u} Z_1, \quad W_u = \int_D e^{Z_u(x)-\frac{1}{2}\E[Z_u(x)^2]} \sigma(dx).
\end{align}
Then for all smooth function $F$ with at most polynomial growth at infinity, and $\sigma$ a complex Radon measure over $D$,
\begin{equation}
\left| \E\left[F\left(\int_D e^{Z_0(x)-\frac{1}{2}\E[Z_0(x)^2]} \sigma(dx)\right)\right]-\E\left[F\left(\int_D e^{Z_1(x)-\frac{1}{2}\E[Z_1(x)^2]} \sigma(dx)\right)\right]\right| \le \sup_{u \in [0,1]}\frac{C}{2}\E[|W_u|^2 |F''(W_u)|].
\end{equation}
\end{theorem}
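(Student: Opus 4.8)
The plan is to prove this by Gaussian interpolation between the covariance structures of $Z_0$ and $Z_1$, following the classical argument. First I would observe that only the marginal laws of $W_0$ and $W_1$ enter the statement, so I may realize $Z_0$ and $Z_1$ on a common probability space as \emph{independent} processes without loss of generality. Setting $\varphi(u) := \E[F(W_u)]$ for $u \in [0,1]$, the inequality reduces to estimating $|\varphi(1) - \varphi(0)| \le \int_0^1 |\varphi'(u)|\, du$, so the heart of the matter is to compute $\varphi'(u)$ and to establish the pointwise bound $|\varphi'(u)| \le \frac{C}{2}\E[|W_u|^2 |F''(W_u)|]$; bounding the resulting integral by its supremum over $u$ then yields exactly the stated form.

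To differentiate, write $\dot Z_u(x) := \frac{d}{du}Z_u(x) = -\frac{1}{2\sqrt{1-u}}Z_0(x) + \frac{1}{2\sqrt u}Z_1(x)$ and $M_u(x) := e^{Z_u(x) - \frac12 \E[Z_u(x)^2]}$, so that $W_u = \int_D M_u(x)\,\sigma(dx)$. Introduce the kernel $K(x,y) := \E[Z_1(x)Z_1(y)] - \E[Z_0(x)Z_0(y)]$, which by hypothesis satisfies $|K(x,y)| \le C$; by independence one computes $\E[\dot Z_u(x)Z_u(y)] = \tfrac12 K(x,y)$ and $\frac{d}{du}\E[Z_u(x)^2] = K(x,x)$. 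Differentiating under the expectation (justified below) then gives
\begin{equation}
\varphi'(u) = \E\!\left[F'(W_u)\int_D M_u(x)\dot Z_u(x)\,\sigma(dx)\right] - \frac12\E\!\left[F'(W_u)\int_D K(x,x)M_u(x)\,\sigma(dx)\right].
\end{equation}

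Next I would apply Gaussian integration by parts (Stein's lemma) to the first term, treating $\dot Z_u(x)$ as jointly Gaussian with the field $(Z_u(y))_{y\in D}$. The functional $F'(W_u)M_u(x)$ depends on the field both explicitly through $M_u(x)$, whose functional derivative produces the diagonal weight $\E[\dot Z_u(x)Z_u(x)] = \tfrac12 K(x,x)$, and through $W_u$, whose functional derivative produces $\int_D \tfrac12 K(x,y)\,\E[F''(W_u)M_u(x)M_u(y)]\,\sigma(dy)$. The diagonal contribution equals $\frac12\int_D K(x,x)\E[F'(W_u)M_u(x)]\sigma(dx)$ and therefore cancels the second term of the display above. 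What survives is
\begin{equation}
\varphi'(u) = \frac12 \int_{D\times D} K(x,y)\,\E\!\left[F''(W_u)M_u(x)M_u(y)\right]\sigma(dx)\,\sigma(dy).
\end{equation}
Since $M_u \ge 0$ and $|K| \le C$, taking moduli gives $|\varphi'(u)| \le \frac{C}{2}\E[|F''(W_u)|\,(\int_D M_u\,d|\sigma|)^2]$; when $\sigma \ge 0$ this is exactly $\frac{C}{2}\E[|W_u|^2|F''(W_u)|]$, and for complex $\sigma$ the factor $|W_u|^2$ is read through the total-variation measure as $(\int_D M_u\,d|\sigma|)^2 \ge |\int_D M_u\,d\sigma|^2$.

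The main obstacle is rigor: the fields in our application are log-correlated and not defined pointwise, so the continuum integration by parts above is only formal. I would make it precise by running the entire computation first with smooth mollifications $Z_{0,\epsilon}, Z_{1,\epsilon}$, which are genuine continuous Gaussian processes for which the finite-dimensional Stein formula and differentiation under the integral sign are legitimate, and then passing to the limit $\epsilon \to 0$. The differentiation under $\E$ and the passage to the limit are controlled by the polynomial growth of $F$ together with the finiteness of GMC moments from Proposition \ref{lem:GMC-moment}, which supplies the uniform integrability required for dominated convergence; the complex measure $\sigma$ is handled throughout via its total-variation decomposition as indicated.
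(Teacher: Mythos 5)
The paper itself contains no proof of this theorem: it is recalled from Kahane's work and the GMC literature, with only a remark that a regularization procedure justifies its use for log-correlated fields. So your proposal can only be compared against the classical argument, and your route is indeed that argument: realizing $Z_0,Z_1$ as independent, interpolating $\varphi(u)=\E[F(W_u)]$, computing $\varphi'(u)$, applying Gaussian integration by parts, and observing that the diagonal terms $\tfrac12 K(x,x)$ cancel, leaving
\begin{equation*}
\varphi'(u)=\frac12\iint_{D\times D}K(x,y)\,\E\bigl[F''(W_u)M_u(x)M_u(y)\bigr]\,\sigma(dx)\,\sigma(dy),
\end{equation*}
which is correct, as is the reduction of rigor to a finite-dimensional (or mollified) approximation.

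The genuine gap is the very last step, and you half-concede it yourself: taking moduli in the display above gives $|\varphi'(u)|\le \frac{C}{2}\,\E\bigl[|F''(W_u)|\bigl(\int_D M_u\,d|\sigma|\bigr)^2\bigr]$, and since $\bigl(\int_D M_u\,d|\sigma|\bigr)^2\ge |W_u|^2$ this does \emph{not} imply the stated bound with $|W_u|^2$. This cannot be repaired, because the statement as literally written is false for signed or complex $\sigma$. Take $D=\{1,2\}$, $\sigma=\delta_1-\delta_2$, $Z_0\equiv 0$, $Z_1(1)=\xi$, $Z_1(2)=-\xi$ with $\xi\sim\mathcal{N}(0,C)$, and $F(w)=w^2$: the covariance hypothesis holds with constant $C$, the left-hand side equals $2(e^{C}-e^{-C})$, while $\sup_u\frac{C}{2}\E[|W_u|^2|F''(W_u)|]=2C(e^{C}-e^{-C})$, so the inequality fails for every $C<1$. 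What your argument proves is the corrected statement in which $|W_u|^2$ is replaced by $\bigl(\int_D e^{Z_u(x)-\frac12\E[Z_u(x)^2]}\,d|\sigma|(x)\bigr)^2$; this is also the version the paper's applications actually need (there the complex GMC integrals have controlled phases, with a positive part dominating the modulus, so the distinction is harmless), but you should state explicitly that you are proving this total-variation version rather than suggesting the two quantities can be identified. A more minor point: since $W_u$ is complex, the chain rule and integration by parts you invoke require $F$ to be holomorphic where it is evaluated (as it is in the paper's application $F(w)=w^q$ on the cut plane); for a merely smooth $F$ of two real variables, $\varphi'(u)$ acquires additional Wirtinger-derivative terms that your formula omits.
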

The same remark as the one below Theorem \ref{girsanov} is valid to justify one can use this inequality in the case where $Z_0$ and $Z_1$ are log-correlated fields. Finally we provide the Williams decomposition theorem, see for instance \cite{Williams} for a reference.
\begin{theorem}\label{theo Williams}
Let $(B_s-vs)_{s\ge 0}$ be a Brownian motion with negative drift, i.e. $v>0$ and let $M = \sup_{s\ge 0}(B_s-vs)$. Then conditionally on $M$ the law of the path $(B_s-vs)_{s\ge 0}$ is given by the joining of two independent paths:\\
1) A Brownian motion $(B^1_s+vs)_{0\le s \le \tau_M}$ with positive drift $v$ run until its hitting time $\tau_M$ of $M$.\\
2) $(M+B^2_t-vt)_{t\ge 0}$ where $(B^2_t-vt)_{t\ge 0}$ is a Brownian motion with negative drift conditioned to stay negative.
\par Moreover, one has the following time reversal property for all $C>0$ (where $\tau_C$ denotes the hitting time of $C$),
\begin{equation}\label{equation 4.3}
(B^1_{\tau_C-s}+v(\tau_C-s)-C)_{0\le s\le \tau_C} \overset{\text{law}}{=} (\tilde{B}_s-vs)_{0\le s \le L_{-C}},
\end{equation}
where $(\tilde{B}_s-vs)_{s\ge 0}$ is a Brownian motion with drift $-v$ conditioned to stay negative and $L_{-C}$ is the last time $(\tilde{B}_s-vs)_{s\ge 0}$ hits $-C$.
\end{theorem}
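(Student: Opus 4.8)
The plan is to reduce everything to the strong Markov property applied at the \emph{first passage times} $\tau_a := \inf\{s \ge 0 : B_s - vs = a\}$, which are genuine stopping times, together with two Doob $h$-transforms; the time-reversal identity will then follow from a duality argument. The reason for working with $\tau_a$ rather than the time of the maximum directly is that the latter is not an adapted time.

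First I would record the law of $M$. Since $v>0$ the process $(B_s-vs)_{s\ge 0}$ drifts to $-\infty$, so $M<\infty$ almost surely, the supremum is attained at a unique time, and the scale-function (gambler's ruin) computation gives $\mathbb{P}(M \ge a) = e^{-2va}$, i.e. $M \sim \mathrm{Exp}(2v)$, which in particular has a density. The key geometric observation is that the first hitting time of the level $M$ coincides with the almost surely unique time at which the supremum is attained, so that on the event $\{M=m\}$ one has $\tau_M = \tau_m$; this replaces the non-adapted time $\tau_M$ by the stopping time $\tau_m$ and is precisely what makes the strong Markov property applicable.

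Next I would identify the two pieces via $h$-transforms for the generator $L = \tfrac12 \partial^2 - v\partial$. The function $h_\uparrow(x) = e^{2vx}$ is $L$-harmonic, and since $h_\uparrow'/h_\uparrow = 2v$ its Doob transform has generator $\tfrac12\partial^2 + v\partial$; this shows that $(B_s-vs)$ conditioned on $\{\tau_m<\infty\}$ and observed on $[0,\tau_m]$ is exactly a Brownian motion with drift $+v$ run to its hitting time of $m$, giving part~1) of the statement. Similarly, on $(-\infty,0)$ the function $h_\downarrow(x)=1-e^{2vx}$ is positive and $L$-harmonic and vanishes at $0$; its $h$-transform is the definition of the negative-drift motion conditioned to stay negative. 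Disintegrating over the density of $M$ and applying the strong Markov property at $\tau_m$, the post-maximum path is an independent copy of $(B_s-vs)$ started at $m$ and further conditioned to remain below $m$ forever, which by the $h_\downarrow$-transform is precisely $(m + B^2_t - vt)_{t\ge0}$; the independence of the two pieces is inherited directly from the strong Markov property at $\tau_m$.

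Finally, for the time-reversal property \eqref{equation 4.3} I would invoke a time-reversal (duality) theorem for one-dimensional diffusions: reversing an $h$-transformed diffusion run up to a first passage time produces the diffusion transformed by the dual harmonic function run up to a \emph{last exit} time. Here the positive-drift motion is the $h_\uparrow$-transform of $L$, the stay-negative motion is the $h_\downarrow$-transform, and the reversal map exchanges the roles of hitting $C$ and last-exiting $-C$; I would verify the identity by checking that the finite-dimensional densities of the reversed path match those of the $h_\downarrow$-process, using the explicit transition densities of Brownian motion with drift. The main obstacle I anticipate is the decomposition at the maximum: conditioning on the null event $\{M=m\}$ must be carried out rigorously through regular conditional distributions, and the identification of the post-maximum path as a bona fide Doob transform requires realizing the heuristic ``conditioned to stay negative'' as an honest object, e.g.\ as the $h_\downarrow$-transform or equivalently as the weak limit of the laws of the motion conditioned to stay below $m$ up to time $T$ as $T\to\infty$, rather than taking it as given. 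The reversal step is the other delicate point, since it rests on a duality theorem and the compatibility of the two harmonic functions under time reversal rather than on a direct computation.
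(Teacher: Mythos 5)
The first thing to say is that the paper contains no proof of this statement to compare against: Theorem \ref{theo Williams} is imported verbatim as a classical result, with the reader referred to Williams' original article \cite{Williams}, and it is then used as a black box (in the definition \eqref{def_R2} of $\overline{R}$ and in the OPE-with-reflection estimates). So your proposal stands or falls on its own. On its own terms, it is a correct outline of the standard modern route to this decomposition, which is genuinely different from Williams' original local-time argument: the law $M\sim\mathrm{Exp}(2v)$; the identification, on $\{M=m\}$, of the (non-adapted) argmax with the stopping time $\tau_m$; the Doob transform by the harmonic function $h_\uparrow(x)=e^{2vx}$ turning the conditioning on $\{\tau_m<\infty\}$ into drift $+v$ (your generator computation $\tfrac12\partial^2+v\partial$ is right); the transform by $h_\downarrow(x)=1-e^{2vx}=\mathbb{P}_x(\text{never hit }0)$ as the meaning of ``conditioned to stay negative''; and a Nagasawa-type duality, verified on finite-dimensional densities, for the reversal identity \eqref{equation 4.3}. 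The obstacles you flag are the genuine ones.

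One concrete correction is needed in the post-maximum piece. Your proposed realization of the conditioning as ``the weak limit, as $T\to\infty$, of the motion conditioned to stay below $m$ up to time $T$'' is empty: that piece starts \emph{at} the level $m$, and $\mathbb{P}_m\bigl(\sup_{s\le T}(B_s-vs)<m\bigr)=0$ for every $T>0$, so there is nothing to condition on; for the same reason the $h_\downarrow$-transform itself is degenerate at the starting point, since $h_\downarrow(0)=0$. Note also that on $\{\tau_m<\infty\}$ one has $M>m$ almost surely (the post-$\tau_m$ path immediately exceeds $m$), so ``conditioned to remain below $m$ forever'' cannot be an honest conditioning either. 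The repair comes out of the disintegration you already set up: writing $\{M\in[m,m+\epsilon]\}=\{\tau_m<\infty\}\cap\{M'\le\epsilon\}$, where $M'$ is the supremum of the post-$\tau_m$ increments and $\mathbb{P}(M'\le\epsilon)=1-e^{-2v\epsilon}>0$, the strong Markov property makes the pre-$\tau_m$ law the $h_\uparrow$-process independently of $\epsilon$, while the post-$\tau_m$ path is the drifted motion conditioned to stay below $m+\epsilon$ forever, a positive-probability event; letting $\epsilon\downarrow 0$ produces the entrance law at $0$ of the stay-negative process, which is the correct meaning of part 2) of the statement. With that limit in place of the $T\to\infty$ one, and with the duality step either computed on densities or cited, your plan does yield the theorem.
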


\subsection{Technical estimates on GMC}

We repeat here several proofs found in \cite{DOZZ2, interval} that must be adapted because our objects are complex valued.

\subsubsection{OPE with reflection}

We want to compute the asymptotic expansion of the functions $\tilde{G}_{\chi}$ and $H_{\chi}$ in the case where there will be reflection. This has been performed in the previous works \cite{DOZZ2,interval} but it is not straightforward to adapt the proofs as we are working with complex valued quantities so there are many inequalities that need to be adapted. We will treat separately the cases where $\chi = \frac{\gamma}{2}$ and $ \chi = \frac{2}{\gamma}$. Starting with the case where $\chi = \frac{2}{\gamma}$:

\begin{lemma}\label{lem_reflection_ope2} (OPE with reflection for $\chi = \frac{2}{\gamma}$) Recall $p = \frac{2}{\gamma}(Q-\alpha-\frac{\beta}{2}+\frac{1}{\gamma})$ and consider $s \in (-1,0)$. Recall also the functions $\tilde{G}_{\frac{2}{\gamma}}$ and $H_{\frac{2}{\gamma}}$ given by \eqref{eq:def_G_tilde} and \eqref{eq:def_H_chi} for $\chi = \frac{2}{\gamma}$. There exists a small parameter $\beta_0>0$ such that for $\beta \in (Q -\beta_0, Q)$ and $\alpha$ such that $  p < \frac{4}{\gamma^2} \wedge \frac{2}{\gamma}(Q - \beta)$, the following asymptotic expansion holds:
\begin{align}
 \tilde{G}_{\frac{2}{\gamma}}(s) - \tilde{G}_{\frac{2}{\gamma}}(0) = -s^{\frac{1}{2} + \frac{2}{\gamma^2} - \frac{ \beta}{\gamma}}\frac{\Gamma(1-\frac{2}{\gamma}(Q-\beta)) \Gamma(\frac{2}{\gamma}(Q-\beta) -p)}{\Gamma(-p)}\overline{R}(\beta, 1, -1) \overline{G}(\alpha, 2Q-\beta -\frac{2}{\gamma})  + o(|s|^{\frac{1}{2} + \frac{2}{\gamma^2} - \frac{ \beta}{\gamma}}).
\end{align}
Similarly, recall $q = \frac{1}{\gamma}(2 Q  - \beta_1 - \beta_2 - \beta_3 + \frac{2}{\gamma})$ and consider $t \in (0,1)$. Then in the following parameter range,
\begin{equation}
\beta_1 \in (Q - \beta_0,Q), \quad q < \frac{4}{\gamma^2} \wedge \min_i \frac{2}{\gamma}(Q - \beta_i), \quad \mu_1 \in (0,+\infty), \quad \mu_2, \mu_3 \in(-\infty,0),
\end{equation}
the following asymptotic also holds:
\begin{align}
H_{\frac{2}{\gamma}}(it) - H_{\frac{2}{\gamma}}(0) &= - (it)^{1 - \frac{ 2 \beta_1}{\gamma} + \frac{4}{\gamma^2}}  \frac{\Gamma(1 - \frac{2}{\gamma}(Q - \beta_1)) \Gamma(\frac{2}{\gamma}(Q -\beta_1) -q ) }{\Gamma(-q)} \overline{R}(\beta_1, \mu_1, \mu_2) \overline{H}^{(2 Q - \beta_1 - \frac{2}{\gamma} , \beta_2 , \beta_3)}_{( \mu_1, -\mu_2,  -\mu_3)}  \\ \nonumber
&+ o(|t|^{1 - \frac{ 2 \beta_1}{\gamma} + \frac{4}{\gamma^2}}).
\end{align}
\end{lemma}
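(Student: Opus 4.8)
The plan is to establish both asymptotics by a single argument, carrying out the analysis in detail for $H_{\frac{2}{\gamma}}$ and then recovering the statement for $\tilde{G}_{\frac{2}{\gamma}}$ verbatim: after the change of variables $x \to \frac1x$ of Section \ref{section ope reflection 1} the weight-$\beta$ insertion becomes a boundary insertion at $0$ carrying the effective cosmological constants $1$ and $e^{i\pi\frac{\gamma\chi}{2}}=-1$ (here $\chi=\frac{2}{\gamma}$) to its two sides, which is exactly what produces the factor $\overline{R}(\beta,1,-1)$. The essential point, and the one place where we cannot simply copy \cite{DOZZ2, interval}, is that the negative (and in the general case complex) constants $\mu_2,\mu_3$ make the GMC integral complex valued; the positivity-based tail and monotonicity arguments of those references must be replaced throughout by estimates on the modulus, and showing the error terms survive these absolute values is the main obstacle.

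First I would use the radial decomposition \eqref{radial} of $X$ about the insertion point $0$, writing $X(\pm e^{-s/2}) = B_s + Y(\pm e^{-s/2})$ with $B$ a standard Brownian motion and $Y$ the independent lateral field, and pass to the logarithmic coordinate $s = \ln\frac{1}{|x|}$. Up to the smooth prefactor $g(x)^{\frac{\gamma^2}{8}(q-1)}|x-1|^{-\frac{\gamma\beta_2}{2}}$, frozen at its value at $x=0$ to leading order, the mass of the GMC near $0$ becomes, after centering, an integral against $e^{\frac{\gamma}{2}B_s}e^{-s(\frac{Q-\beta_1}{2})\frac{\gamma}{2}}$ of the angular weight $\mu_1 e^{\frac{\gamma}{2}Y(-e^{-s/2})} + \mu_2 e^{\frac{\gamma}{2}Y(e^{-s/2})}$, which is precisely the integrand of the probabilistic definition \eqref{def_R2}. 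I would then split $\intr d\mu(x)$ into the region $\{|x| \le t^{\kappa}\}$ near $0$ and its complement, for a well-chosen $\kappa \in (0,1)$; the perturbation $(it-x)-(-x) = it$ brought by moving from $H_{\frac{2}{\gamma}}(0)$ to $H_{\frac{2}{\gamma}}(it)$ effectively cuts the radial variable off at $s \simeq \ln\frac1t$, which is the origin of the power $t^{1-C}$ with $C = \frac{2}{\gamma}(\beta_1 - \frac{2}{\gamma})$.

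Then I would apply the Williams path decomposition, Theorem \ref{theo Williams}, to the drifted Brownian motion $\frac{\gamma}{2}B_s - \frac{\gamma}{2}\frac{Q-\beta_1}{2}s$ governing the radial behavior. When $\beta_1$ is within $\beta_0$ of $Q$ the drift is small and the $q$-th moment is dominated by trajectories attaining a high running maximum $M$ near $0$; conditioning on $M$ and using the time-reversal identity \eqref{equation 4.3}, the part of the path after the maximum, combined with the lateral field $Y$, reassembles the process $\mathcal{B}^{\frac{Q-\beta_1}{2}}$ of \eqref{mathcal B} and hence the reflection coefficient $\overline{R}(\beta_1,\mu_1,\mu_2)$. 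The remaining part of the path, together with the GMC away from $0$, reconstructs the shifted correlation $\overline{H}^{(2Q-\beta_1-\frac{2}{\gamma},\beta_2,\beta_3)}_{(\mu_1,-\mu_2,-\mu_3)}$, the shift $\beta_1 \mapsto 2Q-\beta_1-\frac{2}{\gamma}$ and the rotations $\mu_i \mapsto -\mu_i = e^{i\pi}\mu_i$ being read off from the exponents produced by the reflection. The explicit prefactor $\frac{\Gamma(1-\frac{2}{\gamma}(Q-\beta_1))\Gamma(\frac{2}{\gamma}(Q-\beta_1)-q)}{\Gamma(-q)}$ then emerges from evaluating the resulting Mellin-type integral of the Brownian functional, exactly as in \cite{DOZZ2}.

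The delicate part is the error analysis. I would control the complement region $\{|x|>t^{\kappa}\}$ and the subleading terms near $0$ through bounds on their moduli, applying Girsanov (Theorem \ref{girsanov}) and the Kahane interpolation of Theorem \ref{theo Kahane inequality}, the latter already formulated for a complex Radon measure $\sigma$, to $|W_u|^2|F''(W_u)|$. Finiteness of every moment that appears is furnished by Proposition \ref{lem:GMC-moment} together with the half-space condition of Definition \ref{half-space}, which is exactly what lets one dominate $|\,\cdot\,|^{q}$ of the complex GMC by a positive GMC of the same exponent. The main obstacle is to upgrade the resulting $O(|t|^{1-C})$ bounds on the remainder to a genuine $o(|t|^{1-C})$: since the complex phases can only create cancellations that help, the strategy is to dominate the error uniformly by the corresponding real positive GMC quantity and then invoke the sharp near-insertion tail estimates of \cite{interval}, which are already available in the positive case and carry over under this domination.
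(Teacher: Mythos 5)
Your outline reproduces the paper's own strategy for this lemma: radial decomposition of $X$ around the insertion, a near/far splitting of the GMC integral, Kahane interpolation to decouple the two regions, the Williams decomposition together with the explicit tail $\mathbb{P}(e^{\frac{\gamma}{2}M}>v)=v^{-\frac{2}{\gamma}(Q-\beta_1)}$ to produce $\overline{R}(\beta_1,\mu_1,\mu_2)$ and the Gamma prefactor via a Mellin-type integral, Girsanov for the weight shift $\beta_1\mapsto 2Q-\beta_1-\frac{2}{\gamma}$, and domination of the complex quantities by positive GMC for the error terms. However, two of your steps would fail as literally written.

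First, the geometry of your splitting is wrong. You cut at a single scale $t^{\kappa}$ with $\kappa\in(0,1)$, i.e.\ at radius $t^{\kappa}\gg t$. The mechanism that produces the leading term requires the inner region to have radius much \emph{smaller} than $t$: only for $|x|\ll t$ does one have $(it-x)\approx it$, which is what pulls out the phase $i$ and the factor $t$ and reduces the inner contribution to $i\sigma_t$ times the Brownian functional feeding into $\overline{R}$. On the portion $t\ll |x|\le t^{\kappa}$ of your inner region one has $(it-x)\approx -x$ instead, so the reduction you describe does not hold there. This is why the paper cuts at $t^{1+h}$ with $h>0$ small, shows separately that the annulus $t^{1+h}<|x|<t$ contributes $O(t^{1+(1+h)(1-\frac{\gamma\beta_1}{2})})=o(t^{\frac{2}{\gamma}(Q-\beta_1)})$, and moreover uses that annulus as a buffer so that Kahane's inequality applies with covariance error $O(t^{h})$ between the lateral fields on $\{|x|\le t^{1+h}\}$ and $\{|x|\ge t\}$. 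With a single cut and no buffer, the two lateral fields are $O(1)$-correlated near the common boundary and the decoupling estimate gives nothing.

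Second, the parenthetical ``the complex phases can only create cancellations that help'' is backwards, and it misidentifies where the difficulty lies. The exponents appearing in the interpolation bounds are negative (in this regime $q<\frac{2}{\gamma}(Q-\beta_1)$ is small, so in particular $q-1<0$), hence cancellations, which make the modulus of the GMC integral small, make these powers blow up: cancellation is precisely the danger. The reason the lemma is stated for $\mu_1>0$ and $\mu_2,\mu_3<0$ real, rather than for general $\mu_i$ in a half-space, is that with these signs $\mathrm{Re}\,K_I(it)=K_I(0)$ is itself a positive GMC integral, giving the lower bound $|K_I(it)|\ge K_I(0)$ and therefore $|K_I(it)|^{q-1}\le K_I(0)^{q-1}$; this deterministic lower bound on the modulus, not any helpful cancellation, is what drives every error estimate in the paper's proof. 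Your stated remedy (dominate by the real positive GMC) is the correct one, but you should justify it by this real-part identity; as reasoned in your sketch the step is unsupported.
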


\begin{proof} We will prove only the case of $H_{\frac{2}{\gamma}}$, the case of $\tilde{G}_{\frac{2}{\gamma}}$ can be treated in a similar fashion. For a Borel set $I \subseteq \mathbb{R}$, we introduce the notation,
\begin{equation}
K_{I}(it) : =  \int_I \frac{it-x}{|x|^{\frac{\gamma \beta_1 }{2}} |x-1|^{\frac{\gamma \beta_2 }{2}}}  g(x)^{\frac{\gamma^2}{8}(q-1)} e^{\frac{\gamma}{2} X(x)} d \mu(x),
\end{equation}
where as always $d\mu(x) = \mu_1\mathbf{1}_{(-\infty,0)}(x)dx + \mu_2 \mathbf{1}_{(0,1)}(x)dx + \mu_3 \mathbf{1}_{(1,\infty)}(x)dx$. In the following it is convenient to use $d|\mu|(x)$ to denote the measure $\mu_1\mathbf{1}_{(-\infty,0)}(x)dx - \mu_2 \mathbf{1}_{(0,1)}(x)dx- \mu_3 \mathbf{1}_{(1,\infty)}(x)dx$ which is a positive measure thanks to our choice $\mu_1 \in (0,+\infty)$,  $\mu_2, \mu_3 \in(-\infty,0)$. The signs of the parameters $\mu_i$ allows to separate $K_{I}(it)$ into a positive real part $K_{I}(0)$ and an imaginary part. This remark is used to bound $|K_{I}(it)|^{q-1}$ by $|K_{I}(0)|^{q-1}$ and in several other similar cases (remark that necessarily $q-1<0$). Now we want to study the asymptotic of,
\begin{equation}
 \E[K_{\mathbb{R}}(it)^{q}]-\E[K_{\mathbb{R}}(0)^{q}] =: T_1+T_2,
\end{equation}
where we defined:
\begin{equation}
T_1: = \E[K_{(-t, t)^c}(it)^{q}]-\E[K_{\mathbb{R}}(0)^{q}], \quad T_2:= \E[K_{\mathbb{R}}(it)^{q}]- \E[K_{(-t, t)^c}(it)^{q}].
\end{equation}

\noindent
$\Diamond $ First we consider $T_1$. The goal is to show that $T_1 =o(|t|^{1 - \frac{ 2 \beta_1}{\gamma} + \frac{4}{\gamma^2}}) = o(|t|^{\frac{2}{\gamma}(Q-\beta_1)})$. By interpolation,
\begin{align}
|T_1| \le &|q| \int_0^1 du \E\left[|K_{(-t, t)^c}(it)-K_{\mathbb{R}}(0)||uK_{(-t, t)^c}(it)+(1-u)K_{\mathbb{R}}(0)|^{q-1}\right] \\ \nonumber
\le &|q| \E\left[\left(|K_{(-t, t)^c}(it)-K_{(-t,t)^c}(0)| + |K_{(-t,t)^c}(0) - K_{\mathbb{R}}(0)| \right)|K_{(-t,t)^c}(0)|^{q-1}\right]\\ \nonumber
=&|q| ( A_1 + A_2),
\end{align}
with:
\begin{equation}
A_1 := \E\left[|K_{(-t, t)^c}(it)-K_{(-t,t)^c}(0)| |K_{(-t,t)^c}(0)|^{q-1}\right], \quad A_2 := \E\left[ |K_{(-t,t)^c}(0) - K_{\mathbb{R}}(0)||K_{(-t,t)^c}(0)|^{q-1}\right].
\end{equation}
We have
\begin{align}
A_1 &\le t\int_{(-t,t)^c}d|\mu|(x_1)\, \frac{1}{|x_1|^{\frac{\gamma \beta_1 }{2}} |x_1-1|^{\frac{\gamma \beta_2 }{2}}} \E\left[\left(\int_{(-t,t)^c}\frac{g(x)^{\frac{\gamma^2}{8}(q-2)}e^{\frac{\gamma}{2} X(x)} d |\mu|(x)}{|x|^{\frac{\gamma\beta_1 }{2}-1} |x-1|^{\frac{\gamma \beta_2 }{2}}|x-x_1|^{\frac{\gamma^2}{2}}} \right)^{q-1}\right]\\ \nonumber
&\le t\int_{\mathbb{R}}d|\mu|(x_1)\, \frac{2|x_1|\mathbf{1}_{(-\frac{1}{2},\frac{1}{2})^c} + \mathbf{1}_{(-\frac{1}{2},-t)\cup (t,\frac{1}{2})}}{|x_1|^{\frac{\gamma \beta_1 }{2}} |x_1-1|^{\frac{\gamma \beta_2 }{2}}} \E\left[\left(\int_{(-t,t)^c}\frac{g(x)^{\frac{\gamma^2}{8}(q-2)}e^{\frac{\gamma}{2} X(x)} d |\mu|(x)}{|x|^{\frac{\gamma\beta_1 }{2}-1} |x-1|^{\frac{\gamma \beta_2 }{2}}|x-x_1|^{\frac{\gamma^2}{2}}} \right)^{q-1}\right] \\ \nonumber
&\le 2t \E\left[\left(\int_{(-t,t)^c}\frac{g(x)^{\frac{\gamma^2}{8}(q-1)}e^{\frac{\gamma}{2} X(x)} d |\mu|(x)}{|x|^{\frac{\gamma\beta_1 }{2}-1} |x-1|^{\frac{\gamma \beta_2 }{2}}} \right)^{q}\right] + O(t^{2-\frac{\gamma\beta_1}{2}}) = O(t^{2-\frac{\gamma\beta_1}{2}}).
\end{align}
In the last equality we have ignored the first term since it is a $O(t)$ and we will take $\beta_1>\frac{2}{\gamma}$. On the other hand,
\begin{align}
A_2 \le c_1 \int_{(-t,t)}d|\mu|(x_1)\, \frac{1}{|x_1|^{\frac{\gamma \beta_1 }{2}-1} |x_1-1|^{\frac{\gamma \beta_2 }{2}}} = O(t^{2-\frac{\gamma\beta_1}{2}}),
\end{align}
for some constant $c_1 > 0$. When $\beta_1 > \frac{\frac{4}{\gamma^2}-1}{\frac{\gamma}{2} - \frac{2}{\gamma}}$ is satisfied, i.e., $\beta_0 < \frac{1-\frac{\gamma^2}{4}}{\frac{\gamma}{2}-\frac{2}{\gamma}}$, we have $O(t^{2-\frac{\gamma\beta_1}{2}}) = o(t^{\frac{2}{\gamma}(Q-\beta_1)})$. This proves that
\begin{equation}
T_1 = o(t^{\frac{2}{\gamma}(Q-\beta_1)}).
\end{equation}

$\Diamond$ Now we focus on $T_2$. The goal is to restrict $K$ to $(-\infty,-t)\cup(-t^{1+h}, t^{1+h})\cup(t,\infty)$, with $h>0$ a small positive constant to be fixed, and then the GMC measures on the three disjoint parts will be weakly correlated. We have by interpolation and by dropping the imaginary part,
\begin{align}
&\left|\E[K_{\mathbb{R}}(it)^{q}]- \E[K_{(-\infty,-t)\cup(-t^{1+h}, t^{1+h})\cup(t,\infty)}(it)^{q}] \right|  \\ \nonumber
&\le |q| \int_0^1 du \mathbb{E}\left[ |K_{ (-t, -t^{1 + h} ) \cup (t^{1 + h},t )}(it)|  | u K_{\mathbb{R}}(0) + (1 - u) K_{(-\infty,-t)\cup(-t^{1+h}, t^{1+h})\cup(t,\infty)}(0)  |^{q-1} \right]\\ \nonumber
&\le  c_2|q| \int_{(-t, -t^{1 + h} ) \cup (t^{1 + h},t )}d|\mu|(x_1)\, \frac{t+|x_1|}{|x_1|^{\frac{\gamma \beta_1 }{2}} |x_1-1|^{\frac{\gamma \beta_2 }{2}}} \\ \nonumber
&= O(t^{1+(1+h)(1-\frac{\gamma\beta_1}{2})}),
\end{align}
for some constant $c_2>0$. By taking $h$ satisfying the condition,
\begin{equation}
h < \frac{1+(\frac{2}{\gamma}-\frac{\gamma}{2})\beta_1 -\frac{4}{\gamma^2}}{\frac{\gamma\beta_1}{2}-1}, 
\end{equation}
we have:
\begin{equation}
\E[K_{\mathbb{R}}(it)^{q}]- \E[K_{(-\infty,-t)\cup(-t^{1+h}, t^{1+h})\cup(t,\infty)}(it)^{q}] = o(t^{\frac{2}{\gamma}(Q-\beta_1)}).
\end{equation}
It remains to evaluate $\E[K_{(-\infty,-t)\cup(-t^{1+h}, t^{1+h})\cup(t,\infty)}(it)^{q}]-\E[K_{(-t,t)^c}(it)^{q}]$. We now introduce the radial decomposition of the field $X$,
\begin{equation}\label{equation radial decomposition}
X(x) = B_{-2\ln|x|} + Y(x),
\end{equation}
where $B$, $Y$ are independent Gaussian processes with $(B_{s})_{s \in \mathbb{R}}$ a Brownian motion starting from $0$ for $s \geq 0$, $B_s= 0$ when $s<0$, and $Y$ is a centered Gaussian process with covariance,
\begin{equation}\label{equation correlation Y}
\E[Y(x)Y(y)] = 
\begin{cases}
2\ln \frac{|x|\lor |y|}{|x-y|}, \quad & |x|,|y| \le 1,\\
2\ln \frac{1}{|x-y|} -\frac{1}{2}\ln g(x) -\frac{1}{2}\ln g(y), \quad &\text{else.}
\end{cases}
\end{equation}
One can wonder why the process $Y$ with the above covariance is well-defined. To construct $Y$, starting from $X$ set:
\begin{equation}
Y(x)=
\begin{cases}
X(x) - \frac{1}{\pi} \int_0^{\pi} X(|x| e^{i \theta}) d \theta, \quad & |x| \le 1,\\
X(x), \quad & |x| \geq 1.
\end{cases}
\end{equation}
Now with this decomposition one can write:
\begin{align}
K_{I}(it) =  \int_{I} \frac{it-x}{|x|^{\frac{\gamma \beta_1 }{2}-\frac{\gamma^2}{4}} |x-1|^{\frac{\gamma \beta_2 }{2}}}  g(x)^{\frac{\gamma^2}{8}(q-1)} e^{\frac{\gamma}{2} B_{-2\ln|x|}} e^{\frac{\gamma}{2}Y(x)} d\mu(x).
\end{align}
From \eqref{equation correlation Y}, we deduce that for $|x'| \le t^{1+h}$ and $|x| \ge t$,
\begin{equation}
\left|\E[Y(x)Y(x')]\right| = 
\left|2\ln \left|1-\frac{x'}{x}\right| \right| \le 4t^h,
\end{equation}
where we used the inequality $|\ln |1-x|| \le 2|x|$ for $x\in [-\frac{1}{2},\frac{1}{2}]$. Define the processes,
\begin{align}
P(x) &:= Y(x)\mathbf{1}_{|x| \le t^{1+h}} + Y(x)\mathbf{1}_{|x| \ge t},\\
\tilde{P}(x) &:= \tilde{Y}(x)\mathbf{1}_{|x| \le t^{1+h}} + Y(x)\mathbf{1}_{|x| \ge t},
\end{align}
where $\tilde{Y}$ is an independent copy of $Y$. Then we have the inequality over the covariance:
\begin{equation}
\left|\E[P(x)P(y)] - \E[\tilde{P}(x) \tilde{P}(y)]\right| \le 4t^h.
\end{equation}
Consider now for $u\in [0,1]$:
\begin{align}
P_u(x) &= \sqrt{1-u} P(x) + \sqrt{u} \tilde{P}(x),\\
K_{I}(it,u) &=  \int_{I} \frac{it-x}{|x|^{\frac{\gamma \beta_1 }{2}-\frac{\gamma^2}{4}} |x-1|^{\frac{\gamma \beta_2 }{2}}}  g(x)^{\frac{\gamma^2}{8}(q-1)} e^{\frac{\gamma}{2} B_{-2\ln|x|}} e^{\frac{\gamma}{2}P_u(x)} d\mu(x).
\end{align}
By applying Kahane's inequality of Theorem \ref{theo Kahane inequality}, 
\begin{align}
&\left|\E\left[K_{(-\infty,-t)\cup(-t^{1+h}, t^{1+h})\cup(t,\infty)}(it)^{q}\right] - \E\left[\left(K_{(-\infty,-t)\cup(t,\infty)}(it)+\tilde{K}_{(-t^{1+h}, t^{1+h})}(it)\right)^{q}\right]\right| \\
&\le  2|q(q-1)|t^h  \sup_{u\in [0,1]}\E\left[\left| K_{I}(it,u) \right|^{q}\right]\nonumber\\ \nonumber
&\le c_3\, t^h,
\end{align}
for some constant $c_3 >0$, and where in $\tilde{K}_{(-t^{1+h}, t^{1+h})}(it)$ we simply use the field $\tilde{Y}$ instead of $Y$. When $h > \frac{2}{\gamma}(Q-\beta_1)$, we can bound the previous term by $o(t^{\frac{2}{\gamma}(Q-\beta_1)})$.

Consider now the change of variable $x = t^{1+h}e^{-s/2}$ for the field $\tilde{K}_{(-t^{1+h}, t^{1+h})}(it)$. By the Markov property of the Brownian motion and stationarity of 
\begin{equation}
d\mu_{\tilde{Y}}(s): = \mu_1 e^{ \frac{\gamma}{2} \tilde{Y}(-e^{-s/2}) }ds + \mu_2 e^{\frac{\gamma}{2} \tilde{Y}(e^{-s/2}) }ds,
\end{equation}
we have
\begin{equation}
\tilde{K}_{(-t^{1+h}, t^{1+h})}(it) = \frac{1}{2}i t^{1+(1+h)(1-\frac{\gamma\beta_1}{2}+\frac{\gamma^2}{4})}e^{\frac{\gamma}{2}B_{2(1+h)\ln (1/t)}}\int_{0}^{\infty} \frac{(1 + i t^h e^{-s/2})}{|t^{1+h}e^{-s/2}-1|^{\frac{\gamma\beta_2}{2}}}  e^{ \frac{\gamma}{2}(\tilde{B}_s - \frac{s}{2}(Q-\beta_1) ) } d\mu_{\tilde{Y}}(s),
\end{equation}
with $\tilde{B}$ an independent Brownian motion. We denote
\begin{equation}
\sigma_t:= t^{1+(1+h)(1-\frac{\gamma\beta_1}{2}+\frac{\gamma^2}{4})}e^{\frac{\gamma}{2}B_{2(1+h)\ln (1/t)}},\quad V: =\frac{1}{2}\int_{0}^{\infty} e^{ \frac{\gamma}{2}(\tilde{B}_s - \frac{s}{2}(Q-\beta_1) ) } d\mu_{\tilde{Y}}(s).
\end{equation}
By interpolation, we can prove that for some constant $c_4>0$:
\begin{align}
&\left|\E[(K_{(-t,t)^c}(it) + \tilde{K}_{(-t^{1+h}, t^{1+h})}(it))^q] - \E\left[\left(K_{(-t,t)^c}(it) + i\sigma_t V\right)^q\right]\right| \\ \nonumber
& \le  c_4|q|t^{1+h+(1+h)(1-\frac{\gamma\beta_1}{2}+\frac{\gamma^2}{4})}\E\left[e^{\frac{\gamma}{2}B_{2(1+h)\ln (1/t)}}\int_{0}^{\infty} e^{ \frac{\gamma}{2}(\tilde{B}_s - \frac{s}{2}(Q-\beta_1) ) } d\mu_{\tilde{Y}}(s)|K_{(1,2)}(0)|^{q-1}\right].
\end{align}
Since $B_{2(1+h)\ln (1/t)}$, $(\tilde{B}_s)_{s\ge 0}$, $(\tilde{Y}(x))_{|x|\le 1}$, and $K_{(1,2)}(0)$ are independent, we can easily bound the last term by, for some $c_5>0$,
\begin{align}
c_5\,t^{(1+h)(2-\frac{\gamma\beta_1}{2})} = o(t^{\frac{2}{\gamma}(Q-\beta_1)}).
\end{align}
By the Williams path decomposition of Theorem \ref{theo Williams} we can write,
\begin{equation}
V =  e^{\frac{\gamma}{2}M}\frac{1}{2}\int_{-L_M}^{\infty}  e^{ \frac{\gamma}{2}\mathcal{B}^{\frac{Q-\beta_1}{2}}_s} \mu_{\tilde{Y}}(ds),
\end{equation}
where $ M = \sup_{s \geqslant 0} (\tilde{B}_s -\frac{Q-\beta_1}{2} s) $ and $L_M$ is the last time $\left( \mathcal{B}^{\frac{Q-\beta_1}{2}}_{-s} \right)_{s\ge 0} $ hits $-M$. Recall that the law of $M$ is known, for $v\ge 1$,
\begin{equation}\label{equation 5.18}
\mathbb{P}( e^{\frac{\gamma}{2} M} >v ) = \frac{1}{v^{ \frac{2}{\gamma}(Q-\beta_1)}}.
\end{equation}
For simplicity, we introduce the notation:
\begin{equation}\label{equation rho}
\rho(\beta_1) :=\frac{1}{2}\int_{-\infty}^{\infty}  e^{ \frac{\gamma}{2}\mathcal{B}^{\frac{Q-\beta_1}{2}}_s} \mu_{\tilde{Y}}(ds).
\end{equation}
Again by interpolation and then independence we can show that
\begin{align}
&\left|\E\left[\left(K_{(-t,t)^c}(it) + i\sigma_t V\right)^q\right] - \E\left[\left(K_{(-t,t)^c}(it) + i\sigma_t e^{\frac{\gamma}{2}M} \rho(\beta_1)\right)^q\right]\right|\\ \nonumber
&\le \frac{1}{2}|q|t^{1+(1+h)(1-\frac{\gamma\beta_1}{2}+\frac{\gamma^2}{4})}\E\left[e^{\frac{\gamma}{2}B_{2(1+h)\ln (1/t)}}\int_{-\infty}^{0} e^{ \frac{\gamma}{2}\mathcal{B}^{\frac{Q-\beta_1}{2}}_s} \mu_{\tilde{Y}}(ds)\left|K_{(1,2)}(0)\right|^{q-1}\right]\\ \nonumber
 &= O(t^{1+(1+h)(1-\frac{\gamma\beta_1}{2})}) = o(t^{\frac{2}{\gamma}(Q-\beta_1)}).
\end{align}
In summary, 
\begin{equation}
T_2 = \E[(K_{(-t,t)^c}(it) + i\sigma_t e^{\frac{\gamma}{2}M} \rho(\beta_1))^q] - \E[K_{(-t,t)^c}(it)^q] + o(t^{\frac{2}{\gamma}(Q-\beta_1)}).\\
\end{equation}
Finally, we evaluate the above difference at first order explicitly using the fact that density of $e^{\frac{\gamma}{2}M}$ is known:
\begin{align}
&\quad \E[(K_{(-t,t)^c}(it) + i\sigma_t e^{\frac{\gamma}{2}M} \rho(\beta_1))^q] - \E[K_{(-t,t)^c}(it)^q]\\ \nonumber
&\quad = \frac{2}{\gamma}(Q-\beta_1) \E\left[\int_1^{\infty} \frac{dv}{v^{\frac{2}{\gamma}(Q-\beta_1)+1}}\left(\left(K_{(-t,t)^c}(it) + i\sigma_t \rho(\beta_1) v\right)^q - K_{(-t,t)^c}(it)^q \right) \right]\\ \nonumber
&\quad = t^{\frac{2}{\gamma}(Q-\beta_1)} \frac{2}{\gamma}(Q-\beta_1)\E\left[\int_{\frac{\hat{\sigma}_t \rho(\beta_1)}{\hat{K}_{(-t,t)^c}(it)}}^{\infty} \frac{du}{u^{\frac{2}{\gamma}(Q-\beta_1)+1}}((iu+1)^q-1) \rho(\beta_1)^{\frac{2}{\gamma}(Q-\beta_1)}\hat{K}_{(-t,t)^c}(it)^{q-\frac{2}{\gamma}(Q-\beta_1)} \right].
\end{align}
In the last equality we have applied Theorem \ref{girsanov}. Next,
\begin{align}
\hat{K}_{(-t,t)^c}(it) = \int_{(-t,t)^c} \frac{it-x}{|x|^{\frac{\gamma}{2}(2Q-\beta_1-\frac{2}{\gamma})} |x-1|^{\frac{\gamma \beta_2 }{2}}}  g(x)^{\frac{\gamma^2}{8}(q-1)} e^{\frac{\gamma}{2} X(x)} d \mu(x) \underset{\text{a.s.}}{\overset{t\to 0_+}{\longrightarrow}} \hat{K}_{\mathbb{R}}(0),
\end{align}
and for $h < \frac{2}{\gamma(Q-\beta_1)}-1$,
\begin{align}
\hat{\sigma}_t = t^{1-(1+h)(1-\frac{\gamma\beta_1}{2}+\frac{\gamma^2}{4})}e^{\frac{\gamma}{2}B_{2(1+h)\ln (1/t)}} \underset{\text{a.s.}}{\overset{t\to 0_+}{\longrightarrow}} 0.
\end{align}
With some simple arguments of uniform integrability, we conclude that:
\begin{align}
&\quad \E\left[\left(K_{(-t,t)^c}(it) + i\sigma_t e^{\frac{\gamma}{2}M} \rho(\beta_1)\right)^q\right] - \E[K_{(-t,t)^c}(it)^q] \\ \nonumber
&\quad \overset{t \to 0_+}{\sim}t^{\frac{2}{\gamma}(Q-\beta_1)} \frac{2}{\gamma}(Q-\beta_1) \left( \int_{0}^{\infty} \frac{du}{u^{\frac{2}{\gamma}(Q-\beta_1)+1}}((iu+1)^q-1) \right)  \E\left[\rho(\beta_1)^{\frac{2}{\gamma}(Q-\beta_1)}\right] \E\left[\hat{K}_{\mathbb{R}}(0)^{q-\frac{2}{\gamma}(Q-\beta_1)} \right]\\ \nonumber
&\quad = (it)^{\frac{2}{\gamma}(Q-\beta_1)}\frac{2}{\gamma}(Q - \beta_1) \frac{\Gamma(\frac{2}{\gamma}(\beta_1 -Q)) \Gamma(\frac{2}{\gamma}(Q -\beta_1) -q ) }{\Gamma(-q)} \overline{R}(\beta_1, \mu_1, \mu_2) \overline{H}^{(2 Q - \beta_1 - \frac{2}{\gamma}, \beta_2 , \beta_3)}_{( \mu_1, -\mu_2,  -\mu_3)}.
\end{align}
The power of $i$ comes from the evaluation of the integral. Inspecting the proof we see that the conditions on $\beta_0$ and $h$ indeed allow us to find small values of these parameters that make the arguments work. Therefore we have proved the claim.
\end{proof}

Now the analogue result for $\chi = \frac{\gamma}{2}$:

\begin{lemma}\label{lem_reflection_ope1} (OPE with reflection for $\chi = \frac{\gamma}{2}$) Recall $p = \frac{2}{\gamma}(Q-\alpha-\frac{\beta}{2}+\frac{\gamma}{4})$ and consider $s \in (0,1)$. Recall also the functions $\tilde{G}_{\frac{\gamma}{2}}$ and $H_{\frac{\gamma}{2}}$ given by \eqref{eq:def_G_tilde} and \eqref{eq:def_H_chi} for $\chi = \frac{\gamma}{2}$. There exists a small parameter $\beta_0>0$ such that for $\beta \in (Q -\beta_0, Q)$ and $\alpha$ such that $  p < \frac{4}{\gamma^2} \wedge \frac{2}{\gamma}(Q - \beta)$, the following asymptotic expansion holds:
\begin{align}
 \tilde{G}_{\frac{\gamma}{2}}(s) - \tilde{G}_{\frac{\gamma}{2}}(0) &= -s^{\frac{1}{2} + \frac{\gamma^2}{8} - \frac{\gamma \beta}{4}}\frac{\Gamma(1-\frac{2(Q-\beta)}{\gamma}) \Gamma(-p+\frac{2}{\gamma}(Q-\beta))}{\Gamma(-p)}\overline{R}(\beta, 1, e^{i\pi\frac{\gamma^2}{4}}) \overline{G}(\alpha, 2Q-\beta -\frac{\gamma}{2}) \nonumber \\  &+ o(|s|^{{\frac{1}{2} + \frac{\gamma^2}{8} - \frac{\gamma \beta}{4}}}).
\end{align}
Similarly, recall $q = \frac{1}{\gamma}(2 Q  - \beta_1 - \beta_2 - \beta_3 + \frac{\gamma}{2})$ and consider $t \in (0,1)$. Then for $\mu_1, \mu_2, \mu_3 \in (0,+\infty)$, $\beta_1 \in (Q - \beta_0,Q)$ and $\beta_2, \beta_3$ chosen so that $  q < \frac{4}{\gamma^2} \wedge \min_i \frac{2}{\gamma}(Q - \beta_i) $, the following asymptotic also holds:
\begin{align}
H_{\frac{\gamma}{2}}(t) - H_{\frac{\gamma}{2}}(0) &= t^{1 -\frac{\gamma \beta_1 }{2} + \frac{\gamma^2}{4}} \frac{2(Q - \beta_1)}{\gamma} \frac{\Gamma(\frac{2}{\gamma}(\beta_1 -Q)) \Gamma(\frac{2}{\gamma}(Q -\beta_1) -q ) }{\Gamma(-q)} \overline{R}(\beta_1, \mu_1, \mu_2) \overline{H}^{(2 Q - \beta_1 - \frac{\gamma}{2} , \beta_2 , \beta_3)}_{( \mu_1, e^{i\pi\frac{\gamma^2}{4}} \mu_2,  e^{i\pi\frac{\gamma^2}{4}}  \mu_3)} \nonumber \\ 
  &+ o(|t|^{1 -\frac{\gamma \beta_1 }{2} + \frac{\gamma^2}{4}}).
\end{align}
\end{lemma}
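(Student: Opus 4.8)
The plan is to prove the stated asymptotics for $H_{\frac{\gamma}{2}}(t)$ by following verbatim the strategy of Lemma \ref{lem_reflection_ope2}, adapting each estimate to the fractional prefactor $(t-x)^{\frac{\gamma^2}{4}}$ (in place of the linear $(it-x)$) and to the fact that here $t\in(0,1)$ is real while all $\mu_i$ are positive; the case of $\tilde G_{\frac{\gamma}{2}}$ is entirely analogous and I would only indicate the changes. As before I would set $K_I(t)=\int_I \frac{(t-x)^{\frac{\gamma^2}{4}}}{|x|^{\frac{\gamma\beta_1}{2}}|x-1|^{\frac{\gamma\beta_2}{2}}}g(x)^{\frac{\gamma^2}{8}(q-1)}e^{\frac{\gamma}{2}X(x)}d\mu(x)$ for a Borel set $I\subseteq\mathbb R$, with the branch of $(t-x)^{\frac{\gamma^2}{4}}$ fixed as the limit from $\mathbb H$, so that it is positive for $x<t$ and equals $e^{i\pi\frac{\gamma^2}{4}}|t-x|^{\frac{\gamma^2}{4}}$ for $x>t$. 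I would then split $\E[K_{\mathbb R}(t)^q]-\E[K_{\mathbb R}(0)^q]=T_1+T_2$ as in Lemma \ref{lem_reflection_ope2}, with $T_1=\E[K_{(-t,t)^c}(t)^q]-\E[K_{\mathbb R}(0)^q]$ the near-diagonal error and $T_2$ the genuine reflection contribution.

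For $T_1$ the goal remains to show $T_1=o(t^{1-C})$ with $1-C=1-\frac{\gamma\beta_1}{2}+\frac{\gamma^2}{4}$. The interpolation bounds and Kahane's inequality (Theorem \ref{theo Kahane inequality}) go through; the only genuinely new point is the increment estimate, where for $|x|>t$ one uses $|(t-x)^{\frac{\gamma^2}{4}}-(-x)^{\frac{\gamma^2}{4}}|\lesssim t\,|x|^{\frac{\gamma^2}{4}-1}$ from the mean value theorem, which replaces the clean Lipschitz bound $|(it-x)-(-x)|=t$ of the linear case and produces a slightly different weight in $|x|$. One then checks that, for $\beta_1$ close enough to $Q$ and $\beta_0$ small, the resulting powers of $t$ are indeed $o(t^{1-C})$, the contribution of $x\in(-t,t)$ to $K_{(-t,t)^c}(0)-K_{\mathbb R}(0)$ being handled exactly as before.

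For $T_2$ I would restrict the GMC to $(-\infty,-t)\cup(-t^{1+h},t^{1+h})\cup(t,\infty)$ for small $h>0$, discard the annuli at cost $o(t^{1-C})$, and use the radial decomposition $X=B_{-2\ln|x|}+Y$ with Kahane's inequality to decorrelate the microscopic scales $|x|<t^{1+h}$ from the macroscopic ones. On the microscopic part the change of variables $x=t^{1+h}e^{-s/2}$ turns $(t-x)^{\frac{\gamma^2}{4}}\approx t^{\frac{\gamma^2}{4}}$ (positive, since these $x$ lie left of $t$) into an overall scale $\sigma_t=t^{\frac{\gamma^2}{4}+(1+h)(1-\frac{\gamma\beta_1}{2}+\frac{\gamma^2}{4})}e^{\frac{\gamma}{2}B_{2(1+h)\ln(1/t)}}$, and the Williams decomposition (Theorem \ref{theo Williams}) conditioned on the maximum $M=\sup_{s\ge0}(\tilde B_s-\frac{Q-\beta_1}{2}s)$, whose law is $\P(e^{\frac{\gamma}{2}M}>v)=v^{-\frac{2}{\gamma}(Q-\beta_1)}$, produces the factor $\rho(\beta_1)$ of \eqref{equation rho} satisfying $\E[\rho(\beta_1)^{\frac{2}{\gamma}(Q-\beta_1)}]=\overline R(\beta_1,\mu_1,\mu_2)$. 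On the macroscopic part the limit $t\to0^+$ gives $(t-x)^{\frac{\gamma^2}{4}}\to|x|^{\frac{\gamma^2}{4}}$ for $x<0$ but $e^{i\pi\frac{\gamma^2}{4}}|x|^{\frac{\gamma^2}{4}}$ for $x>0$, which is exactly what rotates $\mu_2,\mu_3$ into $e^{i\pi\frac{\gamma^2}{4}}\mu_2,e^{i\pi\frac{\gamma^2}{4}}\mu_3$ inside the residual $\overline{H}^{(2Q-\beta_1-\frac{\gamma}{2},\beta_2,\beta_3)}$, while $\mu_1$ is unchanged. Collecting the two pieces and evaluating the explicit integral $\int_0^\infty u^{-\frac{2}{\gamma}(Q-\beta_1)-1}((u+1)^q-1)\,du$, where the integrand carries $(u+1)^q$ rather than $(iu+1)^q$ because the microscopic scale $\sigma_t$ is now real positive, produces precisely the prefactor $\frac{2(Q-\beta_1)}{\gamma}\frac{\Gamma(\frac{2}{\gamma}(\beta_1-Q))\Gamma(\frac{2}{\gamma}(Q-\beta_1)-q)}{\Gamma(-q)}$ and the leading term $t^{1-\frac{\gamma\beta_1}{2}+\frac{\gamma^2}{4}}$.

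The main obstacle, and the only place genuinely new work is needed relative to Lemma \ref{lem_reflection_ope2}, is the bookkeeping forced by the fractional power $(t-x)^{\frac{\gamma^2}{4}}$. First, in the increment estimates of $T_1$ the factor is no longer uniformly Lipschitz, so one must track both the weight $|x|^{\frac{\gamma^2}{4}-1}$ and the vanishing of $(t-x)^{\frac{\gamma^2}{4}}$ near $x=t$. Second, one must keep the branch of $(t-x)^{\frac{\gamma^2}{4}}$ consistent throughout, since it is exactly the phase $e^{i\pi\frac{\gamma^2}{4}}$ acquired in crossing the branch point at $x=t$ that yields the rotated cosmological constants in the answer. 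Once these are settled, the admissible ranges of $\beta_0$ and $h$ that make every error term $o(t^{1-\frac{\gamma\beta_1}{2}+\frac{\gamma^2}{4}})$ can be read off exactly as in the proof of Lemma \ref{lem_reflection_ope2}.
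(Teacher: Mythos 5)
Your skeleton coincides with the paper's: the paper also defines $K_I(t)$ with the $(t-x)^{\frac{\gamma^2}{4}}$ prefactor, localizes around the insertion at scale $t^{1+h}$, decorrelates with Kahane's inequality, and uses the Williams decomposition to produce $\sigma_t e^{\frac{\gamma}{2}M}\rho(\beta_1)$ with exactly your $\sigma_t$, your phase bookkeeping (it is indeed the branch of $(t-x)^{\frac{\gamma^2}{4}}$ for $x>t$ that rotates $\mu_2,\mu_3$ into $e^{i\pi\frac{\gamma^2}{4}}\mu_2, e^{i\pi\frac{\gamma^2}{4}}\mu_3$), and the real integral with $(u+1)^q$ in place of $(iu+1)^q$. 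So the target structure of your computation is correct.

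The genuine gap is your repeated assertion that the interpolation and comparison estimates of Lemma \ref{lem_reflection_ope2} and of \cite{interval} ``go through,'' with the only new work being MVT-type increment bounds and branch consistency. They do not go through as stated. In Lemma \ref{lem_reflection_ope2} the bounds of the form $|uK_1+(1-u)K_2|^{q-1}\le |K_{(-t,t)^c}(0)|^{q-1}$ exploited that the observable had a positive real part; here the observable lies in the wedge spanned by $1$ and $e^{i\pi\frac{\gamma^2}{4}}$, and, more seriously, the crucial steps imported from \cite{interval} are not interpolation estimates at all but squeezing arguments for \emph{real positive} quantities: the replacement of the truncated functional $V$ (cut at $-L_M$) by $e^{\frac{\gamma}{2}M}\rho(\beta_1)$ of \eqref{equation rho} uses the pointwise order $V\le e^{\frac{\gamma}{2}M}\rho(\beta_1)$ and the fact that for positive reals the difference of $q$-th powers has a sign, so that upper and lower bounds with the same explicit leading order pin down the expansion; note that crude interpolation is obstructed by the heavy tail of $e^{\frac{\gamma}{2}M}$, whose exponent $\frac{2}{\gamma}(Q-\beta_1)$ is less than $1$ precisely in the regime $\beta_1\in(Q-\beta_0,Q)$. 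None of this has a complex analogue, and this is exactly the difficulty the paper flags as the new content of Lemmas \ref{lem_reflection_ope2} and \ref{lem_reflection_ope1}. The paper's resolution is the elementary inequality \eqref{complex inequality}, $|(x_1+e^{i\varphi}y_1)^p-(x_2+e^{i\varphi}y_2)^p|\le c\left(|x_1^p-x_2^p|+|y_1^p-y_2^p|\right)$ for positive $x_i,y_i$: every observable is decomposed into its two positive wedge coordinates $K_{(-\infty,\cdot)}(t)$ and $|K_{(\cdot,\infty)}(t)|$, every estimate (the $T_1$-type increments, the annulus removal, and the $V$ versus $e^{\frac{\gamma}{2}M}\rho(\beta_1)$ comparison, the latter carried out on the real coordinate $K_{(-\infty,-t)}(t)$ alone, where the absolute value can be put inside the expectation) is proven coordinate-wise exactly as in \cite{interval}, and only then recombined through \eqref{complex inequality}. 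Without this inequality or a substitute for it, the steps you declare routine are unsupported; supplying it is the actual content of the paper's proof.
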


\begin{proof}
We will keep the notations in the proof of Lemma \ref{lem_reflection_ope2} although there are some slight differences. This time $K$ is defined with the $\chi = \frac{\gamma}{2}$ insertion:
\begin{equation}
K_{I}(t) : =  \int_I \frac{(t-x)^{\frac{\gamma^2}{4}}}{|x|^{\frac{\gamma \beta_1 }{2}} |x-1|^{\frac{\gamma \beta_2 }{2}}}  g(x)^{\frac{\gamma^2}{8}(q-1)} e^{\frac{\gamma}{2} X(x)} d \mu(x).
\end{equation}
To deal with the complex phases we will simply use the following inequality. For a fixed $p<1$ and $\varphi \in [0, \pi)$, there exists a constant $c>0$ such that for all $x_1, x_2, y_1, y_2 \in (0, +\infty)$:
\begin{equation}\label{complex inequality}
|(x_1+e^{i\varphi}y_1)^p - (x_2 + e^{i\varphi}y_2)^p| \le c( |x_1^p - x_2^p|+ |y_1^p - y_2^p|).
\end{equation}
This inequality can be proved by studying the derivative of the function $(x,y) \mapsto (x^{1/p}+e^{i\varphi}y^{1/p})^p$. With the help of this inequality we will be able to perform the same proof as in the case of the previous lemma. 
Following the same steps as in \cite{interval}, we have:
\begin{align}
\E[ |K_{(-\infty,-t)}(t)^q - K_{(-\infty,0)}(0)^q|] &= o(t^{\frac{\gamma}{2}(Q-\beta_1)}),\\
\E[ ||K_{(t, \infty)}(t)|^q - |K_{(0, \infty)}(0)|^q|] &= o(t^{\frac{\gamma}{2}(Q-\beta_1)}).
\end{align}
Applying \eqref{complex inequality} implies that:
\begin{align}
\E[ |K_{(-t,t)^c}(t)^q - K_{\mathbb{R}}(0)^q|]  &= \E[ |(K_{(-\infty,-t)}(t)+e^{i\pi\frac{\gamma^2}{4}}|K_{(t,\infty)}(t)|)^q - (K_{(-\infty, 0)}(0)+e^{i\pi\frac{\gamma^2}{4}}|K_{(0,\infty)}(0)|)^q|] \nonumber \\ \nonumber
&\le c \E[ |K_{(-\infty,-t)}(t)^q - K_{(-\infty,0)}(0)^q|] + c \E[||K_{(t,\infty)}(t)|^q - |K_{(t,\infty)}(0)|^q|]\\ 
 & \le o(t^{\frac{\gamma}{2}(Q-\beta_1)}).
\end{align}
Next we repeat the step where we introduce a small $h>0$ and want to compare $K_{\mathbb{R}}(t)$ and\\ $K_{(-\infty,-t)\cup(-t^{1+h},t^{1+h})\cup(t,\infty)}(t)$. Following again the steps of \cite{interval}, under the constraint on $h$,
\begin{equation}
h<\frac{\frac{\gamma\beta_1}{2}-1}{1-\frac{\gamma\beta_1}{2}+\gamma^2},
\end{equation}
one can show that:
\begin{align}
\E[ |K_{(-\infty,t)}(t)^q - K_{(-\infty,-t)\cup(-t^{1+h},t^{1+h})}(t)^q|] &= o(t^{\frac{\gamma}{2}(Q-\beta_1)}).
\end{align} 
By applying again \eqref{complex inequality} one obtains,
\begin{align}
\E[ |K_{\mathbb{R}}(t)^q - K_{(-\infty,-t)\cup(-t^{1+h},t^{1+h})\cup(t,\infty)}(t)^q|] \le c\E[ |K_{(-\infty,t)}(t)^q - K_{(-\infty,-t)\cup(-t^{1+h},t^{1+h})}(t)^q|] = o(t^{\frac{\gamma}{2}(Q-\beta_1)}).
\end{align}
Therefore as in the previous lemma we have now reduced the problem to studying the difference:
\begin{align}
\E[K_{(-\infty,-t)\cup(-t^{1+h},t^{1+h})\cup(t,\infty)}(t)^q] - \E[K_{(-t,t)^c}(t)^q].
\end{align}

We proceed exactly in the same way as the case $\chi=\frac{2}{\gamma}$, using Kahane's inequality of Theorem \ref{theo Kahane inequality} to obtain:
\begin{align}
\E[K_{(-\infty,-t)\cup(-t^{1+h}, t^{1+h})\cup(t,\infty)}(t)^{q}] - \E[(K_{(-t,t)^c}(t) + \sigma_t V)^q]  = O(t^{h})+O(t^{(1+h)(1-\frac{\gamma\beta_1}{2}+\frac{\gamma^2}{4})}).
\end{align}
When $h > \frac{\gamma}{2}(Q-\beta_1)$ this term is also a $o(t^{\frac{\gamma}{2}(Q-\beta_1)})$. Here the expression of $\sigma_t$ is slightly different:
\begin{align}
\sigma_t = t^{\frac{\gamma^2}{4}+(1+h)(1-\frac{\gamma\beta_1}{2}+\frac{\gamma^2}{4})}e^{\frac{\gamma}{2}B_{2(1+h)\ln (1/t)}}.
\end{align}
As in our previous work \cite{interval}, we can show that 
\begin{align}
&\E[(K_{(-\infty,-t)}(t) + \sigma_t V)^q] - \E[K_{(-\infty,-t)}(t)^q] \\ \nonumber
 &= \E[(K_{(-\infty,-t)}(t) + \sigma_t e^{\frac{\gamma}{2}M} \rho(\beta_1))^q] - \E[K_{(-\infty,-t)}(t)^q] + o(t^{\frac{\gamma}{2}(Q-\beta_1)}).
\end{align}
This result is proved using inequalities, the lower bound and upper bound are then equivalent to a term with order $t^{\frac{\gamma}{2}(Q-\beta_1)}$. As a consequence,
\begin{align}
\E[(K_{(-\infty,-t)}(t) + \sigma_t V)^q] - \E[(K_{(-\infty,-t)}(t) + \sigma_t e^{\frac{\gamma}{2}M} \rho(\beta_1))^q] = o(t^{\frac{\gamma}{2}(Q-\beta_1)}).
\end{align}
Furthermore, we can write $V$ as 
\begin{align}
\sigma_t V = \sigma_t e^{\frac{\gamma}{2}M}\frac{1}{2}\int_{-L_M}^{\infty}  e^{ \frac{\gamma}{2}\mathcal{B}^{\frac{Q-\beta_1}{2}}_s} \mu_{\tilde{Y}}(ds) \le \sigma_t e^{\frac{\gamma}{2}M} \rho(\beta_1).
\end{align}
This allows us to put an absolute value in expectation:
\begin{align}
\E[|(K_{(-\infty,-t)}(t) + \sigma_t V)^q - (K_{(-\infty,-t)}(t) + \sigma_t e^{\frac{\gamma}{2}M} \rho(\beta_1))^q|] = o(t^{\frac{\gamma}{2}(Q-\beta_1)}).
\end{align}
We can conclude by using \eqref{complex inequality} that:
\begin{equation}
\E[(K_{(-t,t)^c}(t) + \sigma_t V)^q] - \E[(K_{(-t,t)^c}(t) + \sigma_t e^{\frac{\gamma}{2}M} \rho(\beta_1))^q] = o(t^{\frac{\gamma}{2}(Q-\beta_1)}).
\end{equation}
We estimate as in the case $\chi = \frac{2}{\gamma}$:
\begin{align}
&\E[(K_{(-t,t)^c}(t) + \sigma_t e^{\frac{\gamma}{2}M} \rho(\beta_1))^q] - \E[K_{(-t,t)^c}(t)^q]\\ \nonumber
&=  t^{\frac{\gamma}{2}(Q-\beta_1)}\frac{2(Q - \beta_1)}{\gamma} \frac{\Gamma(\frac{2}{\gamma}(\beta_1 -Q)) \Gamma(\frac{2}{\gamma}(Q -\beta_1) -q ) }{\Gamma(-q)} \overline{R}(\beta_1, \mu_1, \mu_2 ) \overline{H}^{(2 Q - \beta_1 - \frac{\gamma}{2}, \beta_2 , \beta_3)}_{( \mu_1, \mu_2e^{i\pi\frac{\gamma^2}{4}},  \mu_3e^{i\pi\frac{\gamma^2}{4}})} + o(t^{\frac{\gamma}{2}(Q-\beta_1)}).
\end{align}
Finally it is again possible to choose suitable small $h>0 $ and $\beta_0>0$ which make the argument work. This concludes the proof of the lemma.
\end{proof}

\subsubsection{Analytic continuation}

In this section we prove the lemma of analyticity of the moments of GMC that we have used repetitively throughout the paper. This fact has been first shown in \cite[Theorem 6.1]{DOZZ2} in the case of the correlation functions on the sphere. The main idea is that starting from the range of real parameters of $\beta_i$ or $\alpha$ where a given GMC expression is defined, one can find a small neighborhood in $\mathbb{C}$ of the parameter range where the quantity will still be well-defined and is complex analytic on this neighborhood. We also use in Section \ref{sec_3pt} the fact that the three-point function is complex analytic in the $\mu_i$. This fact is obtained directly by differentiating with respect to $\mu_i$.

\begin{lemma}\label{lem_analycity} (Analyticity in insertions weights and in $\mu_i$ of moments of GMC)
Consider the following functions defined in the given parameter range:
\begin{itemize}
\item $(\alpha, \beta) \mapsto \overline{G}(\alpha, \beta)$ for $\beta < Q $, $ \frac{\gamma}{2} - \alpha < \frac{\beta}{2} < \alpha$.
\item $(\alpha, \beta) \mapsto G_{\chi}(t) $ for $t \in \H$, $ \beta < Q$,  $ \frac{2}{\gamma} \left(Q -\alpha - \frac{\beta}{2} + \frac{\chi}{2} \right) < \frac{4}{\gamma^2} \wedge \frac{2}{\gamma}(Q - \beta)$.

\item $(\beta_1, \beta_2, \beta_3) \mapsto \overline{H}^{(\beta_1 , \beta_2 , \beta_3)}_{( \mu_1, \mu_2,  \mu_3 )}$ for:
 $$(\mu_i)_{i=1,2,3} \: \: \: \text{satisfies Definition \ref{half-space},} \quad \beta_i < Q,  \quad   {\frac{1}{\gamma}(2 Q  - \sum_{i=1}^3 \beta_i)} < \frac{4}{\gamma^2} \wedge \min_i \frac{2}{\gamma}(Q - \beta_i).$$
\item $(\beta_1, \beta_2, \beta_3) \mapsto H_{\chi}(t)$ for:
$$ \beta_i < Q, \quad \mu_1 \in (0, \infty), \: \: \mu_2, \mu_3 \in - \overline{\H}, \quad  q < \frac{4}{\gamma^2} \wedge \min_i \frac{2}{\gamma}(Q - \beta_i), \quad t \in \mathbb{H}.$$
\end{itemize} 
Then for each function above, and for each of the function's variables, it is complex analytic in a small complex neighborhood of any compact set $K$ contained in the domain of definition of the function for real parameters. Furthermore the function  $\overline{H}$ now viewed as a function of $\mu_1, \mu_2, \mu_3$ is complex analytic in any compact $\tilde{K}$ contained in the range of parameters written above.
\end{lemma}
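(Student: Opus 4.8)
The plan is to establish analyticity for all four functions by a single template, following the method introduced in \cite{DOZZ2} and adapted to our complex-valued setting: for a fixed realization of the (regularized) field, the random variable inside the expectation is an honest analytic function of the parameters, and this analyticity is transferred to the expectation via Morera's theorem combined with Fubini. The only genuine work is a locally uniform integrability bound. I will carry out the argument in detail for $(\alpha,\beta)\mapsto\overline G(\alpha,\beta)$ on a compact $K$ contained in its real domain $\{\beta<Q,\ \frac{\gamma}{2}-\alpha<\frac{\beta}{2}<\alpha\}$; the remaining three functions are handled verbatim. Write $I=I(\alpha,\beta)$ for the GMC integral inside the expectation and $p=\frac{2}{\gamma}(Q-\alpha-\frac{\beta}{2})$ for its exponent, so $\overline G=\mathbb{E}[I^p]$. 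For a fixed field realization the integrand is, pointwise in $x$, entire in $(\alpha,\beta)$ since the factors $g(x)^{\cdots}$ and $|x-i|^{-\gamma\alpha}$ are exponentials of affine functions of the parameters; differentiating under the $x$-integral (justified by the same bounds as below) shows $I$ is almost surely holomorphic in $(\alpha,\beta)$. Because in the $\overline G$ case $I$ is almost surely a positive real number, $I^p=e^{p\ln I}$ is holomorphic in $p$, hence the random variable $I^p$ is a.s. holomorphic on a complex neighborhood of $K$.

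To pass this to the expectation, fix all variables but one in a small complex polydisc $U$ around $K$ and let $T$ be any triangle in the remaining complex variable $\alpha$. By Fubini and Cauchy's theorem,
\begin{equation}
\oint_{\partial T}\mathbb{E}[I^p]\,d\alpha=\mathbb{E}\Big[\oint_{\partial T}I^p\,d\alpha\Big]=0,
\end{equation}
where the exchange is licensed by the integrability bound below and the inner integral vanishes almost surely. Together with continuity of $\alpha\mapsto\mathbb{E}[I^p]$, which follows from the same bound by dominated convergence, Morera's theorem yields holomorphy in $\alpha$; repeating for each variable gives joint analyticity on $U$.

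The crux, and the main obstacle, is the locally uniform bound. Writing $p_+=\sup_U\mathrm{Re}(p)$ and $p_-=\inf_U\mathrm{Re}(p)$, one estimates
\begin{equation}
\sup_{U}|I^p|\le e^{\pi\sup_U|\mathrm{Im}(p)|}\big(|I|^{p_+}+|I|^{p_-}\big),
\end{equation}
using $|I^p|=|I|^{\mathrm{Re}(p)}e^{-\mathrm{Im}(p)\arg(I)}$ together with the fact that the GMC integral avoids $(-\infty,0)$, so $\arg(I)\in(-\pi,\pi)$ (in the $\overline G$ case $I>0$ and this is trivial). Since $K$ is compact and the moment-finiteness conditions are open, $U$ can be chosen so small that both $p_+$ and $p_-$ still lie in the range covered by Proposition~\ref{lem:GMC-moment}, giving $\mathbb{E}[\sup_U|I^p|]<\infty$. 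For $H_\chi$, $G_\chi$ and especially $\overline H$ with complex $\mu_i$ the integral $I$ is itself complex; there one first dominates $|I|$ by a GMC integral against a \emph{positive} measure via the half-space condition of Definition~\ref{half-space}, exactly as in the proof of Proposition~\ref{lem:GMC-moment}, after which the estimate above applies. This is precisely where both the complex-valuedness of the measure and of the exponent must be absorbed into a single finite real moment.

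Finally, the analyticity of $\overline H$ in $(\mu_1,\mu_2,\mu_3)$ on the open set $\tilde K$ where Definition~\ref{half-space} holds is obtained by differentiating under the expectation. Since $d\mu$ depends linearly on each $\mu_i$, the operator $\partial_{\mu_i}$ brings down a factor of the GMC integrated over the $i$-th arc, inside the expectation at the reduced power $q-1$. The resulting expression is again dominated by a finite moment: the shift of the exponent by one keeps $\mathrm{Re}(q)-1$ within the admissible range of Proposition~\ref{lem:GMC-moment} on a slightly smaller compact subset of $\tilde K$. Hence differentiation under $\mathbb{E}$ is justified and the Cauchy--Riemann equations hold in each $\mu_i$, completing the proof.
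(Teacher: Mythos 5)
Your overall framework (almost-sure analyticity of the integrand, then Morera plus Fubini with a dominated bound) would be sound if the integrand really were a.s.\ analytic in the parameters, but that is exactly the point that fails, and it is the central difficulty this lemma has to overcome. Your assertion that ``in the $\overline G$ case $I$ is almost surely a positive real number'' is true only on the real slice of parameters. As soon as $\alpha$ or $\beta$ acquires a nonzero imaginary part, the insertion $|x-i|^{-\gamma\alpha}$ contributes the phase $e^{-i\gamma\,\mathrm{Im}(\alpha)\ln|x-i|}$, which winds infinitely many times as $x \to \infty$ (likewise $|x|^{-\gamma\beta_1/2}$ winds as $x\to 0$ in the $\overline H$ case). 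The half-space condition of Definition \ref{half-space} only controls the phases coming from the $\mu_i$; it says nothing about phases produced by complex insertion weights. Hence for complex parameters the random variable $I$ is genuinely complex with no confinement to a half-plane, and there is no choice of branch making $\omega \mapsto I(\omega)^p$ both single-valued and, realization by realization, analytic: for a fixed realization, the principal-branch power $I^p$ jumps across the random set of parameters where $I(\alpha,\beta)$ crosses $(-\infty,0]$, and this set is nonempty inside any \emph{fixed} complex neighborhood of $K$ with positive probability (e.g.\ on realizations where the GMC mass concentrates at large $|x|$, so that $\partial_\alpha \log I$ is huge and the phase of $I$ sweeps past $\pi$ within the neighborhood). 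On such realizations $\oint_{\partial T} I^p\,d\alpha \neq 0$ for triangles $T$ meeting the crossing set, so the Morera step collapses; the same issue invalidates your uniform bound, which presumes $\arg I \in (-\pi,\pi)$ uniformly over $U$.

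The paper's proof is engineered precisely to avoid taking a complex power of a parameter-dependent base. Using the Girsanov theorem it first moves the insertions \emph{out} of the GMC integral, writing $\overline H$ (up to an explicit analytic prefactor) as the $r \to \infty$ limit of
\begin{equation*}
\mathcal{F}_r(\beta_1,\beta_2,\beta_3) \;=\; \mathbb{E}\left[\,\prod_{i=1}^3 e^{\beta_i X_r(s_i) - \frac{\beta_i^2}{2}\mathbb{E}[X_r(s_i)^2]}\left(\int_{\mathbb{R}_r} g(x)^{\frac{1}{2}} e^{\frac{\gamma}{2}X(x)}\, d\mu(x)\right)^{p_0}\,\right],
\end{equation*}
in which the $\beta_i$-dependence sits (i) in entire exponential factors and (ii) in the exponent $p_0$ of a random variable that does \emph{not} depend on the $\beta_i$ and which, thanks to the half-space condition, a.s.\ avoids $(-\infty,0)$; consequently each $\mathcal{F}_r$ is analytic. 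The substantive work is then to show that $\mathcal{F}_r$ converges \emph{locally uniformly} on a small complex neighborhood: the oscillating factors cost a growth $e^{\frac{r+1}{2}\sum_i b_i^2}$ (with $b_i = \mathrm{Im}(\beta_i)$), which must be beaten by a decay $e^{-\theta r}$ extracted from multifractal GMC estimates; this competition is what determines, and limits, the width of the neighborhood $V$. Analyticity of the limit then follows from the Weierstrass theorem rather than Morera. Your proposal contains no mechanism playing the role of this Girsanov step, and without it the input your argument needs — a.s.\ analyticity of $I^p$ on a fixed complex neighborhood — is simply false. By contrast, your final paragraph on analyticity in the $\mu_i$ (differentiating under the expectation at fixed real $\beta_i$, where the base does avoid the cut) is correct and matches what the paper does.
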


\begin{proof}
We briefly adapt the proof of \cite[Theorem 6.1]{DOZZ2} for the function $H^{(\beta_1, \beta_2, \beta_3)}_{(\mu_1, \mu_2, \mu_3)}$ as the other cases can be treated in a similar manner. The first step performed in \cite{DOZZ2} is to apply the Girsanov Theorem \ref{girsanov} to pull out the insertions outside of the GMC expectation. It will be convenient to assume the three insertions are not located at $0$, $1$ and $\infty$ but rather at three points $s_1$, $s_2$, $s_3$ all in $\mathbb{R}$ and obeying the extra constraints  $|s_i|>2$ and $|s_i-s_{i'}|>2$ respectively for all $i \in \{1,2,3 \}$ and for all $i \neq i'$. The reason it is possible to assume this is that the Liouville correlations are conformally invariant in the sense of \cite[Theorem~3.5.]{Disk}. It will be convenient to use the notations $\bm{\beta} = (\beta_1, \beta_2, \beta_3)$ and $\s = (s_1, s_2, s_3)$. Our starting point is thus that it is possible to write,
\begin{align}
\overline{H}^{(\beta_1, \beta_2, \beta_3)}_{(\mu_1, \mu_2, \mu_3)} = C \overline{H}^{(\beta_1, \beta_2, \beta_3)}_{(\mu_1, \mu_2, \mu_3)}(\s),
\end{align}
for $C$ an explicit prefactor that is analytic in the $\beta_i$ and hence can be ignored and where we have introduced:
\begin{align}
\overline{H}^{(\beta_1, \beta_2, \beta_3)}_{(\mu_1, \mu_2, \mu_3)}(\s) = \mathbb{E} \left[ \left( \intr \frac{g(x)^{\frac{\gamma}{8}(\frac{4}{\gamma}- \sum_{i=1}^3 \beta_i )} }{\prod_{i=1}^3 |x-s_i|^{\frac{\gamma \beta_i }{2}}}  e^{\frac{\gamma}{2} X(x)} d \mu(x) \right)^{\frac{1}{\gamma}(2 Q  - \sum_{i=1}^3 \beta_i)} \right].
\end{align}
Now by applying Theorem \ref{girsanov} we can obtain $\overline{H}^{(\beta_1, \beta_2, \beta_3)}_{(\mu_1, \mu_2, \mu_3)}(\s)$ from the following limit,
\begin{equation}
H^{(\beta_1, \beta_2, \beta_3)}_{(\mu_1, \mu_2, \mu_3)}(\s) = \lim_{r \to \infty} \mathcal{F}_r(\bm{\beta}),
\end{equation}
where we have introduced,
\begin{equation}
\mathcal{F}_r(\bm{\beta}) = \mathbb{E} \left[ \prod_{i=1}^3 e^{\beta_i X_{r}(s_i) - \frac{\beta_i^2}{2} \mathbb{E}[X_r(s_i)^2] } \left( \int_{\mathbb{R}_r} g(x)^{\frac{1}{2}}  e^{\frac{\gamma}{2} X(x)} d \mu(x) \right)^{p_0} \right],
\end{equation}
$p_0 = \frac{1}{\gamma}(2 Q  - \sum_{i=1}^3 \beta_i)$ and:
\begin{equation}
\mathbb{R}_r := \mathbb{R} \backslash \cup_{i=1}^3 (s_i-e^{-r/2}, s_i+e^{-r/2}).
\end{equation}
The fields $X_r(s_i)$ are the radial parts of $X(s_i)$ obtained by taking the mean of $X(s_i)$ over the upper-half circles of radius $e^{-r/2}$, $\partial B(s_i,e^{-r/2})_+$.

Now when $\beta_i$ are complex numbers, we write $\beta_i = a_i + i b_i$. We want to prove there exists a complex neighborhood $V$ in $\mathbb{C}^3$ containing the domain of definition for real $\beta_i$ such that for all compact sets contained in $ V$, $\mathcal{F}_r(\bm{\beta})$ converges uniformly as $r \rightarrow + \infty$ over the compact set. It is known that $X_{r+t}(s_i)-X_{r}(s_i)$ are independent Brownian motions for different $s_i$. Hence,
\begin{align}
&|\mathcal{F}_{r+1}(\bm{\beta}) - \mathcal{F}_{r}(\bm{\beta})|\\ \nonumber
&= \left|\mathbb{E} \left[ \prod_{i=1}^3 e^{i b_i X_{r+1}(s_i) + \frac{b_i^2}{2} \mathbb{E}[X_{r+1}(s_i)^2] } \left( \left( \int_{\mathbb{R}_{r+1}}  e^{\frac{\gamma}{2} X(x)} f(x) d \mu(x) \right)^{p_0}  - \left( \int_{\mathbb{R}_r}  e^{\frac{\gamma}{2} X(x)} f(x) d \mu(x) \right)^{p_0}\right) \right] \right|\\ \nonumber
&\le  c \,e^{\frac{r+1}{2}\sum_{i=1}^3 b_i^2}\mathbb{E} \left[ \left| \left( \int_{\mathbb{R}_{r+1}}  e^{\frac{\gamma}{2} X(x)} f(x) d \mu(x) \right)^{p_0}  - \left( \int_{\mathbb{R}_r}  e^{\frac{\gamma}{2} X(x)}  f(x) d \mu(x) \right)^{p_0}\right| \right],
\end{align}
where we denote $f(x) = \frac{g(x)^{\frac{\gamma^2}{8}(p_0-1)}}{\prod_{i=1}^3 |x-s_i|^{\frac{\gamma a_i}{2}}}  $. Set $Z_r := \int_{\mathbb{R}_{r}}  e^{\frac{\gamma}{2} X(x)} f(x) d \mu(x)$ and $Y_r := Z_{r+1} - Z_{r}$. We want to estimate 
\begin{align}
\E[|(Z_r + Y_r)^{p_0} - Z_r^{p_0}|] \le \E[\mathbf{1}_{|Y_r| < \epsilon}|(Z_r + Y_r)^{p_0} - Z_r^{p_0}|] + \E[\mathbf{1}_{|Y_r| \ge \epsilon}|(Z_r + Y_r)^{p_0} - Z_r^{p_0}|],
\end{align}
where $\epsilon>0$ will be fixed later. By interpolation,
\begin{align}
\E[\mathbf{1}_{|Y_r| < \epsilon}|(Z_r + Y_r)^{p_0} - Z_r^{p_0}|] \le |p_0|\epsilon \sup_{u\in [0,1]} \E[|(1-u)Z_r + uY_r|^{\textnormal{Re}(p_0)-1}] \le c \, \epsilon.
\end{align}
For the other term, we use the H\"older inequality with $\lambda > 1$ such that $\frac{\lambda}{\lambda-1} \textnormal{Re}(p_0) < \min_{i=1}^3 \frac{2}{\gamma}(Q-a_i) \wedge \frac{4}{\gamma^2}$, and $0< m < \frac{4}{\gamma^2}$,
\begin{align}
\E[\mathbf{1}_{|Y_r| \ge \epsilon}|(Z_r + Y_r)^{p_0} - Z_r^{p_0}|] &\le c\, \P(|Y_r| \ge \epsilon)^{\frac{1}{\lambda}} \le c \epsilon^{-\frac{m}{\lambda}}\E[|Y_r|^m]^{\frac{1}{\lambda}}\\ \nonumber
&\le c\epsilon^{-\frac{m}{\lambda}} \E \left[ \left| \sum_{i=1}^3 \int_{(s_i - e^{-r/2}, s_i + e^{r/2})}  e^{\frac{\gamma}{2}X(x)} f(x) d\mu(x) \right|^m \right]^{\frac{1}{\lambda}}\\ \nonumber
&\le c' \epsilon^{-\frac{m}{\lambda}} \left(\max_i e^{-\frac{r}{2}((1+\frac{\gamma^2}{2}-\frac{\gamma a_i}{2})m - \frac{\gamma^2 m^2}{2})}\right)^{\frac{1}{\lambda}} =: c'\epsilon^{-\frac{m}{\lambda}} e^{-\frac{\theta}{\lambda}r},
\end{align}
where in the last step $\theta \in \mathbb{R} $ is defined by the last equality and we have used the multifractal scaling property of the GMC, see e.g.~\cite[Section 3.6]{BP} or \cite[Section 4]{review}. We can choose a suitable $m$ such that $\theta>0$. Now take $\epsilon = e^{-\eta r}$ with $\eta = \frac{\theta}{\lambda + m}$, then:
\begin{align}
\E[|(Z_r + Y_r)^{p_0} - Z_r^{p_0}|] \le c\,e^{\frac{r+1}{2}\sum_{i=1}^3 b_i^2}(\epsilon + \epsilon^{-\frac{m}{\lambda}} e^{-\frac{\theta}{\lambda}r}) \le c'\, e^{-(\eta - \frac{1}{2}\sum_{i=1}^3 b_i^2)r}.
\end{align}
Hence if one chooses the open set $V$ in such a way that $\frac{1}{2}\sum_{i=1}^3 b_i^2 < \eta$ always holds, all the inequalities we have done before hold true and thus we have shown that $\mathcal{F}_r(\bm{\beta})$ converges locally uniformly. This proves the analyticity result. 

Lastly we very briefly justify all the other cases. The analyticity of $\overline{G}(\alpha, \beta)$ can be proved in the exact same way as done above for $\overline{H}^{(\beta_1 , \beta_2 , \beta_3)}_{( \mu_1, \mu_2,  \mu_3 )}$. Furthermore adding the dependence $t$ to get the functions $G_{\chi}(t)$ and $H_{\chi}(t)$ also changes nothing to the above argument and so the same claim also holds in this case. Lastly for the analyticity in $\mu_i$ of $\overline{H}^{(\beta_1 , \beta_2 , \beta_3)}_{( \mu_1, \mu_2,  \mu_3 )}$, one simply needs to notice the complex derivatives are well-defined. For instance for $\mu_1$ one can write,
\begin{align}
\partial_{\mu_1} &\overline{H}^{(\beta_1, \beta_2, \beta_3)}_{(\mu_1, \mu_2, \mu_3)}  = \partial_{\mu_1}\mathbb{E} \left[ \left( \intr \frac{g(x)^{\frac{\gamma}{8}(\frac{4}{\gamma}- \sum_{i=1}^3 \beta_i )} }{|x|^{\frac{\gamma \beta_1 }{2}}|x-1|^{\frac{\gamma \beta_2 }{2}}}  e^{\frac{\gamma}{2} X(x)} d \mu(x) \right)^{\frac{1}{\gamma}(2 Q  - \sum_{i=1}^3 \beta_i)} \right]\\ \nonumber
&= \int_{-\infty}^0 dx_1 \frac{g(x_1)^{\frac{\gamma}{8}(\frac{4}{\gamma}- \sum_{i=1}^3 \beta_i )} }{|x|^{\frac{\gamma \beta_1 }{2}}|x-1|^{\frac{\gamma \beta_2 }{2}}}  \E \left[ e^{\frac{\gamma}{2} X(x_1)} \left( \intr \frac{g(x)^{\frac{\gamma}{8}(\frac{4}{\gamma}- \sum_{i=1}^3 \beta_i )} }{|x|^{\frac{\gamma \beta_1 }{2}}|x-1|^{\frac{\gamma \beta_2 }{2}}}  e^{\frac{\gamma}{2} X(x)} d \mu(x) \right)^{\frac{1}{\gamma}(2 Q  - \sum_{i=1}^3 \beta_i)-1} \right],
\end{align}
where the last expression is clearly well-defined. Furthermore one can check that $\partial_{\overline{\mu}_1} \overline{H}^{(\beta_1, \beta_2, \beta_3)}_{(\mu_1, \mu_2, \mu_3)} = 0$. Therefore $\mu_1 \mapsto \overline{H}^{(\beta_1 , \beta_2 , \beta_3)}_{( \mu_1, \mu_2,  \mu_3 )}$ is complex analytic in the claimed domain.
\end{proof}

\subsubsection{The limit of $\overline{H}$ recovers $\overline{R}$}\label{sec_H_R}

Here we will prove Lemma \ref{lim_H_R}.
With our choice of $\mu_i $ satisfying Definition \ref{half-space} this is an easy adaptation of the positive case as performed in \cite{DOZZ2, interval}.
\begin{proof}
We prove the lemma in the first case where $\beta_2 < \beta_1$. 
Let us denote $\epsilon = \frac{\beta_3 - (\beta_1 - \beta_2)}{\gamma}$, $p_1 = \frac{2}{\gamma}(Q-\beta_1)$. For $I \subseteq \mathbb{R}$ a Borel set, we introduce the notation:
\begin{align}
K_{I} = \int_I \frac{1}{|x|^{\frac{\gamma \beta_1 }{2}} |x-1|^{\frac{\gamma \beta_2 }{2}}}  g(x)^{\frac{\gamma^2}{8}(p-1-\epsilon)} e^{\frac{\gamma}{2} X(x)} d x.
\end{align}
In our previous paper \cite{interval} it is proved that:
\begin{align}
\epsilon\E[K_{[0,1]}^{p_1-\epsilon}] \overset{\epsilon \to 0}{\longrightarrow} p_1 \overline{R}(\beta_1, 0, 1).
\end{align}
Using the density of $e^{\frac{\gamma}{2}M}$, we have by definition of the reflection coefficient,
\begin{align}
\epsilon\E\left[\left( e^{\frac{\gamma}{2}M} \rho_+(\beta_1) \right)^{p_1-\epsilon} \right] \overset{\epsilon \to 0}{\longrightarrow} p_1 \overline{R}(\beta_1, 0, 1),
\end{align}
where:
\begin{align}
\rho_{\pm}(\beta_1) := \frac{1}{2} \int_{-\infty}^{\infty}  e^{ \frac{\gamma}{2}\mathcal{B}^{\frac{Q-\beta_1}{2}}_s} e^{\frac{\gamma}{2}Y(\pm e^{-s/2})} ds.
\end{align}
On the other hand, by the William's path decomposition of Theorem \ref{theo Williams} we can write:
\begin{align}
K_{[0,1]} &=  e^{\frac{\gamma}{2}M} \frac{1}{2}  \int_{-L_M}^{\infty}  e^{ \frac{\gamma}{2}\mathcal{B}^{\frac{Q-\beta_1}{2}}_s} e^{\frac{\gamma}{2}Y(e^{-s/2})} ds \le e^{\frac{\gamma}{2}M}\rho_+(\beta_1).
\end{align}
Therefore, the result from \cite{interval} implies that:
\begin{align}
\E \left[ \left|K_{[0,1]}^{p_1-\epsilon} - (e^{\frac{\gamma}{2}M}\rho_+(\beta_1))^{p_1-\epsilon}\right| \right] = o(\epsilon^{-1}).
\end{align}
Similarly we also have
\begin{align}
\E \left[ \left|K_{[-1,0)}^{p_1-\epsilon} - \left(e^{\frac{\gamma}{2}M}\rho_-(\beta_1)\right)^{p_1-\epsilon}\right| \right] = o(\epsilon^{-1}).
\end{align}
We will use these results to prove the complex $\mu_i$ case. Consider first the case $p_1>1$. Using interpolation and H\"older's inequality, for $\lambda > 1$,
\begin{align}
&\E \left[ \left|(\mu_1 K_{(-\infty, 0)} + \mu_2 K_{[0,1]} + \mu_3 K_{(1,\infty)} )^{p_1-\epsilon} -  (\mu_1 K_{[-1, 0)} + \mu_2 K_{[0,1]} )^{p_1-\epsilon} \right| \right]\\ \nonumber
& \le  \E \left[ |\mu_1 K_{(-\infty, -1)} + \mu_3 K_{(1,\infty)} |^{\lambda}\right]^{\frac{1}{\lambda}} \\
& \qquad \times \sup_{u\in [0,1]}\E\left[\left|(1-u)(\mu_1 K_{(-\infty, 0)} + \mu_2 K_{[0,1]} + \mu_3 K_{(1,\infty)} ) + u(\mu_1 K_{[-1, 0)} + \mu_2 K_{[0,1]}) \right|^{(p_1-1-\epsilon)\frac{\lambda}{\lambda-1}} \right]^{\frac{\lambda-1}{\lambda}}. \nonumber
\end{align}
Take $p_1 < \lambda < \min\{\frac{4}{\gamma^2},\frac{2}{\gamma}(Q-\beta_2\lor \beta_3)\}$, then both expectations can be bounded by $O(1)$. By the same techniques with $\lambda = p-\epsilon$ we prove:
\begin{align}
&\E \left[ \left|\left(\mu_1 K_{[-1, 0)} + \mu_2 K_{[0,1]}\right)^{p_1-\epsilon}  - \left(\mu_1 e^{\frac{\gamma}{2}M}\rho_-(\beta_1)+  \mu_2 e^{\frac{\gamma}{2}M}\rho_+(\beta_1) \right)^{p_1-\epsilon} \right| \right]\\ \nonumber
& \le  \E\left[\left(e^{\frac{\gamma}{2}M} \frac{1}{2}  \int_{-\infty}^{-L_M}  e^{ \frac{\gamma}{2}\mathcal{B}^{\frac{Q-\beta_1}{2}}_s} \left(|\mu_1| e^{\frac{\gamma}{2}Y(-e^{-s/2})} + |\mu_2| e^{\frac{\gamma}{2}Y(e^{-s/2})} \right)ds\right)^{p_1-\epsilon}\right]^{\frac{1}{p_1-\epsilon}}\\ \nonumber
&\times \E\left[\left(|\mu_1| e^{\frac{\gamma}{2}M}\rho_-(\beta_1)+  |\mu_2| e^{\frac{\gamma}{2}M}\rho_+(\beta_1) \right)^{p_1-\epsilon}\right]^{\frac{p_1-1-\epsilon}{p_1-\epsilon}}.
\end{align}
The second expectation is a $O(\epsilon^{-1})$. For the first expectation, we use the inequality that for $x,y>0$ one has $x^{p_1-\epsilon} + y^{p_1-\epsilon} < (x+y)^{p_1-\epsilon}$. This shows that:
\begin{align}
&\E\left[\left(e^{\frac{\gamma}{2}M} \frac{1}{2}  \int_{-\infty}^{-L_M}  e^{ \frac{\gamma}{2}\mathcal{B}^{\frac{Q-\beta_1}{2}}_s} \left(|\mu_1| e^{\frac{\gamma}{2}Y(-e^{-s/2})} + |\mu_2| e^{\frac{\gamma}{2}Y(e^{-s/2})} \right)ds\right)^{p_1-\epsilon}\right]\\ \nonumber
& \le \E\left[\left(|\mu_1| e^{\frac{\gamma}{2}M}\rho_-(\beta_1)+  |\mu_2| e^{\frac{\gamma}{2}M}\rho_+(\beta_1) \right)^{p-\epsilon}\right]- \E\left[\left (|\mu_1| K_{[-1, 0)} + |\mu_2| K_{[0,1]}\right)^{p_1-\epsilon}\right] = o(\epsilon^{-1}).
\end{align}
The last inequality comes from the fact that the two expectations are equivalent when $\epsilon \to 0$ to a term $O(\epsilon^{-1})$. Therefore:
\begin{align}
\E \left[ \left|\left(\mu_1 K_{[-1, 0)} + \mu_2 K_{[0,1]}\right)^{p_1-\epsilon}  - \left(\mu_1 e^{\frac{\gamma}{2}M}\rho_-(\beta_1)+  \mu_2 e^{\frac{\gamma}{2}M}\rho_+(\beta_1) \right)^{p_1-\epsilon} \right| \right] = o(\epsilon^{-1}).
\end{align}
Now consider the case $p_1 \le 1$. Since $p_1 = \frac{2}{\gamma}(Q-\beta_1)>0$, we are in the case $0 < p_1 \le 1$. By studying the first order derivatives of the function,
\begin{align}
(\mathbb{R}_+^*)^3 \ni (x_1,x_2,x_3) \mapsto \left(\mu_1 x_1^{\frac{1}{p_1}} + \mu_2 x_2^{\frac{1}{p_1}} + \mu_3 x_3^{\frac{1}{p_1}} \right)^{p_1},
\end{align}
we can prove the following inequality with a constant $c>0$ depending only on the $\mu_i$. For $x_i, x_i' >0$,
\begin{align}
\left|(\sum_{i=1}^3 \mu_i x_i)^{p_1} - (\sum_{i=1}^3\mu_i x_i')^{p_1}\right| \le c \sum_{i=1}^3 |x_i^{p_1}-x_i'^{p_1}|.
\end{align}
Applying the inequality,
\begin{align}
&\E \left[ \left|\left(\mu_1 K_{[-\infty, 0)} + \mu_2 K_{[0,1]} + \mu_3 K_{(1,\infty)}\right)^{p_1-\epsilon}  - \left(\mu_1 e^{\frac{\gamma}{2}M}\rho_-(\beta_1)+  \mu_2 e^{\frac{\gamma}{2}M}\rho_+(\beta_1) \right)^{p_1-\epsilon} \right| \right]\\ \nonumber
& \le  c \E\left[\left|K_{(-\infty,0)}^{p_1-\epsilon} - (e^{\frac{\gamma}{2}M} \rho_-(\beta_1))^{p_1-\epsilon}\right|\right] + c \E\left[\left|K_{[0,1]}^{p_1-\epsilon} - (e^{\frac{\gamma}{2}M} \rho_+(\beta_1))^{p_1-\epsilon}\right|\right] + O(1)\\ \nonumber
&\le c \E\left[\left|K_{(-\infty,0)}^{p_1-\epsilon} - (e^{\frac{\gamma}{2}M} \rho_-(\beta_1))^{p_1-\epsilon}\right|\right] + o(\epsilon^{-1}).
\end{align}
Moreover, by sub-additivity,
\begin{align}
\E[|K_{(-\infty,0)}^{p_1-\epsilon}- K_{(-1,0)}^{p_1-\epsilon}|] =\E[K_{(-\infty,0)}^{p_1-\epsilon}- K_{(-1,0)}^{p_1-\epsilon}] \le \E[K_{(-\infty,-1)}^{p_1-\epsilon}] = O(1).
\end{align}
Therefore we can bound
\begin{align}
\E\left[\left|K_{(-\infty,0)}^{p_1-\epsilon} - (e^{\frac{\gamma}{2}M} \rho_-(\beta_1))^{p_1-\epsilon}\right|\right] \le \E\left[\left|K_{(-1,0)}^{p_1-\epsilon} - (e^{\frac{\gamma}{2}M} \rho_-(\beta_1))^{p_1-\epsilon}\right|\right] + o(\epsilon^{-1}) = o(\epsilon^{-1}).
\end{align}
In conclusion,
\begin{align}
\lim_{\epsilon \to 0} \epsilon \E\left[\left(\mu_1 K_{(-\infty, 0)} + \mu_2 K_{[0,1]} + \mu_3 K_{(1,\infty)} \right)^{p_1-\epsilon} \right] &= \lim_{\epsilon \to 0} \epsilon \E\left[\left(\mu_1 e^{\frac{\gamma}{2}M}\rho_-(\beta_1)+  \mu_2 e^{\frac{\gamma}{2}M}\rho_+(\beta_1) \right)^{p_1-\epsilon}\right]\\ \nonumber
&=p_1 \overline{R}(\beta_1,\mu_1,\mu_2).
\end{align}
This finishes the proof of the lemma.
\end{proof}

\subsection{Mapping GMC moments from $\partial \mathbb{D}$ to $\mathbb{R}$}\label{sec_path_proba}

We prove here a lemma providing a very concrete computation linking the moment of GMC on $\partial \mathbb{D}$ to the moment on $\mathbb{R}$. This will be used to relate the moment formula for GMC on the circle of \cite{remy} to the $\overline{U}(\alpha)$ defined in our paper. This lemma can be deduced from \cite[Proposition 3.7]{Disk}, but doing so requires several straightforward but tedious computational steps. Therefore for clarity we include a self-contained proof.
\begin{lemma}\label{link_D_H}
Consider $\beta < Q$ and $\frac{\gamma}{2} - \alpha < \frac{\beta}{2} < \alpha $ and let $X$ and $X_{\mathbb{D}}$ be the GFF respectively on $\mathbb{H}$ and $\mathbb{D}$ with covariance given by equations \eqref{covariance} and \eqref{GFF_D}. Then the following equality holds,
\begin{equation}
\E\left[ \left( \int_0^{2\pi} \frac{1}{|e^{i \theta} -1|^{\frac{\gamma \beta}{2}}} e^{\frac{\gamma}{2} X_{\mathbb{D}}(e^{i\theta})}  d\theta\right)^{\frac{2Q-2\alpha - \beta}{\gamma}}\right] =  2^{(\alpha - \frac{\beta}{2})(Q-\alpha - \frac{\beta}{2})}   \E \left[   \left( \int_{\mathbb{R}} \frac{e^{\frac{\gamma}{2}  X(x)   }}{|x - i|^{\gamma \alpha}} g(x)^{\frac{1}{2} - \frac{\alpha \gamma }{4} - \frac{\beta\gamma}{8}} dx \right)^{\frac{2Q -2 \alpha - \beta}{\gamma}} \right],
\end{equation}

where both GMC measures are defined by a renormalization according to variance as performed in Definition \ref{def_GMC}. Setting $\beta = 0$, we obtain for $ \alpha > \frac{\gamma}{2}$ the equation 
\begin{equation}
\E\left[ \left( \int_0^{2\pi} e^{\frac{\gamma}{2} X_{\mathbb{D}}(e^{i\theta})}  d\theta\right)^{\frac{2Q-2\alpha}{\gamma}}\right] = 2^{\alpha(Q-\alpha)} \E \left[   \left( \int_{\mathbb{R}} \frac{e^{\frac{\gamma}{2}  X(x)   }}{|x - i|^{\gamma \alpha}} g(x)^{\frac{1}{2} - \frac{\alpha \gamma }{4}} dx \right)^{\frac{2Q -2 \alpha}{\gamma}} \right].
\end{equation}
\end{lemma}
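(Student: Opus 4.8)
The plan is to transport the circle GMC to $\mathbb{R}$ through the Cayley transform and then correct the background field with the Girsanov theorem. Let $\phi:\mathbb{H}\to\mathbb{D}$ be the conformal map $\phi(z)=\frac{z-i}{z+i}$, which sends $i\mapsto 0$, $\infty\mapsto 1$ and pulls back the Euclidean metric of $\mathbb{D}$ to the metric $\hat g$ of Section \ref{sec:useful_facts}, since $|\phi'(x)|^2=\hat g(x)$. As recalled there, $X_{\hat g}(x):=X_\mathbb{D}(\phi(x))$ has covariance \eqref{GFF_hg}, which I would verify directly from $|\phi(x)-\phi(y)|=\frac{2|x-y|}{|x+i||y+i|}$. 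First I would establish the conformal covariance of the boundary GMC: under $e^{i\theta}=\phi(x)$ the renormalized circle measure satisfies $e^{\frac{\gamma}{2}X_\mathbb{D}(e^{i\theta})}\,d\theta = e^{\frac{\gamma}{2}X_{\hat g}(x)}\,\hat g(x)^{1/2}\,dx$, the anomaly created by the change of regularization scale cancelling exactly against the diagonal of the metric term $-\tfrac12\ln\hat g$ in \eqref{GFF_hg}. Using $|e^{i\theta}-1|=|\phi(x)-1|=2/|x+i|$, the boundary insertion becomes $|e^{i\theta}-1|^{-\gamma\beta/2}=2^{-\gamma\beta/2}|x+i|^{\gamma\beta/2}$, so the left-hand integral is rewritten as a moment of a GMC on $\mathbb{R}$ driven by $X_{\hat g}$.

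The key step is to pass from $X_{\hat g}$ to the field $X$ of Definition \ref{def_GFF} via $X_{\hat g}=X+c$, where $c=\frac1\pi\int_0^\pi X_{\hat g}(e^{i\theta})\,d\theta$ is the semicircle average. Here one must be careful: as renormalized measures $e^{\frac{\gamma}{2}X_{\hat g}(x)}dx$ is \emph{not} simply $e^{\frac{\gamma}{2}c}e^{\frac{\gamma}{2}X(x)}dx$, because the GMC normalization depends on the diagonal of the covariance, which differs between the two fields. Carrying the regularization through gives the corrected identity
\[
e^{\frac{\gamma}{2}X_{\hat g}(x)}dx = e^{\frac{\gamma}{2}c}\,e^{-\frac{\gamma^2}{4}\E[X(x)c]-\frac{\gamma^2}{8}\E[c^2]}\,e^{\frac{\gamma}{2}X(x)}dx .
\]
I would then compute $\E[X(x)c]$ explicitly: the mean value property $\frac1\pi\int_0^\pi\ln|x-e^{i\theta}|\,d\theta=\ln|x|_+$ yields $\E[X(x)c]=\tfrac12\ln\frac{g(x)}{\hat g(x)}-\tfrac12\E[c^2]$, i.e. precisely the ratio of the two background metrics. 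Raising the resulting expression to the power $p=\frac{2}{\gamma}(Q-\alpha-\frac{\beta}{2})$ and applying the Girsanov theorem \ref{girsanov} to the factor $e^{\frac{\gamma p}{2}c}$ shifts the field inside the GMC by $\frac{\gamma p}{2}\E[X(\cdot)c]$; all terms involving $\E[c^2]$ then cancel between the Girsanov weight, the renormalization correction and the metric ratio.

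Finally I would collect the deterministic factors. The powers of $|x\pm i|$ coming from $\hat g^{1/2}$, from the metric-ratio correction and from the Girsanov shift assemble into the bulk insertion $|x-i|^{-\gamma\alpha}$; the powers of $|x|_+$ assemble into the metric weight appearing on the right-hand side; and the remaining numerical constants combine, using $\gamma Q=2+\frac{\gamma^2}{2}$, to the exponent $(\alpha-\frac{\beta}{2})(Q-\alpha-\frac{\beta}{2})$, giving the prefactor $2^{(\alpha-\frac{\beta}{2})(Q-\alpha-\frac{\beta}{2})}$. This reproduces the claimed identity (the right-hand side being the structure constant $\overline{G}(\alpha,\beta)$ of \eqref{cc2} up to the explicit prefactor), and setting $\beta=0$ gives the stated formula with $\overline{U}(\alpha)$. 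The main obstacle is the bookkeeping of the two independent sources of conformal/metric factors — the anomaly of the GMC change of variables and the renormalization correction hidden in $X_{\hat g}=X+c$ — both of which must be made rigorous through the regularization of the log-correlated field (as for Theorem \ref{girsanov}); since the numerical prefactor only comes out correctly once the renormalization correction of the middle step is included, omitting it is the principal pitfall.
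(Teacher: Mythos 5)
Your proof is correct, and after the first step (the conformal change of variables turning the circle moment into a moment of the field $X_{\hat g}$ on $\mathbb{R}$, which you and the paper perform identically) it takes a genuinely different route. The paper converts $X_{\hat g}$ into $X$ by first rewriting the fractional moment as a Liouville zero-mode integral $\int_{\mathbb{R}} dc\, e^{(\alpha+\frac{\beta}{2}-Q)c}\,\E\bigl[e^{\alpha X_{\hat g}(i)-\cdots}e^{-e^{\gamma c/2}\int\cdots}\bigr]$, which requires temporarily assuming $\alpha+\frac{\beta}{2}>Q$; it then substitutes $X=X_{\hat g}-Y$, shifts $c$ by $-Y$, applies Girsanov to $e^{(Q-\frac{\beta}{2})Y}$, recomputes the $c$-integral, and removes the constraint by the analyticity of Lemma \ref{lem_analycity}. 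You instead substitute $X_{\hat g}=X+c$ directly into the variance-renormalized measure, keep the renormalization correction $e^{-\frac{\gamma^2}{4}\E[X(x)c]-\frac{\gamma^2}{8}\E[c^2]}$, and apply Girsanov once to the zero-mode factor $e^{\frac{\gamma p}{2}c}$ at the level of the $p$-th moment. Both arguments hinge on the same covariance identity $\E[X(x)c]=\tfrac12\ln\frac{g(x)}{\hat g(x)}-\tfrac12\E[c^2]$ (the paper's $\E[YX(x)]$), and in both the exponent $\frac{\gamma^2}{8}(p-1)=\frac{\gamma}{4}(\frac{2}{\gamma}-\alpha-\frac{\beta}{2})$ is exactly what trades the $\hat g$-weight for the $g$-weight. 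What your version buys: no Seiberg-type constraint, no analytic continuation, and a single Girsanov application. What the paper's version buys: it exhibits the computation in the form of the zero-mode integral of \cite[Proposition~3.2]{Disk}, which is precisely the illustration the paper advertises this lemma as providing elsewhere in the text.

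Two minor remarks. First, Theorem \ref{girsanov} as stated asks that the shifted Gaussian variable lie in the $L^2$ span of the field; $c$ need not lie in the span of $X$, but decomposing $c=c_\parallel+c_\perp$ with $c_\perp$ independent of $X$ yields the same prefactor $e^{\frac{\gamma^2 p^2}{8}\E[c^2]}$ and the same shift $\frac{\gamma p}{2}\E[X(\cdot)\,c]$, so your cancellation of all $\E[c^2]$ terms goes through verbatim. Second, your derivation lands on the weight $g(x)^{\frac{\gamma}{4}(\frac{2}{\gamma}-\alpha-\frac{\beta}{2})}=g(x)^{\frac12-\frac{\gamma\alpha}{4}-\frac{\gamma\beta}{8}}$, which is what the paper's own proof produces and what matches $\overline{G}$ in \eqref{cc2}; the exponent $g(x)^{\frac12-\frac{\alpha\gamma}{4}}$ appearing in the displayed statement of the lemma is missing the $-\frac{\gamma\beta}{8}$ (it is correct only at $\beta=0$), so that discrepancy is a typo in the statement rather than a flaw in your argument.
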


\begin{proof}

Take $\psi : z\mapsto i \frac{1 +z}{1 - z}$ the conformal map that maps the unit disk $\mathbb{D}$ equipped with the Euclidean metric to the upper-half plane $\mathbb{H}$ equipped with the metric $\hg(x) = \frac{4}{|x+i|^4}$. This also maps the field $X_{\mathbb{D}}$ to the field $X_{\hg}$ with covariance given by \eqref{GFF_hg}. Record the following:
\begin{equation}
\hat{g}(x)^{- \frac{\gamma \alpha}{4}} = 2^{- \frac{\gamma \alpha}{2}} |x-i|^{\gamma \alpha}, \quad \frac{1}{|e^{i \theta} -1|^{\frac{\gamma \beta}{2}}} =  2^{- \frac{\gamma \beta}{4}} \hat{g}(x)^{- \frac{\gamma \beta}{8}}, \quad d \theta = \hat{g}(x)^{1/2} dx. 
\end{equation}
This change of coordinates applied to the GMC implies the following relation:
\begin{align}\label{Ualpha_D_H}
&\E\left[ \left( \int_0^{2\pi} \frac{1}{|e^{i \theta } -1 |^{\frac{\gamma \beta}{2}}} e^{\frac{\gamma}{2} X_{\mathbb{D}}(e^{i\theta}) - \frac{\gamma^2}{8} \E[X_{\mathbb{D}}(e^{i\theta})^2]}  d\theta\right)^{\frac{2Q-2\alpha - \beta}{\gamma}}\right]\\
& = 2^{(\alpha - \frac{\beta}{2})(Q-\alpha - \frac{\beta}{2})}  \E\left[ \left( \intr \frac{e^{\frac{\gamma}{2} X_{\hg}(x)   -\frac{\gamma^2}{8} \E[X_{\hg}(x)^2]}}{|x-i|^{\gamma\alpha}} \hg(x)^{\frac{\gamma}{4}(\frac{2}{\gamma}-\alpha -\frac{\beta}{2})}d x \right)^{\frac{2Q-2\alpha - \beta}{\gamma}}\right]. \nonumber
\end{align}
Notice in the above expression we explicitly wrote the renormalization of the GMC to emphasize the formula holds when the GMC is renormalized by variance. Now lets momentarily assume $\alpha +\frac{\beta}{2}>Q$ and introduce the following integral over $c$:

\begin{align}
&\E\left[ \left( \intr \frac{e^{\frac{\gamma}{2} X_{\hg}(x)   -\frac{\gamma^2}{8} \E[X_{\hg}(x)^2]}}{|x-i|^{\gamma\alpha}} \hg(x)^{\frac{\gamma}{4}(\frac{2}{\gamma}-\alpha - \frac{\beta}{2})}d x \right)^{\frac{2Q-2\alpha -\beta}{\gamma}}\right] \\ \nonumber
&= \frac{\gamma}{2} \frac{e^{\frac{\alpha}{2}(Q-\alpha - \frac{\beta}{2}) \ln \hg(i)}}{ \Gamma(\frac{2}{\gamma}(\alpha + \frac{\beta}{2} - Q))}   \int_{\mathbb{R}} dc e^{(\alpha + \frac{\beta}{2} - Q)c} \E \left[ e^{\alpha  X_{\hg}(i) - \frac{\alpha^2}{2} \E[ X_{\hg}(i)^2] }e^{- e^{\frac{\gamma c}{2}} \int_{\mathbb{R}} e^{\frac{\gamma}{2}  X_{\hg}(x) - \frac{\gamma^2}{8} \E[ X_{\hg}(x)^2] } \hat{g}(x)^{\frac{1}{2} - \frac{\gamma \beta}{8}} dx } \right].
\end{align}
To go from the field $ X_{\hg}$ to the field $X$ we must perform the change of variable $X = X_{\hg} - Y $ with $ Y = \frac{1}{\pi} \int_0^{\pi} X_{\hg}(e^{i \theta}) d \theta$. We perform this replacement and at the same time shift the integration over $c$ by $-Y$ to obtain:
\begin{align}
&\int_{\mathbb{R}} dc e^{(\alpha + \frac{\beta}{2} - Q)c} \E \left[ e^{(Q - \frac{\beta}{2})Y} e^{\alpha  X(i) - \frac{\alpha^2}{2} \E[ X_{\hg}(i)^2] }e^{- e^{\frac{\gamma c}{2}} \int_{\mathbb{R}} e^{\frac{\gamma}{2}  X(x) - \frac{\gamma^2}{8} \E[ X_{\hg}(x)^2] } \hat{g}(x)^{\frac{1}{2} - \frac{\gamma \beta}{8}} dx } \right] \\ \nonumber
&= \int_{\mathbb{R}} dc e^{(\alpha + \frac{\beta}{2} - Q)c} \E \Bigg[ e^{\frac{(Q - \frac{\beta}{2})^2}{2} \E[Y^2]} e^{\alpha  X(i) + \alpha (Q - \frac{\beta}{2})  \E[X(i)Y] - \frac{\alpha^2}{2} \E[ X_{\hg}(i)^2] } \\
&\qquad \qquad \qquad \qquad \qquad   \times e^{- e^{\frac{\gamma c}{2}} \int_{\mathbb{R}} e^{\frac{\gamma}{2}  X(x) + \frac{\gamma }{2}(Q -\frac{\beta}{2}) \E[X(x)Y] - \frac{\gamma^2}{8} \E[ X_{\hg}(x)^2] } \hat{g}(x)^{\frac{1}{2} - \frac{\gamma \beta}{8}} dx } \Bigg]. \nonumber
\end{align}
In the last line we have applied the Girsanov Theorem \ref{girsanov} to $e^{(Q - \frac{\beta}{2})Y}$. Record the following easy computations:
\begin{align}
&\E[Y^2] = - \frac{1}{\pi} \int_0^{\pi} \ln \hg(e^{i \theta}) d \theta, \quad \quad  \E[Y X_{\hg}(x)] = \frac{1}{2} \ln \frac{g(x)}{\hg(x)}  + \frac{1}{2}\E[Y^2], \\ \nonumber
&\E[Y X(x)] =   \frac{1}{2} \ln \frac{g(x)}{\hg(x)}  - \frac{1}{2}\E[Y^2], \quad \quad   \E[ X_{\hg}(x)^2] =  \E[ X(x)^2] +  \ln \frac{g(x)}{\hg(x)}.  
\end{align}
Then we get:
\begin{align}
 & \int_{\mathbb{R}} dc e^{(\alpha + \frac{\beta}{2} - Q)c} \E \Bigg[ e^{ \frac{1}{2} (Q- \frac{\beta}{2})(Q-\alpha - \frac{\beta}{2}) \E[Y^2]} e^{\frac{\alpha}{2}(Q -\alpha - \frac{\beta}{2}) \ln \frac{g(i)}{\hg(i)}} e^{\alpha  X(i)  - \frac{\alpha^2}{2} \E[ X(i)^2] }\\ \nonumber
  &  \qquad \qquad \qquad \qquad \qquad   \times  e^{- e^{\frac{\gamma c}{2} - \frac{\gamma}{4} (Q -\frac{\beta}{2}) \E[Y^2]} \int_{\mathbb{R}} e^{\frac{\gamma}{2}  X(x)  - \frac{\gamma^2}{8} \E[ X(x)^2] } g(x)^{\frac{1}{2} - \frac{\gamma \beta}{8}} dx } \Bigg] \\ \nonumber
 &=e^{\frac{\alpha}{2}(Q -\alpha - \frac{\beta}{2}) \ln \frac{g(i)}{\hg(i)}} \int_{\mathbb{R}} dc e^{(\alpha + \frac{\beta}{2} - Q)c} \E \left[   e^{\alpha  X(i)  - \frac{\alpha^2}{2} \E[ X(i)^2] }e^{- e^{\frac{\gamma c}{2} } \int_{\mathbb{R}} e^{\frac{\gamma}{2}  X(x)  - \frac{\gamma^2}{8} \E[ X(x)^2] } g(x)^{\frac{1}{2} - \frac{\gamma \beta}{8}} dx } \right]\\ \nonumber
  &= \frac{2}{\gamma} \Gamma\left(\frac{2}{\gamma}(\alpha + \frac{\beta}{2} -Q) \right) e^{\frac{\alpha}{2}(\alpha + \frac{\beta}{2} -Q) \ln \hg(i) }  \E \left[   \left( \int_{\mathbb{R}} \frac{e^{\frac{\gamma}{2}  X(x)  - \frac{\gamma^2}{8} \E[ X(x)^2] }}{|x - i|^{\gamma \alpha}} g(x)^{\frac{1}{2} - \frac{\alpha \gamma }{4}  - \frac{\gamma \beta}{8}} dx \right)^{\frac{2Q -2 \alpha - \beta}{\gamma}} \right].
\end{align}
To obtain the second line we have shifted the integral over $c$ by $\frac{1}{2}(Q - \frac{\beta}{2}) \E[Y^2]$ and to obtain the last one we have computed the integral over $c$.
The conclusion of the above is thus that:

\begin{equation}
\E\left[ \left( \intr \frac{e^{\frac{\gamma}{2} X_{\hg}(x)   -\frac{\gamma^2}{8} \E[X_{\hg}(x)^2]}}{|x-i|^{\gamma\alpha}} \hg(x)^{\frac{\gamma}{4}(\frac{2}{\gamma}-\alpha - \frac{\beta}{2})}d x \right)^{\frac{2Q-2\alpha - \beta}{\gamma}}\right] = \E \left[   \left( \int_{\mathbb{R}} \frac{e^{\frac{\gamma}{2}  X(x)  - \frac{\gamma^2}{8} \E[ X(x)^2] }}{|x - i|^{\gamma \alpha}} g(x)^{\frac{1}{2} - \frac{\alpha \gamma }{4} - \frac{\beta \gamma}{8}} dx \right)^{\frac{2Q -2 \alpha - \beta}{\gamma}} \right].
\end{equation}
To lift the constraint $\alpha + \frac{\beta}{2} > Q $ we have introduced to write the integral over $c$ we can simply use analyticity in $\alpha$ of both sides of the above equation given by Lemma \ref{lem_analycity}. Then combining this equation with \eqref{Ualpha_D_H} implies the claim of the lemma.
\end{proof}

\subsection{Special functions}\label{sec_special_func}

\subsubsection{Hypergeometric equations}\label{sec_hypergeo}
Here we recall some facts we have used on the hypergeometric equation and its solution space. We always use $\mathbb{N}$ as the set of non-negative integers.
For $A>0$ let $\Gamma(A) = \int_0^{\infty} t^{A-1} e^{-t} dt $ denote the standard Gamma function which can then be analytically extended to $\mathbb{C} \setminus \{ - \mathbb{N} \} $. We recall the following useful properties:
\begin{align}\label{prop_gamma}
\Gamma(A+1) = A \Gamma(A), \quad \Gamma(A) \Gamma(1-A) = \frac{\pi}{\sin(\pi A)}, \quad \Gamma(A) \Gamma(A + \frac{1}{2}) =  \sqrt{\pi} 2^{1 - 2A}  \Gamma(2A).
\end{align}
Let $(A)_n : = \frac{\Gamma(A +n)}{\Gamma(A)}$. For $A, B, C,$ and $t$ real numbers we define the hypergeometric function $F$ by:
\begin{equation}
F(A,B,C,t) :=  \sum_{n=0}^{\infty} \frac{(A)_n (B)_n}{n! (C)_n} t^n.
\end{equation}
This function can be used to solve the following hypergeometric equation:
\begin{equation}
\left( t (1-t) \frac{d^2}{d t^2} + ( C - (A +B +1)t) \frac{d}{dt} - AB\right) f(t) =0.
\end{equation}
We can give the following three bases of solutions corresponding respectively to a power series expansion around $t=0$, $t=1$, and $t = \infty$. Under the assumption that $C$ is not an integer:
\begin{align}
f(t) &= C_1 F(A,B,C,t) + C_2 t^{1 -C} F(1 + A-C, 1 +B - C, 2 -C, t).
\end{align}
Under the assumption that $ C - A - B $ is not an integer:
\begin{align}
f(t) &= B_1 F(A,B,1+A+B- C, 1 -t)\\ \nonumber
& + B_2 (1-t)^{C- A - B} F(C- A, C- B, 1 + C - A - B , 1 -t).
\end{align}
Under the assumption that $ A - B $ is not an integer:
\begin{align}
f(t) &= D_1 t^{-A}F(A,1+A-C,1+A-B,t^{-1})\\  \nonumber
& + D_2 t^{-B} F(B, 1 +B - C, 1 +B - A, t^{-1}).
\end{align}
For each basis we have two real constants that parametrize the solution space, $C_1, C_2$, $B_1, B_2$, and $D_1, D_2$. We thus expect to have an explicit change of basis formula that will give a link between $C_1, C_2$, $B_1, B_2$, and $D_1, D_2$. This is precisely what gives the so-called connection formulas,
\begin{equation}\label{hpy1}
\begin{pmatrix}
C_1  \\
C_2
\end{pmatrix} 
= 
 \begin{pmatrix}
\frac{\Gamma(1 -C ) \Gamma( A - B +1  )  }{\Gamma(A - C +1 )\Gamma(1- B   )  } & \frac{\Gamma(1 -C) \Gamma( B- A +1 )  }{\Gamma(B - C +1 )\Gamma( 1- A )   } \\
 \frac{\Gamma(C-1 ) \Gamma( A - B +1  )  }{\Gamma( A  )  \Gamma( C - B  )} & \frac{\Gamma(C-1 )  \Gamma(B - A +1 ) }{ \Gamma(  B ) \Gamma( C - A )  }
\end{pmatrix} 
\begin{pmatrix}
D_1  \\
D_2 
\end{pmatrix},
\end{equation}

\begin{equation}\label{connection1}
\begin{pmatrix}
B_1  \\
B_2
\end{pmatrix} 
= 
 \begin{pmatrix}
\frac{\Gamma(C)\Gamma(C-A-B)}{\Gamma(C-A)\Gamma(C-B)} & \frac{\Gamma(2-C)\Gamma(C-A-B)}{\Gamma(1-A)\Gamma(1-B)}  \\
\frac{\Gamma(C)\Gamma(A+B-C)}{\Gamma(A)\Gamma(B)} &  \frac{\Gamma(2-C)\Gamma(A+B-C)}{\Gamma(A-C+1)\Gamma(B-C+1)}
\end{pmatrix} 
\begin{pmatrix}
C_1  \\
C_2 
\end{pmatrix}.
\end{equation}

These relations come from the theory of hypergeometric equations and we will extensively use them in Section \ref{sec_bulk_boundary} and Section \ref{sec_3pt} to deduce our shift equations.

\subsubsection{The double gamma function}\label{sec:double_gamma}

We will now provide some explanations on the functions $\Gamma_{\frac{\gamma}{2}}(x)$ and $S_{\frac{\gamma}{2}}(x)$ that we have introduced. For all $\gamma \in (0,2) $ and for $\mathrm{Re}(x) >0$, $\Gamma_{\frac{\gamma}{2}}(x)$ is defined by the integral formula,
\begin{equation}
\ln \Gamma_{\frac{\gamma}{2}}(x) = \int_0^{\infty} \frac{dt}{t} \left[ \frac{ e^{-xt} -e^{- \frac{Qt}{2}}   }{(1 - e^{- \frac{\gamma t}{2}})(1 - e^{- \frac{2t}{\gamma}})} - \frac{( \frac{Q}{2} -x)^2 }{2}e^{-t} + \frac{ x -\frac{Q}{2}  }{t} \right],
\end{equation} 
where we have $Q = \frac{\gamma}{2} +\frac{2}{\gamma}$. Since the function $\Gamma_{\frac{\gamma}{2}}(x)$ is continuous it is completely determined by the following two shift equations
\begin{align}
\frac{\Gamma_{\frac{\gamma}{2}}(x)}{\Gamma_{\frac{\gamma}{2}}(x + \frac{\gamma}{2}) }&= \frac{1}{\sqrt{2 \pi}}
\Gamma(\frac{\gamma x}{2}) ( \frac{\gamma}{2} )^{ -\frac{\gamma x}{2} + \frac{1}{2}
}, \\
\frac{\Gamma_{\frac{\gamma}{2}}(x)}{\Gamma_{\frac{\gamma}{2}}(x + \frac{2}{\gamma}) }&= \frac{1}{\sqrt{2 \pi}} \Gamma(\frac{2
x}{\gamma}) ( \frac{\gamma}{2} )^{ \frac{2 x}{\gamma} - \frac{1}{2} },
\end{align}
and by its value in $\frac{Q}{2}$, $\Gamma_{\frac{\gamma}{2}}(\frac{Q}{2} ) =1$. Furthermore $x \mapsto \Gamma_{\frac{\gamma}{2}}(x)$ admits a meromorphic extension to all of $\mathbb{C}$ with single poles at $x = -n\frac{\gamma}{2}-m\frac{2}{\gamma}$  for any $n,m \in \mathbb{N}$ and $\Gamma_{\frac{\gamma}{2}}(x)$ is never equal to $0$.  We have also used the double sine function defined by:
\begin{equation} 
S_{\frac{\gamma}{2}}(x) = \frac{\Gamma_{\frac{\gamma}{2}}(x)}{\Gamma_{\frac{\gamma}{2}}(Q -x)}.
\end{equation}
It obeys the following two shift equations:
\begin{align}\label{eq:shift_S}
\frac{S_{\frac{\gamma}{2}}(x+\frac{\gamma}{2})}{S_{\frac{\gamma}{2}}(x)} = 2\sin(\frac{\gamma\pi}{2}x), \quad \frac{S_{\frac{\gamma}{2}}(x+\frac{2}{\gamma})}{S_{\frac{\gamma}{2}}(x)} = 2\sin(\frac{2\pi}{\gamma}x).
\end{align}
The double sine function admits a meromorphic extension to $\mathbb{C}$ with poles at $x = -n\frac{\gamma}{2}-m\frac{2}{\gamma}$ and with zeros at $x = Q+n\frac{\gamma}{2}+m\frac{2}{\gamma}$ for any $n,m \in \mathbb{N}$. We also record the following asymptotic for $S_{\frac{\gamma}{2}}(x) $ which can be found in \cite[Equation (B.52)]{Eber}:
\begin{align}\label{eq:lim_S}
S_{\frac{\gamma}{2}}(x) \sim 
\begin{cases}
e^{-i\frac{\pi}{2}x(x-Q)} e^{- i \frac{\pi}{12}(Q^2 +1)} & \text{as} \quad \textnormal{Im}(x) \to \infty,\\
e^{i\frac{\pi}{2}x(x-Q)} e^{ i \frac{\pi}{12}(Q^2 +1)} &\text{as} \quad \textnormal{Im}(x) \to -\infty.
\end{cases}
\end{align}

Finally in Section \ref{app:laws_GMC} in order to state Corollaries \ref{cor_law2} and \ref{cor_law3} on the law of the GMC measures we have used the following random variable $\beta_{2,2}$ defined in \cite{Ostro_review}. Its moments involve the function $\Gamma_{\frac{\gamma}{2}}$.
\begin{definition} [Existence theorem] $\\$
The distribution $-\ln \beta_{2,2}(a_1,a_2;b_0,b_1,b_2)$ is infinitely divisible on $[0,\infty)$ and has the L\'evy-Khintchine decomposition for $\operatorname{Re} (p) >-b_0$:
\begin{align}
&\E[\exp(p \ln \beta_{2,2}(a_1, a_2;b_0,b_1,b_2))] = \exp \Big(\int_0^{\infty} (e^{-pt}-1)e^{-b_0t} \frac{(1-e^{-b_1t})(1-e^{-b_2t})}{(1-e^{-a_1t})(1-e^{-a_2t})}  \frac{dt}{t}\Big).
\end{align}
Furthermore, the distribution $\ln \beta_{2,2}(a_1,a_2;b_0,b_1,b_2)$ is absolutely continuous with respect to the Lebesgue measure.
\end{definition}
We only work with the case $(a_1,a_2)= (1,\frac{4}{\gamma^2})$. Then  $\beta_{2,2}(1,\frac{4}{\gamma^2};b_0,b_1,b_2)$ depends on 4 parameters $\gamma, b_0, b_1, b_2$ and its real moments $p>-b_0$ are given by the formula:
\begin{align}
&\mathbb{E}[ \beta_{2,2}(1, \frac{4}{\gamma^2}; b_0, b_1, b_2)^p]  = \frac{\Gamma_{\frac{\gamma}{2}}(\frac{\gamma}{2}(p + b_0)) \Gamma_{\frac{\gamma}{2}}( \frac{\gamma}{2}(b_0 + b_1)) \Gamma_{\frac{\gamma}{2}}(\frac{\gamma}{2}(b_0 + b_2)) \Gamma_{\frac{\gamma}{2}}(\frac{\gamma}{2}(p + b_0 + b_1 +b_2))}{\Gamma_{\frac{\gamma}{2}}(\frac{\gamma}{2}b_0)\Gamma_{\frac{\gamma}{2}}(\frac{\gamma}{2}(p + b_0 + b_1))\Gamma_{\frac{\gamma}{2}}(\frac{\gamma}{2}(p + b_0 +b_2)) \Gamma_{\frac{\gamma}{2}}( \frac{\gamma}{2}(b_0 +b_1 +b_2))}.
\end{align}
Of course we have $\gamma \in (0,2)$ and the real numbers $p, b_0, b_1, b_2$ must be chosen so that the arguments of all the $\Gamma_{\frac{\gamma}{2}}$ are positive.

\subsubsection{The exact formula $\mathcal{I}$}\label{formula_I}
We provide here an analysis of the formula $\mathcal{I}$ we have written to give the expression for $\overline{H}$. This formula comes from taking the limit $\mu \rightarrow 0$ in formula for the boundary three-point function proposed in \cite{three_point}. We denote the integral it contains by $\mathcal{J}$ and first give a condition of convergence for $\mathcal{J}$.

\begin{lemma}\label{lem:J}
Consider parameters $\beta_1, \beta_2, \beta_3 \in \mathbb{C}$ and $\sigma_1, \sigma_2, \sigma_3 \in \mathbb{C}$ such that the inequality 
\begin{equation}\label{eq:cond_J}
Q > \mathrm{Re}\left( \sigma_3 - \sigma_2  + \frac{\beta_2}{2} \right)
\end{equation}
holds. Then the following integral is well-defined as meromorphic function of all its parameters

\begin{align}
&\mathcal{J} :=  \int_{\mathcal{C}} \frac{S_{\frac{\gamma}{2}}(Q-\frac{\beta_2}{2}+\sigma_3-\sigma_2+r) S_{\frac{\gamma}{2}}(\frac{\beta_3}{2}+\sigma_3-\sigma_1+r) S_{\frac{\gamma}{2}}(Q-\frac{\beta_3}{2}+\sigma_3-\sigma_1+r)}{S_{\frac{\gamma}{2}}(Q+\frac{\beta_1}{2}-\frac{\beta_2}{2}+\sigma_3-\sigma_1+r) S_{\frac{\gamma}{2}}(2Q-\frac{\beta_1}{2}-\frac{\beta_2}{2}+\sigma_3-\sigma_1+r) S_{\frac{\gamma}{2}}(Q+r)}e^{i\pi(-\frac{\beta_2}{2}+\sigma_2-\sigma_3)r} \frac{dr}{i},
\end{align}
where the contour $\mathcal{C}$ of the integral goes from $-i \infty$ to $ i \infty$ passing to the right of the poles at $r = -(Q-\frac{\beta_2}{2}+\sigma_3-\sigma_2) -n\frac{\gamma}{2}-m\frac{2}{\gamma}$, $r = -(\frac{\beta_3}{2}+\sigma_3-\sigma_1)-n\frac{\gamma}{2}-m\frac{2}{\gamma}$, $r= -(Q-\frac{\beta_3}{2}+\sigma_3-\sigma_1)-n\frac{\gamma}{2}-m\frac{2}{\gamma}$ and to the left of the poles at $r=-(\frac{\beta_1}{2}-\frac{\beta_2}{2}+\sigma_3-\sigma_1)+n\frac{\gamma}{2}+m\frac{2}{\gamma}$, $r = -(Q-\frac{\beta_1}{2}-\frac{\beta_2}{2}+\sigma_3-\sigma_1)+n\frac{\gamma}{2}+m\frac{2}{\gamma} $, $r=n\frac{\gamma}{2}+m\frac{2}{\gamma} $ with $m,n \in \mathbb{N}^2$. Furthermore the poles of the function $\mathcal{J}$ occur when $\zeta = n \frac{\gamma}{2} + m \frac{2}{\gamma}$ where $n, m \in \mathbb{N}$ and $\zeta$ is equal to any of the following

$$
\begin{array}{lll}
- Q + \sigma_2 + \frac{\beta_1}{2} - \sigma_1, & \quad \sigma_2  - \sigma_1 - \frac{\beta_1}{2}, & \quad -Q + \frac{\beta_2}{2} - \sigma_3 + \sigma_2, \\[0.2cm]
\frac{\beta_1}{2} - \frac{\beta_2}{2} - \frac{\beta_3}{2}, & \quad Q - \frac{\beta_1}{2} - \frac{\beta_2}{2} - \frac{\beta_3}{2}, & \quad - \frac{\beta_3}{2} - \sigma_3 + \sigma_1, \\[0.2cm]
- Q + \frac{\beta_1}{2} - \frac{\beta_2}{2} + \frac{\beta_3}{2}, & \quad \frac{\beta_3}{2} - \frac{\beta_1}{2} - \frac{\beta_2}{2}, & \quad - Q + \frac{\beta_3}{2} - \sigma_3 + \sigma_1.
 \end{array}
     $$
\end{lemma}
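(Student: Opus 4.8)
The plan is to analyze the contour integral $\mathcal{J}$ by carefully tracking the poles of the integrand in the variable $r$ and establishing a convergence criterion from the asymptotic behavior of the double sine function $S_{\frac{\gamma}{2}}$ as $|\mathrm{Im}(r)| \to \infty$. First I would recall the known pole and zero structure of $S_{\frac{\gamma}{2}}$: it has simple poles at $-n\frac{\gamma}{2}-m\frac{2}{\gamma}$ and simple zeros at $Q+n\frac{\gamma}{2}+m\frac{2}{\gamma}$ for $n,m \in \mathbb{N}$. Reading off the arguments of the three $S_{\frac{\gamma}{2}}$ factors in the numerator, their poles generate exactly the three left lattices described in the statement, while the three denominator factors contribute zeros that, when inverted, produce the three right lattices. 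This identifies the poles of the integrand and justifies why $\mathcal{C}$ can be drawn from $-i\infty$ to $+i\infty$ separating the left lattices from the right lattices, provided no pole of a left lattice coincides with the real-axis position of a right-lattice pole — which is guaranteed for generic parameters and recovered by analyticity otherwise.

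The key analytic step is to verify convergence of the integral along the vertical contour. Here I would use the asymptotic \eqref{eq:lim_S}, namely $S_{\frac{\gamma}{2}}(x) \sim e^{\mp i\frac{\pi}{2}x(x-Q)}$ as $\mathrm{Im}(x) \to \pm\infty$. Writing $r = r_0 + i\tau$ with $\tau \to \pm\infty$ and collecting the quadratic exponents from the three numerator and three denominator factors, the leading $\tau^2$ contributions cancel because there are three factors on top and three on the bottom, each contributing $\mp i\frac{\pi}{2} r^2$ with the same sign depending only on the sign of $\tau$. What survives is a linear-in-$r$ exponent coming from the $(x-Q)$ and cross terms, combined with the explicit exponential $e^{i\pi(-\frac{\beta_2}{2}+\sigma_2-\sigma_3)r}$. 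Computing this surviving linear coefficient as $\tau \to +\infty$ and as $\tau \to -\infty$, one finds the integrand decays like $e^{-c_{\pm}|\tau|}$ where the decay rates $c_{\pm}$ are controlled precisely by the real part of $Q - \sigma_3 + \sigma_2 - \frac{\beta_2}{2}$. The condition \eqref{eq:cond_J} is exactly what forces both $c_+ > 0$ and $c_- > 0$, so the integral converges absolutely and defines a holomorphic function of the parameters in this region; standard Morera/dominated-convergence arguments then give analyticity.

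To obtain the meromorphic continuation and locate the poles of $\mathcal{J}$, I would employ the standard mechanism of pole collision: as the parameters vary, a pole from one of the three left lattices moves toward a pole of one of the three right lattices, and when they pinch the contour $\mathcal{C}$, the integral develops a singularity. Each such collision occurs when the sum of a left-lattice base point and a right-lattice base point lands on the lattice $-n\frac{\gamma}{2}-m\frac{2}{\gamma}$, equivalently when one of the nine pairwise differences equals $n\frac{\gamma}{2}+m\frac{2}{\gamma}$. Matching the three numerator base points against the three denominator base points (after accounting for the sign conventions that turn denominator zeros into contour-confining right poles) produces exactly the nine expressions listed for $\zeta$ in the statement. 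The continuation itself is carried out by deforming $\mathcal{C}$ and picking up residues, expressing $\mathcal{J}$ away from \eqref{eq:cond_J} as the original integral plus a finite sum of residue terms, each of which is manifestly meromorphic; this extends $\mathcal{J}$ to a meromorphic function of all six parameters.

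The main obstacle I anticipate is the convergence computation in the second step: one must be careful that the six quadratic exponents genuinely cancel and that the surviving linear terms are assembled correctly, since sign errors in the $\mathrm{Im}(r) \to +\infty$ versus $\mathrm{Im}(r) \to -\infty$ regimes are easy to make and the two one-sided decay conditions must both reduce to the single clean inequality \eqref{eq:cond_J}. A secondary subtlety is bookkeeping the nine collision loci correctly, in particular tracking which base points belong to numerator poles and which arise from denominator zeros, and verifying that the symmetry of the formula under $\beta_1 \mapsto 2Q - \beta_1$ (used later in the reflection principle) is consistent with the pole list. Once the asymptotic estimate is pinned down, the remaining arguments are routine applications of the pole structure of $S_{\frac{\gamma}{2}}$ and contour deformation.
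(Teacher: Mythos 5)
Your proposal is correct and follows essentially the same route as the paper: identify the three left and three right lattices of poles from the pole/zero structure of $S_{\frac{\gamma}{2}}$, verify convergence via the asymptotic \eqref{eq:lim_S} with the quadratic exponents cancelling between the three numerator and three denominator factors, and obtain the meromorphic continuation together with the nine pole families of $\mathcal{J}$ by contour deformation when a left-lattice pole collides with a right-lattice pole (the nine pairwise differences of base points). One minor correction to your convergence bookkeeping: in the paper's computation the integrand decays like $e^{-2\pi Q|\mathrm{Im}(r)|}$ as $r \to -i\infty$ for \emph{all} parameter values, so condition \eqref{eq:cond_J} is only needed at the $+i\infty$ end rather than forcing both one-sided rates $c_{\pm}$ to be positive as you suggest—this does not affect the validity of the argument, since under \eqref{eq:cond_J} both ends decay exponentially either way.
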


\begin{proof} In the process of proving the above claims, we will also explain in detail the way the contour $\mathcal{C}$ is chosen. Notice first how the poles of the integrand of $\mathcal{J}$ are located. There are three lattices of poles starting from $-(Q-\frac{\beta_2}{2}+\sigma_3-\sigma_2)$, $-(\frac{\beta_3}{2}+\sigma_3-\sigma_1)$, $-(Q-\frac{\beta_3}{2}+\sigma_3-\sigma_1)$ and extending to $-\infty$ by increments of $\frac{\gamma}{2}$ and $\frac{2}{\gamma}$. We call these the left lattices. Similarly we have three lattices of poles starting from $0$, $-(\frac{\beta_1}{2}-\frac{\beta_2}{2}+\sigma_3-\sigma_1)$, $-(Q-\frac{\beta_1}{2}-\frac{\beta_2}{2}+\sigma_3-\sigma_1) $ and extending to $+\infty$ by similar increments, we call them the right lattices. The situation where it is the easiest to draw the correct contour $\mathcal{C}$ is when the parameters are chosen such that the poles of the six different latices all have different imaginary parts. Then it is clear how to draw a line starting from $-i \infty$, passing to the right of the left lattices of poles, to the left of the right lattices of poles, and finally continuing to $+ i \infty$.

Lets us check why the condition \eqref{eq:cond_J} implies the convergence of such a contour at $-i \infty$ and $+ i \infty$. Using the asymptotic of $S_{\frac{\gamma}{2}}$ given by \eqref{eq:lim_S} one obtains the integrand of $\mathcal{J}$ as $r \rightarrow i \infty $ is equivalent to $c_1 e^{2 i \pi  Q r} e^{-i \pi r(2 \sigma_3 - 2 \sigma_2 +\beta_2)}$ for $c_1 \in \mathbb{C}$ a constant independent of $r$. In the other direction, as $r \rightarrow -i \infty $, one finds similarly that the integrand is equivalent to $c_2 e^{-2 i \pi  Q r}$, which is always convergent. The asymptotic as $r \rightarrow i \infty $ thus tells us the integral converges if $ Q > \mathrm{Re}(\sigma_3 - \sigma_2  + \frac{\beta_2}{2})$.

Lastly let us discuss the poles of $\mathcal{J}$ as function of its parameters. This problem is related to being able to choose correctly the contour $\mathcal{C}$ in any situation. It turns out the poles of $\mathcal{J}$ occur when the parameters $\beta_i, \sigma_i$ are chosen such that there is pole from one of the left lattices that coincides with a pole from of one of the right lattices. In such a situation it is clearly not possible to choose the contour $\mathcal{C}$ as required. To solve this issue one can for instance deform the contour $\mathcal{C}$ in a small neighborhood of the collapsing poles so that it crosses one of the two poles before they collapse. By the residue theorem this adds a meromorphic function along side the contour integral in the expression of $\mathcal{J}$. This meromorphic function then has a pole precisely in situation we described, where the parameters are such that poles from the left and right lattices collapse. Lastly one can see by drawing a picture of all the poles that in any other situation one can always draw the contour $\mathcal{C}$. There is one tricky situation where a pole from a left lattice is to the right of a pole from the right lattice, and both poles have the same imaginary part. But in the setup the condition on $\mathcal{C}$ can be satisfied by choosing an eight shaped contour around the two poles.

The conclusion is thus that it is possible to choose consistently the contour $\mathcal{C}$ passing to the left and right of the right and left lattices of poles, except when the parameters are such that two poles of a right and a left lattice coincide. These special points then correspond to poles of the meromorphic function $\mathcal{J}$ and are given by the values taken by $\zeta$ in the statement of the lemma.

\end{proof}
Building upon the previous lemma we can easily deduce the following result about $\mathcal{I}$.
\begin{lemma}
Recall the expression \begin{align}\label{formule_PT}
&\mathcal{I}
\begin{pmatrix}
\beta_1 , \beta_2, \beta_3 \\
\sigma_1,  \sigma_2,   \sigma_3 
\end{pmatrix}\\
   & = \mathcal{J} \times  \frac{(2\pi)^{\frac{2Q-\overline{\beta}}{\gamma}+1}(\frac{2}{\gamma})^{(\frac{\gamma}{2}-\frac{2}{\gamma})(Q-\frac{\overline{\beta}}{2})-1}}{\Gamma(1-\frac{\gamma^2}{4})^{\frac{2Q-\overline{\beta}}{\gamma}}\Gamma(\frac{\overline{\beta}-2Q}{\gamma})} 
 \frac{\Gamma_{\frac{\gamma}{2}}(2Q-\frac{\overline{\beta}}{2})\Gamma_{\frac{\gamma}{2}}(\frac{\beta_1+\beta_3-\beta_2}{2})\Gamma_{\frac{\gamma}{2}}(Q-\frac{\beta_1+\beta_2-\beta_3}{2})\Gamma_{\frac{\gamma}{2}}(Q-\frac{\beta_2+\beta_3-\beta_1}{2})}{\Gamma_{\frac{\gamma}{2}}(Q) \Gamma_{\frac{\gamma}{2}}(Q-\beta_1) \Gamma_{\frac{\gamma}{2}}(Q-\beta_2) \Gamma_{\frac{\gamma}{2}}(Q-\beta_3)} \nonumber \\ 
&\times \frac{e^{i\frac{\pi}{2}(-(2Q-\frac{\beta_1}{2}-\sigma_1-\sigma_2)(Q-\frac{\beta_1}{2}-\sigma_1-\sigma_2) + (Q+\frac{\beta_2}{2}-\sigma_2-\sigma_3)(\frac{\beta_2}{2}-\sigma_2-\sigma_3)+(Q+\frac{\beta_3}{2}-\sigma_1-\sigma_3)(\frac{\beta_3}{2}-\sigma_1-\sigma_3) -2\sigma_3(2\sigma_3-Q)        )}}{S_{\frac{\gamma}{2}}(\frac{\beta_1}{2}+\sigma_1-\sigma_2)  S_{\frac{\gamma}{2}}(\frac{\beta_3}{2}+\sigma_3-\sigma_1) }. \nonumber 
\end{align}
$\mathcal{I}$ is defined as a meromorphic function of all its parameters on the domain given by $ Q > \mathrm{Re} \left( \sigma_3 - \sigma_2  + \frac{\beta_2}{2} \right)$. The poles are a subset of the values of parameters for which there exists $n, m \in \mathbb{N}$ such that $\zeta = n \frac{\gamma}{2} + m \frac{2}{\gamma}$  is equal to one of the following:
$$
\begin{array}{lll}
- Q + \sigma_2 + \frac{\beta_1}{2} - \sigma_1, & \quad \sigma_2  - \sigma_1 - \frac{\beta_1}{2}, & \quad -Q + \frac{\beta_2}{2} - \sigma_3 + \sigma_2, \\[0.2cm]
\frac{\beta_1}{2} - \frac{\beta_2}{2} - \frac{\beta_3}{2}, & \quad Q - \frac{\beta_1}{2} - \frac{\beta_2}{2} - \frac{\beta_3}{2}, & \quad - \frac{\beta_3}{2} - \sigma_3 + \sigma_1, \\[0.2cm]
- Q + \frac{\beta_1}{2} - \frac{\beta_2}{2} + \frac{\beta_3}{2}, & \quad \frac{\beta_3}{2} - \frac{\beta_1}{2} - \frac{\beta_2}{2}, & \quad - Q + \frac{\beta_3}{2} - \sigma_3 + \sigma_1, \\[0.2cm]
\frac{\beta_1}{2} + \frac{\beta_2}{2}+ \frac{\beta_3}{2} - 2Q, & \quad \frac{\beta_2}{2} - \frac{\beta_1}{2} - \frac{\beta_3}{2}, & \quad - Q + \frac{\beta_1}{2} + \frac{\beta_2}{2} - \frac{\beta_3}{2}, \\[0.2cm]
  - Q + \frac{\beta_2}{2} + \frac{\beta_3}{2} - \frac{\beta_1}{2}, & \quad - Q + \frac{\beta_1}{2} + \sigma_1 - \sigma_2, & \quad -Q + \frac{\beta_3}{2} + \sigma_3 -\sigma_1.
 \end{array}
     $$
\end{lemma}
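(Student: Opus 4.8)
The strategy is to exploit the factorization $\mathcal{I} = \mathcal{J} \cdot P$, where $\mathcal{J}$ is the Barnes-type contour integral analyzed in Lemma \ref{lem:J} and $P$ denotes the explicit prefactor multiplying it in \eqref{formule_PT}. Since $P$ is a finite product of functions each meromorphic on all of $\mathbb{C}^6$, and Lemma \ref{lem:J} already establishes that $\mathcal{J}$ is meromorphic on the domain $Q > \mathrm{Re}(\sigma_3 - \sigma_2 + \frac{\beta_2}{2})$, the product $\mathcal{I}$ is meromorphic on the same domain, and its pole set is contained in the union of the pole sets of $\mathcal{J}$ and of $P$. The nine families of poles contributed by $\mathcal{J}$ are exactly those listed in Lemma \ref{lem:J}, which constitute the first three rows of the table in the statement, so it remains only to locate the poles of $P$.

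First I would dispose of the factors of $P$ that contribute no poles. The powers $(2\pi)^{\cdots}$, $(\frac{2}{\gamma})^{\cdots}$ and $\Gamma(1-\frac{\gamma^2}{4})^{\cdots}$ are exponentials of affine functions of the parameters with fixed positive base, hence entire and nowhere vanishing; the same is true of the phase $e^{i\frac{\pi}{2}(\cdots)}$. The factor $1/\Gamma(\frac{\overline{\beta}-2Q}{\gamma})$ is entire because $1/\Gamma$ is entire, so it produces zeros but no poles. Finally, the factors $1/\Gamma_{\frac{\gamma}{2}}(Q)$ and $1/\Gamma_{\frac{\gamma}{2}}(Q-\beta_i)$ are holomorphic, contributing only zeros, since $\Gamma_{\frac{\gamma}{2}}$ never vanishes (Section \ref{sec:double_gamma}). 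Thus all poles of $P$ must arise from the four functions $\Gamma_{\frac{\gamma}{2}}$ in the numerator and the two functions $1/S_{\frac{\gamma}{2}}$ in the denominator.

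Then I would read off the six remaining pole families using the pole and zero lattices recalled in Section \ref{sec:double_gamma}: the function $\Gamma_{\frac{\gamma}{2}}(x)$ has poles exactly at $x = -n\frac{\gamma}{2}-m\frac{2}{\gamma}$, while $S_{\frac{\gamma}{2}}(x)$ has zeros exactly at $x = Q + n\frac{\gamma}{2}+m\frac{2}{\gamma}$, with $n,m \in \mathbb{N}$. Setting the argument of each of the four numerator $\Gamma_{\frac{\gamma}{2}}$ equal to $-n\frac{\gamma}{2}-m\frac{2}{\gamma}$ yields the four values $\zeta = \frac{\beta_1}{2}+\frac{\beta_2}{2}+\frac{\beta_3}{2}-2Q$, $\zeta = \frac{\beta_2}{2}-\frac{\beta_1}{2}-\frac{\beta_3}{2}$, $\zeta = -Q+\frac{\beta_1}{2}+\frac{\beta_2}{2}-\frac{\beta_3}{2}$ and $\zeta = -Q+\frac{\beta_2}{2}+\frac{\beta_3}{2}-\frac{\beta_1}{2}$, while requiring the argument of each $S_{\frac{\gamma}{2}}$ in the denominator to hit a zero at $Q+n\frac{\gamma}{2}+m\frac{2}{\gamma}$ gives $\zeta = -Q+\frac{\beta_1}{2}+\sigma_1-\sigma_2$ and $\zeta = -Q+\frac{\beta_3}{2}+\sigma_3-\sigma_1$. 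These are precisely the last two rows of the table, and taking their union with the poles of $\mathcal{J}$ produces the full list claimed.

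The argument is entirely a bookkeeping exercise once Lemma \ref{lem:J} is in hand, so I do not expect any substantial analytic obstacle; the only point requiring care is the direction of the containment. The table describes the set of \emph{possible} pole locations rather than asserting that each listed value is a genuine pole, since a pole of $\mathcal{J}$ could in principle be cancelled by a zero of $P$ (the factors $1/\Gamma(\frac{\overline{\beta}-2Q}{\gamma})$ and $1/\Gamma_{\frac{\gamma}{2}}(Q-\beta_i)$ supply exactly such zeros) or vice versa. Because the statement only asserts that the poles of $\mathcal{I}$ lie in the indicated union, I would phrase the conclusion as an inclusion of pole sets and not attempt to track which candidate points survive as actual poles.
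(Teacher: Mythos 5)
Your proposal is correct and follows essentially the same route as the paper: its proof likewise observes that $\mathcal{I}$ equals $\mathcal{J}$ times an explicit meromorphic prefactor with known pole structure, and simply appends the prefactor's poles (read off from the pole lattice of $\Gamma_{\frac{\gamma}{2}}$ and the zero lattice of $S_{\frac{\gamma}{2}}$) to the list established in Lemma \ref{lem:J}. Your closing caveat that the table should be interpreted as an inclusion of possible pole locations, since cancellations between poles of $\mathcal{J}$ and zeros of the prefactor could in principle occur, is a sound refinement that the paper leaves implicit.
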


\begin{proof}
The proof of this claim is a direct consequence of Lemma \ref{lem:J}, since $\mathcal{I}$ is obtained from $\mathcal{J}$ by multiplying by an explicit meromorphic function with a known pole structure. One simply adds in the list of poles of $\mathcal{J}$ the poles coming from this function.
\end{proof}

The obvious drawback of the expression of $\mathcal{I}$ is that the integral $\mathcal{J}$ it contains forces one to work under the condition $ Q > \mathrm{Re} \left(  \sigma_3 - \sigma_2  + \frac{\beta_2}{2} \right)$. Luckily thanks to the result of the main text we can propose a meromorphic extension of $\mathcal{I}$ to all of $\mathbb{C}^6$. The logic is as follows. First by the result of Lemma \ref{analycity_H}, we know the function $\overline{H}$ defined using GMC admits a meromorphic extension to a subset of $\mathbb{C}^6$, where the $\beta_i$ are in complex neighborhood of $\mathbb{R}$ and the $\sigma_i$ in $\mathbb{C}$. Then thanks to the results of Section \ref{subsec:three_point}, we know that the function $\mathcal{I}$ matches on its domain of definition with the meromorphic extension of $\overline{H}$. Therefore $\mathcal{I}$ admits a meromorphic extension to the same subset of  $\mathbb{C}^6$. Furthermore by using simple symmetries of the function $\overline{H}$ proved in the main text it is actually possible to deduce an analytic continuation of $\mathcal{I}$ to $\mathbb{C}^6$. The two symmetries we will use correspond to performing a cyclic permutation of the parameters and to using the reflection principle link $\beta_i$ and $2Q - \beta_i$. For this purpose consider the following list of domains with the associated function $\mathcal{I}$.
$$
\begin{array}{ll}
\mathcal{I}
\begin{pmatrix}
\beta_1 , \beta_2, \beta_3 \\
\sigma_1,  \sigma_2,   \sigma_3 
\end{pmatrix}, & \quad \quad \mathcal{D}_1 =  \left\{ (\beta_i, \sigma_i)_{i = 1,2,3} \in \mathbb{C}^6 \: \vert \:  \mathrm{Re} \left( Q - \sigma_3 + \sigma_2 - \frac{\beta_2}{2} \right) >0 \right \}, \\
\mathcal{I}
\begin{pmatrix}
\beta_1 , 2Q - \beta_2, \beta_3 \\
\sigma_1,  \sigma_2,   \sigma_3 
\end{pmatrix}, & \quad \quad  \mathcal{D}_2 = \left\{ (\beta_i, \sigma_i)_{i = 1,2,3} \in \mathbb{C}^6 \: \vert \:  \mathrm{Re} \left( - \sigma_3 + \sigma_2 + \frac{\beta_2}{2} \right) >0 \right \},     \\
\mathcal{I}
\begin{pmatrix}
\beta_2 , \beta_3, \beta_1 \\
\sigma_2,  \sigma_3,   \sigma_1 
\end{pmatrix}, & \quad \quad \mathcal{D}_3 = \left\{ (\beta_i, \sigma_i)_{i = 1,2,3} \in \mathbb{C}^6 \: \vert \:  \mathrm{Re} \left( Q - \sigma_1 + \sigma_3 - \frac{\beta_3}{2} \right) >0 \right \}, \\
\mathcal{I}
\begin{pmatrix}
\beta_2 , 2Q - \beta_3, \beta_1 \\
\sigma_2,  \sigma_3,   \sigma_1 
\end{pmatrix}, & \quad \quad \mathcal{D}_4 = \left\{ (\beta_i, \sigma_i)_{i = 1,2,3} \in \mathbb{C}^6 \: \vert \:  \mathrm{Re} \left( - \sigma_1 + \sigma_3 + \frac{\beta_3}{2} \right) >0 \right \},     \\
\mathcal{I}
\begin{pmatrix}
\beta_3 , \beta_1, \beta_2 \\
\sigma_3,  \sigma_1,   \sigma_2 
\end{pmatrix}, & \quad \quad \mathcal{D}_5 = \left\{ (\beta_i, \sigma_i)_{i = 1,2,3} \in \mathbb{C}^6 \: \vert \:   \mathrm{Re} \left( Q - \sigma_2 + \sigma_1 - \frac{\beta_1}{2} \right) >0 \right \}, \\
\mathcal{I}
\begin{pmatrix}
\beta_3 , 2Q - \beta_1, \beta_2 \\
\sigma_3,  \sigma_1,   \sigma_2 
\end{pmatrix}, & \quad \quad \mathcal{D}_6 = \left\{ (\beta_i, \sigma_i)_{i = 1,2,3} \in \mathbb{C}^6 \: \vert \:  \mathrm{Re} \left( - \sigma_2 + \sigma_1 + \frac{\beta_1}{2} \right) >0 \right \}.     
\end{array}
$$
We first give the following lemma about the domains $\mathcal{D}_i$.

\begin{lemma}\label{lem:D_i}
Consider the six domains $\mathcal{D}_i$, $i=1, \dots, 6$, defined above. Then one has $\cup_{i=1}^6 \mathcal{D}_i = \mathbb{C}^6$. Furthermore for any $i, j$ the set $\mathcal{D}_i \cap \mathcal{D}_j $ is non-empty and contains an open ball of $\mathbb{C}^6$.
\end{lemma}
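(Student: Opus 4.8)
The plan is to strip away the complex-analytic content and reduce the statement to elementary linear inequalities on the real parts of the parameters. First I would introduce the real quantities
$$x_1 = \mathrm{Re}(\sigma_1 - \sigma_2), \quad x_2 = \mathrm{Re}(\sigma_2 - \sigma_3), \quad x_3 = \mathrm{Re}(\sigma_3 - \sigma_1), \quad y_i = \tfrac{1}{2}\mathrm{Re}(\beta_i),$$
and observe that each $\mathcal{D}_i$ is precisely the open half-space where one of the six affine functionals
$$L_1 = Q + x_2 - y_2, \quad L_2 = x_2 + y_2, \quad L_3 = Q + x_3 - y_3, \quad L_4 = x_3 + y_3, \quad L_5 = Q + x_1 - y_1, \quad L_6 = x_1 + y_1$$
is strictly positive. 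The key structural fact, obtained by telescoping the differences $\sigma_i - \sigma_{i+1}$, is the identity $x_1 + x_2 + x_3 = 0$. Note also that each $L_i$ depends only on the six real-part coordinates of $(\beta_1,\beta_2,\beta_3,\sigma_1,\sigma_2,\sigma_3)$, a point I would keep in mind for the open-ball claim.

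For the first assertion I would argue by contradiction. If a point lies outside every $\mathcal{D}_i$, then all six complementary inequalities $L_i \le 0$ hold. Adding $L_1 \le 0$ and $L_2 \le 0$ cancels $y_2$ and gives $2x_2 \le -Q$; the pairs $(L_3,L_4)$ and $(L_5,L_6)$ give $2x_3 \le -Q$ and $2x_1 \le -Q$ in the same way. Summing the three bounds and inserting $x_1 + x_2 + x_3 = 0$ produces $0 \le -\tfrac{3Q}{2}$, which is impossible since $Q = \tfrac{\gamma}{2} + \tfrac{2}{\gamma} > 0$. Hence no such point exists and $\cup_{i=1}^6 \mathcal{D}_i = \mathbb{C}^6$.

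For the second assertion, since every $L_i$ involves only the real-part coordinates, it suffices to produce a nonempty open set in the six-dimensional real-part space $\mathbb{R}^6$; letting the six imaginary parts vary freely then yields an open ball in $\mathbb{C}^6 = \mathbb{R}^{12}$. The intersection $\mathcal{D}_i \cap \mathcal{D}_j$ is automatically open, so only nonemptiness needs checking for each of the fifteen pairs. This follows whenever the gradients of $L_i$ and $L_j$ in $\mathbb{R}^6$ are linearly independent: then the affine map $v \mapsto (L_i(v), L_j(v))$ is a surjection onto $\mathbb{R}^2$, so the preimage of the open positive quadrant $(0,\infty)^2$ is nonempty and open. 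I would verify independence by recording the six coefficient vectors: pairs taken from distinct groups $\{L_1,L_2\}$, $\{L_3,L_4\}$, $\{L_5,L_6\}$ are independent because they have disjoint supports among the $\beta_i$-coordinates, while within a single group the two vectors differ exactly in the coefficient of the relevant $y_i$ and are therefore independent.

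The whole argument is routine convex geometry, so I do not anticipate a genuine obstacle; the only place demanding real care is bookkeeping. Specifically, I would double-check that the six functionals are transcribed with the correct signs and the constant $Q$ placed on the right functionals, and that the telescoping identity $x_1 + x_2 + x_3 = 0$ is matched to the pairing $(\mathcal{D}_1,\mathcal{D}_2)$, $(\mathcal{D}_3,\mathcal{D}_4)$, $(\mathcal{D}_5,\mathcal{D}_6)$ that makes each $y_i$ cancel when the complementary inequalities are added. Getting that alignment right is precisely what drives the contradiction in the union step.
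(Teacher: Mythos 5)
Your proof is correct and follows essentially the same route as the paper: the covering claim is obtained by summing the six reversed inequalities (the $\beta_i$-terms cancel in pairs and the telescoping $\sigma_i$-differences sum to zero), yielding the contradiction $0 \le -3Q$ with $Q>0$, which is exactly the paper's one-line argument. Your linear-independence argument for the pairwise intersections is a valid fleshing-out of the second claim, which the paper simply declares ``straightforward to check from the inequalities.''
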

\begin{proof}
To see why the first claim is true, assume a point $(\beta_i, \sigma_i)_{i = 1,2,3} \in \mathbb{C}^6$ is contained in none of the domains $\mathcal{D}_i$. Then it satisfies the reverse inequalities of those defining the domains $\mathcal{D}_i$. By summing all those inequalities one obtains $3Q < 0$. Hence the contradiction. The second claim of the lemma is straightforward to check from the inequalities.
\end{proof}

Using the above the lemma it is now possible to express the analytic continuation of $\mathcal{I}$ to $\mathbb{C}^6$ simply by patching the different domains $\mathcal{D}_i$ since their overlap always contains an open set.

\begin{lemma}
The function $\mathcal{I}$ of the six parameters $\beta_1, \beta_2, \beta_3, \sigma_1, \sigma_2, \sigma_3$ originally defined on the domain $\mathcal{D}_1$ admits a meromorphic extension to $\mathbb{C}^6$. Furthermore its expression on any of the domains $\mathcal{D}_i$ can be given up to an explicit prefactor in terms of the function $\mathcal{I}$ written to the left of the definition of the corresponding $\mathcal{D}_i$ in the above table. 
\end{lemma}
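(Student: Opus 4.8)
The plan is to patch together the six explicit functions displayed in the table. Each of them is a genuine meromorphic function on its domain $\mathcal{D}_i$: the inequality $\mathrm{Re}(\cdots)>0$ cutting out $\mathcal{D}_i$ is exactly the convergence condition of Lemma \ref{lem:J} for the Barnes-type integral $\mathcal{J}$ with the permuted and reflected arguments inserted, so the preceding lemma already guarantees that each variant is meromorphic in all six variables throughout $\mathcal{D}_i$. Since $\cup_{i=1}^6\mathcal{D}_i=\mathbb{C}^6$ and every pairwise overlap contains an open ball by Lemma \ref{lem:D_i}, it will suffice to show that on each overlap $\mathcal{D}_i\cap\mathcal{D}_j$ the two relevant variants agree up to the explicit meromorphic prefactor recorded in the table. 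The global extension $\tilde{\mathcal{I}}$ is then defined by declaring it to equal the prefactor-adjusted variant on each $\mathcal{D}_i$, which restricts to $\mathcal{I}$ on $\mathcal{D}_1$.

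First I would record the two symmetries of the GMC quantity $\overline{H}$ that drive the comparison, namely exactly the two operations appearing in the table. The reflection principle of Lemma \ref{reflection_H} relates $\overline{H}^{(\beta_1,\beta_2,\beta_3)}$ and $\overline{H}^{(2Q-\beta_1,\beta_2,\beta_3)}$ through an explicit factor built from $\Gamma$ and $\overline{R}$; and the invariance of the probabilistic definition \eqref{cc3} under cyclically relabelling the three boundary insertions (a rotation of the boundary circle, covariant by \cite[Theorem~3.5.]{Disk}) gives $\overline{H}^{(\beta_1,\beta_2,\beta_3)}_{(\mu_1,\mu_2,\mu_3)}=\overline{H}^{(\beta_2,\beta_3,\beta_1)}_{(\mu_2,\mu_3,\mu_1)}$. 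Conjugating the reflection principle by the cyclic symmetry yields reflection in the middle weight $\beta_2\mapsto 2Q-\beta_2$ with the $\sigma_i$ unchanged, and these two moves generate all six rows of the table.

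The core step is an analytic continuation argument reducing every comparison to the single well-defined object $\overline{H}$. On the region where the $\beta_i$ lie in a complex neighbourhood of $\mathbb{R}$ and the $\sigma_i$ are arbitrary, Lemma \ref{analycity_H} supplies one meromorphic extension of $\overline{H}$, and Theorem \ref{main_th2} identifies this extension with $\mathcal{I}$ wherever both are defined. Given a pair $i,j$, the overlap $\mathcal{D}_i\cap\mathcal{D}_j$ is convex (an intersection of real-linear half-spaces in $\mathbb{C}^6$), hence connected, and I may choose inside it an open subset lying in the near-real $\beta$ region. There each of the two variants equals $\overline{H}$ times its table prefactor, so the two variants differ precisely by the ratio of prefactors; this ratio is meromorphic on all of $\mathbb{C}^6$. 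Since both variants are meromorphic on the connected overlap and coincide up to this ratio on an open subset, the identity theorem for meromorphic functions of several complex variables propagates the relation to the whole of $\mathcal{D}_i\cap\mathcal{D}_j$. Because every identification is ultimately a reduction to $\overline{H}$, the single-valuedness of $\tilde{\mathcal{I}}$ around the cyclic loop $\mathcal{D}_1\to\mathcal{D}_3\to\mathcal{D}_5\to\mathcal{D}_1$ and across the reflections is automatic, the would-be cocycle collapsing by virtue of the involutivity relation \eqref{equation R reflect} for $\overline{R}$ and the order-three nature of the cyclic permutation.

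I expect the main obstacle to be purely organisational: verifying that each overlap $\mathcal{D}_i\cap\mathcal{D}_j$ genuinely meets the near-real $\beta$ region where $\overline{H}$'s extension of Lemma \ref{analycity_H} is available, and keeping track of the prefactors so that the compositions around loops manifestly cancel. The device of always reducing to $\overline{H}$ is what renders the cocycle condition trivial and is the conceptual crux; once each comparison is phrased as an equality of meromorphic functions on an open subset of a connected overlap, the propagation and the final patching are routine.
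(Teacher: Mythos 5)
Your proposal is correct and takes essentially the same route as the paper: identify each table variant with $\overline{H}$ (via Theorem \ref{main_th2} together with the extension of Lemma \ref{analycity_H}), use the cyclic-permutation invariance of $\overline{H}$ and the reflection principle of Lemma \ref{reflection_H} to relate the variants up to explicit prefactors, and patch across the domains $\mathcal{D}_i$ using Lemma \ref{lem:D_i}. Your additional care about convexity of the overlaps, the identity theorem, and cocycle consistency simply makes explicit the patching mechanics that the paper leaves implicit.
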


\begin{proof} From the result of the main text we know $\mathcal{I}$ matches with $\overline{H}$ on the intersection of $\mathcal{D}_1$ and of the subset of $\mathbb{C}^6$ where $\overline{H}$ has been analytically extended. Next using the probabilistic definition of $\overline{H}$ one can check that this function is invariant under a cyclic permutation of the indices $i =1,2,3$, applied simultaneously to both set of variables $\beta_i$ and $\sigma_i$. Similarly under the reflection $\beta_i \rightarrow 2Q -\beta_i$ the function $\overline{H}$ gets multiplied by an explicit meromorphic function given in Lemma \ref{reflection_H}. These same properties must then hold for the function $\mathcal{I}$. But by performing the cyclic permutation or the reflection, the domain of valid of $\mathcal{I}$ changes from $\mathcal{D}_1$ to one of the other $\mathcal{D}_i$. By the result of Lemma \ref{lem:D_i} the different expressions are the meromorphic extension of one another, and jointly they cover all of $\mathbb{C}^6$.
\end{proof}

Lastly we include here a sanity check on the formula of $\mathcal{I}$. We check that it obeys the scaling property verified by the probabilistic formula for $\overline{H}$ that was used in the proof of Lemma \ref{analycity_H}. 

\begin{lemma}\label{lem:scaling_I} Let $A \in \mathbb{C}$. Then the following holds as an equality of meromorphic functions of $\mathbb{C}^6$
\begin{equation}
\mathcal{I}
\begin{pmatrix}
\beta_1 , \beta_2, \beta_3 \\
\sigma_1 +A ,  \sigma_2 + A,   \sigma_3 +A 
\end{pmatrix}
= e^{i \pi A(2Q - \beta_1 - \beta_2 - \beta_3)}\mathcal{I}
\begin{pmatrix}
\beta_1 , \beta_2, \beta_3 \\
\sigma_1,  \sigma_2,   \sigma_3 
\end{pmatrix}.
\end{equation}

\end{lemma}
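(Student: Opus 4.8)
The plan is to exploit the explicit product structure of $\mathcal{I}$ in \eqref{formule_PT} and to track how each factor responds to the simultaneous shift $\sigma_i \mapsto \sigma_i + A$. I would split $\mathcal{I}$ into four pieces: (i) the prefactor built from the $\Gamma_{\frac{\gamma}{2}}$'s and the ordinary $\Gamma$'s, which depends only on the $\beta_i$; (ii) the exponential phase $e^{\frac{i\pi}{2}E(\sigma_1,\sigma_2,\sigma_3)}$ on the second line of \eqref{formule_PT}; (iii) the two denominator factors $S_{\frac{\gamma}{2}}(\frac{\beta_1}{2}+\sigma_1-\sigma_2)\,S_{\frac{\gamma}{2}}(\frac{\beta_3}{2}+\sigma_3-\sigma_1)$; and (iv) the contour integral $\mathcal{J}$ from Lemma \ref{lem:J}. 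The strategy is to show that (i), (iii), (iv) are invariant under the shift and that (ii) produces exactly the asserted phase.

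First I would observe that pieces (i), (iii), (iv) are all invariant. Piece (i) contains no $\sigma_i$. In piece (iii) the arguments involve only the differences $\sigma_1-\sigma_2$ and $\sigma_3-\sigma_1$, which are unchanged. Crucially, inspecting the integrand of $\mathcal{J}$ shows that every occurrence of a $\sigma_i$ enters through one of the differences $\sigma_3-\sigma_2$, $\sigma_3-\sigma_1$, or $\sigma_2-\sigma_3$ (including the exponential weight $e^{i\pi(-\frac{\beta_2}{2}+\sigma_2-\sigma_3)r}$). Hence both the integrand and the locations of its poles, and therefore the admissible contour $\mathcal{C}$, are untouched by $\sigma_i\mapsto\sigma_i+A$, so $\mathcal{J}$ is invariant. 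I would also note that the convergence condition $\mathrm{Re}(Q-\sigma_3+\sigma_2-\frac{\beta_2}{2})>0$ is itself invariant under the shift, so both sides of the claimed identity are simultaneously given by the integral representation on the same domain $\mathcal{D}_1$.

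It then remains to show that piece (ii) alone generates the factor $e^{i\pi A(2Q-\beta_1-\beta_2-\beta_3)}$. I would compute $E(\sigma_1+A,\sigma_2+A,\sigma_3+A)-E(\sigma_1,\sigma_2,\sigma_3)$ directly. Each of the first three quadratic products in $E$ depends on a single sum $\sigma_i+\sigma_j$, which is shifted by $2A$, while the last term $-2\sigma_3(2\sigma_3-Q)$ depends on $\sigma_3$, shifted by $A$; expanding and collecting, the $\sigma$-linear contributions from the three products combine to $4A\bigl(-(\sigma_1+\sigma_2)+(\sigma_2+\sigma_3)+(\sigma_1+\sigma_3)\bigr)=8A\sigma_3$, which exactly cancels the $-8A\sigma_3$ coming from the last term, and the $A^2$ terms cancel as well, leaving $E(\text{shifted})-E=2A(2Q-\beta_1-\beta_2-\beta_3)$. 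Multiplying by $\frac{i\pi}{2}$ gives precisely $i\pi A(2Q-\beta_1-\beta_2-\beta_3)$. Combining this with the invariance of the other three pieces proves the identity on $\mathcal{D}_1$, and since both sides are meromorphic on all of $\mathbb{C}^6$ by the preceding continuation lemma, the identity extends to $\mathbb{C}^6$.

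The computation is entirely elementary; the only point requiring care is the bookkeeping of the $\sigma$-linear terms in $E$, where the cancellation between the three product terms and the term $-2\sigma_3(2\sigma_3-Q)$ must be verified exactly. I expect this to be the only place a sign or coefficient slip could occur, so the main (and minor) obstacle is simply organizing that quadratic algebra cleanly.
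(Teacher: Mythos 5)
Your proof is correct and follows the same route as the paper's: the paper likewise observes that $\mathcal{J}$, the $S_{\frac{\gamma}{2}}$ denominators, and the $\Gamma$-prefactor are invariant under the simultaneous shift (all $\sigma$-dependence there being through differences), and that the quadratic exponential phase alone produces the factor $e^{i\pi A(2Q-\beta_1-\beta_2-\beta_3)}$. Your explicit bookkeeping of the cancellation $4A\bigl(-(\sigma_1+\sigma_2)+(\sigma_2+\sigma_3)+(\sigma_1+\sigma_3)\bigr)-8A\sigma_3=0$ and of the $A^2$ terms simply spells out the ``direct computation'' the paper leaves to the reader, and your final meromorphic-continuation step matches the paper's framing of the identity on $\mathbb{C}^6$.
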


\begin{proof}
Notice that when replacing all the $\sigma_i$ by $\sigma_i +A$, the function $\mathcal{J}$ does not change. The only thing that changes in $\mathcal{I}$ is the function 
$$e^{i\frac{\pi}{2}(-(2Q-\frac{\beta_1}{2}-\sigma_1-\sigma_2)(Q-\frac{\beta_1}{2}-\sigma_1-\sigma_2) + (Q+\frac{\beta_2}{2}-\sigma_2-\sigma_3)(\frac{\beta_2}{2}-\sigma_2-\sigma_3)+(Q+\frac{\beta_3}{2}-\sigma_1-\sigma_3)(\frac{\beta_3}{2}-\sigma_1-\sigma_3) -2\sigma_3(2\sigma_3-Q)        )}.$$
A direct computation then shows the change gives precisely a factor $e^{i \pi A(2Q - \beta_1 - \beta_2 - \beta_3)}$.
\end{proof}

\subsubsection{Some useful integrals} \label{section useful integrals}
\begin{lemma}
For $\theta_0 \in [-\pi,\pi]$, $-1<g< 1$ and $1 \lor (1+g)<b<2$ we have the identity:
\begin{equation}\label{equation integral 1}
\int_{\mathbb{R}_+ e^{i\theta_0}} \frac{ (1+u)^{ g } - 1}{ u^b} du=\frac{\Gamma(1-b)\Gamma(-1+b-g)}{\Gamma(-g)}.
\end{equation}
By $\mathbb{R}_+ e^{i\theta_0}$ we mean a complex contour that is obtained by rotating the half-line $(0,+\infty)$ by an angle $e^{i \theta_0}$. In particular for $\theta_0 = \pi $ it is passing above $-1$ and for $\theta_0 = - \pi $ it is passing below.
\end{lemma}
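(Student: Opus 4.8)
The plan is to first reduce the identity to the case $\theta_0 = 0$ by a contour rotation, and then to evaluate the resulting integral over $(0,\infty)$ by an integration by parts that turns it into a standard Beta integral.

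\textbf{Step 1: independence of $\theta_0$.} The integrand $f(u) = \frac{(1+u)^g - 1}{u^b}$ is holomorphic away from the branch point $u=-1$ of $(1+u)^g$ and the branch cut of $u^{-b}$, and for $\theta_0 \in (-\pi,\pi)$ the ray $\mathbb{R}_+ e^{i\theta_0}$ avoids both. The stated range of $b$ is exactly the convergence range: near $0$ one has $f(u) \sim g u^{1-b}$, integrable iff $b<2$, while near $\infty$ one has $f(u) \sim u^{g-b}$ when $g>0$ and $f(u)\sim -u^{-b}$ when $g\le 0$, integrable iff $b > 1\lor(1+g)$. I would close the contour between angle $0$ and angle $\theta_0$ with circular arcs of radius $\epsilon$ near $0$ and radius $R$ near $\infty$; using $|(1+u)^g-1|\lesssim |u|^{g\vee 0}+1$, these arcs contribute $O(\epsilon^{2-b})$ and $O(R^{1-b+(g\vee 0)})$, both of which vanish in the limit under $1\lor(1+g)<b<2$. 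Cauchy's theorem then gives the same value for all $\theta_0\in(-\pi,\pi)$, hence equal to the value at $\theta_0=0$. For the endpoints $\theta_0=\pm\pi$ the prescribed contour passes above (resp.\ below) $u=-1$, which is an \emph{integrable} singularity since $g>-1$, and a dominated-convergence argument shows the integral is continuous as $\theta_0 \to \pm\pi$, so its value persists to the endpoints.

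\textbf{Step 2: evaluation at $\theta_0=0$.} For $\theta_0=0$ I integrate by parts with $v=\frac{u^{1-b}}{1-b}$ and $w=(1+u)^g-1$. The boundary term $\frac{u^{1-b}}{1-b}\big((1+u)^g-1\big)$ is $O(u^{2-b})$ at $u=0$ and $O(u^{1-b+(g\vee 0)})$ at $u=\infty$, so it vanishes at both ends precisely because $1\lor(1+g)<b<2$ (this is where the lower bound $b>1$ is genuinely used). This yields
\begin{equation*}
\int_0^\infty \frac{(1+u)^g - 1}{u^b}\,du = \frac{g}{b-1}\int_0^\infty u^{1-b}(1+u)^{g-1}\,du .
\end{equation*}
The remaining integral is a Beta integral: applying $\int_0^\infty u^{s-1}(1+u)^{-a}\,du = \frac{\Gamma(s)\Gamma(a-s)}{\Gamma(a)}$ (valid for $0<s<a$) at $s=2-b$ and $a=1-g$, whose hypotheses $0<s<a$ are exactly $1+g<b<2$, it equals $\frac{\Gamma(2-b)\Gamma(b-1-g)}{\Gamma(1-g)}$.

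\textbf{Step 3: simplification.} Using $\Gamma(2-b)=(1-b)\Gamma(1-b)$ and $\Gamma(1-g)=-g\,\Gamma(-g)$ from \eqref{prop_gamma}, the prefactor $\frac{g}{b-1}$ collapses the expression,
\begin{equation*}
\frac{g}{b-1}\cdot\frac{(1-b)\Gamma(1-b)\,\Gamma(b-1-g)}{-g\,\Gamma(-g)} = \frac{\Gamma(1-b)\Gamma(-1+b-g)}{\Gamma(-g)},
\end{equation*}
which is the claimed value. The integration by parts and the Gamma manipulations are routine; I expect the main obstacle to be the careful justification of Step 1, namely pinning down the branch of $(1+u)^g$ along the rotated ray so that the rotated integrand is the genuine analytic continuation of the $\theta_0=0$ one (no spurious monodromy for $|\theta_0|<\pi$), together with the uniform arc estimates. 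The difference between $\theta_0=\pi$ (above $-1$) and $\theta_0=-\pi$ (below $-1$) does not change the answer, since the singularity at $-1$ is integrable and produces no residue.
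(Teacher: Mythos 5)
Your proposal is correct, but the core evaluation at $\theta_0=0$ proceeds by a genuinely different route than the paper's. The paper first computes the integral on $(0,\infty)$ by expanding $(1+u)^g$ in power series, recognizing the two resulting sums as hypergeometric functions evaluated at $-1$, and invoking the Kummer-type identity $\bar{b}\,F(\bar{a}+\bar{b},\bar{a},\bar{a}+1,-1)+\bar{a}\,F(\bar{a}+\bar{b},\bar{b},\bar{b}+1,-1)=\frac{\Gamma(\bar{a}+1)\Gamma(\bar{b}+1)}{\Gamma(\bar{a}+\bar{b})}$; it then disposes of general $\theta_0$ with a one-line contour-rotation remark. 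You instead integrate by parts against $u^{1-b}/(1-b)$ (the boundary terms vanishing exactly under $1\lor(1+g)<b<2$, which is where the hypotheses enter) and reduce to the Beta integral $\int_0^\infty u^{s-1}(1+u)^{-a}\,du=\Gamma(s)\Gamma(a-s)/\Gamma(a)$ at $s=2-b$, $a=1-g$; the Gamma recursions then collapse the prefactor $g/(b-1)$ to give the stated value. Your route is more elementary and self-contained: it avoids the series rearrangement (whose term-by-term integration the paper does not justify, and which mixes expansions valid on $|u|<1$ and $|u|>1$) and avoids quoting a special hypergeometric value, at the cost of losing the explicit link to the hypergeometric machinery the paper uses throughout. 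Your Step 1 matches the paper's rotation argument in content but is more careful — the $O(\epsilon^{2-b})$ and $O(R^{1-b+(g\vee 0)})$ arc estimates, and the dominated-convergence passage to the endpoints $\theta_0=\pm\pi$ (where the domination $|1+re^{i\theta_0}|\ge|1-r|$ together with $g>-1$ makes the limit legitimate) — details the paper waves off as "easy to observe." Both orderings of the two steps are logically equivalent.
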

\begin{proof}
Denote by $(x)_n:= x(x+1)\dots (x+n-1)$. We start by the case $\theta_0 = 0$:
\begin{align}
\int_0^{\infty} \frac{ (1+u)^{ g } - 1}{ u^b} du =& \sum_{n=0}^{\infty} \frac{(-1)^n}{n!} (-g)_n \frac{1}{n +1-b } - \sum_{n=0}^{\infty} \frac{(-1)^n}{n!} (-g)_n \frac{1}{ 1-b  +g-n } \\ \nonumber
=&\frac{1}{1-b}\sum_{n=0}^{\infty} \frac{(-1)^n}{n!} \frac{(-g)_n (1-b)_n}{(2-b)_n} - \frac{1}{1-b + g}\sum_{n=0}^{\infty} \frac{(-1)^n}{n!} \frac{(-g)_n(-1+b - g)_n}{ (b - g )_n } \\ \nonumber
=&\frac{1}{1-b}\,F(-g,1-b,2-b,-1)-\frac{1}{1-b+g}\,F(-g,-1+b-g,b-g,-1)\\ \nonumber
=&\frac{\Gamma(1-b)\Gamma(-1+b-g)}{\Gamma(-g)},
\end{align}
where in the last line we used the formula, for suitable $\overline{a}, \overline{b} \in \mathbb{R}$,
\begin{equation}
\bar{b}\,F(\bar{a}+\bar{b},\bar{a},\bar{a}+1,-1)+\bar{a}\,F(\bar{a}+\bar{b},\bar{b},\bar{b}+1,-1)=\frac{\Gamma(\bar{a}+1)\Gamma(\bar{b}+1)}{\Gamma(\bar{a}+\bar{b})}.
\end{equation}
Then by rotating the contour, it is easy to observe that the value of the integral is the same for all $\theta_0 \in [-\pi,\pi]$, which finishes the proof.
\end{proof}

A direct consequence by a change of variable is the following identity:

\begin{lemma}
For $\theta_0 \in [-\pi,\pi]$, $-1<g< 1$ and $g<b<1 \land (1+g)$ we have the identity:
\begin{equation}\label{equation integral 2}
\int_{\mathbb{R}_+ e^{i\theta_0}} \frac{ (1+u)^{ g } - u^g}{ u^b} du=\frac{\Gamma(1-b)\Gamma(-1+b-g)}{\Gamma(-g)}.
\end{equation}
\end{lemma}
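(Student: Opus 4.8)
The plan is to deduce \eqref{equation integral 2} directly from the previously proven identity \eqref{equation integral 1} via the substitution $u = 1/v$. Fix $g \in (-1,1)$ and $b \in (g, 1 \land (1+g))$ as in the statement, and set $b' := g + 2 - b$. First I would record two elementary facts. The involution $b \mapsto g+2-b$ maps the interval $(g, 1 \land (1+g))$ bijectively onto $(1 \lor (1+g), 2)$, so that $b'$ falls exactly in the range of validity of \eqref{equation integral 1}; this is checked by treating the two cases $g \le 0$ and $g > 0$ separately. Moreover, using $1 - b' = -1 + b - g$ and $-1 + b' - g = 1 - b$, the right-hand side is invariant under the involution, since
$$\frac{\Gamma(1-b')\Gamma(-1+b'-g)}{\Gamma(-g)} = \frac{\Gamma(-1+b-g)\Gamma(1-b)}{\Gamma(-g)}.$$

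Next I would apply \eqref{equation integral 1} with exponent $b'$ and perform the change of variable $u = 1/v$. Here $du = -v^{-2}\,dv$, and the ray $\mathbb{R}_+ e^{i\theta_0}$ traversed from $0$ to $\infty$ is sent to the ray $\mathbb{R}_+ e^{-i\theta_0}$ traversed from $\infty$ to $0$. Using $1 + u = (1+v)/v$, so that $(1+u)^g = (1+v)^g v^{-g}$, the integrand transforms as
$$\frac{(1+u)^{g} - 1}{u^{b'}}\, du = -\,\frac{(1+v)^g - v^g}{v^{\,g+2-b'}}\, dv = -\,\frac{(1+v)^g - v^g}{v^{\,b}}\, dv,$$
where the last step uses $g + 2 - b' = b$. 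Reversing the orientation of the contour cancels the minus sign, yielding
$$\int_{\mathbb{R}_+ e^{i\theta_0}} \frac{(1+u)^{g} - 1}{u^{b'}}\, du = \int_{\mathbb{R}_+ e^{-i\theta_0}} \frac{(1+v)^g - v^g}{v^{\,b}}\, dv.$$
Since $\theta_0$ ranges symmetrically over $[-\pi,\pi]$, the replacement $\theta_0 \to -\theta_0$ is immaterial, and combining this equality with the value of \eqref{equation integral 1} at $b'$ computed above gives \eqref{equation integral 2}.

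The step requiring the most care is the branch-cut bookkeeping concealed in the formal identity $(1+u)^g = (1+v)^g v^{-g}$. The point is to verify that, along the rotated ray and with principal branches, $\arg(1+v) - \arg(v)$ remains in $(-\pi,\pi]$, so that the arguments combine without an extra factor $e^{\pm 2\pi i g}$; and, in the boundary cases $\theta_0 = \pm\pi$, to check that the prescription in \eqref{equation integral 1} of passing above (respectively below) the singularity at $u = -1$ corresponds under $u \mapsto 1/v$ to passing above (respectively below) the singularity at $v = -1$ in \eqref{equation integral 2}. I expect this to be routine once the orientation of the contour and the image of the point $u = -1$ are tracked explicitly, but it is the only genuinely delicate ingredient; everything else is the elementary algebra recorded above.
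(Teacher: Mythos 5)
Your proposal is correct and is essentially the paper's own proof: the paper derives \eqref{equation integral 2} from \eqref{equation integral 1} precisely "by a change of variable," which is the substitution $u=1/v$ you carry out, with the involution $b\mapsto g+2-b$ exchanging the two parameter ranges and leaving the right-hand side invariant. In fact your write-up (including the branch-argument check that $\arg(1+v)-\arg(v)$ stays in $(-\pi,\pi)$ and the observation that $\theta_0\mapsto-\theta_0$ is immaterial) supplies more detail than the paper, which states the deduction without proof.
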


\end{document}